\theoremstyle{definition}
\numberwithin{equation}{section}
\newtheorem{thm}{Theorem}[section]
\newtheorem{df}[thm]{Definition}
\newtheorem{ex}[thm]{Example}
\newtheorem{prop}[thm]{Proposition}
\newtheorem{cor}[thm]{Corollary}
\newtheorem{lem}[thm]{Lemma}
\newtheorem{rmk}[thm]{Remark}
\def\dim{\mathop{\mathrm{dim}}\nolimits}
\def\udim{\mathop{\underline{\mathrm{dim}}}\nolimits}
\def\Im{\mathop{\mathrm{Im}}\nolimits}
\def\Coker{\mathop{\mathrm{Coker}}\nolimits}
\def\Hom{\mathop{\mathrm{Hom}}\nolimits}
\def\End{\mathop{\mathrm{End}}\nolimits}
\def\Ext{\mathop{\mathrm{Ext}}\nolimits}
\def\Tor{\mathop{\mathrm{Tor}}\nolimits}
\def\Spec{\mathop{\mathrm{Spec}}\nolimits}
\def\Irr{\mathop{\mathrm{Irr}}\nolimits}
\def\mod{\mathop{\mathrm{mod}}\nolimits}
\def\RHom{\mathop{\mathbb R\mathrm{Hom}}\nolimits}
\def\add{\mathop{\mathrm{add}}\nolimits}
\newcommand{\op}{\mathrm{op}}
\newcommand{\e}{{\mathbf{e}}}
\newcommand{\m}{{\mathfrak{m}}}
\newcommand{\C}{{\mathbb{C}}}
\newcommand{\Q}{{\mathbb{Q}}}
\newcommand{\R}{{\mathbb{R}}}
\newcommand{\PP}{{\mathbb{P}}}
\newcommand{\OO}{{\mathcal{O}}}
\newcommand{\Z}{{\mathbb{Z}}}
\newcommand{\Hilb}[2]{#1\text{-Hilb}(#2)}
\newcommand{\M}{{\mathcal{M}}}
\newcommand{\SO}{SO}
\newcommand{\SL}{SL}
\newcommand{\GL}{GL}
\newcommand{\Stab}[3]{\mathcal S_{#1 #3}(#2)}  
\newcommand{\Mod}{{\mathrm{Mod}}}
\newcommand{\Ltensor}{\stackrel{\mathbb{L}}{\otimes}}
\title[Flops and mutations for polyhedral singularities]{Flops and mutations for crepant resolutions of polyhedral singularities}
\author{\'Alvaro Nolla de Celis}
\author{Yuhi Sekiya}
\date{}
\subjclass[2010]{14E16, 16G20}
\thanks{{\em Affiliation:} Graduate School of Mathematics. Nagoya University, Chikusa-ku Nagoya 464-8602, Japan. \\{\em \'Alvaro Nolla de Celis:} alnolla@gmail.com, {\em Yuhi Sekiya:} yuhi-sekiya@math.nagoya-u.ac.jp}
\begin{document}

\begin{abstract}
Let $G$ be a polyhedral group $G\subset\SO(3)$ of types $\Z/n\Z$, $D_{2n}$ and $\mathbb{T}$. We prove that there exists a one-to-one correspondence between flops of $\Hilb{G}{\C^3}$ and mutations of the McKay quiver with potential which do not mutate the trivial vertex. This correspondence provides two equivalent methods to construct every projective crepant resolution for the singularities $\C^3/G$, which are constructed as moduli spaces $\mathcal{M}_C$ of quivers with potential for some chamber $C$ in the space $\Theta$ of stability conditions. In addition, we study the relation between the exceptional locus in $\mathcal{M}_C$ with the corresponding quiver $Q_C$, and we describe explicitly the part of the chamber structure in $\Theta$ where every such resolution can be found.
\end{abstract}

\maketitle

\setcounter{tocdepth}{1}
\tableofcontents

\section{Introduction}




This paper focuses on the problem of describing every projective crepant resolution of the quotient $\C^3/G$ for a given finite subgroup $G$ of $\SL(3,\C)$. In particular, we consider the case when $G$ belongs to the special orthogonal group $\SO(3)$, also called {\em polyhedral subgroups}, classified into five types: cyclic $\Z/n\Z$, dihedral $D_{2n}$, tetrahedral  $\mathbb{T}$, octahedral $\mathbb{O}$ and icosahedral $\mathbb{I}$. 

It is well known that every such crepant resolution is related by a sequence of {\em flops}. The purpose of this work is to construct explicitly every projective crepant resolution as certain moduli space of quiver representations and describe how can we perform {\em flops} between them in two different and equivalent ways: by changing the stability condition keeping the original quiver, or by changing the quiver by {\em mutation} but keeping the original stability condition. 

The equivariant Hilbert scheme \Hilb{$G$}{$\C^3$}, or moduli space of {\em $G$-clusters}, is the distinguished candidate of projective crepant resolution to start with (recall that a $G$-cluster is a 0-dimensional subscheme $\mathcal{Z}\subset\C^3$ such that $\OO_\mathcal{Z}\cong\C[G]$ the regular representation of $G$ as $\C[G]$-modules). In one hand, by \cite{BKR} it is known that \Hilb{$G$}{$\C^3$} is always a projective crepant resolution of $\C^3/G$. For polyhedral subgroups \Hilb{$G$}{$\C^3$} was first studied by Gomi, Nakamura and Shinoda in \cite{GNS1,GNS2}, showing also that the fibre over the origin $E:=\pi^{-1}(0)$ of $\pi:\Hilb{G}{\C^3}\to\C^3/G$ has dimension one and there is a one to one correspondence between smooth rational curves in $E$ and nontrivial irreducible representations of $G$ (see also \cite{BS}). 

In addition, by \cite{IN00} we know that we can interpret $\Hilb{G}{\C^3}$ as the moduli space $\M_{\theta,{\bf d}}$ of $\theta$-stable representations of dimension ${\bf d} := (\dim V_i)_{V_i\in\Irr G}$ of the {\em McKay quiver} $Q$ with suitable relations, for a particular choice of generic $\theta$ in the space of stability conditions $\Theta_{\bf d} := \{\theta\in{\Hom_{\Z}(\Z^{Q_0}},\Z)\otimes\Q~|~\theta\cdot{\bf{d}}=0\}\subset\Q^{|Q_0|}$. By \cite{BSW}, the relations in $Q$ are obtained as derivations of a potential $W$, so we consider quivers in this paper as {\em quivers with potential} (QP for short). 

The space of generic parameters in $\Theta_{\bf d}$ (or simply $\Theta$) is the disjoint union of finitely many convex polyhedral cones called {\em chambers} where the moduli space is constant, that is, $\M_{\theta,{\bf d}}\cong\M_{\theta',{\bf d}}$ where $\theta,\theta'\in C$ for any chamber $C\subset\Theta$. Calling this moduli space $\M_C$, if follows from \cite{BKR} that in fact $\M_C$ is a projective crepant resolution of $\C^3/G$ for any finite subgroup $G\subset\SL(3,\C)$ and any chamber $C\subset\Theta$. In the opposite direction, it was conjectured (and proved in the Abelian case) by Craw and Ishii in \cite{CI} that every projective crepant resolution is isomorphic to a $\M_C$ for some $C\subset\Theta$. 

In terms of the moduli spaces $\M_C$, the operation of a flop corresponds to vary the stability parameter to cross a wall in $\Theta$ to an adjacent chamber $C'$, obtaining a new moduli space $\M_{C'}$. It should be pointed out that not every wall crossing in $\Theta$ produces a flop since it may happen that $\M_C\cong\M_{C'}$. Therefore, one strategy to obtain our goal is to start from \Hilb{$G$}{$\C^3$} and perform a flop for every possible floppable rational curve contained in the exceptional divisor $E\subset\Hilb{G}{\C^3}$. By constructing explicitly the other side of the flop and iterating the process we eventually obtain every projective crepant resolution of $\C^3/G$. 

On the other hand, {\em non-commutative crepant resolutions} (NCCRs) of $R:=\C[x,y,z]^G$ are considered to be the non-commutative analogue of crepant resolutions of $\C^3/G$. They are algebras of the form $\Lambda:=\End_R(M)$ where $M$ is a reflexive $R$-module, $\Lambda$ has finite global dimension and it is a (maximal) Cohen-Macaulay $R$-module (see \cite{VdB}). Indeed, if $\Gamma$ is a NCCR of $R$ then for any generic stability condition $\theta$ the moduli space $\M_{\theta,{\bf d}}(\Gamma)$ of $\theta$-stable $\Gamma$-modules of dimension vector ${\bf d}$ is a crepant resolution of $\Spec R$. For instance, the skew group algebra $S\ast G$ is an NCCR of $R$, which is Morita equivalent to the {\em Jacobian algebra} $\mathcal{P}(Q,W):=\C Q/\langle \partial_a W \mid a \in Q_1 \rangle$ of the McKay QP. Then $\M_{C_0,{\bf d}}(\mathcal{P}(Q,W))\cong\Hilb{G}{\C^3}$ where $C_0$ is the chamber containing the {\em 0-generated} stability condition $\theta^0$ (the one such that $\theta^0_i>0$ for $i\neq0$).

From this point of view, a common operation to obtain a new NCCR from a given one is by {\em mutation}. For the groups $G$ treated in this paper, we take the Jacobian algebra $\mathcal{P}(Q,W)$ of the McKay QP of $G$, and consider mutations of quivers with potential at suitable vertices $k$ in $Q$ without loops. We obtain in this manner new QPs denoted by $\mu_k\mathcal{P}(Q,R)$, starting the iterative procedure which turns out to cover every projective crepant resolution of $\C^3/G$. We say that a quiver with potential $(Q', W')$ is an {\em iterated mutation} of $(Q,W)$ if there are quivers with potentials $(Q^{(i)},W^{(i)})$ for $0\leq i\leq n$ such tat $(Q,W)=(Q^{(0)},W^{(0)})$, $(Q',W')=(Q^{(n)},W^{(n)})$ and $(Q^{(i+1)},W^{(i+1)})=\mu_{k_i}(Q^{(i)},W^{(i)})$ where $k_i$ is a vertex of $Q^{(i)}$ without loops. See \ref{defn:mut} for the precise definition of the mutation that we use in this paper, and Section \ref{sect:mutNCCRs} for a discussion about the meaning of mutation at loops in our setting.

In this paper we prove that the two strategies above explained are equivalent for polyhedral subgroups $G\subset\SO(3)$ of types $\Z/n\Z$, $D_{2n}$ and $\mathbb{T}$. For all these cases every mutation at a vertex with a loop is trivial, restricting our the study to mutations only at vertices without loops. For subgroups $G \subset SO(3)$ of types $\mathbb O$ and $\mathbb I$ there are vertices with loops in some iterated quiver QP $(Q',W')$ for which the mutation is not trivial. As it is explained in Section \ref{sect:mutNCCRs}, this fact is encoded locally in the factor algebra $\Lambda/\Lambda(1-e_i)\Lambda$ where $\Lambda=\mathcal{P}(Q',W')$, which in these cases turns out to have finite dimension. Geometrically, this means that there are a priori floppable $(-2,0)$ and $(-3,1)$-curves in the fibre of origin of some crepant resolution of $\mathbb C^3/G$. Even though following \cite{Wem14} we can ensure that the correspondence of both approaches also holds for types $\mathbb O$ and $\mathbb I$, because of the different nature of this cases with respect to explicit computations (namely the presence of high rank modules in the McKay quiver and mutations of QPs at vertices with loops) we leave the treatment of this cases for a future work.

\subsection{Statements of results and corollaries} 

The main theorem of the paper is the following.

\begin{thm}\label{thm:main} Let $G\subset\SO(3)$ of types $\Z/n\Z$, $D_{2n}$ or $\mathbb{T}$, and let $(Q,W)$ be the McKay quiver with potential. Then there exists a one-to-one correspondence between flops of $\Hilb{G}{\C^3}$ and mutations of $(Q,W)$ which do not mutate the trivial vertex.
\end{thm}

Therefore, to a given projective crepant resolution $\mathcal{M}_C$ for some $C\subset\Theta$ we can associate a QP $(Q_C, W_C)$ obtained as an iterated mutation from the McKay QP. 

The theorem is proved in Section \ref{Proof:Main} and it is done by direct comparison. On one hand we calculate every possible mutation at non-trivial vertices of the McKay QP according to Definition \ref{defn:mut}, and on the other hand we construct an explicit open cover of every projective crepant resolution of $\C^3/G$ obtained by a sequence of flops from $\Hilb{G}{\C^3}$. It turns out that every open cover consists of a finite number of copies of $\C^3$ (see Theorems \ref{OpensDnOdd}, \ref{OpensDnEven} and \ref{OpensE6}) and in every step only $(-1,-1)$-curves are floppable. The last fact is proved in Lemma \ref{floppable} using Reid's {\em width} for $(-2,0)$-curves and $S$-equivalence classes for $(-3,1)$-curves (see also Section \ref{sect:contralg} for an alternative approach using {\em contraction algebras}). In fact, the direct comparison shows that by mutating at non-trivial vertex $k$ in $Q_C$ without loops we match what is happening geometrically when flopping a rational curve $E_k\subset\M_C$.

By construction, every such resolution is described as a moduli space of the McKay quiver for some chamber $C\subset\Theta$, which means that for this groups the Craw-Ishii conjecture holds:

\begin{cor} Let $G\subset\SO(3)$ be a finite subgroup of type $\Z/n\Z$, $D_{2n}$ or $\mathbb{T}$. Then every projective crepant resolution of $\C^3/G$ is isomorphic to $\mathcal{M}_C$ for some chamber $C\subset\Theta$.
\end{cor}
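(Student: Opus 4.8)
The plan is to read the corollary off Theorem \ref{thm:main} once it is combined with the birational geometry of $3$-fold crepant resolutions. Recall first that $\Hilb{G}{\C^3}$ is itself realised as a moduli space $\mathcal{M}_{C_0}$ for a distinguished chamber $C_0\subset\Theta$ by \cite{IN00}, and that each $\mathcal{M}_C$ is projective over $\C^3/G$. So it suffices to show that an arbitrary projective crepant resolution $\pi:X\to\C^3/G$ can be reached from $\Hilb{G}{\C^3}$ by wall-crossings inside $\Theta$.

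First I would invoke the fact that in dimension three any two projective crepant resolutions of a given quotient singularity are connected by a finite sequence of flops, a standard consequence of the minimal model program for $3$-folds. Thus there is a chain
\[
\Hilb{G}{\C^3}=X_0 \dashrightarrow X_1 \dashrightarrow \cdots \dashrightarrow X_r = X
\]
in which each $X_{i+1}$ is the flop of $X_i$ along a single curve. Next I would translate this chain into $\Theta$: by Theorem \ref{thm:main}(ii) each flop of a curve $E_k$ is the mutation $\mu_k(Q,W)$, and by the stability analysis of Theorems \ref{stability} and \ref{horizontal} (summarised in Corollary \ref{FRegion}) each such mutation is effected by crossing one wall between adjacent chambers inside the connected region $F\subset\Theta$, with the moduli space on the far side being exactly the flopped threefold. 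Proceeding by induction on $i$, each $X_i$ equals $\mathcal{M}_{C_i}$ for a chamber $C_i\subset F$; in particular $X=\mathcal{M}_{C_r}$, which is the assertion. Alternatively, and more concretely, the open cover constructed in the proof of Theorem \ref{thm:main}(i) is built vertex by vertex from $\theta$-stable representations, so it exhibits the relevant chamber $C$ directly.

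The main obstacle is to guarantee that the flop graph of crepant resolutions is matched exactly---not merely covered---by the wall-crossing graph of chambers in $F$, i.e.\ that the assignment $C\mapsto\mathcal{M}_C$ is surjective onto the set of crepant resolutions. This is where the finiteness of Section \ref{Sect:Mut-SO(3)} enters: one must check that the explicitly classified, finite set of mutations of $(Q,W)$ is in bijection with the flops, so that no crepant resolution escapes the image and no wall-crossing leaves the crepant locus. Concretely, the delicate point is to rule out walls of divisorial (non-flopping) type, which is precisely why one works inside the region $F$ where Corollary \ref{FRegion} guarantees that every wall-crossing is a flop.
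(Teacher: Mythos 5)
Your proposal is correct and takes essentially the same route as the paper: the corollary is read off from the classification behind Theorem \ref{thm:main}(i) (Theorems \ref{OpensDnOdd}, \ref{OpensDnEven}, \ref{OpensE6}), which starts from $\Hilb{G}{\C^3}\cong\mathcal{M}_\theta$ for the $0$-generated parameter and inductively realizes each flop of a $(-1,-1)$-curve as a moduli space $\mathcal{M}_{C'}$, using the (implicit) MMP fact that all projective crepant resolutions are connected by flops and Lemma \ref{floppable} to rule out flops at $(-2,0)$- and $(-3,1)$-curves. Your two ``delicate points''---exhaustiveness of the flop/wall-crossing correspondence and exclusion of divisorial contractions---are exactly the roles played in the paper by the explicit exhaustive flop computation and by Lemma \ref{floppable} (packaged as Corollary \ref{FRegion}).
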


As the next corollary shows, the relation between $\mathcal{M}_C$ and $(Q_C, W_C)$ goes one step further:

\begin{cor}\label{cor:DualGraph} Let $G$ be as above, let $\pi_C:\mathcal{M}_C\to\C^3/G$ be the projective crepant resolution for some chamber $C\subset\Theta$ and let $Q_C$ be the corresponding iterated quiver. 
\begin{itemize}
\item[(i)] The dual graph of $\pi_C^{-1}(0)$ is the same as the graph of $Q_C$ removing the trivial vertex. 
\item[(ii)] The number of loops at a vertex $i$ of the quiver $Q_C$ determines the degree of the normal bundle of the corresponding rational curve $E_i\subset\pi_C^{-1}(0)\subset\M_C$. More precisely, we have the following one-to-one correspondences:
\[
\begin{array}{rcl}
\{ \text{$(-1,-1)$-curves in $\mathcal{M}_C$} \} & \xlongleftrightarrow{\text{}} & \{ \text{non-trivial vertices in $Q_C$ with no loops} \} \\
\{ \text{$(-2,0)$-curves in $\mathcal{M}_C$} \} & \xlongleftrightarrow{\text{}} & \{ \text{non-trivial vertices in $Q_C$ with one loop} \} \\
\{ \text{$(-3,1)$-curves in $\mathcal{M}_C$} \} & \xlongleftrightarrow{\text{}} & \{ \text{non-trivial vertices in $Q_C$ with two loops} \}
\end{array}
\]
\end{itemize}
\end{cor}

Although there is no relation with irreducible representations of $G$ except in the case when $\M_C\cong\Hilb{G}{\C^3}$, this corollary extends the McKay correspondence for finite subgroups in $\GL(2,\C)$ of Wemyss \cite{Wem08}. We would also like to note that the one-to-one correspondences in (ii) are expected since the dimension of the fibre over the origin $0\in\C^3/G$ has dimension one (see Remark \ref{rem:LoopDeg}).

The way of finding the projective crepant resolution $\mathcal{M}_C$ in the corresponding QP $(Q_C,W_C)$ is shown in the next result (= Theorem \ref{horizontal}), which states that $\mathcal{M}_C$ is the moduli space of representations of $(Q_C,W_C)$ of dimension vector $\omega\bf{d}$ and the 0-generated stability condition $\theta^0$ (see Section \ref{Sect:Stability} for the precise definition of $\omega$). In the opposite direction, i.e.\ starting from an iterated QP $(Q,W)$ and its corresponding moduli space $X:=\M_{\theta^0}(\mathcal{P}(Q,W))$, it also provides the way of finding the stability parameter $\theta$ such that the moduli space of McKay quiver representations $\mathcal{M}_\theta$ is isomorphic to $X$. The result was motivated by the work of \cite{SY} in dimension 2.

\begin{thm}
Let $G\subset\SO(3)$ be a finite subgroup of type $\Z/n\Z$, $D_{2n}$ or $\mathbb{T}$, and let $X:=\mathcal{M}_C$ be an projective crepant resolution of $C^3/G$. Then \[X \cong \mathcal{M}_{\theta^0,\omega \mathbf d}(\mathcal P(Q_C,W_C)).\] 
Moreover, there exists a corresponding sequence of wall crossings from $\mathcal{M}_{C_0}\cong\Hilb{G}{\C^3}$ which leads to $X \cong \mathcal M_{\theta,\mathbf d}(\Lambda)$ where $\Lambda$ is the Jacobi algebra associated to the McKay QP, and the chamber $C \subset \Theta_{\mathbf d}$ containing $\theta$ is given by the inequalities $\theta(\omega^{-1}\e_i)>0$ for any $i \neq 0$.
\end{thm}

In relation to the space of stability conditions $\Theta$ we describe explicitly the part of the chamber structure that contains every moduli space $\M_C$ constructed in Section \ref{sect:opens} (see Theorem \ref{stability}). In other words, considering the dual graph $\mathcal{T}$ of $\Theta$, that is, one vertex for each chamber and an edge between two vertices if the corresponding chambers are separated by a wall, then we can state the following corollary (= Corollary \ref{FRegion}):

\begin{cor}\label{Intro:FRegion} Let $G\subset\SO(3)$ be a finite subgroup of type $\Z/n\Z$, $D_{2n}$ or $\mathbb{T}$. There exists a path in $\mathcal{T}$ containing the chamber $C_0$ where every crepant resolution of $\C^3/G$ can be found and such that every wall crossing in $\mathcal{T}$ corresponds to a flop.
\end{cor}

This nice distribution contrast for example with the general case for Abelian groups in $\SL(3,\C)$, where it can happen that finitely many wall crossings (of {\em Types} $0$ or $III$) are needed to connect two crepant resolutions related by a single flop. See \cite{CI} for more details. \\

The paper is organized as follows. In Section \ref{Sect:Subgroups} we make a brief introduction to the finite subgroups of $\SO(3)$ and their irreducible representations. In Section \ref{Sect:McKayQP-SO(3)} we describe the McKay QP $(Q,W)$ using the \cite{BSW} method for every polyhedral subgroups in $\SO(3)$. Section \ref{Sect:Mutation} describes mutations of quiver with potentials and Section calculates every possible mutation of the McKay QP at non-trivial vertices for subgroups $G\subset\SO(3)$ of types $\Z/n\Z$, $D_{2n}$ and $\mathbb{T}$. In Section \ref{sect:opens} we describe explicitly every projective crepant resolution of $\C^3/G$ with $G\subset\SO(3)$ of types $\Z/n\Z$, $D_{2n}$ and $\mathbb{T}$ as moduli spaces $\mathcal{M}_C$ of representations of the McKay QP. Section \ref{Sect:Stability} is dedicated to the space of stability conditions $\Theta$ for the moduli spaces $\mathcal{M}_C$ and the relation between changing the stability condition and mutating at a vertex $k\in Q$. Finally, in Section \ref{Sect:Floppable} we prove the lemma which allows us to calculate explicitly every crepant resolution by flopping only at $(-1,-1)$-curves and we describe explicitly the contraction algebra for the tetrahedral subgroup $\mathbb{T}$. \\

The authors would like to thank Alastair Craw for his suggestion to study the polyhedral subgroups to the second author when he was visiting Glasgow. We are also grateful to Osamu Iyama and Michael Wemyss for invaluable comments and improvements of this manuscript, Akira Ishii and Kota Yamaura for many useful discussions. 
Finally, we would also like to thank Yukari Ito for bringing us together. 

The first author is supported by FY2009 JSPS Fellowship for Foreign Researchers and JSPS grant No.\ 09F09768, and the second author by JSPS Fellowship for Young Scientists No.\ 21-6922.

\subsection{Conventions} We always take $\C$ as ground field although everything can be done in any algebraically closed field of characteristic 0. 

Abusing the notation, we indistinguishably use vertices in a quiver $Q$ and their corresponding vector spaces in a representations of $Q$. If in addition $Q$ is the McKay quiver we also treat them as irreducible representations of $Q$.

Since we use GIT methods, by crepant resolution $\pi:Y\to X$ we always mean projective.

\section{Finite subgroups of $SO(3)$}\label{Sect:Subgroups}

Let $G$ be a finite subgroup of $\SO(3)$ which consists of rotations about $0\in\R^3$. These groups are the so called {\em polyhedral groups} and are classified into five cases: cyclic, dihedral, tetrahedral, octahedral and icosahedral (see Table \ref{subSO3}). 

\begin{table}[htdp]
\begin{center}\begin{tabular}{|c|c|c|c|}\hline 
Polyhedral group  & Isomorphic group & Order \\\hline 
Cyclic & $\mathbb Z/n\mathbb Z$ & $n$ \\
Dihedral $D_{2n}$ & $\mathbb Z/n\mathbb Z \rtimes \mathbb Z/2\mathbb Z$ & $2n$ \\
Tetrahedral $\mathbb T$ & $A_4$ & 12  \\ 
Octahedral $\mathbb O$ & $S_4$ & 24\\
Icosahedral $\mathbb I$ & $A_5$ & 60\\\hline 
\end{tabular} 
\caption{Finite subgroups of $SO(3)$}
\label{subSO3}
\end{center}
\end{table}

\subsection{The cyclic group of order $n+1$}

Let $G$ be the cyclic subgroup of $SO(3)$ of order $n+1$. Then $G$ is of the form:
\[
G = \frac{1}{n+1}(1,n,0):= \langle \sigma = \left(\begin{array}{ccc} \epsilon &0&0\\ 0& \epsilon^{-1} &0\\ 0&0&1 \end{array}\right) \rangle, \text{ where } \epsilon = e^{2\pi i/(n+1)}.
\]
The character table of $G$ is given in Table \ref{CharCyclic}.

{\renewcommand{\arraystretch}{1.25}
\begin{table}[htdp]
\begin{center}\begin{tabular}{|c|cccccc|}\hline 
Conjugacy classes & 1 & $\sigma$ & $\cdots$ & $\sigma^i$ & $\cdots$ & $\sigma^{n}$  \\\hline 
Number of elements & 1 & 1 & $\cdots$ & 1 & $\cdots$ & 1 \\\hline 
$V_{j}$ & 1 & $\epsilon^j$ & $\cdots$ & $\epsilon^{ij}$ & $\cdots$ & $\epsilon^{jn}$ \\
\hline
\end{tabular}
\caption{Characters of $G$ of type $\Z/n\Z$ with $0\leq j\leq n$.}
\label{CharCyclic}
\end{center}
\end{table}}

\subsection{The dihedral group of order $2n$}
Let $n$ be a positive integer and $G$ be the dihedral subgroup $D_{2n}$ in $SO(3)$ of order $2n$. Then $G$ is generated by:
\[
G = \langle \sigma=\left(\begin{array}{ccc} \epsilon &0&0\\ 0& \epsilon^{n-1} &0\\ 0&0&1 \end{array}\right), \tau=\left(\begin{array}{ccc} 0&1&0\\ 1&0&0\\ 0&0&-1 \end{array}\right) \rangle, \text{ where } \epsilon = e^{2\pi i/n}.
\]
These groups are divided into two cases depending on the parity of $n$. 

For the case $n=2m$ even, the group has four 1-dimensional irreducible  representations $V_0$, $V_{0'}$, $V_m$ and $V_{m'}$, and $m-1$ irreducible representations $V_j$ of dimension 2 for $1\leq j \leq m-1$. The character table is given in Table \ref{D-even-table}. 

{\renewcommand{\arraystretch}{1.25}
\begin{table}[htdp]
\begin{center}\begin{tabular}{|c|ccccc|}\hline 
c.c. & 1 & -1 & $\tau$ & $\tau\sigma$ & $\sigma^i$ \\\hline 
$\#$  & 1 &  1 & $m$ & $m$ & 2 \\\hline 
$V_{0}$ & 1 & 1 & 1 & 1 & 1 \\
$V_{0'}$ & $1$ & $1$ & $-1$ & $-1$ & $1$ \\
$V_j$ & $2$ & $(-1)^j2$ & $0$ & $0$ & $\epsilon^{ij}+\epsilon^{-ij}$ \\
$V_{m}$ & $1$ & $(-1)^m$ & $1$ & $-1$ & $(-1)^i$\\
$V_{m'}$ & $1$ & $(-1)^m$ & $-1$ & $1$ & $(-1)^i$\\
\hline
\end{tabular}
\caption{Characters of $D_{2n}$, with $n=2m$ even and $1\leq i,j \leq m-1$.}
\label{D-even-table}
\end{center}
\end{table}}

For the case $n=2m+1$ odd, the group has two 1-dimensional representations $V_0$ and $V_{0'}$, and $m$ 2-dimensional representations $V_j$ for $1\leq j \leq m$. The character table is given in Table \ref{D-odd-table}. 

{\renewcommand{\arraystretch}{1.25}
\begin{table}[htdp]
\begin{center}\begin{tabular}{|c|ccc|}\hline 
c.c. & 1 & $\tau$ & $\sigma^i$ \\\hline 
$\#$ & 1 & $2m+1$ & 2 \\\hline 
$V_0$ & 1 & 1 & 1 \\
$V_{0'}$ & $1$ & $-1$ & $1$ \\
$V_j$ & $2$ & $0$ & $\epsilon^{ij}+\epsilon^{-ij}$\\
\hline
\end{tabular}
\caption{Characters of $D_{2n}$, with $n=2m+1$ odd and $1\leq i,j \leq m$.}
\label{D-odd-table}
\end{center}
\end{table}}

In both cases, the representations $V_j$ are realized as $V_j(\sigma)= \left(\!\begin{smallmatrix}\epsilon^j &0\\ 0&\epsilon^{-j}\\ \end{smallmatrix}\!\right), V_j(\tau)= \left(\!\begin{smallmatrix} 0&1\\1&0\\ \end{smallmatrix}\!\right)$.

\subsection{The tetrahedral group}
Let $G$ be the tetrahedral subgroup $\mathbb T$ of $SO(3)$. Then
\[
G = \langle \sigma=\left(\begin{array}{ccc} -1&0&0\\ 0&-1&0\\ 0&0&1 \end{array}\right), \tau=\left(\begin{array}{ccc} 0&1&0\\ 0&0&1\\ 1&0&0 \end{array}\right) \rangle.
\]
The group $G$ is isomorphic to the alternating group $A_4$ and it has order $12$. This group is also called {\em trihedral group of order 12} and the character table of $G$ is shown in Table \ref{Char-T}.

{\renewcommand{\arraystretch}{1.25}
\begin{table}[htdp]
\begin{center}\begin{tabular}{|c|cccc|}\hline 
c.c. & 1 & $\sigma$ & $\tau$ & $\tau^2$ \\\hline 
$\#$ & 1 & 3 & 4 & 4 \\\hline 
$V_0$ & 1 & 1 & 1 & 1\\
$V_1$ & $1$ & $1$ & $\omega$ & $\omega^2$ \\
$V_2$ & $1$ & $1$ & $\omega^2$ & $\omega$ \\
$V_3$ & $3$ & $-1$ & 0 & 0\\\hline
\end{tabular}
\caption{Characters of tetrahedral group of order 12.}
\label{Char-T}
\end{center}
\end{table}}

\subsection{The octahedral group}
Let $G$ be the octahedral subgroup $\mathbb O$ of $SO(3)$. Then:
\[
G = \langle \sigma=\left(\begin{array}{ccc} 0&-1&0\\ 1&0&0\\ 0&0&1 \end{array}\right), \tau=\left(\begin{array}{ccc} 0&1&0\\ 0&0&1\\ 1&0&0 \end{array}\right) \rangle.
\]
The group $G$ is isomorphic to the symmetric group $S_4$ and its order is $24$. The character table of $G$ is given in Table \ref{Char-E7}.

{\renewcommand{\arraystretch}{1.25}
\begin{table}[htdp]
\begin{center}\begin{tabular}{|c|ccccc|}\hline 
c.c. & 1 & $\sigma^2$ & $\tau$ & $\sigma$ & $\sigma\tau\sigma^2$ \\\hline 
$\#$ & 1 & 3 & 8 & 6 & 6 \\\hline 
$V_0$ & 1 & 1 & 1 & 1 & 1\\
$V_1$ & $1$ & $1$ & $1$ & $-1$ & $-1$ \\
$V_2$ & $2$ & $2$ & $-1$ & 0 & 0 \\
$V_3$ & $3$ & $-1$ & 0 & 1 & $-1$\\
$V_4$ & $3$ & $-1$ & 0 & $-1$ & $1$\\\hline
\end{tabular}
\caption{Characters of octahedral group}
\label{Char-E7}
\end{center}
\end{table}}

The representation $V_{3}$ is the representation given by the inclusion of $G$ into $SO(3)$, i.e.\ it is the natural representation. The irreducible representations $V_2$ and $V_{4}$ are realized as $V_2(\sigma)= \left(\begin{smallmatrix}0&1\\1&0\\ \end{smallmatrix}\right)$, $V_2(\tau)= \left(\begin{smallmatrix} \omega&0\\0&\omega^2\\ \end{smallmatrix}\right)$, $V_{4}(\sigma)=-\sigma, V_{4}(\tau)=\tau$.

\subsection{The icosahedral group}
Let $G$ be the icosahedral subgroup $\mathbb I$ of $SO(3)$:
\[
G = \langle \sigma=\left(\begin{array}{ccc} 1&0&0\\ 0&\epsilon&0\\ 0&0&\epsilon^4 \end{array}\right), \tau=\displaystyle{\frac{1}{\sqrt{5}}}\left(\begin{array}{ccc} 1&1&1\\ 2&s&t\\ 2&t&s \end{array}\right) \rangle,
\]
where $\epsilon = e^{2\pi i/5}$, $s = \epsilon^2 + \epsilon^3 = \frac{-1-\sqrt{5}}{2}$ and $s = \epsilon + \epsilon^4 = \frac{-1+\sqrt{5}}{2}$. Note that $v = \left(\begin{smallmatrix} -1&0&0\\ 0&0&-1\\ 0&-1&0 \end{smallmatrix}\right) = \sigma^3\tau$ (compare \cite[3.1]{GNS2} and \cite{YY}). The group $G$ is isomorphic to the alternating group $A_5$ and its order is $60$. The character table of $G$ is given in Table \ref{Char-G60}.

{\renewcommand{\arraystretch}{1.25}
\begin{table}[htdp]
\begin{center}\begin{tabular}{|c|ccccc|}\hline 
c.c. & 1 & $\sigma\tau$ & $\tau$ & $\sigma$ & $\sigma^2$ \\\hline 
$\#$ & 1 & 20 & 15 & 12 & 12 \\\hline 
$V_0$ & 1 & 1 & 1 & 1 & 1\\
$V_1$ & $3$ & $0$ & -1 & $-s$ & $-t$\\
$V_2$ & $3$ & $0$ & -1 & $-t$ & $-s$\\
$V_3$ & $4$ & $1$ & 0 & $-1$ & $-1$\\
$V_4$ & $5$ & $-1$ & 1 & $0$ & $0$\\\hline
\end{tabular}
\caption{Characters of icosahedral group}
\label{Char-G60}
\end{center}
\end{table}}

The natural representation is $V_{1}$, the irreducible representation $V_{2}$ is realized as $V_{2}(\sigma)=\sigma^2, V_{2}(\tau)=\tau$, and the 4-dimensional irreducible representation $V_3$ is obtained by removing the unit representation from the permutation representation of $A_5$ on $\{a,b,c,d,e\}$. If we take a suitable basis of $V_3$, it is realized as:
\[
V_3(\sigma)= \left(\begin{array}{cccc}\epsilon&0&0&0\\0&\epsilon^2&0&0\\0&0&\epsilon^3&0\\0&0&0&\epsilon^4 \end{array}\right), V_3(\tau)=\displaystyle{\frac{1}{\sqrt{5}}}\left(\begin{array}{cccc} 1&t&-s&-1\\t&-1&1&-s\\-s&1&-1&t\\-1&-s&t&1 \end{array}\right).
\]
The 5-dimensional irreducible representation $V_4$ is a representation obtained by removing the unit representation from the permutation representation of $A_5$ on the set of its 6 subgroups of order 5. Taking a suitable basis, it is realized as:
\[
V_4(\sigma)= \left(\begin{array}{ccccc}1&0&0&0&0\\0&\epsilon&0&0&0\\0&0&\epsilon^2&0&0\\0&0&0&\epsilon^3&0\\0&0&0&0&\epsilon^4 \end{array}\right), V_4(\tau)=\displaystyle{\frac{1}{5}}\left(\begin{array}{ccccc} -1&-6&-6&-6&-6\\-1&1-t&2s&2t&1-s\\-1&2s&1-s&1-t&2t\\-1&2t&1-t&1-s&2s\\-1&1-s&2t&2s&1-t\\ \end{array}\right).
\]

\section{McKay quivers with potential for $G \subset SO(3)$}
\label{Sect:McKayQP-SO(3)}
 
Let $Q$ be an arbitrary finite connected quiver (possibly with loops and 2-cycles) with a vertex set $Q_0$ and an arrow set $Q_1$. For an arrow $a\in Q_1$, denote by $ha$ and $ta$ the head and tail of $a$ respectively.
Let $\C Q$ be the path algebra and denote by $\C Q_i$ the $\C$-vector space with basis $Q_i$ consisting of paths of length $i$ in $Q$, and by $\C Q_{i,\mathrm{cyc}}$ the subspace of $\C Q_i$ spanned by all cycles.
A {\it quiver with potential} ({\it QP} for short) is a pair $(Q,W)$ consisting of $Q$ and an element $W \in \bigoplus_{i \geq 2} \C Q_{i,\mathrm{cyc}}$ called {\it potential}. 
For an arrow $a \in Q_1$, the cyclic derivative $\partial_a W$ is defined by $\partial_a(a_1\cdots a_{\ell}) = \sum_{a_i=a}a_{i+1}\cdots a_{\ell}a_1\cdots a_{i-1}$ and extended linearly.
The {\it Jacobian algebra} of a QP $(Q,W)$ is defined by
\[
\mathcal P(Q,W) := \C Q/\langle \partial_a W \mid a \in Q_1 \rangle.
\]

Let $Q$ be the McKay quiver of $G$, that is the quiver such that the vertex set is the set of irreducible representations $V_i$ of $G$ and we draw $a_{ij}$ arrows from $V_i$ to $V_j$ where $a_{ij} := \dim_{\C } \Hom_{\C G}(V_i,V^{*}\otimes V_j)$. In this case it is well known that $\mathcal{P}(Q,W)$ is Morita equivalent to the skew group algebra $S\ast G$, where $S*G$ is a free $S$-module $S\otimes_{\C } G$ with basis $G$ with multiplications given by $(s\otimes g)(s'\otimes g')=sg(s')\otimes gg'$ for any $s,s' \in S$ and $g,g'\in G$. 

Let us now restrict to the case of polyhedral subgroups $G\subset\SO(3)$. The description of the potential $W$ can be calculated following the method provided in \cite{BSW}, which we now briefly sketch.

Take the standard basis $v_1,v_2$ and $v_3$ of $V=\C^3$. Note that in this case $G$ acts on $V$ naturally and dually on the polynomial ring $S:=\C [V]$. Then $\Hom_{\C G}(V_i,V^{*}\otimes V_j)$ is isomorphic to $\Hom_{\C G}(V_j,V\otimes V_i)$ as a $\C$-vector space. For each arrow $a \in Q_1$ we consider the $G$-equivariant homomorphism $\varphi_{a} : V_{t(a)} \to V\otimes V_{h(a)}$.
If $p=abc$ is a closed path of length $3$, then by Schur's lemma, the composition of maps
\[\xymatrix{
V_{t(a)} \ar[r]^(.4){\varphi_{a}} & 
*++{V^{*}\otimes V_{t(b)}} \ar[r]^(.45){id_{V} \otimes \varphi_{b}} &
*++{V^{\otimes 2}\otimes V_{t(c)}} \ar[r]^(.5){id_{V^{\otimes 2}} \otimes \varphi_{c}} & 
*++{V^{\otimes 3}\otimes V_{h(c)}} \ar[r]^(.5){\alpha \otimes id_{V_{h(c)}}} & 
\bigwedge^3 V\otimes V_{h(c)} \ar[r]^{} \ar[r]^(.5){\sim} & 
V_{h(c)=t(a)} 
}\]
is a constant denoted by $c_p$. Note that in the above sequence $\alpha : V^{\otimes 3} \to \bigwedge^3 V$ is the antisymmetrizer, the last map is the isomorphism given by the composition of $\bigwedge^3 V \to V_0$ defined by $v_1\wedge v_2\wedge v_3 \to \ell_{0}$ and $V_0\otimes V_{h(c)} \to V_{h(c)}$ defined by $\ell_{0}\otimes v \to v$, where $\ell_0$ is the basis for $V_0$ (recall that $\bigwedge^3 V\cong V_0$ since $G\subset\SL(3,\C)$). 

\begin{thm}[{\cite[Theorem 3.2]{BSW}}]
If we take $W=\sum_{|p|=3} c_p(\dim V_{h(p)}) p$ then $\mathcal P(Q,W)$ is Morita equivalent to $S*G$.
\end{thm}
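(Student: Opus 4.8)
The plan is to pass to the basic algebra Morita equivalent to $S*G$ and to identify it with $\mathcal P(Q,W)$ by comparing generators and relations. Since $\C G\cong\bigoplus_i V_i^{\oplus\dim V_i}$, we have $S*G\cong\bigoplus_i(S\otimes V_i)^{\oplus\dim V_i}$ as a left module over itself, so $S*G$ is Morita equivalent to
\[
A:=\End_{S*G}\Bigl(\bigoplus_{i\in Q_0}S\otimes V_i\Bigr)\cong\bigoplus_{i,j\in Q_0}\Hom_{\C G}(V_i,S\otimes V_j),
\]
where multiplication is composition using the $S$-module structure. The polynomial grading on $S=\C[V]=\mathrm{Sym}(V^*)$ makes $A$ a positively graded algebra: by Schur's lemma its degree-$0$ part is $\prod_i\C e_i$, and its degree-$1$ part is $\bigoplus_{i,j}\Hom_{\C G}(V_i,V^*\otimes V_j)$, whose dimension is exactly the number $a_{ij}$ of arrows of the McKay quiver. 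First I would fix the $\varphi_a$ as bases of these $\Hom$-spaces, identifying the arrows of $Q$ with a homogeneous set of degree-$1$ generators of $A$.

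Because $S$ is generated in degree $1$, $A$ is generated over $\prod_i\C e_i$ by its degree-$1$ part, so there is a surjection of graded algebras $\C Q\twoheadrightarrow A$. As $S=\mathrm{Sym}(V^*)$ is Koszul, with quadratic relations given by $\bigwedge^2 V^*\hookrightarrow V^*\otimes V^*$, the kernel of $\C Q\twoheadrightarrow A$ is generated by its degree-$2$ component
\[
R_2:=\bigoplus_{i,j}\Ker\Bigl(\Hom_{\C G}(V_i,(V^*)^{\otimes 2}\otimes V_j)\to\Hom_{\C G}(V_i,\mathrm{Sym}^2 V^*\otimes V_j)\Bigr)=\bigoplus_{i,j}\Hom_{\C G}\bigl(V_i,\textstyle\bigwedge^2 V^*\otimes V_j\bigr),
\]
the ``antisymmetric'' length-$2$ paths. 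It then remains to show that the cyclic derivatives $\partial_a W$ ($a\in Q_1$), which are homogeneous of degree $2$, span $R_2$; equivalently $\Ker(\C Q\twoheadrightarrow A)=\langle\partial_a W\rangle$ and $A\cong\mathcal P(Q,W)$.

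The heart of the argument, and the step I expect to be hardest, is this spanning statement. For an arrow $a\colon i\to j$ the derivative $\partial_a W$ is a length-$2$ path from $j$ to $i$, lying in the block $\Hom_{\C G}(V_j,(V^*)^{\otimes 2}\otimes V_i)$; one first checks that it lands in the antisymmetric subspace, so $\langle\partial_a W\rangle\subseteq R_2$. The reverse inclusion is a dimension count together with nondegeneracy, and here the Calabi--Yau structure of $S*G$ enters. Since $G\subset\SO(3)\subset\SL(3,\C)$ the form $v_1\wedge v_2\wedge v_3$ spans $\bigwedge^3 V\cong V_0$ and is $G$-invariant, so $S$, and hence $A$, is $3$-Calabi--Yau; moreover self-duality $V\cong V^*$ gives a $G$-isomorphism $\bigwedge^2 V^*\cong V$. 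Under it the relation block $\Hom_{\C G}(V_j,\bigwedge^2 V^*\otimes V_i)\cong\Hom_{\C G}(V_j,V\otimes V_i)$ has dimension exactly $a_{ij}$, the number of arrows $i\to j$. The antisymmetrizer $\alpha\colon V^{\otimes 3}\to\bigwedge^3 V$ used to define the constants $c_p$ sets up a pairing between arrows $i\to j$ and this block, and the remaining work is to prove that $a\mapsto\partial_a W$ is a bijection, i.e. that the pairing is perfect. I expect the weight $\dim V_{h(p)}$ to be exactly the normalization that forces cyclic invariance of $W$ and makes the pairing nondegenerate rather than merely nonzero.

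Assembling the three steps gives $A\cong\C Q/\langle\partial_a W\rangle=\mathcal P(Q,W)$, whence $\mathcal P(Q,W)$ is Morita equivalent to $S*G$. A more structural route, bypassing the explicit relation computation, is Bocklandt's theorem that a graded $3$-Calabi--Yau algebra generated in degree $1$ is the Jacobian algebra of some quiver with potential: this produces \emph{some} $(Q,W)$ for free, but one must still compute the potential, so the antisymmetrizer calculation of the $c_p$ and the dimension weights remains unavoidable.
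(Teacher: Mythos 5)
First, a point of comparison: the paper does not prove this statement at all. It is imported as a citation of \cite[Theorem 3.2]{BSW}, followed only by a remark that in dimension three superpotentials reduce to potentials, so there is no internal proof to measure your argument against; what you have written is essentially a reconstruction of the strategy of \cite{BSW}. Judged on its own terms, your reduction steps are correct: the passage to the basic algebra $A=\bigoplus_{i,j}\Hom_{\C G}(V_i,S\otimes V_j)$ via the decomposition $S*G\cong\bigoplus_i(S\otimes V_i)^{\oplus\dim V_i}$, the identification of $A_1$ with the arrow space of the McKay quiver, the use of Koszulity of $S$ (hence of $S*G$ over the semisimple base $\C G$, a property inherited by the basic algebra since the idempotent is full) to conclude that the kernel of $\C Q\twoheadrightarrow A$ is generated by its quadratic part $\bigoplus_{i,j}\Hom_{\C G}(V_i,\bigwedge^2 V^*\otimes V_j)$, and the dimension count $\dim\Hom_{\C G}(V_j,\bigwedge^2 V^*\otimes V_i)=\dim\Hom_{\C G}(V_j,V\otimes V_i)=a_{ij}$ coming from $\bigwedge^3 V\cong V_0$ (i.e.\ $G\subset\SL(3,\C)$) is the right bookkeeping.

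However, the proposal has a genuine gap exactly where you flag the ``heart of the argument'': you never prove that the cyclic derivatives $\partial_a W$ (i) are well defined against rotation of cycles, (ii) land in the antisymmetric block, and (iii) span it, and these three points are the entire content of \cite[Theorem 3.2]{BSW}. All of them hinge on the normalization $\dim V_{h(p)}$, about which you only record an expectation. The scalar $c_p$ itself is \emph{not} invariant under rotating the cycle $p$, but $c_p\dim V_{h(p)}$ is: it computes the trace of the composite endomorphism of $V_{h(p)}$, and this trace is unchanged under cyclic rotation of the three tensor factors. This is what makes $W$ a legitimate (cyclically symmetric) potential, so that $\partial_a W$ collects every rotated occurrence of $a$ and defines an element of the relation block. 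Nondegeneracy of the resulting pairing between arrows $a:i\to j$ and antisymmetric length-two paths $j\to i$ then still requires an argument; in \cite{BSW} it comes from the self-duality of the Koszul bimodule resolution of $S*G$ (the Calabi--Yau-3 property of the skew group ring), not merely from the existence of the invariant volume form, which by itself only produces a pairing with no control on its rank. Writing ``I expect the weight $\dim V_{h(p)}$ \ldots makes the pairing nondegenerate'' identifies the missing lemma but does not close it, so as it stands the proposal is an outline of the BSW proof rather than a proof.
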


In the following sections we give the explicit description of the McKay QP $(Q,W)$ for every finite subgroup $G\subset\SO(3)$. For simplicity we write $Q_0 = \{ 0,\ldots,n \}$ where $i \in Q_0$ corresponds to the irreducible representation $V_i$. In particular, $0$ corresponds to the trivial representation $V_0$.

\subsection{The cyclic group of order $n+1$}
\label{QP-cyclic}

Let $G$ be a finite cyclic subgroup of $SO(3)$ of order $n+1$. The McKay quiver $Q$ of $G$ is as follows:

\begin{center}
\scalebox{.65}{
\begin{pspicture}(-2,-3)(2,3.15)
	\psset{arcangle=-15,nodesep=2pt}
\rput(0,0){
	\rput(0,2){\rnode{0}{\large $0$}}
	\rput(-1.73,1){\rnode{1}{\large $1$}}
	\rput(-1.73,-1){\rnode{2}{\large $2$}}
	\rput(0,-2){\rnode{3}{\large $3$}}
	\rput(1.73,-1){\rnode{4}{\large $n\text{--}1$}}
	\rput(1.73,1){\rnode{5}{\large $n$}}
	\ncarc{->}{0}{1}\Bput[0.05]{\large $a_0$}
	\ncarc{->}{1}{2}\Bput[0.05]{\large $a_1$}
	\ncarc{->}{2}{3}\Bput[0.05]{\large $a_2$}
	\ncarc[linestyle=dashed,nodesep=3pt]{-}{3}{4}
	\ncarc{->}{4}{5}\Bput[0.05]{\large $a_{n-1}$}
	\ncarc{->}{5}{0}\Bput[0.05]{\large $a_{n}$}
	\ncarc{->}{0}{5}\Bput[0.05]{\large $b_{n}$}
	\ncarc{->}{1}{0}\Bput[0.05]{\large $b_0$}
	\ncarc{->}{2}{1}\Bput[0.05]{\large $b_1$}
	\ncarc{->}{3}{2}\Bput[0.05]{\large $b_2$}
	\ncarc{->}{5}{4}\Bput[0.05]{\large $b_{n-1}$}
	\nccircle[nodesep=3pt]{->}{0}{.4cm}\Bput[0.05]{\large $c_0$}
	\nccircle[angleA=60,nodesep=3pt]{->}{1}{.4cm}\Bput[0.05]{\large $c_1$}
	\nccircle[angleA=120,nodesep=3pt]{->}{2}{.4cm}\Bput[0.05]{\large $c_2$} 
	\nccircle[angleA=180,nodesep=3pt]{->}{3}{.4cm}\Bput[0.05]{\large $c_3$}
	\nccircle[angleA=240,nodesep=1pt]{->}{4}{.4cm}\Bput[0.05]{\large $c_{n-1}$}
	\nccircle[angleA=300,nodesep=3pt]{->}{5}{.4cm}\Bput[0.05]{\large $c_{n}$}
	}
\end{pspicture}
}
\end{center}

For each arrow, the corresponding $G$-equivariant homomorphism is given by:
\begin{align*}
\varphi_{a_i}: & V_{i} \to V\otimes V_{i+1} & \varphi_{b_i}: & V_{i+1} \to V\otimes V_{i} & \varphi_{c_i}: & V_{i} \to V\otimes V_{i}\\
& \ell_{i} \mapsto v_2\otimes \ell_{i+1} & & \ell_{i+1} \mapsto v_1\otimes \ell_{i} & & \ell_{i} \mapsto v_3\otimes \ell_{i}
\end{align*}
where $\ell_i$ denotes a basis of $V_{i}$ for any $i=0,\ldots,n$. For any $i=0,\ldots,n$,  it follows that $c_p=-1$ if $p=c_ia_ib_i$ and  $c_p=1$ if $p=c_ib_{i-1}a_{i-1}$, where $a_{-1}=a_{n}$ and $b_{-1}=b_{n}$. By definition of $c_p$, for all other 3-cycles $p$ we have $c_p=0$.
Hence the McKay potential is given by
\[
W = -\sum_{i=0}^{n}a_ib_ic_i +\sum_{i=0}^{n}b_{i-1}a_{i-1}c_i.
\]

\subsection{The dihedral group of order $2n$ ($n$ even)}
\label{QP-D-even}

Let $G$ be a dihedral group $D_{2n}$ where $n=2m$ for some positive integer $m$. The McKay quiver $Q$ of $G$ is as follows:

\begin{center}
\scalebox{.8}{
\begin{pspicture}(-1,-2)(10,2)
	\psset{arcangle=15,nodesep=2pt}
\rput(0,0){
	\rput(-0,1.7){\rnode{0}{$0$}}
	\rput(-0,-1.7){\rnode{1}{$0'$}}
	\rput(2,0){\rnode{2}{$1$}}
	\rput(4,0){\rnode{3}{$2$}}
	\rput(6,0){\rnode{4}{\footnotesize$m-2$}}
	\rput(8,0){\rnode{5}{\footnotesize$m-1$}}
	\rput(10,1.7){\rnode{6}{$m$}}
	\rput(10,-1.7){\rnode{7}{$m'$}}	
	\rput(2,0.3){\rnode{u1}{}}	
	\rput(4,0.3){\rnode{u2}{}}	
	\rput(6,0.3){\rnode{u3}{}}	
	 \rput(8,0.3){\rnode{u4}{}}	
	\ncarc{->}{0}{1}\Aput[0.05]{\footnotesize $a$}
	\ncarc{->}{1}{0}\Aput[0.05]{\footnotesize $A$}
	\ncarc{->}{1}{2}\Aput[0.05]{\footnotesize $d_0$}
	\ncarc{->}{2}{1}\Aput[0.05]{\footnotesize $D_0$}
	\ncarc{->}{0}{2}\Aput[0.05]{\footnotesize $c$}
	\ncarc{->}{2}{0}\Aput[0.05]{\footnotesize $C$}
	\ncarc{->}{2}{3}\Aput[0.05]{\footnotesize $d_1$}
	\ncarc{->}{3}{2}\Aput[0.05]{\footnotesize $D_1$}
	\ncline[linestyle=dashed,nodesep=3pt]{-}{3}{4}
	\ncarc{->}{4}{5}\Aput[0.05]{\footnotesize $d_{m-2}$}
	\ncarc{->}{5}{4}\Aput[0.05]{\footnotesize $D_{m-2}$}
	\ncarc{->}{5}{6}\Aput[0.05]{\footnotesize $C'$}
	\ncarc{->}{6}{5}\Aput[0.05]{\footnotesize $c'$}
	\ncarc{->}{5}{7}\Aput[0.05]{\footnotesize $B'$}
	\ncarc{->}{7}{5}\Aput[0.05]{\footnotesize $b'$}
	\ncarc{<-}{6}{7}\Aput[0.05]{\footnotesize $A'$}
	\ncarc{<-}{7}{6}\Aput[0.05]{\footnotesize $a'$}
	 \nccircle[angleA=-25,nodesep=3pt]{->}{u1}{.4cm}\Bput[0.05]{\footnotesize $u_1$}
	 \nccircle[nodesep=3pt]{->}{u2}{.4cm}\Bput[0.05]{\footnotesize $u_2$}
	 \nccircle[nodesep=3pt]{->}{u3}{.4cm}\Bput[0.05]{\footnotesize $u_{m-2}$}
	 \nccircle[angleA=25,nodesep=3pt]{->}{u4}{.4cm}\Bput[0.05]{\footnotesize $u_{m-1}$}
	}
\end{pspicture}
}
\end{center}

Abusing the notation, the corresponding $G$-equivariant maps in matrix form are: 

$A=a=A'=a'=v_3$, 
$d_0 = \begin{pmatrix} v_2, -v_1 \end{pmatrix}$, 
$D_0 = \begin{pmatrix} v_1\\ -v_2 \end{pmatrix}$,
$c = \begin{pmatrix} v_2, v_1 \end{pmatrix}$, 
$C = \begin{pmatrix} v_1\\ v_2 \end{pmatrix}$,
$b' = \begin{pmatrix} -v_1, -v_2 \end{pmatrix}$, 
$B' = \begin{pmatrix} -v_2\\ -v_1 \end{pmatrix}$,
$c' = \begin{pmatrix} v_1, v_2 \end{pmatrix}$, 
$C' = \begin{pmatrix} v_2\\ v_1 \end{pmatrix}$,
$d_i = \begin{pmatrix} v_2& 0\\ 0&v_1 \end{pmatrix}$ and
$D_i = \begin{pmatrix} v_1& 0\\ 0&v_2 \end{pmatrix}$ for $0\leq i \leq m-2$,
and $u_i = \begin{pmatrix} v_3& 0\\ 0&-v_3 \end{pmatrix}$ for $0\leq i \leq m-1$.

With this notation, $d_0 = \begin{pmatrix} v_2, -v_1 \end{pmatrix}$ means the $G$-equivariant map $\varphi_{d_0} : V_{0'} \to V\otimes V_{1}$ is defined by $\ell_{0'} \mapsto v_2\otimes \ell_1^1 -v_1\otimes \ell_1^2$, where $\ell_{0'}$ is the basis of $V_{0'}$ and $\{\ell_1^1,\ell_1^2\}$ is the basis of $V_1$ given in the previous section. Note that the above description depends on the choice of basis for the $V_i$'s.

For the above equivariant maps, one can calculate the potential to obtain:
\begin{align*}
W/2 &= -ad_0C-cD_0A +u_1D_0d_0+u_1Cc -\sum_{i=1}^{m-2}u_id_{i}D_{i}+\sum_{i=2}^{m-1}u_{i}D_{i-1}d_{i-1}\\
&-u_{m-1}B'b'-u_{m-1}C'c'-a'b'C'-c'B'A'.
\end{align*}

\subsection{The dihedral group of order $2n$ ($n$ odd)}
\label{QP-D-odd}

Let $G$ be a dihedral group $D_{2n}$ where $n=2m+1$ for some positive integer $m$. The McKay quiver $Q$ of $G$ is as follows:

\begin{center}
\scalebox{.8}{
\begin{pspicture}(-1,-2)(10,2)
	\psset{arcangle=15,nodesep=2pt}
\rput(0,0){
	\rput(-0,1.7){\rnode{0}{$0$}}
	\rput(-0,-1.7){\rnode{1}{$0'$}}
	\rput(2,0){\rnode{2}{$1$}}
	\rput(4,0){\rnode{3}{$2$}}
	\rput(6,0){\rnode{4}{\footnotesize$m-2$}}
	\rput(8,0){\rnode{5}{\footnotesize$m-1$}}
	\rput(10,0){\rnode{6}{$m$}}
	\rput(2,0.3){\rnode{u1}{}}	
	\rput(4,0.3){\rnode{u2}{}}	
	\rput(6,0.3){\rnode{u3}{}}	
	\rput(8,0.3){\rnode{u4}{}}	
	\rput(10,0.3){\rnode{u5}{}}	
	\rput(10,-0.3){\rnode{u6}{}}	
	\ncarc{->}{0}{1}\Aput[0.05]{\footnotesize $a$}
	\ncarc{->}{1}{0}\Aput[0.05]{\footnotesize $A$}
	\ncarc{->}{1}{2}\Aput[0.05]{\footnotesize $d_0$}
	\ncarc{->}{2}{1}\Aput[0.05]{\footnotesize $D_0$}
	\ncarc{->}{0}{2}\Aput[0.05]{\footnotesize $c$}
	\ncarc{->}{2}{0}\Aput[0.05]{\footnotesize $C$}
	\ncarc{->}{2}{3}\Aput[0.05]{\footnotesize $d_1$}
	\ncarc{->}{3}{2}\Aput[0.05]{\footnotesize $D_1$}
	\ncline[linestyle=dashed]{-}{3}{4}
	\ncarc{->}{4}{5}\Aput[0.05]{\footnotesize $d_{m-2}$}
	\ncarc{->}{5}{4}\Aput[0.05]{\footnotesize $D_{m-2}$}
	\ncarc{->}{5}{6}\Aput[0.05]{\footnotesize $d_{m-1}$}
	\ncarc{->}{6}{5}\Aput[0.05]{\footnotesize $D_{m-1}$}
	 \nccircle[angleA=-25,nodesep=3pt]{->}{u1}{.4cm}\Bput[0.05]{\footnotesize $u_1$}
	 \nccircle[nodesep=3pt]{->}{u2}{.4cm}\Bput[0.05]{\footnotesize $u_2$}
	 \nccircle[nodesep=3pt]{->}{u3}{.4cm}\Bput[0.05]{\footnotesize $u_{m-2}$}
	 \nccircle[nodesep=3pt]{->}{u4}{.4cm}\Bput[0.05]{\footnotesize $u_{m-1}$}
	 \nccircle[nodesep=3pt]{->}{u5}{.4cm}\Bput[0.05]{\footnotesize $u_{m}$}
	 \nccircle[angleA=180,nodesep=3pt]{->}{u6}{.4cm}\Bput[0.05]{\footnotesize $v$}
	}
\end{pspicture}
}
\end{center}

and the corresponding $G$-equivariant maps are 

$A = a = v_3$, 
$d_0 = \begin{pmatrix} v_2, -v_1 \end{pmatrix}$, 
$D_0 = \begin{pmatrix} v_1\\ -v_2 \end{pmatrix}$,
$c = \begin{pmatrix} v_2, v_1 \end{pmatrix}$, 
$C = \begin{pmatrix} v_1\\ v_2 \end{pmatrix}$,
$d_i = \begin{pmatrix} v_2& 0\\ 0&v_1 \end{pmatrix}$ and
$D_i = \begin{pmatrix} v_1& 0\\ 0&v_2 \end{pmatrix}$ for $0\leq i \leq m-1$,
$u_i = \begin{pmatrix} v_3& 0\\ 0&-v_3 \end{pmatrix}$ for $0\leq i \leq m$, and
$v = \begin{pmatrix} 0& v_2\\ v_1&0 \end{pmatrix}$.

For the above choice of equivariant maps, the potential is given by
\begin{align*}
W/2 &= -ad_0C-cD_0A +u_1D_0d_0+u_1Cc -\sum_{i=1}^{m-1}u_id_{i}D_{i}+\sum_{i=2}^{m}u_{i}D_{i-1}d_{i-1}-u_mv^2.
\end{align*}

\subsection{The tetrahedral group}
\label{QP-tetra}

Let $G$ be the tetrahedral group of order 12. The McKay quiver $Q$ of $G$ is the following:

\begin{center}
\scalebox{.9}{
\begin{pspicture}(-2,-2)(2,1.5)
	\psset{arcangle=15,nodesep=2pt}
\rput(0,0){
	\rput(0,-2){\rnode{0}{$0$}}
	\rput(0,0){\rnode{3}{$3$}}
	\rput(-1.5,1.5){\rnode{1}{$1$}}
	\rput(1.5,1.5){\rnode{2}{$2$}}
	\rput(-0.3,0){\rnode{u}{}}	
	\rput(0.3,0){\rnode{v}{}}	
	\ncarc{->}{0}{3}\Aput[0.05]{\footnotesize $a$}
	\ncarc{->}{3}{0}\Aput[0.05]{\footnotesize $A$}
	\ncarc{->}{3}{1}\Aput[0.05]{\footnotesize $B$}
	\ncarc{->}{1}{3}\Aput[0.05]{\footnotesize $b$}
	\ncarc{->}{3}{2}\Aput[0.05]{\footnotesize $C$}
	\ncarc{->}{2}{3}\Aput[0.05]{\footnotesize $c$}
	\nccircle[angleA=120,nodesep=3pt]{->}{u}{.3cm}\Bput[0.05]{\footnotesize $u$}
	 \nccircle[angleA=240,nodesep=3pt]{->}{v}{.3cm}\Bput[0.05]{\footnotesize $v$}
	}
\end{pspicture}
}
\end{center}

and the corresponding $G$-equivariant maps are:

$a = \begin{pmatrix} v_1, v_2, v_3 \end{pmatrix}$, 
$A = \begin{pmatrix} v_1 \\ v_2 \\ v_3 \end{pmatrix}$, 
$b = \begin{pmatrix} v_1, \omega v_2, \omega^2 v_3 \end{pmatrix}$, 
$B = \begin{pmatrix} v_1 \\ \omega^2 v_2 \\ \omega v_3 \end{pmatrix}$, 
$c = \begin{pmatrix} v_1, \omega^2 v_2, \omega v_3 \end{pmatrix}$, 
$C = \begin{pmatrix} v_1 \\ \omega v_2 \\ \omega^2 v_3 \end{pmatrix}$, 
$u = \begin{pmatrix} 0&0&v_2 \\ v_3&0&0 \\ 0&v_1&0 \end{pmatrix}$ and
$v = \begin{pmatrix} 0&v_3&0 \\ 0&0&v_1 \\ v_2&0&0 \end{pmatrix}$.

For the above choice of equivariant maps, the potential is given by
\begin{align*}
W/3 &= uAa+\omega uBb +\omega^2 uCc -\frac{1}{3}u^3  -vAa-\omega^2 vBb-\omega vCc +\frac{1}{3}v^3.
\end{align*}

\subsection{The octahedral group}
\label{QP-octa}

Let $G$ be the tetrahedral group. The McKay quiver $Q$ of $G$ is the following:

\begin{center}
\scalebox{.8}{
\begin{pspicture}(0,-1)(6,2.1)
	\psset{arcangle=15,nodesep=2pt}
\rput(0,0){
	\rput(0,0){\rnode{0}{$0$}}
	\rput(2.5,0){\rnode{1}{$3$}}
	\rput(5,0){\rnode{3}{$4$}}
	\rput(3.75,2){\rnode{2}{$2$}}
	\rput(7.5,0){\rnode{4}{$1$}}
	\rput(2.5,-.3){\rnode{u}{}}	
	\rput(5,-.3){\rnode{v}{}}	
	\ncarc{->}{0}{1}\Aput[0.05]{\footnotesize $a$}
	\ncarc{->}{1}{0}\Aput[0.05]{\footnotesize $A$}
	\ncarc{->}{1}{2}\Aput[0.05]{\footnotesize $b$}
	\ncarc{->}{2}{1}\Aput[0.05]{\footnotesize $B$}
	\ncarc{->}{2}{3}\Aput[0.05]{\footnotesize $c$}
	\ncarc{->}{3}{2}\Aput[0.05]{\footnotesize $C$}
	\ncarc{->}{1}{3}\Aput[0.05]{\footnotesize $d$}
	\ncarc{->}{3}{1}\Aput[0.05]{\footnotesize $D$}
	\ncarc{->}{3}{4}\Aput[0.05]{\footnotesize $e$}
	\ncarc{->}{4}{3}\Aput[0.05]{\footnotesize $E$}
	\nccircle[angleA=180,nodesep=3pt]{->}{u}{.3cm}\Bput[0.05]{\footnotesize $u$}
	 \nccircle[angleA=180,nodesep=3pt]{->}{v}{.3cm}\Bput[0.05]{\footnotesize $v$}
	}
\end{pspicture}
}
\end{center}

and the $G$-equivariant maps are:

$a = E = \begin{pmatrix} v_1, v_2, v_3 \end{pmatrix}$,
$A = e = \begin{pmatrix} v_1\\ v_2\\ v_3 \end{pmatrix}$,
$b = \begin{pmatrix} v_1&\omega v_1\\ \omega v_2&v_2 \\ \omega^2 v_3&v_3 \end{pmatrix}$,
$B = \begin{pmatrix} v_1&\omega^2 v_2&\omega v_3\\ \omega^2 v_1&v_2&\omega v_3 \end{pmatrix}$,
$c = \begin{pmatrix} v_1&\omega^2 v_2&\omega v_3\\ -\omega^2 v_1&-v_2&-\omega v_3 \end{pmatrix}$,
$C = \begin{pmatrix} v_1&-\omega v_1\\ \omega v_2&-v_2 \\ \omega^2 v_3&-v_3 \end{pmatrix}$,
$d = D = \begin{pmatrix} 0&v_3&v_2\\ v_3&0&v_1\\ v_2&v_1&0 \end{pmatrix}$ and
$u = v = \begin{pmatrix} 0&-v_3&v_2\\ v_3&0&-v_1\\ -v_2&v_1&0 \end{pmatrix}$.

For the above equivariant maps, the potential is given by
\[
W/6 = uAa-ubB-udD-1/3u^3 +veE-vCc-vDd+1/3v^3 +(w^2-w)dCB+(w^2-w)Dbc.
\]

\subsection{The icosahedral group}
\label{QP-icosa}

Let $G$ be the tetrahedral group. The McKay quiver $Q$ of $G$ is as follows:

\begin{center}
\scalebox{.8}{
\begin{pspicture}(0,-1)(8,2.1)
	\psset{arcangle=15,nodesep=2pt}
\rput(0,0){
	\rput(0,0){\rnode{0}{$0$}}
	\rput(2.5,0){\rnode{1}{$1$}}
	\rput(5,0){\rnode{2}{$3$}}
	\rput(6.25,2){\rnode{3}{$2$}}
	\rput(7.5,0){\rnode{4}{$4$}}
	\rput(2.5,-.3){\rnode{u}{}}	
	\rput(5,-.3){\rnode{v}{}}	
	\rput(7.5,-.3){\rnode{w}{}}	
	\ncarc{->}{0}{1}\Aput[0.05]{\footnotesize $a$}
	\ncarc{->}{1}{0}\Aput[0.05]{\footnotesize $A$}
	\ncarc{->}{1}{2}\Aput[0.05]{\footnotesize $b$}
	\ncarc{->}{2}{1}\Aput[0.05]{\footnotesize $B$}
	\ncarc{->}{2}{3}\Aput[0.05]{\footnotesize $c$}
	\ncarc{->}{3}{2}\Aput[0.05]{\footnotesize $C$}
	\ncarc{->}{3}{4}\Aput[0.05]{\footnotesize $d$}
	\ncarc{->}{4}{3}\Aput[0.05]{\footnotesize $D$}
	\ncarc{->}{2}{4}\Aput[0.05]{\footnotesize $e$}
	\ncarc{->}{4}{2}\Aput[0.05]{\footnotesize $E$}
	\nccircle[angleA=180,nodesep=3pt]{->}{u}{.3cm}\Bput[0.05]{\footnotesize $u$}
	 \nccircle[angleA=180,nodesep=3pt]{->}{v}{.3cm}\Bput[0.05]{\footnotesize $v$}
	 \nccircle[angleA=180,nodesep=3pt]{->}{w}{.3cm}\Bput[0.05]{\footnotesize $w$}
	}
\end{pspicture}
}
\end{center}

and the $G$-equivariant maps are:

{\footnotesize
$a = \begin{pmatrix} 2v_1, v_3, v_2 \end{pmatrix}$,
$A = \begin{pmatrix} v_1 \\ v_2 \\ v_3 \end{pmatrix}$,
$u = \begin{pmatrix} 0&v_3&-v_2\\ 2v_2&-2v_1&0\\ -2v_3&0&2v_1 \end{pmatrix}$,
$b = \begin{pmatrix} -2v_1&3v_3&0&0&3v_2\\ v_2&6v_1&6v_3&0&0\\ v_3&0&0&6v_2&6v_1 \end{pmatrix}$,

$B = \begin{pmatrix} -4v_1&v_3&v_2\\ v_2&v_1&0\\ 0&v_2&0\\ 0&0&v_3\\ v_3&0&v_1 \end{pmatrix}$,
$v = \begin{pmatrix} 0&6v_3&0&0&-6v_2\\ v_2&2v_1&-2v_3&0&0\\ 0&-2v_2&4v_1&0&0\\ 0&0&0&-4v_1&2v_3 \\-v_3&0&0&2v_2&-2v_1 \end{pmatrix}$,
$c = \begin{pmatrix} 6v_1&0&0\\ v_2&v_3&0\\ 0&v_1&v_3\\ 0&v_2&v_1\\ v_3&0&v_2 \end{pmatrix}$,

$C = \begin{pmatrix} v_1&v_3&0&0&v_2\\ 0&2v_2&2v_1&2v_3&0\\ 0&0&2v_2&2v_1&2v_3 \end{pmatrix}$,
$d = \begin{pmatrix} v_3&0&0&v_2\\ -v_2&-v_1&v_3&0\\ 0&v_2&-2v_1&-v_3 \end{pmatrix}$,
$D = \begin{pmatrix} -v_2&v_3&0\\ 0&2v_1&-v_3\\ 0&-v_2&2v_1\\ -2v_3&0&v_2 \end{pmatrix}$,

$e = \begin{pmatrix} 6v_3&0&0&-6v_2\\-4v_1&2v_3&0&0\\v_2&2v_1&3v_3&0\\0&-3v_2&-2v_1&-v_3\\0&0&-2v_2&4v_1 \end{pmatrix}$,
$E = \begin{pmatrix} v_2&-4v_1&v_3&0&0\\0&2v_2&2v_1&-3v_3&0\\0&0&3v_2&-2v_1&-2v_3\\-v_3&0&0&-v_2&4v_1 \end{pmatrix}$ and
$w = \begin{pmatrix} v_1&v_3&0&0\\v_2&-v_1&0&0\\0&0&v_1&-v_3\\0&0&-v_2&-v_1\end{pmatrix}$.
}

For the above equivariant maps, the potential is given by
\[
W/12 = -uAa+5ubB-\frac{2}{3}u^3 +15vBb-5vcC-20vEe+\frac{10}{3}v^3+5weE-wDd-\frac{1}{3}w^3 -5dec+10CED
\]

\section{Mutations of quivers with potentials}
\label{Sect:Mutation}

For a QP $(Q,W)$ let $k$ be a vertex in $Q$ with no loops (but possibly lying on a 2-cycle). We define the {\em mutation} of $(Q,W)$ by first constructing the QP $\widetilde \mu_k(Q,W)$ in the following way:

\begin{enumerate}
\item Let $Q'$ be the quiver obtained from $Q$ by the following steps:
\begin{enumerate}
\item Replace the vertex $k$ in $Q$ by a new vertex $k^*$.
\item Add new arrows $[ab] : i \to j$ for each pair of arrows $a : i \to k$ and $b : k \to j$ in $Q$.
\item Replace each arrow $a : i \to k$ in $Q$ by a new arrow $a^* : k^* \to i$.
\item Replace each arrow $b : k \to j$ in $Q$ by a new arrow $b^* : j \to k^*$.
\end{enumerate}
\item Let $W':=[W]+\Delta$ where 
\begin{enumerate}
\item $[W]$ is obtained by substituting $[ab]$ for each factor $ab$ in $W$ with $ha=k=tb$.
\item $\displaystyle \Delta = \!\!\!\!\sum_{a,b \in Q_1, ha=k=tb}\!\!\!\![ab]b^*a^*$.
\end{enumerate}
\end{enumerate}

This mutation is obtained from the original \cite{DWZ} with the difference that $(Q,W)$ may have loops and 2-cycles. This situation is quite natural in some geometric contexts as the one treated in this paper.

A QP $(Q,W)$ is called {\it reduced} if $W \in \bigoplus_{i \geq 3} \C Q_{i,\mathrm{cyc}}$. Given a non-reduced QP $(Q,W)$, if there is a reduced QP $(Q',W')$ such that $\mathcal P(Q,W) \simeq \mathcal P(Q',W')$, then we say that $(Q',W')$ is the reduced part of $(Q,W)$.

Before going further, let us consider now {\em graded} quivers with potentials. Given $(Q,W)$ a QP, we can define a map $\deg:Q_1\to\Z$ which extends naturally on $\C Q$. We say that the potential $W$ is {\em homogeneous} of degree $d$ if all terms in $W$ are of degree $d$. 

\begin{df}
We say that a QP $(Q,W)$ is {\em graded} if there exist a grading in $Q$ such that $W$ is homogeneous of degree $d$.
\end{df} 

Trivially the Jacobian algebra of a graded QP becomes a graded algebra, and $\widetilde \mu_k(Q,W)$ is naturally graded with degree $d$ (see \cite{Mizuno}, Section 3).

\begin{lem}
If a QP $(Q,W)$ is graded, then there exists a reduced graded QP $(Q_{\mathrm{red}},W_{\mathrm{red}})$ such that $\mathcal P(Q,W) \simeq \mathcal P(Q_{\mathrm{red}},W_{\mathrm{red}})$.
\end{lem}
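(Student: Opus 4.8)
The plan is to adapt the Splitting Theorem of Derksen--Weyman--Zelevinsky \cite{DWZ} to the present graded setting, the only genuinely new input being that gradability forces the resulting reduced potential to be an honest (finite) potential rather than an element of a completed path algebra. So I would first recall that theorem as a statement over $\widehat{\C Q}$ and then show that the grading makes all the formal manipulations terminate.

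First I would separate the quadratic part, writing $W = W_2 + W_{\geq 3}$, where $W_2$ collects the length-$2$ terms (the $2$-cycles) and $W_{\geq 3}$ the terms of length $\geq 3$. Since $W$ is homogeneous of degree $d$ and grouping monomials by length respects the grading, both $W_2$ and $W_{\geq 3}$ are again homogeneous of degree $d$. The coefficients of $W_2$ define, for each ordered pair of vertices $(i,j)$, a bilinear pairing between the arrows $i \to j$ and the arrows $j \to i$ appearing in $2$-cycles; diagonalising this pairing by a degree-preserving linear change of the arrow basis (carried out within each pair of matching degree-blocks) brings $W_2$ into the normal form $\sum_{k=1}^{N} a_k b_k$ for suitable arrows $a_k \colon i_k \to j_k$ and $b_k \colon j_k \to i_k$.

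Next I would eliminate the trivial arrows $a_k, b_k$. For each $k$ the cyclic derivative gives $\partial_{a_k} W = b_k + \partial_{a_k} W_{\geq 3}$, where $\partial_{a_k} W_{\geq 3}$ is a combination of paths of length $\geq 2$ and degree $\deg b_k$; hence in $\mathcal P(Q,W)$ one may substitute $b_k = -\partial_{a_k} W_{\geq 3}$, and symmetrically $a_k = -\partial_{b_k} W_{\geq 3}$. Following \cite{DWZ}, these substitutions assemble into an automorphism $\rho$ of the (completed) path algebra fixing the vertices and the remaining arrows, and the Splitting Theorem yields a right-equivalence $(Q,W) \cong (Q_{\mathrm{triv}}, \sum_k a_k b_k) \oplus (Q_{\mathrm{red}}, W_{\mathrm{red}})$ together with an isomorphism $\mathcal P(Q,W) \simeq \mathcal P(Q_{\mathrm{red}}, W_{\mathrm{red}})$, where $Q_{\mathrm{red}}$ is obtained from $Q$ by deleting the arrows $a_k, b_k$ and $W_{\mathrm{red}}$ has no terms of length $<3$.

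The remaining and main point is that $W_{\mathrm{red}}$ is a genuine potential, i.e.\ a finite sum: a priori the splitting only produces it inside $\widehat{\C Q_{\mathrm{red}}}$. Here gradability is decisive. Each substitution $b_k \mapsto -\partial_{a_k} W_{\geq 3}$ preserves degree and strictly increases length, so $\rho$ is degree-preserving and $W_{\mathrm{red}} = (\rho(W))_{\geq 3}$ is homogeneous of degree $d$; moreover, since every iteration pushes the surviving occurrences of the $b_k$ to strictly greater length while keeping the degree equal to $d$, and only finitely many paths of degree $d$ occur, the process terminates after finitely many steps. Thus $W_{\mathrm{red}}$ has only finitely many terms, $(Q_{\mathrm{red}}, W_{\mathrm{red}})$ is a reduced QP, and the isomorphism $\mathcal P(Q,W) \simeq \mathcal P(Q_{\mathrm{red}}, W_{\mathrm{red}})$ is the desired conclusion. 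I expect this termination/finiteness step to be the crux: without a grading the reduced potential can be an infinite series, and the entire role of gradability is precisely to bound the length of the degree-$d$ paths and thereby make $W_{\mathrm{red}}$ finite.
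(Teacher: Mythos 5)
Your proposal is correct in essence, but it is a genuinely different argument from the paper's. The paper never invokes \cite{DWZ} and never passes to the completed path algebra: it removes a single $2$-cycle at a time. Writing $W = c_{ab}ab + W'$ with $c_{ab} \neq 0$ and assuming $\deg a \geq \deg b$, homogeneity forces $\deg \partial_a W' = \deg b$, so $\partial_a W'$ can contain neither $a$ nor $b$; the relation $\partial_a W = 0$ therefore expresses $b$ by paths avoiding the pair, the pair of arrows is substituted away, the new QP is again graded, and one repeats --- finitely many times, since each step removes a $2$-cycle. This is more elementary and uses the grading only \emph{locally}, to rule out a circular substitution. Your route through the Splitting Theorem buys the full DWZ package (normal form of the quadratic part, right-equivalence, uniqueness of the reduced part), but at two costs you partly gloss over: (i) the splitting is a statement about \emph{completed} Jacobian algebras, so you must still descend the isomorphism to the ordinary Jacobian algebras asserted in the lemma; (ii) both that descent and your termination rest on the claim that there are only finitely many paths of degree $d$, which is strictly stronger than anything the paper uses --- the paper only requires degrees to be non-negative integers, so a cycle of degree $0$ makes the set of degree-$d$ paths infinite and your finiteness argument fails. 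To be fair, the paper's own step ``$\partial_a W'$ does not contain $a$ or $b$'' breaks in exactly the same situation (e.g.\ $W = ab + ab\ell$ with $\deg \ell = 0$), so both proofs tacitly assume the grading admits no degree-zero cycles; but your argument needs this hypothesis globally, the paper's only near the $2$-cycle being removed.
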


\begin{proof}
If $(Q,W)$ is reduced, we have nothing to do. So we assume that there is a 2-cycle $ab$ which appears in $W$. Write $W = c_{ab}ab + W'$ where $c_{ab} \in \C$ is a non-zero element. Then $\partial_a W = c_{ab}b + \partial_a W'$ and $\partial_b W = c_{ab}a + \partial_b W'$. Without loss of generality we can assume that $\deg a \geq \deg b$. Then $\deg \partial_a W' = \deg b$, so $\partial_a W'$ does not contain $a$ or $b$. Thus $(Q,W)$ reduce to a QP whose potential does not contain $a$ or $b$ and Jacobian algebras are isomorphic, and the new QP obtained is graded. By repeating this operation, we obtain the required reduced QP.
\end{proof}

\begin{df}\label{defn:mut}
We define the mutation $\mu_k(Q,W)$ of the quiver with potential $(Q,W)$ to be the reduced part of $\widetilde \mu_k(Q,W)$.
\end{df}

\begin{ex}\label{exa:D6} Consider the group $G=D_{2n}$ for $n=6$. In this case there are 16 non-equivalent QPs, shown in Figure \ref{D-12}.

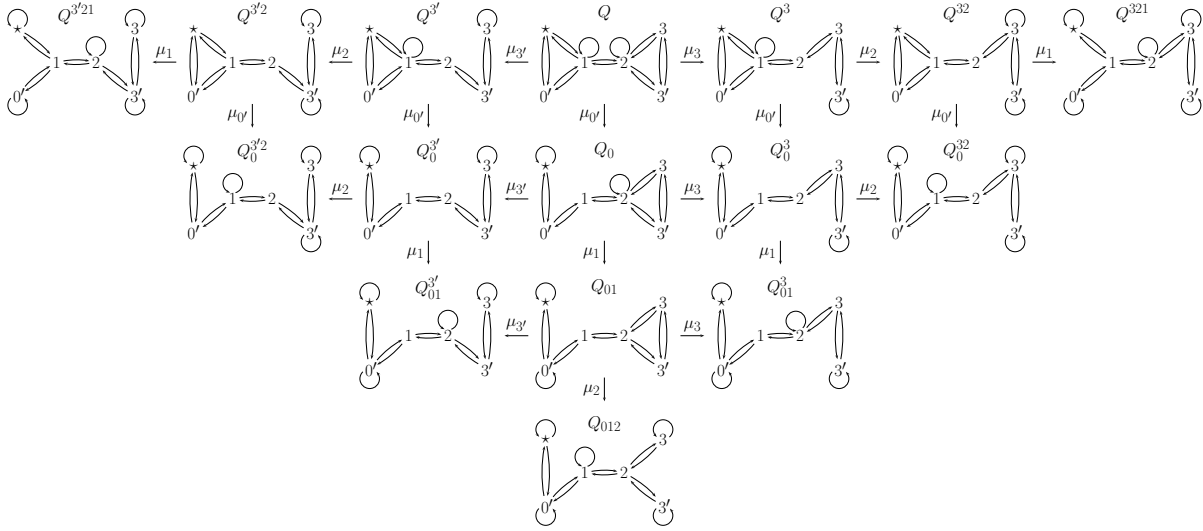
\begin{figure}[htbp]
\begin{center}
\scalebox{0.26}{
\begin{pspicture}(-20,-23)(26,2.25)
	\psset{arcangle=10,nodesep=4pt,linewidth=1.5pt}

\rput(0,0){
	\rput(3,2.5){\Huge\bf $Q$}
	\rput(-0,1.7){\rnode{0}{\Huge $\star$}}
	\rput(-0,-1.7){\rnode{0'}{\Huge $0'$}}
	\rput(2,0){\rnode{1}{\Huge $1$}}
	\rput(4,0){\rnode{2}{\Huge $2$}}
	\rput(6,1.7){\rnode{3}{\Huge $3$}}
	\rput(6,-1.7){\rnode{3'}{\Huge $3'$}}	
	\rput(2,0.3){\rnode{u1}{}}	
	\rput(4,0.3){\rnode{u2}{}}	
	\ncarc{->}{0}{0'}\ncarc{->}{0'}{0}\ncarc{->}{0'}{1}\ncarc{->}{1}{0'}
	\ncarc{->}{0}{1}\ncarc{->}{1}{0}\ncarc{->}{1}{2}\ncarc{->}{2}{1}
	\ncarc{->}{2}{3}\ncarc{->}{3}{2}\ncarc{->}{2}{3'}\ncarc{->}{3'}{2}
	\ncarc{<-}{3}{3'}\ncarc{<-}{3'}{3}
	\nccircle[angleA=-25,nodesep=4pt]{->}{u1}{.5cm}
	\nccircle[angleA=25,nodesep=5pt]{->}{u2}{.5cm}
}

\rput(0,-7){
	\rput(3,2.5){\Huge\bf $Q_0$}
	\rput(-0,1.7){\rnode{0}{\Huge $\star$}}
	\rput(-0,-1.7){\rnode{0'}{\Huge $0'$}}
	\rput(2,0){\rnode{1}{\Huge $1$}}
	\rput(4,0){\rnode{2}{\Huge $2$}}
	\rput(6,1.7){\rnode{3}{\Huge $3$}}
	\rput(6,-1.7){\rnode{3'}{\Huge $3'$}}	
	\rput(2,0.3){\rnode{u1}{}}	
	\rput(4,0.3){\rnode{u2}{}}	
	\ncarc{->}{0}{0'}\ncarc{->}{0'}{0}\ncarc{->}{0'}{1}\ncarc{->}{1}{0'}
	\ncarc{->}{1}{2}\ncarc{->}{2}{1}\ncarc{->}{2}{3}\ncarc{->}{3}{2}
	\ncarc{->}{2}{3'}\ncarc{->}{3'}{2}\ncarc{<-}{3}{3'}\ncarc{<-}{3'}{3}
	\nccircle[angleA=0,nodesep=4pt]{->}{0}{.5cm}
	\nccircle[angleA=25,nodesep=5pt]{->}{u2}{.5cm}
}

\rput(0,-14){
	\rput(3,2.5){\Huge\bf $Q_{01}$}
	\rput(-0,1.7){\rnode{0}{\Huge $\star$}}
	\rput(-0,-1.7){\rnode{0'}{\Huge $0'\!$}}
	\rput(2,0){\rnode{1}{\Huge $1$}}
	\rput(4,0){\rnode{2}{\Huge $2$}}
	\rput(6,1.7){\rnode{3}{\Huge $3$}}
	\rput(6,-1.7){\rnode{3'}{\Huge $3'$}}	
	\rput(2,0.3){\rnode{u1}{}}	
	\rput(4,0.3){\rnode{u2}{}}	
	\ncarc{->}{0}{0'}\ncarc{->}{0'}{0}\ncarc{->}{0'}{1}\ncarc{->}{1}{0'}
	\ncarc{->}{1}{2}\ncarc{->}{2}{1}\ncarc{->}{2}{3}\ncarc{->}{3}{2}
	\ncarc{->}{2}{3'}\ncarc{->}{3'}{2}\ncarc{<-}{3}{3'}\ncarc{<-}{3'}{3}
	\nccircle[angleA=0,nodesep=4pt]{->}{0}{.5cm}
	\nccircle[angleA=180,nodesep=4pt]{->}{0'}{.5cm}
}

\rput(0,-21){
	\rput(3,2.5){\Huge\bf $Q_{012}$}
	\rput(-0,1.7){\rnode{0}{\Huge $\star$}}
	\rput(-0,-1.7){\rnode{0'}{\Huge $0'\!$}}
	\rput(2,0){\rnode{1}{\Huge $1$}}
	\rput(4,0){\rnode{2}{\Huge $2$}}
	\rput(6,1.7){\rnode{3}{\Huge $3$}}
	\rput(6,-1.7){\rnode{3'}{\Huge $3'\!$}}	
	\rput(2,0.3){\rnode{u1}{}}	
	\rput(4,0.3){\rnode{u2}{}}	
	\ncarc{->}{0}{0'}\ncarc{->}{0'}{0}\ncarc{->}{0'}{1}\ncarc{->}{1}{0'}
	\ncarc{->}{1}{2}\ncarc{->}{2}{1}\ncarc{->}{2}{3}\ncarc{->}{3}{2}
	\ncarc{->}{2}{3'}\ncarc{->}{3'}{2}
	\nccircle[angleA=0,nodesep=4pt]{->}{0}{.5cm}
	\nccircle[angleA=180,nodesep=4pt]{->}{0'}{.5cm}
	\nccircle[angleA=0,nodesep=4pt]{->}{u1}{.5cm}
	\nccircle[angleA=0,nodesep=4pt]{->}{3}{.5cm}
	\nccircle[angleA=180,nodesep=4pt]{->}{3'}{.5cm}
}

\rput(9,0){
	\rput(3,2.5){\Huge\bf $Q^3$}
	\rput(-0,1.7){\rnode{0}{\Huge $\star$}}
	\rput(-0,-1.7){\rnode{0'}{\Huge $0'$}}
	\rput(2,0){\rnode{1}{\Huge $1$}}
	\rput(4,0){\rnode{2}{\Huge $2$}}
	\rput(6,1.7){\rnode{3}{\Huge $3$}}
	\rput(6,-1.7){\rnode{3'}{\Huge $3'\!$}}	
	\rput(2,0.3){\rnode{u1}{}}	
	\rput(4,0.3){\rnode{u2}{}}	
	\ncarc{->}{0}{0'}\ncarc{->}{0'}{0}\ncarc{->}{0'}{1}\ncarc{->}{1}{0'}
	\ncarc{->}{0}{1}\ncarc{->}{1}{0}\ncarc{->}{1}{2}\ncarc{->}{2}{1}
	\ncarc{->}{2}{3}\ncarc{->}{3}{2}\ncarc{<-}{3}{3'}\ncarc{<-}{3'}{3}
	\nccircle[angleA=-25,nodesep=4pt]{->}{u1}{.5cm}
	\nccircle[angleA=180,nodesep=4pt]{->}{3'}{.5cm}
}

\rput(18,0){
\rput(3,2.5){\Huge\bf $Q^{32}$}
	\rput(-0,1.7){\rnode{0}{\Huge $\star$}}
	\rput(-0,-1.7){\rnode{0'}{\Huge $0'$}}
	\rput(2,0){\rnode{1}{\Huge $1$}}
	\rput(4,0){\rnode{2}{\Huge $2$}}
	\rput(6,1.7){\rnode{3}{\Huge $3$}}
	\rput(6,-1.7){\rnode{3'}{\Huge $3'\!$}}	
	\rput(2,0.3){\rnode{u1}{}}	
	\rput(4,0.3){\rnode{u2}{}}	
	\ncarc{->}{0}{0'}\ncarc{->}{0'}{0}\ncarc{->}{0'}{1}\ncarc{->}{1}{0'}
	\ncarc{->}{0}{1}\ncarc{->}{1}{0}\ncarc{->}{1}{2}\ncarc{->}{2}{1}
	\ncarc{->}{2}{3}\ncarc{->}{3}{2}\ncarc{<-}{3}{3'}\ncarc{<-}{3'}{3}
	\nccircle[angleA=180,nodesep=4pt]{->}{3'}{.5cm}
	\nccircle[angleA=0,nodesep=4pt]{->}{3}{.5cm}
}

\rput(27,0){
	\rput(3,2.5){\Huge\bf $Q^{321}$}
	\rput(-0,1.7){\rnode{0}{\Huge $\star$}}
	\rput(-0,-1.7){\rnode{0'}{\Huge $0'\!$}}
	\rput(2,0){\rnode{1}{\Huge $1$}}
	\rput(4,0){\rnode{2}{\Huge $2$}}
	\rput(6,1.7){\rnode{3}{\Huge $3$}}
	\rput(6,-1.7){\rnode{3'}{\Huge $3'\!$}}	
	\rput(2,0.3){\rnode{u1}{}}	
	\rput(4,0.3){\rnode{u2}{}}	
	\ncarc{->}{0'}{1}\ncarc{->}{1}{0'}
	\ncarc{->}{0}{1}\ncarc{->}{1}{0}\ncarc{->}{1}{2}\ncarc{->}{2}{1}
	\ncarc{->}{2}{3}\ncarc{->}{3}{2}\ncarc{<-}{3}{3'}\ncarc{<-}{3'}{3}
	\nccircle[angleA=25,nodesep=5pt]{->}{u2}{.5cm}
	\nccircle[angleA=180,nodesep=4pt]{->}{3'}{.5cm}
	\nccircle[angleA=180,nodesep=4pt]{->}{0'}{.5cm}
	\nccircle[angleA=0,nodesep=4pt]{->}{0}{.5cm}
	\nccircle[angleA=0,nodesep=4pt]{->}{3}{.5cm}
}

\rput(9,-7){
	\rput(3,2.5){\Huge\bf $Q_0^3$}
	\rput(-0,1.7){\rnode{0}{\Huge $\star$}}
	\rput(-0,-1.7){\rnode{0'}{\Huge $0'$}}
	\rput(2,0){\rnode{1}{\Huge $1$}}
	\rput(4,0){\rnode{2}{\Huge $2$}}
	\rput(6,1.7){\rnode{3}{\Huge $3$}}
	\rput(6,-1.7){\rnode{3'}{\Huge $3'\!$}}	
	\rput(2,0.3){\rnode{u1}{}}	
	\rput(4,0.3){\rnode{u2}{}}	
	\ncarc{->}{0'}{1}\ncarc{->}{1}{0'}
	\ncarc{->}{0}{0'}\ncarc{->}{0'}{0}\ncarc{->}{1}{2}\ncarc{->}{2}{1}
	\ncarc{->}{2}{3}\ncarc{->}{3}{2}\ncarc{<-}{3}{3'}\ncarc{<-}{3'}{3}
	\nccircle[angleA=180,nodesep=4pt]{->}{3'}{.5cm}
	\nccircle[angleA=0,nodesep=4pt]{->}{0}{.5cm}
}

\rput(18,-7){
	\rput(3,2.5){\Huge\bf $Q_0^{32}$}
	\rput(-0,1.7){\rnode{0}{\Huge $\star$}}
	\rput(-0,-1.7){\rnode{0'}{\Huge $0'$}}
	\rput(2,0){\rnode{1}{\Huge $1$}}
	\rput(4,0){\rnode{2}{\Huge $2$}}
	\rput(6,1.7){\rnode{3}{\Huge $3$}}
	\rput(6,-1.7){\rnode{3'}{\Huge $3'\!$}}	
	\rput(2,0.3){\rnode{u1}{}}	
	\rput(4,0.3){\rnode{u2}{}}	
	\ncarc{->}{0'}{1}\ncarc{->}{1}{0'}
	\ncarc{->}{0}{0'}\ncarc{->}{0'}{0}\ncarc{->}{1}{2}\ncarc{->}{2}{1}
	\ncarc{->}{2}{3}\ncarc{->}{3}{2}\ncarc{<-}{3}{3'}\ncarc{<-}{3'}{3}
	\nccircle[angleA=180,nodesep=4pt]{->}{3'}{.5cm}
	\nccircle[angleA=0,nodesep=4pt]{->}{u1}{.5cm}
	\nccircle[angleA=0,nodesep=4pt]{->}{0}{.5cm}
	\nccircle[angleA=0,nodesep=4pt]{->}{3}{.5cm}
}

\rput(9,-14){
	\rput(3,2.5){\Huge\bf $Q_{01}^3$}
	\rput(-0,1.7){\rnode{0}{\Huge $\star$}}
	\rput(-0,-1.7){\rnode{0'}{\Huge $0'\!$}}
	\rput(2,0){\rnode{1}{\Huge $1$}}
	\rput(4,0){\rnode{2}{\Huge $2$}}
	\rput(6,1.7){\rnode{3}{\Huge $3$}}
	\rput(6,-1.7){\rnode{3'}{\Huge $3'\!$}}	
	\rput(2,0.3){\rnode{u1}{}}	
	\rput(4,0.3){\rnode{u2}{}}	
	\ncarc{->}{0'}{1}\ncarc{->}{1}{0'}
	\ncarc{->}{0}{0'}\ncarc{->}{0'}{0}\ncarc{->}{1}{2}\ncarc{->}{2}{1}
	\ncarc{->}{2}{3}\ncarc{->}{3}{2}\ncarc{<-}{3}{3'}\ncarc{<-}{3'}{3}
	\nccircle[angleA=180,nodesep=4pt]{->}{3'}{.5cm}
	\nccircle[angleA=180,nodesep=4pt]{->}{0'}{.5cm}
	\nccircle[angleA=0,nodesep=4pt]{->}{0}{.5cm}
	\nccircle[angleA=25,nodesep=5pt]{->}{u2}{.5cm}
}

\rput(-9,0){
	\rput(3,2.5){\Huge\bf $Q^{3'}$}
	\rput(-0,1.7){\rnode{0}{\Huge $\star$}}
	\rput(-0,-1.7){\rnode{0'}{\Huge $0'$}}
	\rput(2,0){\rnode{1}{\Huge $1$}}
	\rput(4,0){\rnode{2}{\Huge $2$}}
	\rput(6,1.7){\rnode{3}{\Huge $3$}}
	\rput(6,-1.7){\rnode{3'}{\Huge $3'$}}	
	\rput(2,0.3){\rnode{u1}{}}	
	\rput(4,0.3){\rnode{u2}{}}	
	\ncarc{->}{0}{0'}\ncarc{->}{0'}{0}\ncarc{->}{0'}{1}\ncarc{->}{1}{0'}
	\ncarc{->}{0}{1}\ncarc{->}{1}{0}\ncarc{->}{1}{2}\ncarc{->}{2}{1}
	\ncarc{->}{2}{3'}\ncarc{->}{3'}{2}\ncarc{<-}{3}{3'}\ncarc{<-}{3'}{3}
	\nccircle[angleA=-25,nodesep=4pt]{->}{u1}{.5cm}
	\nccircle[angleA=0,nodesep=4pt]{->}{3}{.5cm}
}

\rput(-18,0){
	\rput(3,2.5){\Huge\bf $Q^{3'2}$}
	\rput(-0,1.7){\rnode{0}{\Huge $\star$}}
	\rput(-0,-1.7){\rnode{0'}{\Huge $0'$}}
	\rput(2,0){\rnode{1}{\Huge $1$}}
	\rput(4,0){\rnode{2}{\Huge $2$}}
	\rput(6,1.7){\rnode{3}{\Huge $3$}}
	\rput(6,-1.7){\rnode{3'}{\Huge $3'\!$}}	
	\rput(2,0.3){\rnode{u1}{}}	
	\rput(4,0.3){\rnode{u2}{}}	
	\ncarc{->}{0}{0'}\ncarc{->}{0'}{0}\ncarc{->}{0'}{1}\ncarc{->}{1}{0'}
	\ncarc{->}{0}{1}\ncarc{->}{1}{0}\ncarc{->}{1}{2}\ncarc{->}{2}{1}
	\ncarc{->}{2}{3'}\ncarc{->}{3'}{2}\ncarc{<-}{3}{3'}\ncarc{<-}{3'}{3}
	\nccircle[angleA=180,nodesep=4pt]{->}{3'}{.5cm}
	\nccircle[angleA=0,nodesep=4pt]{->}{3}{.5cm}
}

\rput(-27,0){
	\rput(3,2.5){\Huge\bf $Q^{3'21}$}
	\rput(-0,1.7){\rnode{0}{\Huge $\star$}}
	\rput(-0,-1.7){\rnode{0'}{\Huge $0'\!$}}
	\rput(2,0){\rnode{1}{\Huge $1$}}
	\rput(4,0){\rnode{2}{\Huge $2$}}
	\rput(6,1.7){\rnode{3}{\Huge $3$}}
	\rput(6,-1.7){\rnode{3'}{\Huge $3'\!$}}	
	\rput(2,0.3){\rnode{u1}{}}	
	\rput(4,0.3){\rnode{u2}{}}	
	\ncarc{->}{0'}{1}\ncarc{->}{1}{0'}
	\ncarc{->}{0}{1}\ncarc{->}{1}{0}\ncarc{->}{1}{2}\ncarc{->}{2}{1}
	\ncarc{->}{2}{3'}\ncarc{->}{3'}{2}\ncarc{<-}{3}{3'}\ncarc{<-}{3'}{3}
	\nccircle[angleA=0,nodesep=5pt]{->}{u2}{.5cm}
	\nccircle[angleA=180,nodesep=4pt]{->}{3'}{.5cm}
	\nccircle[angleA=180,nodesep=4pt]{->}{0'}{.5cm}
	\nccircle[angleA=0,nodesep=4pt]{->}{0}{.5cm}
	\nccircle[angleA=0,nodesep=4pt]{->}{3}{.5cm}
}

\rput(-9,-7){
	\rput(3,2.5){\Huge\bf $Q_0^{3'}$}
	\rput(-0,1.7){\rnode{0}{\Huge $\star$}}
	\rput(-0,-1.7){\rnode{0'}{\Huge $0'$}}
	\rput(2,0){\rnode{1}{\Huge $1$}}
	\rput(4,0){\rnode{2}{\Huge $2$}}
	\rput(6,1.7){\rnode{3}{\Huge $3$}}
	\rput(6,-1.7){\rnode{3'}{\Huge $3'$}}	
	\rput(2,0.3){\rnode{u1}{}}	
	\rput(4,0.3){\rnode{u2}{}}	
	\ncarc{->}{0'}{1}\ncarc{->}{1}{0'}
	\ncarc{->}{0}{0'}\ncarc{->}{0'}{0}\ncarc{->}{1}{2}\ncarc{->}{2}{1}
	\ncarc{->}{2}{3'}\ncarc{->}{3'}{2}\ncarc{<-}{3}{3'}\ncarc{<-}{3'}{3}
	\nccircle[angleA=0,nodesep=4pt]{->}{3}{.5cm}
	\nccircle[angleA=0,nodesep=4pt]{->}{0}{.5cm}
}

\rput(-18,-7){
	\rput(3,2.5){\Huge\bf $Q_0^{3'2}$}
	\rput(-0,1.7){\rnode{0}{\Huge $\star$}}
	\rput(-0,-1.7){\rnode{0'}{\Huge $0'$}}
	\rput(2,0){\rnode{1}{\Huge $1$}}
	\rput(4,0){\rnode{2}{\Huge $2$}}
	\rput(6,1.7){\rnode{3}{\Huge $3$}}
	\rput(6,-1.7){\rnode{3'}{\Huge $3'\!$}}	
	\rput(2,0.3){\rnode{u1}{}}	
	\rput(4,0.3){\rnode{u2}{}}	
	\ncarc{->}{0'}{1}\ncarc{->}{1}{0'}
	\ncarc{->}{0}{0'}\ncarc{->}{0'}{0}\ncarc{->}{1}{2}\ncarc{->}{2}{1}
	\ncarc{->}{2}{3'}\ncarc{->}{3'}{2}\ncarc{<-}{3}{3'}\ncarc{<-}{3'}{3}
	\nccircle[angleA=0,nodesep=4pt]{->}{3}{.5cm}
	\nccircle[angleA=0,nodesep=4pt]{->}{u1}{.5cm}
	\nccircle[angleA=0,nodesep=4pt]{->}{0}{.5cm}
	\nccircle[angleA=180,nodesep=4pt]{->}{3'}{.5cm}
}

\rput(-9,-14){
	\rput(3,2.5){\Huge\bf $Q_{01}^{3'}$}
	\rput(-0,1.7){\rnode{0}{\Huge $\star$}}
	\rput(-0,-1.7){\rnode{0'}{\Huge $0'\!$}}
	\rput(2,0){\rnode{1}{\Huge $1$}}
	\rput(4,0){\rnode{2}{\Huge $2$}}
	\rput(6,1.7){\rnode{3}{\Huge $3$}}
	\rput(6,-1.7){\rnode{3'}{\Huge $3'$}}	
	\rput(2,0.3){\rnode{u1}{}}	
	\rput(4,0.3){\rnode{u2}{}}	
	\ncarc{->}{0'}{1}\ncarc{->}{1}{0'}
	\ncarc{->}{0}{0'}\ncarc{->}{0'}{0}\ncarc{->}{1}{2}\ncarc{->}{2}{1}
	\ncarc{->}{2}{3'}\ncarc{->}{3'}{2}\ncarc{<-}{3}{3'}\ncarc{<-}{3'}{3}
	\nccircle[angleA=0,nodesep=4pt]{->}{3}{.5cm}
	\nccircle[angleA=0,nodesep=4pt]{->}{0}{.5cm}
	\nccircle[angleA=180,nodesep=4pt]{->}{0'}{.5cm}
	\nccircle[angleA=0,nodesep=5pt]{->}{u2}{.5cm}
}

\rput(6.75,0){\rput(0,0){\rnode{0}{}}\rput(1.5,0){\rnode{1}{}}\ncline{->}{0}{1}\Aput{\Huge $\mu_3$}}
\rput(6.76,-7){\rput(0,0){\rnode{0}{}}\rput(1.5,0){\rnode{1}{}}\ncline{->}{0}{1}\Aput{\Huge $\mu_3$}}
\rput(6.75,-14){\rput(0,0){\rnode{0}{}}\rput(1.5,0){\rnode{1}{}}\ncline{->}{0}{1}\Aput{\Huge $\mu_3$}}
\rput(15.75,0){\rput(0,0){\rnode{0}{}}\rput(1.5,0){\rnode{1}{}}\ncline{->}{0}{1}\Aput{\Huge $\mu_2$}}
\rput(15.75,-7){\rput(0,0){\rnode{0}{}}\rput(1.5,0){\rnode{1}{}}\ncline{->}{0}{1}\Aput{\Huge $\mu_2$}}
\rput(24.75,0){\rput(0,0){\rnode{0}{}}\rput(1.5,0){\rnode{1}{}}\ncline{->}{0}{1}\Aput{\Huge $\mu_1$}}
\rput(-2.25,0){\rput(0,0){\rnode{0}{}}\rput(1.5,0){\rnode{1}{}}\ncline{<-}{0}{1}\Aput{\Huge $\mu_{3'}$}}
\rput(-2.25,-7){\rput(0,0){\rnode{0}{}}\rput(1.5,0){\rnode{1}{}}\ncline{<-}{0}{1}\Aput{\Huge $\mu_{3'}$}}
\rput(-2.25,-14){\rput(0,0){\rnode{0}{}}\rput(1.5,0){\rnode{1}{}}\ncline{<-}{0}{1}\Aput{\Huge $\mu_{3'}$}}
\rput(-11.25,0){\rput(0,0){\rnode{0}{}}\rput(1.5,0){\rnode{1}{}}\ncline{<-}{0}{1}\Aput{\Huge $\mu_2$}}
\rput(-11.25,-7){\rput(0,0){\rnode{0}{}}\rput(1.5,0){\rnode{1}{}}\ncline{<-}{0}{1}\Aput{\Huge $\mu_2$}}
\rput(-20.25,0){\rput(0,0){\rnode{0}{}}\rput(1.5,0){\rnode{1}{}}\ncline{<-}{0}{1}\Aput{\Huge $\mu_1$}}
\rput(-15,-2){\rput(0,0){\rnode{0}{}}\rput(0,-1.5){\rnode{1}{}}\ncline{->}{0}{1}\Bput{\Huge $\mu_{0'}$}}
\rput(-6,-2){\rput(0,0){\rnode{0}{}}\rput(0,-1.5){\rnode{1}{}}\ncline{->}{0}{1}\Bput{\Huge $\mu_{0'}$}}
\rput(3,-2){\rput(0,0){\rnode{0}{}}\rput(0,-1.5){\rnode{1}{}}\ncline{->}{0}{1}\Bput{\Huge $\mu_{0'}$}}
\rput(12,-2){\rput(0,0){\rnode{0}{}}\rput(0,-1.5){\rnode{1}{}}\ncline{->}{0}{1}\Bput{\Huge $\mu_{0'}$}}
\rput(21,-2){\rput(0,0){\rnode{0}{}}\rput(0,-1.5){\rnode{1}{}}\ncline{->}{0}{1}\Bput{\Huge $\mu_{0'}$}}
\rput(-6,-9){\rput(0,0){\rnode{0}{}}\rput(0,-1.5){\rnode{1}{}}\ncline{->}{0}{1}\Bput{\Huge $\mu_1$}}
\rput(3,-9){\rput(0,0){\rnode{0}{}}\rput(0,-1.5){\rnode{1}{}}\ncline{->}{0}{1}\Bput{\Huge $\mu_1$}}
\rput(12,-9){\rput(0,0){\rnode{0}{}}\rput(0,-1.5){\rnode{1}{}}\ncline{->}{0}{1}\Bput{\Huge $\mu_1$}}
\rput(3,-16){\rput(0,0){\rnode{0}{}}\rput(0,-1.5){\rnode{1}{}}\ncline{->}{0}{1}\Bput{\Huge $\mu_2$}}
\end{pspicture}
}
\caption{Mutations of type $D_{12}$.}
\label{D-12}
\end{center}
\end{figure}

We demonstrate how to calculate the mutation of $(Q,W)$ at the vertex $0'$. 
First add new arrows $[aA], [D_0A], [ad_0], [D_0d_0]$, replace $a,A,d_0,D_0$ by $a^*,A^*,d_0^*,D_0^*$ respectively as shown below, and denote the new quiver by $\widetilde\mu_{0'}(Q)$.

\begin{center}
\scalebox{.62}{
\begin{pspicture}(-1,-2)(26,2.5)
	\psset{arcangle=15,nodesep=2pt}
\rput(0,0){
\rput(3,2.25){{\Large $\bm{Q}$}}
	\rput(0,1.7){\rnode{0}{\large $\star$}}
	\rput(0,-1.7){\rnode{0'}{\large $0'$}}
	\rput(2,0){\rnode{1}{\large $1$}}
	\rput(4,0){\rnode{2}{\large $2$}}
	\rput(6,1.7){\rnode{3}{\large $3$}}
	\rput(6,-1.7){\rnode{3'}{\large $3'$}}	
	\rput(2,0.3){\rnode{u1}{}}	
	\rput(4,0.3){\rnode{u2}{}}	
	\ncarc{->}{0}{0'}\Aput[0.05]{$a$}
	\ncarc{->}{0'}{0}\Aput[0.05]{$A$}
	\ncarc{->}{0'}{1}\Aput[0.05]{$d_0$}
	\ncarc{->}{1}{0'}\Aput[0.05]{$D_0$}
	\ncarc{->}{0}{1}\Aput[0.05]{$c$}
	\ncarc{->}{1}{0}\Aput[0.05]{$C$}
	\ncarc{->}{1}{2}\Aput[0.05]{$d_1$}
	\ncarc{->}{2}{1}\Aput[0.05]{$D_1$}
	\ncarc{->}{2}{3}\Aput[0.05]{$C'$}
	\ncarc{->}{3}{2}\Aput[0.05]{$c'$}
	\ncarc{->}{2}{3'}\Aput[0.05]{$B'$}
	\ncarc{->}{3'}{2}\Aput[0.05]{$b'$}
	\ncarc{<-}{3}{3'}\Aput[0.05]{$A'$}
	\ncarc{<-}{3'}{3}\Aput[0.05]{$a'$}
	 \nccircle[angleA=-25,nodesep=3pt]{->}{u1}{.4cm}\Bput[0.05]{$u_1$}
	 \nccircle[angleA=25,nodesep=3pt]{->}{u2}{.4cm}\Bput[0.05]{$u_2$}
}
\rput(7,0){
\rput(5,2.25){{\Large {$\bm{\widetilde\mu_{0'}(Q)}$}}}
	\rput(0,0){\rnode{0}{}}
	\rput(1,0){\rnode{1}{}}
	\ncline{->}{0}{1}\Aput[0.05]{\Large $\widetilde\mu_{0'}$}
}
\rput(9,0){
	\rput(-0,1.7){\rnode{0}{\large $\star$}}
	\rput(-0,-1.7){\rnode{0'}{\large $0'$}}
	\rput(2,0){\rnode{1}{\large $1$}}
	\rput(4,0){\rnode{2}{\large $2$}}
	\rput(6,1.7){\rnode{3}{\large $3$}}
	\rput(6,-1.7){\rnode{3'}{\large $3'$}}	
	\rput(2,0.3){\rnode{u1}{}}
	\rput(4,0.3){\rnode{u2}{}}
	\rput(2,-0.3){\rnode{u1'}{}}
	\rput(4,-0.3){\rnode{u2'}{}}
	\ncarc[linecolor=blue]{->}{0}{0'}\mput*[0.05]{$\textcolor{blue}{A^*}$}
	\ncarc[linecolor=blue]{->}{0'}{0}\Aput[0.05]{$\textcolor{blue}{a^*}$}
	\ncarc[linecolor=blue]{->}{0'}{1}\mput*[0.05]{$\textcolor{blue}{D_0^*}$}
	\ncarc[linecolor=blue]{->}{1}{0'}\Aput[0.05]{$\textcolor{blue}{d_0^*}$}
	\ncarc{->}{0}{1}\Aput[0.05]{$c$}
	\ncarc{->}{1}{2}\Aput[0.05]{$d_1$}
	\ncarc{->}{2}{1}\Aput[0.05]{$D_1$}
	\ncarc{->}{2}{3}\Aput[0.05]{$C'$}
	\ncarc{->}{3}{2}\Aput[0.05]{$c'$}
	\ncarc{->}{2}{3'}\Aput[0.05]{$B'$}
	\ncarc{->}{3'}{2}\Aput[0.05]{$b'$}
	\ncarc{<-}{3}{3'}\Aput[0.05]{$A'$}
	\ncarc{<-}{3'}{3}\Aput[0.05]{$a'$}
	\nccircle[angleA=-25,nodesep=3pt]{->}{u1}{.4cm}\Bput[0.05]{$u_1$}
	\nccircle[angleA=25,nodesep=3pt]{->}{u2}{.4cm}\Bput[0.05]{$u_2$}
	\ncarc[linecolor=red,arcangle=50]{->}{0}{1}\Aput[0.05]{$\textcolor{red}{[ad_0]}$}
	\ncarc[linecolor=red,arcangle=57]{->}{1}{0}\mput*[0.05,npos=0.35]{\small $\textcolor{red}{[D_0A]}$}
	\ncarc{->}{1}{0}\mput*[0.05]{$C$}
	\nccircle[linecolor=red,angleA=0,nodesep=3pt]{->}{0}{.4cm}\Bput[0.05]{$\textcolor{red}{[aA]}$}
	\nccircle[linecolor=red,angleA=-155,nodesep=3pt]{->}{u1'}{.4cm}\Bput[0.05]{$\textcolor{red}{[D_0d_0]}$}
}
\rput(16,0){
\rput(5,2.25){{\Large {$\bm{\mu_{0'}(Q)}$}}}
	\rput(0,0){\rnode{0}{}}
	\rput(1,0){\rnode{1}{}}
	\ncline{->}{0}{1}\Aput[0.05]{\Large $\mu_{0'}$}
}
\rput(18,0){
	\rput(-0,1.7){\rnode{0}{\large $\star$}}
	\rput(-0,-1.7){\rnode{0'}{\large $0'$}}
	\rput(2,0){\rnode{1}{\large $1$}}
	\rput(4,0){\rnode{2}{\large $2$}}
	\rput(6,1.7){\rnode{3}{\large $3$}}
	\rput(6,-1.7){\rnode{3'}{\large $3'$}}	
	\rput(2,0.3){\rnode{u1}{}}
	\rput(4,0.3){\rnode{u2}{}}
	\rput(2,-0.3){\rnode{u1'}{}}
	\rput(4,-0.3){\rnode{u2'}{}}
	\ncarc{->}{0}{0'}\Aput[0.05]{$A^*$}
	\ncarc{->}{0'}{0}\Aput[0.05]{$a^*$}
	\ncarc{->}{0'}{1}\mput*[0.05]{$D_0^*$}
	\ncarc{->}{1}{0'}\Aput[0.05]{$d_0^*$}
	\ncarc{->}{1}{2}\Aput[0.05]{$d_1$}
	\ncarc{->}{2}{1}\Aput[0.05]{$D_1$}
	\ncarc{->}{2}{3}\Aput[0.05]{$C'$}
	\ncarc{->}{3}{2}\Aput[0.05]{$c'$}
	\ncarc{->}{2}{3'}\Aput[0.05]{$B'$}
	\ncarc{->}{3'}{2}\Aput[0.05]{$b'$}
	\ncarc{<-}{3}{3'}\Aput[0.05]{$A'$}
	\ncarc{<-}{3'}{3}\Aput[0.05]{$a'$}
	\nccircle[angleA=25,nodesep=3pt]{->}{u2}{.4cm}\Bput[0.05]{$u_2$}
	\nccircle[angleA=0,nodesep=3pt]{->}{0}{.4cm}\Bput[0.05]{$[aA]$}
}
\end{pspicture}
}
\end{center}

Then the potential $[W] + \Delta$ is given by
\begin{align*}
[W] + \Delta =& -[ad_0]C-c[D_0A] +u_1Cc+u_1[D_0d_0] -u_1d_1D_1 + u_2d_1D_1 -u_{2}B'b'-u_{2}C'c' \\
&-a'b'C'-A'c'b' +a^*[aA]A^* + a^*[ad_0]d_0^* + D_0^*[D_0A]A^* + D_0^*[D_0d_0]d_0^*,
\end{align*}
which is non-reduced. By taking derivations we have the following equalities 
\[
\begin{array}{lll}
\partial_C = -[ad_0] + cu_1, 
&\partial_c = -[D_0A] + u_1C,
&\partial_{u_1} = [D_0d_0] - d_1D_1 + Cc, \\
\partial_{[ad_0]} = -C + d_0^*a^*,
&\partial_{[D_0A]} = -c + A^*D_0^*,
&\partial_{[D_0d_0]} = u_1 + d_0^*D_0^*. 
\end{array}
\]
allowing us to obtain the reduced expression $W_{0'}$ of $[W]+\Delta$:
\begin{align*}
W_{0'}  = -(D_0^*d_0^*)^2a^*A^* +d_0^*D_0^*d_1D_1 + u_2d_1D_1 -u_{2}B'b'-u_{2}C'c' -a'b'C'-A'c'B'  + a^*[aA]A^*.
\end{align*}
If $\mu_{0'}(Q)$ is a quiver obtained from $\widetilde\mu_{0'}(Q)$ by removing $c,C,[ad_0],[D_0A],u_1,[D_0d_0]$, then $\mu_{0'}(Q,W)$ $:= (\mu_{0'}(Q),W_{0})$ is a reduced QP and we have $\mathcal P(\widetilde\mu_{0'}(Q,W)) \simeq \mathcal P(\mu_{0'}(Q,W))$. The rest of mutations in Figure \ref{D-12} are obtained similarly.
\end{ex}

\subsection{Mutations of McKay quivers with potentials for $G \subset SO(3)$}
\label{Sect:Mut-SO(3)}

Let $G \subset SO(3)$ be a finite subgroup of type $\Z/n\Z$, $D_{2n}$ or $\mathbb T$. Denote by $\star$ the vertex $0$ corresponding trivial representation of $G$. We do not consider mutations at vertex $\star$ and by Definition \ref{defn:mut} we do not mutate at any vertex which has a loop (see Section \ref{sect:mutNCCRs} for the justification of these two assumptions). 

\subsubsection{The cyclic group of order $n+1$}

Let $(Q,W)$ be the McKay QP of $G$ obtained in \ref{QP-cyclic}. Notice that every vertex in $Q$ has a loop, so according to our definition there are no mutations in this case.

\subsubsection{The dihedral group of order $2n$ ($n$ even)}\label{SubSect:MutDnEven}
Let $n=2m$ with $m \geq 2$ and $(Q,W)$ be the McKay QP of $G$ obtained in \ref{QP-D-even}. Because of the symmetry between the vertices $m$ and $m'$ we only write down mutations of $(Q,W)$ with respect to $0'$ and $m$. Mutations with respect to $m'$ are done in the same way.

We first fix some notations. Note that we do not mutate at the vertex $\star$ so for simplicity we denote by $Q_0$ the quiver obtained by mutating at vertex $0'$. The quiver $Q_{0\ldots i}^{m\ldots(m-j)}$ denotes the iterated quiver obtained by mutating vertices $0,1,\ldots,i$ followed by mutations at vertices $m,m-1,\ldots, j$. The order in both sequences $0,1,\ldots,i$ and $m,m-1,\ldots,j$ are necessary due to the fact that we only mutate a vertices without loops. Moreover, upper indices and lower indices are independent, i.e.\ mutation at 0' followed by mutation at $m$ is the same ate mutation at $m$ followed by mutation at 0' (both give $Q_0^m$).

By abusing the notation, in the calculation of $\mu_k(Q,W)$ for some QP $(Q,W)$ we do not use the notations $(-)^*$ and $[-]$: for example, starting from the McKay quiver $Q$ with notations as in Section \ref{QP-D-even}, arrows $a$, $A$ and $u_{\star}$ in the quiver $Q_{0\cdots i}$ actually mean $A^*$, $a^*$ and $[aA]$ respectively. Finally, we write 
\begin{align*}
X_j &= u_jd_jD_j \text{for $j=0,\ldots,m-2$}, &X_{m-1} &= u_{m-1}C'c', &X_{m-1}' &= u_{m-1}B'b'\\
Y_j &= u_jD_{j-1}d_{j-1} \text{ for $j=1,\ldots,m-1$}, &Y_{m} &= u_mc'C', &Y_{m}'&=u_mb'B' \\
Z_j &= d_jD_jD_{j-1}d_{j-1} \text{ for $j=1,\ldots,m-2$}, &Z_{m-1}&=C'c'D_{m-2}d_{m-2}, &Z_{m-1}'&=B'b'D_{m-2}d_{m-2}.
\end{align*}

The ingredients of the calculations in the general case are the same as in Example \ref{exa:D6}. We summarize the result in the following proposition.

\begin{prop} For dihedral groups of order $2n$ ($n$ even) there are $(m+1)^2$ non-equivalent mutation QPs of the form $(Q_{0\ldots i}^{m\ldots(m-j)},W_{0\ldots i}^{m\ldots(m-j)})$ obtained from the McKay QP $(Q,W)$. \\The list of every possible $(Q_{0\ldots i}^{m\ldots(m-j)},W_{0\ldots i}^{m\ldots(m-j)})$ is the following:

\begin{center}
\scalebox{0.575}{
\begin{pspicture}(-1,-2.75)(20,3.5)
	\psset{arcangle=15,nodesep=2pt}
\rput(2.75,2.5){\huge $Q_{0\ldots i}$ with $0 \leq i \leq m-2$}
\rput(17.25,2.5){\huge $Q_{0\ldots (m-1)}$}
\rput(-4,0){
	\rput(-0,1.7){\rnode{0}{\Large $\star$}}
	\rput(-0,-1.7){\rnode{0'}{\Large $0'\!$}}
	\rput(2,0){\rnode{1}{\Large $1$}}
	\rput(4,0){\rnode{i-1}{\Large $i-1$}}
	\rput(6,0){\rnode{i}{\Large $~i~$}}
	\rput(8,0){\rnode{i+1}{\Large $i+1$}}
	\rput(10,0){\rnode{i+2}{\Large $i+2$}}
	\rput(12,0){\rnode{m-1}{\Large $m-1$}}
	\rput(14,1.7){\rnode{m}{\Large $m$}}
	\rput(14,-1.7){\rnode{m'}{\Large $m'$}}	
	\rput(2,0.3){\rnode{u1}{}}	
	\rput(4,0.3){\rnode{ui-1}{}}	
	\rput(10,0.3){\rnode{ui+2}{}}	
	 \rput(12,0.3){\rnode{um-1}{}}	
	\ncarc{->}{0}{0'}\Aput[0.05]{\large $a$}
	\ncarc{->}{0'}{0}\Aput[0.05]{\large $A$}
	\ncarc{->}{0'}{1}\mput*[0.05]{\large $d_0$}
	\ncarc{->}{1}{0'}\Aput[0.05]{\large $D_0$}
	\ncline[linestyle=dashed,nodesep=4pt]{-}{1}{i-1}
	\ncarc{->}{i-1}{i}\Aput[0.05]{\large $d_{i-1}$}
	\ncarc{->}{i}{i-1}\Aput[0.05]{\large $D_{i-1}$}
	\ncarc{->}{i}{i+1}\Aput[0.05]{\large $d_{i}$}
	\ncarc{->}{i+1}{i}\Aput[0.05]{\large $D_{i}$}
	\ncarc{->}{i+1}{i+2}\Aput[0.05]{\large $d_{i+1}$}
	\ncarc{->}{i+2}{i+1}\Aput[0.05]{\large $D_{i+1}$}
	\ncline[linestyle=dashed,nodesep=4pt]{-}{i+2}{m-1}
	\ncarc{->}{m-1}{m}\Aput[0.05]{\large $C'$}
	\ncarc{->}{m}{m-1}\mput*[0.05]{\large $c'$}
	\ncarc{->}{m-1}{m'}\mput*[0.05]{\large $B'$}
	\ncarc{->}{m'}{m-1}\Aput[0.05]{\large $b'$}
	\ncarc{<-}{m}{m'}\Aput[0.05]{\large $A'$}
	\ncarc{<-}{m'}{m}\Aput[0.05]{\large $a'$}
	\nccircle[angleA=0,nodesep=3pt]{->}{0}{.4cm}\Bput[0.05]{\large $u_{\star}$}
	\nccircle[angleA=180,nodesep=3pt]{->}{0'}{.4cm}\Bput[0.05]{\large $u_0$}
	\nccircle[angleA=0,nodesep=3pt]{->}{u1}{.4cm}\Bput[0.05]{\large $u_1$}
	 \nccircle[nodesep=3pt]{->}{ui-1}{.4cm}\Bput[0.05]{\large $u_{i-1}$}
	 \nccircle[nodesep=3pt]{->}{ui+2}{.4cm}\Bput[0.05]{\large $u_{i+2}$}
	 \nccircle[angleA=25,nodesep=3pt]{->}{um-1}{.4cm}\Bput[0.05]{\large $u_{m-1}$}
	}
	
\rput(12,0){
	\rput(-0,1.7){\rnode{0}{\Large $\star$}}
	\rput(-0,-1.7){\rnode{0'}{\Large $0'\!$}}
	\rput(2,0){\rnode{1}{\Large $1$}}
	\rput(4,0){\rnode{2}{\Large $2$}}
	\rput(6.5,0){\rnode{m-2}{\Large $m-2$}}
	\rput(9,0){\rnode{m-1}{\Large $m-1$}}
	\rput(11,1.7){\rnode{m}{\Large $m$}}
	\rput(11,-1.7){\rnode{m'}{\Large $m'\!$}}	
	\rput(2,0.3){\rnode{u1}{}}	
	\rput(4,0.3){\rnode{ui-1}{}}	
	\rput(6.5,0.3){\rnode{ui+2}{}}	
	 \rput(9,0.3){\rnode{um-1}{}}	
	\ncarc{->}{0}{0'}\Aput[0.05]{\large $a$}
	\ncarc{->}{0'}{0}\Aput[0.05]{\large $A$}
	\ncarc{->}{0'}{1}\mput*[0.05]{\large $d_0$}
	\ncarc{->}{1}{0'}\Aput[0.05]{\large $D_0$}
	\ncarc{->}{1}{2}\Aput[0.05]{\large $d_{1}$}
	\ncarc{->}{2}{1}\Aput[0.05]{\large $D_{1}$}
	\ncline[linestyle=dashed,nodesep=4pt]{-}{2}{m-2}
	\ncarc{->}{m-2}{m-1}\Aput[0.05]{\large $d_{m-2}$}
	\ncarc{->}{m-1}{m-2}\Aput[0.05]{\large $D_{m-2}$}
	\ncarc{->}{m-1}{m}\Aput[0.05]{\large $C'$}
	\ncarc{->}{m}{m-1}\Aput[0.05]{\large $c'$}
	\ncarc{->}{m-1}{m'}\Aput[0.05]{\large $B'$}
	\ncarc{->}{m'}{m-1}\Aput[0.05]{\large  $b'$}
	\nccircle[angleA=0,nodesep=3pt]{->}{0}{.4cm}\Bput[0.05]{\large $u_{\star}$}
	\nccircle[angleA=180,nodesep=3pt]{->}{0'}{.4cm}\Bput[0.05]{\large $u_0$}
	\nccircle[angleA=0,nodesep=3pt]{->}{u1}{.4cm}\Bput[0.05]{\large $u_1$}
	 \nccircle[nodesep=3pt]{->}{ui-1}{.4cm}\Bput[0.05]{\large $u_2$}
	 \nccircle[nodesep=3pt]{->}{ui+2}{.4cm}\Bput[0.05]{\large $u_{m-2}$}
	\nccircle[angleA=0,nodesep=3pt]{->}{m}{.4cm}\Bput[0.05]{\large $u_m$}
	\nccircle[angleA=180,nodesep=3pt]{->}{m'}{.4cm}\Bput[0.05]{\large $u_{m}'$}
	}

\end{pspicture}
}
\end{center}

{\footnotesize
\[
\begin{array}{rl}
W_{0\ldots i}= &
aAu_{\star}-Aau_{0}^2 +\sum_{j=0}^{i-1}X_j-\sum_{j=1}^{i-1}Y_j -Z_i +Z_{i+1}+\sum_{j=i+2}^{m-2}X_j - \sum_{j=i+2}^{m-1}Y_j \\
& +X_{m-1}+X_{m-1}'+a'b'C'+c'B'A'. \\
W_{0\ldots (m-1)}= &
aAu_{\star} -Aau_{0}^2 +\sum_{j=0}^{m-3}X_j -\sum_{j=1}^{m-2}Y_j  +Z_m +Z_m' +Y_{m} +Y_{m}'.
\end{array}
\]
}


\begin{center}
\scalebox{0.575}{
\begin{pspicture}(-1,-2.75)(20,3.5)
	\psset{arcangle=15,nodesep=2pt}
\rput(2.75,2.5){\huge $Q^{m \ldots (m-j)}$ with $0 \leq j \leq m-2$}
\rput(17.25,2.5){\huge $Q^{m\ldots 1}$}
\rput(-4,0){
	\rput(-0,1.7){\rnode{0}{\Large $\star$}}
	\rput(-0,-1.7){\rnode{0'}{\Large $0'$}}
	\rput(2,0){\rnode{1}{\Large $1$}}
	\rput(4,0){\rnode{i-1}{\Large $i-1$}}
	\rput(6,0){\rnode{i}{\Large $~i~$}}
	\rput(8,0){\rnode{i+1}{\Large $i+1$}}
	\rput(10,0){\rnode{i+2}{\Large $i+2$}}
	\rput(12,0){\rnode{m-1}{\Large $m-1$}}
	\rput(14,1.7){\rnode{m}{\Large $m$}}
	\rput(14,-1.7){\rnode{m'}{\Large $m'\!$}}	
	\rput(2,0.3){\rnode{u1}{}}	
	\rput(4,0.3){\rnode{ui-1}{}}	
	\rput(10,0.3){\rnode{ui+2}{}}	
	 \rput(12,0.3){\rnode{um-1}{}}	
	\ncarc{->}{0}{0'}\Aput[0.05]{\large $a$}
	\ncarc{->}{0'}{0}\Aput[0.05]{\large $A$}
	\ncarc{->}{0'}{1}\mput*[0.05]{\large $d_0$}
	\ncarc{->}{1}{0'}\Aput[0.05]{\large $D_0$}
	\ncarc{->}{0}{1}\Aput[0.05]{\large $c$}
	\ncarc{->}{1}{0}\mput*[0.05]{\large $C$}
	\ncline[linestyle=dashed,nodesep=4pt]{-}{1}{i-1}
	\ncarc{->}{i-1}{i}\Aput[0.05]{\large $d_{i-1}$}
	\ncarc{->}{i}{i-1}\Aput[0.05]{\large $D_{i-1}$}
	\ncarc{->}{i}{i+1}\Aput[0.05]{\large $d_{i}$}
	\ncarc{->}{i+1}{i}\Aput[0.05]{\large $D_{i}$}
	\ncarc{->}{i+1}{i+2}\Aput[0.05]{\large $d_{i+1}$}
	\ncarc{->}{i+2}{i+1}\Aput[0.05]{\large $D_{i+1}$}
	\ncline[linestyle=dashed,nodesep=4pt]{-}{i+2}{m-1}
	\ncarc{->}{m-1}{m}\Aput[0.05]{\large $C'$}
	\ncarc{->}{m}{m-1}\mput*[0.05]{\large $c'$}
	\ncarc{<-}{m}{m'}\Aput[0.05]{\large $A'$}
	\ncarc{<-}{m'}{m}\Aput[0.05]{\large $a'$}
	\nccircle[angleA=-25,nodesep=3pt]{->}{u1}{.4cm}\Bput[0.05]{\large $u_1$}
	 \nccircle[nodesep=3pt]{->}{ui-1}{.4cm}\Bput[0.05]{\large $u_{i-1}$}
	 \nccircle[nodesep=3pt]{->}{ui+2}{.4cm}\Bput[0.05]{\large $u_{i+2}$}
	 \nccircle[angleA=25,nodesep=3pt]{->}{um-1}{.4cm}\Bput[0.05]{\large $u_{m-1}$}
	\nccircle[angleA=0,nodesep=3pt]{->}{m}{.4cm}\Bput[0.05]{\large $u_m$}
	\nccircle[angleA=180,nodesep=3pt]{->}{m'}{.4cm}\Bput[0.05]{\large $u_{m}'$}
	}
\rput(12,0){
	\rput(-0,1.7){\rnode{0}{\Large $\star$}}
	\rput(-0,-1.7){\rnode{0'}{\Large $0'\!$}}
	\rput(2,0){\rnode{1}{\Large $1$}}
	\rput(4,0){\rnode{2}{\Large $2$}}
	\rput(6.5,0){\rnode{m-2}{\Large $m-2$}}
	\rput(9,0){\rnode{m-1}{\Large $m-1$}}
	\rput(11,1.7){\rnode{m}{\Large $m$}}
	\rput(11,-1.7){\rnode{m'}{\Large $m'\!$}}	
	\rput(2,0.3){\rnode{u1}{}}	
	\rput(4,0.3){\rnode{ui-1}{}}	
	\rput(6.5,0.3){\rnode{ui+2}{}}	
	 \rput(9,0.3){\rnode{um-1}{}}	
	\ncarc{->}{0}{1}\Aput[0.05]{\large $c$}
	\ncarc{->}{1}{0}\Aput[0.05]{\large $C$}
	\ncarc{->}{0'}{1}\Aput[0.05]{\large $d_0$}
	\ncarc{->}{1}{0'}\Aput[0.05]{\large $D_0$}
	\ncarc{->}{1}{2}\Aput[0.05]{\large $d_{1}$}
	\ncarc{->}{2}{1}\Aput[0.05]{\large $D_{1}$}
	\ncline[linestyle=dashed,nodesep=4pt]{-}{2}{m-2}
	\ncarc{->}{m-2}{m-1}\Aput[0.05]{\large $d_{m-2}$}
	\ncarc{->}{m-1}{m-2}\Aput[0.05]{\large $D_{m-2}$}
	\ncarc{->}{m-1}{m}\Aput[0.05]{\large $C'$}
	\ncarc{->}{m}{m-1}\mput*[0.05]{\large $c'$}
	\ncarc{<-}{m}{m'}\Aput[0.05]{\large $A'$}
	\ncarc{<-}{m'}{m}\Aput[0.05]{\large $a'$}
	\nccircle[angleA=0,nodesep=3pt]{->}{0}{.4cm}\Bput[0.05]{\large $u_{\star}$}
	\nccircle[angleA=180,nodesep=3pt]{->}{0'}{.4cm}\Bput[0.05]{\large $u_0$}
	 \nccircle[nodesep=3pt]{->}{ui-1}{.4cm}\Bput[0.05]{\large $u_2$}
	 \nccircle[nodesep=3pt]{->}{ui+2}{.4cm}\Bput[0.05]{\large $u_{m-2}$}
	 \nccircle[angleA=25,nodesep=3pt]{->}{um-1}{.4cm}\Bput[0.05]{\large $u_{m-1}$}
	\nccircle[angleA=0,nodesep=3pt]{->}{m}{.4cm}\Bput[0.05]{\large $u_m$}
	\nccircle[angleA=180,nodesep=3pt]{->}{m'}{.4cm}\Bput[0.05]{\large $u_{m}'$}
	}

\end{pspicture}
}
\end{center}
{\footnotesize
\[
\begin{array}{rl}
W^{m \ldots (m-j)}=&
ad_0C+cD_0A -u_1D_0d_0-u_1Cc 
+\sum_{j=1}^{i-1}X_j +\sum_{j=2}^{i-1}Y_j  +Z_i -Z_{i+1} -
\sum_{j=i+2}^{m-1}X_j +\sum_{j=i+2}^{m}Y_j \\ 
& -u_{m}^2A'a'+A'a'u_{m}'. \\
W^{m\ldots 1} =& u_{\star}cC+u_{0}d_0D_0 -Ccd_1D_1 -D_0d_0d_1D_1
-\sum_{j=2}^{m-1}X_j +\sum_{j=3}^{m}Y_j -A'a'u_{m}^2 +a'A'u_{m}'.
\end{array}
\]
}


\begin{center}
\scalebox{0.6}{
\begin{pspicture}(-1,-3)(23,3.25)
	\psset{arcangle=15,nodesep=2pt}
\rput(11,2.5){\huge $Q_{0\ldots i}^{m \ldots (m-j)}$ with $i+j \leq m-3$}.
\rput(0,0){
	\rput(-0,1.7){\rnode{0}{\Large $\star$}}
	\rput(-0,-1.7){\rnode{0'}{\Large $0'\!$}}
	\rput(2,0){\rnode{1}{\Large $1$}}
	\rput(4,0){\rnode{i-1}{\Large $i-1$}}
	\rput(6,0){\rnode{i}{\Large $~i~$}}
	\rput(8,0){\rnode{i+1}{\Large $i+1$}}
	\rput(10,0){\rnode{i+2}{\Large $i+2$}}
	\rput(12,0){\rnode{j-1}{\Large $j-1$}}
	\rput(14,0){\rnode{j}{\Large $~j~$}}
	\rput(16,0){\rnode{j+1}{\Large $j+1$}}
	\rput(18,0){\rnode{j+2}{\Large $j+2$}}
	\rput(20,0){\rnode{m-1}{\Large $m-1$}}
	\rput(22,1.7){\rnode{m}{\Large $m$}}
	\rput(22,-1.7){\rnode{m'}{\Large $m'\!$}}	
	\rput(2,0.3){\rnode{u1}{}}	
	\rput(4,0.3){\rnode{ui-1}{}}	
	\rput(10,0.3){\rnode{ui+2}{}}	
	\rput(12,0.3){\rnode{uj-1}{}}	
	\rput(18,0.3){\rnode{uj+2}{}}	
	 \rput(20,0.3){\rnode{um-1}{}}	
	\ncarc{->}{0}{0'}\Aput[0.05]{\large $a$}
	\ncarc{->}{0'}{0}\Aput[0.05]{\large $A$}
	\ncarc{->}{0'}{1}\mput*[0.05]{\large $d_0$}
	\ncarc{->}{1}{0'}\Aput[0.05]{\large $D_0$}
	\ncline[linestyle=dashed,nodesep=4pt]{-}{1}{i-1}
	\ncarc{->}{i-1}{i}\Aput[0.05]{\large $d_{i-1}$}
	\ncarc{->}{i}{i-1}\Aput[0.05]{\large $D_{i-1}$}
	\ncarc{->}{i}{i+1}\Aput[0.05]{\large $d_{i}$}
	\ncarc{->}{i+1}{i}\Aput[0.05]{\large $D_{i}$}
	\ncarc{->}{i+1}{i+2}\Aput[0.05]{\large $d_{i+1}$}
	\ncarc{->}{i+2}{i+1}\Aput[0.05]{\large $D_{i+1}$}
	\ncline[linestyle=dashed,nodesep=4pt]{-}{i+2}{j-1}
	\ncarc{->}{j-1}{j}\Aput[0.05]{\large $d_{j-1}$}
	\ncarc{->}{j}{j-1}\Aput[0.05]{\large $D_{j-1}$}
	\ncarc{->}{j}{j+1}\Aput[0.05]{\large $d_{j}$}
	\ncarc{->}{j+1}{j}\Aput[0.05]{\large $D_{j}$}
	\ncarc{->}{j+1}{j+2}\Aput[0.05]{\large $d_{j+1}$}
	\ncarc{->}{j+2}{j+1}\Aput[0.05]{\large $D_{j+1}$}
	\ncline[linestyle=dashed,nodesep=4pt]{-}{j+2}{m-1}
	\ncarc{->}{m-1}{m}\Aput[0.05]{\large $C'$}
	\ncarc{->}{m}{m-1}\mput*[0.05]{\large $c'$}
	\ncarc{<-}{m}{m'}\Aput[0.05]{\large $A'$}
	\ncarc{<-}{m'}{m}\Aput[0.05]{\large $a'$}
	\nccircle[angleA=0,nodesep=3pt]{->}{0}{.4cm}\Bput[0.05]{\large $u_{\star}$}
	\nccircle[angleA=180,nodesep=3pt]{->}{0'}{.4cm}\Bput[0.05]{\large $u_0$}
	\nccircle[angleA=0,nodesep=3pt]{->}{u1}{.4cm}\Bput[0.05]{\large $u_1$}
	 \nccircle[nodesep=3pt]{->}{ui-1}{.4cm}\Bput[0.05]{\large $u_{i-1}$}
	 \nccircle[nodesep=3pt]{->}{ui+2}{.4cm}\Bput[0.05]{\large $u_{i+2}$}
	 \nccircle[nodesep=3pt]{->}{uj-1}{.4cm}\Bput[0.05]{\large $u_{j-1}$}
	 \nccircle[nodesep=3pt]{->}{uj+2}{.4cm}\Bput[0.05]{\large $u_{j+2}$}
	 \nccircle[angleA=25,nodesep=3pt]{->}{um-1}{.4cm}\Bput[0.05]{\large $u_{m-1}$}
	\nccircle[angleA=0,nodesep=3pt]{->}{m}{.4cm}\Bput[0.05]{\large $u_m$}
	\nccircle[angleA=180,nodesep=3pt]{->}{m'}{.4cm}\Bput[0.05]{\large $u_{m}'$}
	}
\end{pspicture}
}
\end{center}
{\footnotesize
\[
\begin{array}{rl}
W_{0\ldots i}^{m \ldots (m-j)} = &
aAu_{\star}-Aau_{0}^2 
+\sum_{k=0}^{i-1}X_k -\sum_{k=1}^{i-1}Y_k -Z_{i} +Z_{i+1} +\sum_{k=i+2}^{j-1}X_k -\sum_{k=i+2}^{j-1}Y_k\\
& +Z_{j} -Z_{j+1} -\sum_{k=j+2}^{m-1}X_k +\sum_{k=j+2}^{m}Y_k -a'A'u_{m}^2 +A'a'u_{m}'.
\end{array}
\]
}

\begin{center}
\scalebox{0.6}{
\begin{pspicture}(-1,-2.4)(17,3.25)
	\psset{arcangle=15,nodesep=2pt}
\rput(8,2.5){\huge $Q_{0\ldots i}^{m \ldots (i+2)}$}	
\rput(0,0){
	\rput(-0,1.7){\rnode{0}{\Large $\star$}}
	\rput(-0,-1.7){\rnode{0'}{\Large $0'\!$}}
	\rput(2,0){\rnode{1}{\Large $1$}}
	\rput(4,0){\rnode{i-1}{\Large $i-1$}}
	\rput(6,0){\rnode{i}{\Large $~i~$}}
	\rput(8,0){\rnode{i+1}{\Large $i+1$}}
	\rput(10,0){\rnode{i+2}{\Large $i+2$}}
	\rput(12,0){\rnode{i+3}{\Large $i+3$}}
	\rput(14,0){\rnode{m-1}{\Large $m-1$}}
	\rput(16,1.7){\rnode{m}{\Large $m$}}
	\rput(16,-1.7){\rnode{m'}{\Large $m'\!$}}	
	\rput(2,0.3){\rnode{u1}{}}	
	\rput(4,0.3){\rnode{ui-1}{}}	
	\rput(8,0.3){\rnode{ui+1}{}}	
	\rput(12,0.3){\rnode{ui+3}{}}	
	 \rput(14,0.3){\rnode{um-1}{}}	
	\ncarc{->}{0}{0'}\Aput[0.05]{\large $a$}
	\ncarc{->}{0'}{0}\Aput[0.05]{\large $A$}
	\ncarc{->}{0'}{1}\mput*[0.05]{\large $d_0$}
	\ncarc{->}{1}{0'}\Aput[0.05]{\large $D_0$}
	\ncline[linestyle=dashed,nodesep=4pt]{-}{1}{i-1}
	\ncarc{->}{i-1}{i}\Aput[0.05]{\large $d_{i-1}$}
	\ncarc{->}{i}{i-1}\Aput[0.05]{\large $D_{i-1}$}
	\ncarc{->}{i}{i+1}\Aput[0.05]{\large $d_{i}$}
	\ncarc{->}{i+1}{i}\Aput[0.05]{\large $D_{i}$}
	\ncarc{->}{i+1}{i+2}\Aput[0.05]{\large $d_{i+1}$}
	\ncarc{->}{i+2}{i+1}\Aput[0.05]{\large $D_{i+1}$}
	\ncarc{->}{i+2}{i+3}\Aput[0.05]{\large $d_{i+2}$}
	\ncarc{->}{i+3}{i+2}\Aput[0.05]{\large $D_{i+2}$}
	\ncline[linestyle=dashed,nodesep=4pt]{-}{i+3}{m-1}
	\ncarc{->}{m-1}{m}\Aput[0.05]{\large $C'$}
	\ncarc{->}{m}{m-1}\mput*[0.05]{\large $c'$}
	\ncarc{<-}{m}{m'}\Aput[0.05]{\large $A'$}
	\ncarc{<-}{m'}{m}\Aput[0.05]{\large $a'$}
	\nccircle[angleA=0,nodesep=3pt]{->}{0}{.4cm}\Bput[0.05]{\large $u_{\star}$}
	\nccircle[angleA=180,nodesep=3pt]{->}{0'}{.4cm}\Bput[0.05]{\large $u_0$}
	\nccircle[angleA=0,nodesep=3pt]{->}{u1}{.4cm}\Bput[0.05]{\large $u_1$}
	 \nccircle[nodesep=3pt]{->}{ui-1}{.4cm}\Bput[0.05]{\large $u_{i-1}$}
	 \nccircle[nodesep=3pt]{->}{ui+1}{.4cm}\Bput[0.05]{\large $u_{i+1}$}
	 \nccircle[nodesep=3pt]{->}{ui+3}{.4cm}\Bput[0.05]{\large $u_{i+3}$}
	 \nccircle[angleA=25,nodesep=3pt]{->}{um-1}{.4cm}\Bput[0.05]{\large $u_{m-1}$}
	\nccircle[angleA=0,nodesep=3pt]{->}{m}{.4cm}\Bput[0.05]{\large $u_m$}
	\nccircle[angleA=180,nodesep=3pt]{->}{m'}{.4cm}\Bput[0.05]{\large $u_{m}'$}
	}
\end{pspicture}
}
\end{center}
{\footnotesize
\begin{align*}
W_{0\ldots i}^{m \ldots (i+2)} &=
aAu_{\star}-Aau_{0}^2 
+\sum_{k=0}^{i-1}X_k -\sum_{k=1}^{i-1}Y_k  -Z_{i} +Y_{i+1} +X_{i+1}  -Z_{i+2}  -\sum_{k=i+3}^{m-1}X_k +\sum_{k=i+3}^{m}Y_k -a'A'u_{m}^2 +A'a'u_{m}'
\end{align*}
}
\end{prop}

\subsubsection{The dihedral group of order $2n$ ($n$ odd)}

Let $n=2m+1$ with $m \geq 2$ and $(Q,W)$ be the McKay QP of $G$ obtained in \ref{QP-D-odd}. In the following proposition we write down all QPs which are obtained from the McKay QP not mutating at the vertex $\star$.

\begin{prop} For the dihedral group of order $2n$ ($n$ odd) there are $m+1$ non-equivalent mutation QPs of the form $(Q_{0\ldots i},W_{0\ldots i})$ which obtained from the McKay QP $(Q,W)$. The list of every possible $(Q_{0\ldots i},W_{0\ldots i})$ is the following:

\begin{center}
\scalebox{0.575}{
\begin{pspicture}(-1,-2.75)(20,3.5)
	\psset{arcangle=15,nodesep=2pt}
\rput(2.75,2.5){\huge $Q_{0\ldots i}$ with $0 \leq i \leq m-2$}
\rput(17.25,2.5){\huge $Q_{0\ldots (m-1)}$}
\rput(-4,0){
	\rput(-0,1.7){\rnode{0}{\Large $\star$}}
	\rput(-0,-1.7){\rnode{0'}{\Large $0'\!$}}
	\rput(2,0){\rnode{1}{\Large $1$}}
	\rput(4,0){\rnode{i-1}{\Large $i-1$}}
	\rput(6,0){\rnode{i}{\Large $~i~$}}
	\rput(8,0){\rnode{i+1}{\Large $i+1$}}
	\rput(10,0){\rnode{i+2}{\Large $i+2$}}
	\rput(12,0){\rnode{m-1}{\Large $m-1$}}
	\rput(14.25,0){\rnode{m}{\Large $~m~$}}
	\rput(2,0.3){\rnode{u1}{}}	
	\rput(4,0.3){\rnode{ui-1}{}}	
	\rput(10,0.3){\rnode{ui+2}{}}	
	\rput(12,0.3){\rnode{um-1}{}}	
	\rput(14.25,0.3){\rnode{um}{}}	
	\rput(14.25,-0.3){\rnode{um'}{}}	
	\ncarc{->}{0}{0'}\Aput[0.05]{\large $a$}
	\ncarc{->}{0'}{0}\Aput[0.05]{\large $A$}
	\ncarc{->}{0'}{1}\mput*[0.05]{\large $d_0$}
	\ncarc{->}{1}{0'}\Aput[0.05]{\large $D_0$}
	\ncline[linestyle=dashed,nodesep=4pt]{-}{1}{i-1}
	\ncarc{->}{i-1}{i}\Aput[0.05]{\large $d_{i-1}$}
	\ncarc{->}{i}{i-1}\Aput[0.05]{\large $D_{i-1}$}
	\ncarc{->}{i}{i+1}\Aput[0.05]{\large $d_{i}$}
	\ncarc{->}{i+1}{i}\Aput[0.05]{\large $D_{i}$}
	\ncarc{->}{i+1}{i+2}\Aput[0.05]{\large $d_{i+1}$}
	\ncarc{->}{i+2}{i+1}\Aput[0.05]{\large $D_{i+1}$}
	\ncline[linestyle=dashed,nodesep=4pt]{-}{i+2}{m-1}
	\ncarc{->}{m-1}{m}\Aput[0.05]{\large $d_{m-1}$}
	\ncarc{->}{m}{m-1}\Aput[0.05]{\large $D_{m-1}$}
	\nccircle[angleA=0,nodesep=3pt]{->}{0}{.4cm}\Bput[0.05]{\large $u_{\star}$}
	\nccircle[angleA=180,nodesep=3pt]{->}{0'}{.4cm}\Bput[0.05]{\large $u_0$}
	\nccircle[angleA=0,nodesep=3pt]{->}{u1}{.4cm}\Bput[0.05]{\large $u_1$}
	 \nccircle[nodesep=3pt]{->}{ui-1}{.4cm}\Bput[0.05]{\large $u_{i-1}$}
	 \nccircle[nodesep=3pt]{->}{ui+2}{.4cm}\Bput[0.05]{\large $u_{i+2}$}
	 \nccircle[angleA=0,nodesep=3pt]{->}{um-1}{.4cm}\Bput[0.05]{\large $u_{m-1}$}
	 \nccircle[angleA=0,nodesep=3pt]{->}{um}{.4cm}\Bput[0.05]{\large $u_{m}$}
	 \nccircle[angleA=180,nodesep=3pt]{->}{um'}{.4cm}\Bput[0.05]{\large $v$}
	}
	\rput(12,0){
	\rput(-0,1.7){\rnode{0}{\Large $\star$}}
	\rput(-0,-1.7){\rnode{0'}{\Large $0'\!$}}
	\rput(2,0){\rnode{1}{\Large $1$}}
	\rput(4,0){\rnode{2}{\Large $2$}}
	\rput(6.5,0){\rnode{m-2}{\Large $m-2$}}
	\rput(9,0){\rnode{m-1}{\Large $m-1$}}
	\rput(11.25,0){\rnode{m}{\Large $~m~$}}
	\rput(2,0.3){\rnode{u1}{}}	
	\rput(4,0.3){\rnode{ui-1}{}}	
	\rput(6.5,0.3){\rnode{ui+2}{}}	
	\rput(9,0.3){\rnode{um-1}{}}	
	\rput(11.25,0.3){\rnode{um}{}}	
	\rput(11.25,-0.3){\rnode{um'}{}}	
	\ncarc{->}{0}{0'}\Aput[0.05]{\large $a$}
	\ncarc{->}{0'}{0}\Aput[0.05]{\large $A$}
	\ncarc{->}{0'}{1}\mput*[0.05]{\large $d_0$}
	\ncarc{->}{1}{0'}\Aput[0.05]{\large $D_0$}
	\ncarc{->}{1}{2}\Aput[0.05]{\large $d_{1}$}
	\ncarc{->}{2}{1}\Aput[0.05]{\large $D_{1}$}
	\ncline[linestyle=dashed,nodesep=4pt]{-}{2}{m-2}
	\ncarc{->}{m-2}{m-1}\Aput[0.05]{\large $d_{m-2}$}
	\ncarc{->}{m-1}{m-2}\Aput[0.05]{\large $D_{m-2}$}
	\ncarc{->}{m-1}{m}\Aput[0.05]{\large $d_{m-1}$}
	\ncarc{->}{m}{m-1}\Aput[0.05]{\large $D_{m-1}$}
	\nccircle[angleA=0,nodesep=3pt]{->}{0}{.4cm}\Bput[0.05]{\large $u_{\star}$}
	\nccircle[angleA=180,nodesep=3pt]{->}{0'}{.4cm}\Bput[0.05]{\large $u_0$}
	\nccircle[angleA=0,nodesep=3pt]{->}{u1}{.4cm}\Bput[0.05]{\large $u_1$}
	 \nccircle[nodesep=3pt]{->}{ui-1}{.4cm}\Bput[0.05]{\large $u_{i-1}$}
	 \nccircle[nodesep=3pt]{->}{ui+2}{.4cm}\Bput[0.05]{\large $u_{i+2}$}
	 \nccircle[angleA=180,nodesep=3pt]{->}{um'}{.4cm}\Bput[0.05]{\large $v$}
	}

\end{pspicture}
	}
\end{center}
{\footnotesize
\[
\begin{array}{rl}
W_{0\ldots i} = & 
u_{\star}aA -aAu_{0}^2 
+\sum_{j=0}^{i-1}d_{j}D_{j}u_{j} -\sum_{j=1}^{i-1}D_{j-1}d_{j-1}u_{j} 
 -D_{i-1}d_{i-1}d_{i}D_{i}+D_{i}d_{i}d_{i+1}D_{i+1} \\
& +\sum_{j=i+2}^{m-1}d_{j}D_{j}u_{j} -\sum_{j=i+2}^{m}D_{j-1}d_{j-1}u_{j} 
+u_mv^2. \\
W_{0\ldots (m-1)} = & 
aAu_{\star} -aAu_{0}^2 +\sum_{j=0}^{m-2}d_{j}D_{j}u_{j} -\sum_{j=1}^{m-2}D_{j-1}d_{j-1}u_{j} -D_{m-2}d_{m-2}d_{m-1}D_{m-1} +D_{m-1}d_{m-1}v^2.
\end{array}
\]
}
\end{prop}

\subsubsection{The tetrahedral group}

Let $(Q,W)$ be the McKay QP of $G$ obtained in \ref{QP-tetra}. In this case there are $5$ non-equivalent mutation QPs which are equivalent to $(Q,W)$ (see Figure \ref{E_6}). 

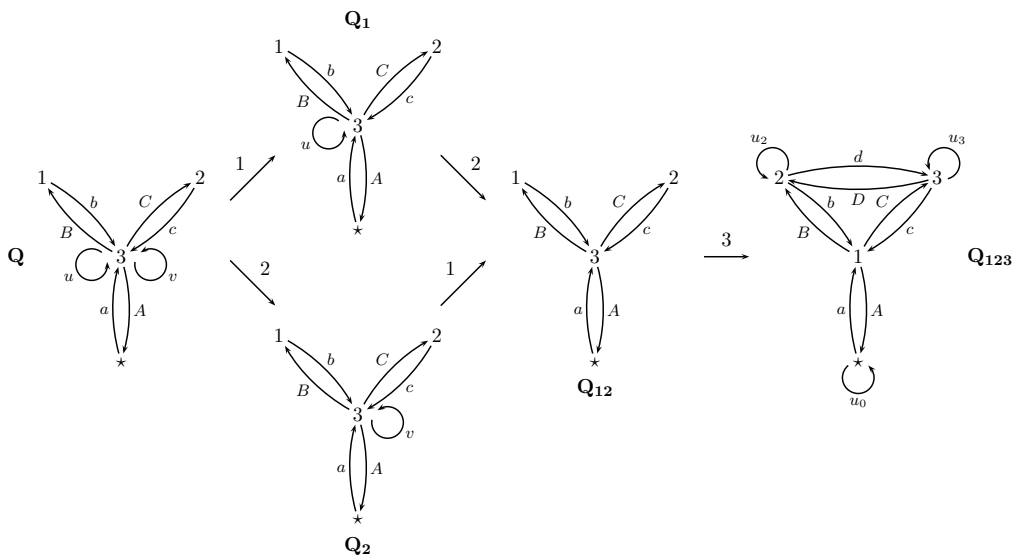
\begin{figure}[htbp]
\begin{center}
\begin{pspicture}(-2,-3.5)(12,3.5)
	\psset{arcangle=15,nodesep=2pt}
\scalebox{0.7}{

\rput(0,0){
	\rput(-2,0){$\bf Q$}
	\rput(0,-2){\rnode{0}{$\star$}}
	\rput(0,0){\rnode{1}{$3$}}
	\rput(-1.5,1.5){\rnode{2}{$1$}}
	\rput(1.5,1.5){\rnode{3}{$2$}}
	\rput(-0.3,0){\rnode{u}{}}	
	\rput(0.3,0){\rnode{v}{}}	
	\ncarc{->}{0}{1}\Aput[0.05]{\footnotesize $a$}
	\ncarc{->}{1}{0}\Aput[0.05]{\footnotesize $A$}
	\ncarc{->}{1}{2}\Aput[0.05]{\footnotesize $B$}
	\ncarc{->}{2}{1}\Aput[0.05]{\footnotesize $b$}
	\ncarc{->}{1}{3}\Aput[0.05]{\footnotesize $C$}
	\ncarc{->}{3}{1}\Aput[0.05]{\footnotesize $c$}
	\nccircle[angleA=120,nodesep=3pt]{->}{u}{.3cm}\Bput[0.05]{\footnotesize $u$}
	 \nccircle[angleA=240,nodesep=3pt]{->}{v}{.3cm}\Bput[0.05]{\footnotesize $v$}
}

\rput(4.5,2.5){
	\rput(0,2){$\bf Q_1$}
	\rput(0,-2){\rnode{0}{$\star$}}
	\rput(0,0){\rnode{1}{$3$}}
	\rput(-1.5,1.5){\rnode{2}{$1$}}
	\rput(1.5,1.5){\rnode{3}{$2$}}
	\rput(-0.3,0){\rnode{u}{}}	
	\rput(0.3,0){\rnode{v}{}}	
	\ncarc{->}{0}{1}\Aput[0.05]{\footnotesize $a$}
	\ncarc{->}{1}{0}\Aput[0.05]{\footnotesize $A$}
	\ncarc{->}{1}{2}\Aput[0.05]{\footnotesize $B$}
	\ncarc{->}{2}{1}\Aput[0.05]{\footnotesize $b$}
	\ncarc{->}{1}{3}\Aput[0.05]{\footnotesize $C$}
	\ncarc{->}{3}{1}\Aput[0.05]{\footnotesize $c$}
	\nccircle[angleA=120,nodesep=3pt]{->}{u}{.3cm}\Bput[0.05]{\footnotesize $u$}
}

\rput(4.5,-3){
	\rput(0,-2.5){$\bf Q_2$}
	\rput(0,-2){\rnode{0}{$\star$}}
	\rput(0,0){\rnode{1}{$3$}}
	\rput(-1.5,1.5){\rnode{2}{$1$}}
	\rput(1.5,1.5){\rnode{3}{$2$}}
	\rput(-0.3,0){\rnode{u}{}}	
	\rput(0.3,0){\rnode{v}{}}	
	\ncarc{->}{0}{1}\Aput[0.05]{\footnotesize $a$}
	\ncarc{->}{1}{0}\Aput[0.05]{\footnotesize $A$}
	\ncarc{->}{1}{2}\Aput[0.05]{\footnotesize $B$}
	\ncarc{->}{2}{1}\Aput[0.05]{\footnotesize $b$}
	\ncarc{->}{1}{3}\Aput[0.05]{\footnotesize $C$}
	\ncarc{->}{3}{1}\Aput[0.05]{\footnotesize $c$}
	 \nccircle[angleA=240,nodesep=3pt]{->}{v}{.3cm}\Bput[0.05]{\footnotesize $v$}
}

\rput(9,0){
	\rput(0,-2.5){$\bf Q_{12}$}
	\rput(0,-2){\rnode{0}{$\star$}}
	\rput(0,0){\rnode{1}{$3$}}
	\rput(-1.5,1.5){\rnode{2}{$1$}}
	\rput(1.5,1.5){\rnode{3}{$2$}}
	\rput(-0.3,0){\rnode{u}{}}	
	\rput(0.3,0){\rnode{v}{}}	
	\ncarc{->}{0}{1}\Aput[0.05]{\footnotesize $a$}
	\ncarc{->}{1}{0}\Aput[0.05]{\footnotesize $A$}
	\ncarc{->}{1}{2}\Aput[0.05]{\footnotesize $B$}
	\ncarc{->}{2}{1}\Aput[0.05]{\footnotesize $b$}
	\ncarc{->}{1}{3}\Aput[0.05]{\footnotesize $C$}
	\ncarc{->}{3}{1}\Aput[0.05]{\footnotesize $c$}
}

\rput(14,0){
	\rput(2.5,0){$\bf Q_{123}$}
	\rput(0,-2){\rnode{0}{$\star$}}
	\rput(0,0){\rnode{1}{$1$}}
	\rput(-1.5,1.5){\rnode{2}{$2$}}
	\rput(1.5,1.5){\rnode{3}{$3$}}
	\ncarc{->}{0}{1}\Aput[0.05]{\footnotesize $a$}
	\ncarc{->}{1}{0}\Aput[0.05]{\footnotesize $A$}
	\ncarc{->}{1}{2}\Aput[0.05]{\footnotesize $B$}
	\ncarc{->}{2}{1}\Aput[0.05]{\footnotesize $b$}
	\ncarc{->}{1}{3}\Aput[0.05]{\footnotesize $C$}
	\ncarc{->}{3}{1}\Aput[0.05]{\footnotesize $c$}
	\ncarc{->}{2}{3}\Aput[0.05]{\footnotesize $d$}
	\ncarc{->}{3}{2}\Aput[0.05]{\footnotesize $D$}
	\nccircle[angleA=25,nodesep=3pt]{->}{2}{.3cm}\Bput[0.05]{\footnotesize $u_2$}
	 \nccircle[angleA=-25,nodesep=3pt]{->}{3}{.3cm}\Bput[0.05]{\footnotesize $u_3$}
	 \nccircle[angleA=180,nodesep=3pt]{->}{0}{.3cm}\Bput[0.05]{\footnotesize $u_0$}
}

\rput(2,1){\rput(0,0){\rnode{0}{}}\rput(1,1){\rnode{1}{}}\ncline{->}{0}{1}\Aput{$1$}}
\rput(2,0){\rput(0,0){\rnode{0}{}}\rput(1,-1){\rnode{1}{}}\ncline{->}{0}{1}\Aput{$2$}}
\rput(6,2){\rput(0,0){\rnode{0}{}}\rput(1,-1){\rnode{1}{}}\ncline{->}{0}{1}\Aput{$2$}}
\rput(6,-1){\rput(0,0){\rnode{0}{}}\rput(1,1){\rnode{1}{}}\ncline{->}{0}{1}\Aput{$1$}}
\rput(11,0){\rput(0,0){\rnode{0}{}}\rput(1,0){\rnode{1}{}}\ncline{->}{0}{1}\Aput{$3$}}

}
\end{pspicture}
\caption{Mutations of type $\mathbb T$.}
\label{E_6}
\end{center}
\end{figure}

The potentials in these cases are: 
\[
\begin{array}{rl}
W = &uAa+\omega uBb +\omega^2 uCc -\frac{1}{3}u^3  -vAa-\omega^2 vBb-\omega vCc +\frac{1}{3}v^3. \\
W_{1} = &(1-\omega^2)Aau-\omega AaBb+(\omega^2-1)Ccu-\omega^2 CcbB+\omega^2 Bbu^2+\omega(Bb)^2u+\frac{1}{3}(Bb)^3.\\
W_{2} = &(\omega^2-1)Aav-\omega AaCc+(1-\omega^2)Bbv-\omega^2 BbCc+\omega^2 Ccv^2+\omega(Cc)^2v+\frac{1}{3}(Cc)^3.\\
W_{12} = &AaBb+AaCc-(Bb)^2Cc-Bb(Cc)^2.\\
W_{123} = &aAu_0 -AaBb -AaCc +bBu_2 -dDu_2 + cCu_3 -Ddu_3 +Bdc +CDb.
\end{array}
\]

We note that in $(Q_1,W_1)$ and $(Q_2,W_2)$ there is a relation of the form $v=\omega^2u-\omega Bb$ and $u=\omega^2v-\omega Cc$ respectively, so $u$ and $v$ are symmetric and $Q_{12}=Q_{21}$.

\subsection{Mutations of quivers with potential and NCCRs}
\label{sect:mutNCCRs}

In this section we explain why we do not consider mutations at the trivial vertex $\star$ and the reason why we do not take into account mutations at vertices with loops. We finish the section with a brief discussion about the number of NCCRs for the cases we treat in this paper in the complete local setting. 

We first start by explaining the equivalence between mutation of QP of Section \ref{Sect:Mutation} and mutations of NCCRs (also called tilting mutation). Let $\Lambda = \mathcal P(Q,W)=\bigoplus_{i\in Q_0}P_i$ be a NCCR and $P_k$ the projective right $\Lambda$-module associated to the vertex $k$. Let $f : P_k \to X$ be a left $\add \Lambda/P_k$-approximation and take $K_k := \Coker f$. If $f$ is injective, then $\mu_k\Lambda:=\Lambda/P_k \oplus K_k$ is a tilting $\Lambda$-module and $\End_{\Lambda}(\mu_k\Lambda)$ is also a NCCR. By a similar strategy of \cite{BIRS}, it can be shown that if a QP $(Q,W)$ is gradable and $\Lambda$ is a 3-Calabi-Yau algebra, then the tilting mutation coincides with the mutation of QP. In other words, we have an isomorphism $\End_{\Lambda}(\mu_k\Lambda) \simeq \mathcal P(\mu_k(Q,W))$. In our case, since the potential $W$ is homogeneous of degree 3 the McKay QP $(Q,W)$ is graded, so all mutations $\mu(Q,W)$ obtained from it are gradable. Also, note that the algebras $P(\mu_k(Q,W))$ are 3-Calabi-Yau (3-CY for short) since $S*G$ is 3-CY and the property of 3-CY is closed under Morita equivalences and mutations, so the result applies. \\

The reason why we do not mutate at the trivial vertex $\star$ can be explain in two different ways. In one hand this vertex corresponds to the trivial representation of $G$, so it does not correspond to any exceptional curve in the fibre over the origin. Thus geometrically there is no reason to mutate (or equivalently flop) at the vertex $\star$. On the other hand, in the context of NCCRs we are dealing with algebras of the form $\End_{R}(M)$ for some reflexive module $M$. Then it is easy to see that if $R$ is a Gorenstein ring then $M$ is a Cohen-Macaulay $R$-module if and only if $M$ contains $R$ as a direct summand. Therefore, the mutation at $\star$ would replace $R$ by a different module, losing the Cohen-Macaulay condition. \\

With respect to the mutation at vertices with loops, let us consider first the following result of Iyama and Wemyss.

\begin{thm}[\cite{IW10}, 6.13]\label{MutTriv}
Suppose $R$ is a complete local normal three-dimensional Gorenstein ring. Denote $\Lambda= \End_R(M)$, let $M_k$ be an indecomposable summand of $M$ and consider $\Lambda_k := \Lambda/\Lambda(1- e_k)\Lambda$ where $e_k$ is the idempotent in $\Lambda$ corresponding to $M_k$. Then if $\dim_\C \Lambda_k = \infty$ then $\mu_k\Lambda \simeq \Lambda$.
\end{thm}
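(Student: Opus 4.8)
The plan is to reduce the ring-theoretic statement to one about the modifying module $M$, and then to analyse the exchange sequence produced by mutation. Write $M = M_i \oplus N$ with $N = \bigoplus_{j \neq i} M_j$, so that $A = \End_R(M)$ and the mutation $\mu_k A$ is by definition $\End_R(\mu_k M)$, where $\mu_k M$ denotes the mutated module. Since $R$ is complete local, $\End_R(-)$ produces semiperfect rings and Krull--Schmidt holds; thus it suffices to prove $\mu_k M \cong M$ as $R$-modules. Because we never mutate at the trivial vertex, $i \neq 0$ and the free summand $R = M_0$ lies in $\add N$; hence the minimal right $\add N$-approximation $a \colon U_i \to M_i$ is surjective (every element of $M_i$, viewed as a map $R \to M_i$, factors through $a$), giving a short exact sequence
\[
0 \to K_i \to U_i \xrightarrow{a} M_i \to 0, \qquad U_i \in \add N,
\]
with $\mu_k M = N \oplus K_i$. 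The whole statement therefore reduces to the single claim $K_i \cong M_i$.

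Next I would identify the factor algebra $A_i$ intrinsically. One checks that $\overline{1-e_i} = 0$ in $A_i = A/A(1-e_i)A$, so $A_i = e_i A_i e_i$ is literally the stable endomorphism algebra $\End_R(M_i)/[\text{maps factoring through } \add N]$. Using that $a$ is a right $\add N$-approximation, the endomorphisms of $M_i$ factoring through $\add N$ are exactly those factoring through $a$, i.e. the image of $a_* \colon \Hom_R(M_i, U_i) \to \End_R(M_i)$; hence $A_i \cong \Coker(a_*)$. Feeding the approximation sequence into the long exact sequence of $\Hom_R(M_i, -)$ yields
\[
A_i \cong \Coker(a_*) \cong \Ker\big(\Ext^1_R(M_i, K_i) \to \Ext^1_R(M_i, U_i)\big),
\]
so the finiteness of $\dim_\C A_i$ is controlled by self-extensions of $M_i$. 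This is where the non-isolated nature of $R$ enters: for $G \subset \SO(3)$ the quotient $\C^3/G$ is singular along the images of the rotation axes, so $M_i$ need not be rigid and $\Ext^1_R(M_i, M_i)$ may be infinite-dimensional over $\C$.

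The main obstacle is the implication $\dim_\C A_i = \infty \Rightarrow K_i \cong M_i$; the long exact sequence alone does not give it, since over a non-isolated singularity $\Ext^1_R(M_i, K_i)$ can be infinite-dimensional even when $K_i \not\cong M_i$. To close this gap I would invoke Auslander--Reiten duality for the Gorenstein non-isolated singularity $R$, which is the technical heart of \cite{IW10}: AR duality identifies the stable endomorphism space of $M_i$ with a space of extensions dual to one built from $M_i$ and the canonical module, and forces a dichotomy. Either $A_i$ has finite length --- the floppable case, where $K_i$ is a genuinely new indecomposable summand and mutation is a nontrivial flop --- or $A_i$ fails to have finite length, and then the approximation sequence is stably a self-extension of $M_i$, which together with reflexivity and the Gorenstein $3$-Calabi--Yau symmetry forces $K_i \cong M_i$.

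Finally, granting $K_i \cong M_i$, Krull--Schmidt gives $\mu_k M = N \oplus K_i \cong N \oplus M_i = M$, whence $\mu_k A = \End_R(\mu_k M) \cong \End_R(M) = A$, as required. I expect essentially all the difficulty to sit in the dichotomy of the third paragraph: separating the divisorial contractions, where $\dim_\C A_i = \infty$ and mutation is an isomorphism, from the curve-to-point contractions, where $A_i$ has finite length and mutation genuinely changes $A$.
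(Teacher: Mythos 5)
First, a point of comparison: the paper does not prove this statement at all — it is quoted verbatim from \cite{IW10} — so there is no internal proof to measure you against; your proposal must stand as a reconstruction of the Iyama--Wemyss argument. Your setup is sound and does match their framework: reducing to $\mu_k M\cong M$ via Krull--Schmidt over the complete local ring, the exchange sequence $0\to K_i\to U_i\xrightarrow{a} M_i\to 0$ (surjectivity of the approximation because $R\in\add N$ is fine), and the identification of $A_i$ with the stable endomorphism algebra $\Coker(a_*)\cong\Ker\bigl(\Ext^1_R(M_i,K_i)\to\Ext^1_R(M_i,U_i)\bigr)$ are all correct, up to the technical caveat that in the non-isolated setting \cite{IW10} actually define mutation through the duals $(-)^*=\Hom_R(-,R)$ and left/right approximations, not the naive kernel.

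The genuine gap is exactly where you place it, and your proposed fix does not close it. ``AR duality forces a dichotomy'' is an appeal to authority, not an argument: nothing you write explains why $\dim_{\C}A_i=\infty$ should make the exchange sequence ``stably a self-extension of $M_i$'', nor why that would force $K_i\cong M_i$ --- the latter is literally the statement to be proved. The mechanism in \cite{IW10} is different and essentially local. Since $A_i$ is a finitely generated $R$-module, $\dim_{\C}A_i=\infty$ means $A_i$ is supported at some non-maximal prime $\mathfrak{p}$, which (as $R$ is normal, so its singular locus has dimension at most one, and $M_i$ is locally free on the smooth locus) must be a height-two prime of the singular locus; equivalently $(M_i)_{\mathfrak{p}}\notin\add\,(N_{\mathfrak{p}})$ there. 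At such a prime $R_{\mathfrak{p}}$ is, up to completion, a two-dimensional (Kleinian-type) Gorenstein singularity, and the two-dimensional theory shows mutation of CM modules there is trivial: the localized exchange sequence returns $(M_i)_{\mathfrak{p}}$ as its kernel. One then globalizes, comparing the reflexive modules $K_i$ and $M_i$ using that both $M$ and $\mu_k M$ are maximal modifying and that mutation is an involution in the IW framework. This localization-to-dimension-two step is where the hypothesis $\dim_{\C}A_i=\infty$ actually enters the proof; without it, your third paragraph restates the goal rather than proving it.
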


If we remove the complete local condition this result is still true but up to additive closure, that is, one can show that if $\dim_\C \Lambda_k = \infty$ then $\add(M/M_k \oplus K_k)=\add M$. In some respect this is enough for our purposes, since at the level of modules we have that $\End_R(M/M_k \oplus K_k)$ and $\End_R(M)$ are Morita equivalent, which induces an isomorphism of the corresponding moduli spaces.

For the groups treated in this paper, the following proposition states that every iterated QP obtained from the McKay QP verifies that the dimension of the factor algebra $\Lambda_k$ at every vertex $k$ with a loop is infinity. In fact, every vertex with a loop corresponds to a non-floppable curve in the corresponding moduli space (see Section \ref{Sect:Floppable}), allowing us to consider only mutations at vertices without loops. 

\begin{prop}
Let $G\subset\SO(3)$ of type $\Z/n\Z$, $D_{2n}$ and $\mathbb{T}$. Let $(Q',W')$ be an iterated QP from the McKay QP $(Q,W)$ and let $\Lambda = \mathcal P(Q',W')$ be the Jacobian algebra. Then for any vertex $i \in Q'_0$ with a loop we have that $\dim_{\C} \Lambda_i = \infty$.
\end{prop}

\begin{proof}
In the Abelian case every vertex $i$ has a loop and $\Lambda_{i} = \bigoplus_{\ell \geq 0} \C c_{i}^{\ell}$, thus $\dim_{\C} \Lambda_i = \infty$. For the dihedral groups it is straightforward to check that for every vertex $i\in Q'$ there is no relation derived from the potential $W'$ which identifies $u_i^p$ as a combination of other paths for some $p>0$. 

For $G=\mathbb{T}$, notice that in the McKay QP we have relations of the form $u^2=3(Aa+\omega Bb+\omega^2Cc)$ and $v^2=3(Aa+\omega^2Bb+\omega Cc)$ but there is no relation involving $uv$, so $\dim_\C\Lambda_3=\infty$.
\end{proof}

However, for subgroups $G \subset SO(3)$ of type $\mathbb O$ and $\mathbb I$, there are vertices $i$ in the McKay QP with a loop such that $\dim_{\C} \Lambda_i < \infty$. For instance, in $\mathbb O$ we have relations of the form $u^2=3(Aa+bB+dD)$ and $v^2=3(Cc+Dd+eE)$ so $\dim_\C\Lambda_3=\dim_\C\Lambda_4=2$. This fact predicts that the mutation at this vertices is not trivial and also that the $(-2,0)$-curves corresponding to this vertices are floppable. At present we do not have a definition of mutation of quivers with potential at a vertex with a loop, so these cases will be treated in a future work. \\

We finish this section by considering the number of NNCRs for the cases treated in this paper. We restrict ourselves to the complete case, that is, let $\widehat R$ be the completion of $R:=S^G$ and we want to look for all possible NCCRs over $\widehat R$. We say that $\Lambda:=\End_{\widehat R}(M)$ is a Cohen-Macaulay (CM) NCCR if $M$ is Cohen-Macaulay. If the set of NCCRs obtained by a sequence of mutations at non-trivial vertices is finite, then this set contains all possible CM NCCRs (see \cite{IW13}, Theorem 1.9). In our case we proved that the list of QPs obtained by mutating at non-trivial vertices is finite (notice that by Theorem \ref{MutTriv} mutating at loops in the complete setting is trivial), so we obtain the following result.

\begin{thm}\label{NCCRsFinite}
Let $G$ be a finite subgroup of $SO(3)$ of types $\Z/n\Z$, $D_{2n}$ and $\mathbb T$. The number of mutations of the McKay QP at non-trivial vertices is finite up to isomorphisms. Moreover the number of CM NCCRs of $\widehat{R}$ is finite up to Morita equivalences.
\end{thm}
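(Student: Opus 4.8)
The plan is to establish the two assertions in turn: the finiteness of the mutation class will follow from the explicit classification carried out in the preceding subsections, and the finiteness of CM NCCRs will then be deduced from it through the dictionary relating QP-mutations, tilting mutations, NCCRs and CT modules, closed off by Iyama's completeness result.

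First I would justify that the rules of the game incur no loss of generality, so that the case-by-case lists are genuinely exhaustive. Rule (2) is handled by combining Theorem \ref{MutTriv} with the final Lemma of each subsection: for every QP in our list and every vertex $i$ carrying a loop one has $\dim_\C \Lambda_i = \infty$, whence $\mu_i\Lambda \simeq \Lambda$ and no new QP is produced by mutating at a loop. This is exactly where the hypothesis that $G$ is of type $\Z/n\Z$, $D_{2n}$ or $\mathbb T$ enters, the types $\mathbb O$ and $\mathbb I$ being excluded because there a loop can carry $\dim_\C \Lambda_i < \infty$. Rule (1) confines us to mutations fixing $\star$, equivalently to reflexive modules $M$ retaining $R=S^G$ as a summand, that is, to CM (equivalently CT) modules. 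Granting this, the first statement is immediate from the computations: the cyclic group gives a single QP, $D_{2n}$ gives $(m+1)^2$ QPs for $n=2m$ and $m+1$ for $n=2m+1$, and $\mathbb T$ gives $5$.

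Next I would set up the correspondence underlying the second statement. By the Remark, each reduced QP in our list is gradable and its Jacobian algebra is $3$-Calabi-Yau, with tilting mutation agreeing with QP-mutation, $\End_\Lambda(\mu_k\Lambda)\simeq\mathcal P(\mu_k(Q,W))$; since NCCRs are stable under Morita equivalence and tilting mutation, every algebra in the list is an NCCR $\End_{\widehat R}(M)$ after passing to the completion $\widehat R$. Rule (1) ensures $R$ is a summand of $M$, so $M$ is a CT module. This yields mutually inverse bijections between the finite set of QPs reachable by the rules, the CM NCCRs up to Morita equivalence, and the CT modules over $\widehat R$ lying in the mutation class.

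The last and decisive step is to verify that this mutation class of CT modules is in fact the set of \emph{all} CT modules. I would invoke the result of Iyama recalled just before the statement, valid for the non-isolated singularity $R=S^G$: once the CT modules reachable by mutation are finite in number they already exhaust every CT module. Combined with the finiteness from the first part, this shows that there are only finitely many CT modules over $\widehat R$, hence finitely many CM NCCRs up to Morita equivalence. The step I expect to be the main obstacle is precisely this completeness input: one must know that mutation acts transitively on the CT modules once the class is finite, and that discarding the loop-mutations (rule (2)) does not quietly omit a CT module lying off the reachable class — a point guaranteed by Theorem \ref{MutTriv}, since those omitted mutations are trivial.
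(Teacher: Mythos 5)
Your proposal is correct and follows essentially the same route as the paper: the finiteness of the mutation class comes from the explicit case-by-case lists, whose exhaustiveness is justified by Theorem \ref{MutTriv} together with the loop lemmas (rule (2)) and the CM/CT interpretation of rule (1), and the finiteness of CM NCCRs then follows from the one-to-one correspondence between QPs in the mutation class, CM NCCRs and CT modules over $\widehat R$, closed off by Iyama's completeness result for non-isolated singularities. The only difference is one of presentation: you spell out the dictionary (tilting mutation $=$ QP mutation, Morita invariance of NCCRs) that the paper leaves implicit in the paragraph preceding the theorem.
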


\section{Explicit description of the moduli spaces $\mathcal{M}_\theta$}
\label{sect:opens}

In this section we describe the explicit structure of every projective crepant resolution $\pi:X\to\C^3/G$ for $G\subset\SO(3)$ of types $\Z/n\Z$, $D_{2n}$ and $\mathbb{T}$. We do not describe the cyclic case $G\cong\Z/n\Z$ since $\Hilb{G}{\C^3}$ is the unique crepant resolution and it is already treated in \cite{CR02} and \cite{Nak01}. 

The results are summarized in Theorems \ref{OpensDnOdd}, \ref{OpensDnEven} and \ref{OpensE6} for the subgroups of type $D_{2n}$ with $n$ odd, $D_{2n}$ with $n$ even and $\mathbb{T}$ respectively. This explicit description allows us to conclude that every projective crepant resolution $X$ is isomorphic to a moduli space $\mathcal{M}_\theta$ of $\theta$-stable representations of the McKay quiver with relations $(Q, R)$ for some $\theta\in\Theta$, and $X$ consists of a finite union of copies of $\C^3$. Moreover, we give local coordinates of every open set and the degrees of the normal bundles of every exceptional rational curve in the fibre $\pi^{-1}(0)$. 

The proof is done by the explicit calculation of every case. The strategy used is the following:
\begin{enumerate}
\item[{\bf Step 1}] We obtain first the crepant resolution $X:=\Hilb{G}{\C^3}\cong\mathcal{M}_{\theta^0}$, where $\theta^0$ is the so called {\em $0$-generated stability condition} (i.e.\ $\theta^0$ is generic and $\theta^0_i>0$ for every $i\neq0$), which from \cite{IN00} is well known to be contained in the chamber corresponding to $\Hilb{G}{\C^3}$. In particular, this choice of stability implies that for every $\rho\in\Irr G\backslash\{\rho_0\}$ there exist $\dim(\rho)$ linearly independent paths from $\rho_0$ to $\rho$.
\item[{\bf Step 2}] By calculating the gluings between the open sets in $X$ we obtain the degrees of the normal bundle of every rational exceptional curve in $\pi^{-1}(0)$. This is done by using the quiver of the endomorphism algebra $\End_{S^G}(\bigoplus_{\rho\in\Irr G}S_\rho)$ where $S_\rho$ are the CM $S^G$-modules $S_\rho:=(S\otimes\rho)^G$, which coincides with the McKay quiver. This identification allows us to give local coordinates at each open set in terms of the original coordinates $x,y$ and $z$ in $\C^3$. Then by Lemma \ref{floppable} we know that only the rational curves $E\subset X$ of type $(-1,-1)$ are floppable, which gives us a finite number of possible flops of $X$.
\item[{\bf Step 3}] We calculate the flop $X'$ of $X$ by only modifying the open sets containing the curve which is flopped, obtaining $X'$ again as a moduli space $\mathcal{M}_{C'}$. Here we use a representation space which dominates both sides of the flop. 
\item[{\bf Step 4}] We repeat the process until we find all possible flops of $(-1,-1)$-curves.
\end{enumerate}
 
As a consequence, every projective crepant resolution $X$ of $\C^3/G$ for $G\subset\SO(3)$ of types $\Z/n\Z$, $D_{2n}$ and $\mathbb T$ is described by an affine open cover $X\cong\bigcup U_i$ with $U_i\cong\C^3$ (see part (1) in Theorems \ref{OpensDnOdd}, \ref{OpensDnEven} and \ref{OpensE6}). 
 
In this section $S:=\C[x,y,z]$, $(Q,W)$ always denote the McKay quiver potential as in Section \ref{Sect:McKayQP-SO(3)}, $\Lambda:= \mathcal{P}(Q,W)$ and ${\bf{d}}=(d_i)_{i\in Q_0}:=(\dim\rho)_{\rho\in\Irr G}$. Thus we write simply $\mathcal{M}_\theta$ or $\mathcal{M}_C$ to denote the moduli space $\mathcal{M}_{\theta,{\bf{d}}}(\Lambda)$, where $\theta\in C$ for some $C\subset\Theta$. 

The notion of stability of a representation of $(Q,W)$ is defined as follows (cf.\ \cite{King}): let $M$ be a representation of $Q$ of dimension vector $\textbf{d}$, let $\theta\in\Theta_{\bf d}$ and define $\theta(M):=\sum\theta_id_i$. Then $M$ is $\theta$-(semi)stable if $\theta(M')>0=\theta(M)$ for $0\subsetneq M'\subsetneq M$ (with the usual $\geq$ for semistability). The stability parameter $\theta\in\Theta_{\bf{d}}$ is said to be {\em generic} if every $\theta$-semistable representation is $\theta$-stable. 

For dimension vectors ${\bf d}$ of representations (and subrepresentations) of $Q$ we adopt the notation ${\bf d}={\Large{\begin{smallmatrix}d_0\\d_{0'}\end{smallmatrix}}}d_1\ldots d_m$, where $d_i=\dim V_i$ and $V_i$ is the vector space associated to the vertex $i\in Q_0$.

\subsection{The dihedral group of order $2n$ ($n$ odd)}\label{sect:Dnodd}

With the potential given in Section \ref{QP-D-odd} the relations derived by the potential $W$ in this case are:
\[
\begin{small}
\begin{array}{rl}
\partial a, \partial A, \partial b, \partial B, \partial c, \partial C: & bC=0, cB =0, Ca=u_1B, Ac=bu_1, BA=u_1C, ab=cu_1. \\
\partial d_1,\ldots , \partial d_{m-1}: & D_1u_1=u_2D_1, \ldots, D_{m-1}u_{m-1}=u_{m}D_{m-1}. \\
\partial D_1, \ldots,\partial D_{m-1}: & u_1d_1=d_1u_2, \ldots, u_{m-1}d_{m-1}=d_{m-1}u_m. \\ 
\partial u_1: & Bb+Cc=d_1D_1. \\
\partial u_2, \ldots, \partial u_{m-1}: & d_2D_2=D_1d_1, \ldots, d_{m-1}D_{m-1}=D_{m-2}d_{m-2}. \\
\partial u_m : & D_{m-1}d_{m-1}=v^2. \\
\partial v: & u_mv+vu_m=0. 

\end{array}
\end{small}
\]

We introduce the following notation for the arrows of $Q$ as linear maps between vector spaces: $a:=a$, $A:=A$, $b:=(b_1,b_2)$, $B:=\left(\begin{smallmatrix}B_1\\B_2\end{smallmatrix}\right)$, $c:=(c_1,c_2)$, $C=\left(\begin{smallmatrix}C_1\\C_2\end{smallmatrix}\right)$, $d_i:=\left(\begin{smallmatrix}d^i_{11}&d^i_{12}\\d^i_{21}&d^i_{22}\end{smallmatrix}\right)$, $D_i:=\left(\begin{smallmatrix}D^i_{11}&D^i_{12}\\D^i_{21}&D^i_{22}\end{smallmatrix}\right)$, $u_j:=\left(\begin{smallmatrix}u^j_{11}&u^j_{12}\\u^j_{21}&u^j_{22}\end{smallmatrix}\right)$ and $v:=\left(\begin{smallmatrix}v_{11}&v_{12}\\v_{21}&v_{22}\end{smallmatrix}\right)$, where $1\leq i\leq m-1$, $1\leq j\leq m$, and every entry belongs to $\C$. We may drop the upper indices in the entries of the matrices for $d_i$, $D_i$ and $u_i$ if the context allows us to do so, and we may also drop the lower indices when there exist a unique element in the matrix which is non-zero. 

\begin{thm}\label{OpensDnOdd} Let $G=D_{2n}\subset\SO(3)$ with $n$ odd. Let $f_1:=x^{2m+1}+y^{2m+1}$ and $f_2:=x^{2m+1}-y^{2m+1}$. Then,
\begin{enumerate}
\item There are $m+1$ crepant resolutions of $\pi_i:X_{0\ldots i}\to\C^3/G$ for $-1\leq i\leq m$, given by the open covers:
\[
X_{0\ldots i} = \bigcup_{k=1}^{i+1}U''_k\cup U'_{i+2}\cup\bigcup_{k=i+3}^{m+2}U_i 
\]
where $U_i, U'_i, U''_i\cong\C^3$ for all $i$, with corresponding local coordinates
{\renewcommand{\arraystretch}{1.5}
\[
\begin{array}{l}
U'_1 \cong \C^3_{a,b,C}= \Spec\C[\frac{z}{f_2}, \text{\scriptsize $xy$}, \text{\scriptsize $f_1$}]. \\
U_i \cong \C^3_{d,D,u}= \Spec\C[\frac{(xy)^{i-1}z}{f_2}, \frac{f_2}{(xy)^{i-2}z}, \frac{zf_1}{f_2}], \text{ for $i\leq m+1$}.  \\
U_{m+2} \cong \C^3_{v,V,u}= \Spec\C[ \text{\scriptsize $z^2$},\frac{f_1}{(xy)^{m}}, \frac{f_2}{(xy)^mz}]. \\
U'_i \cong \C^3_{a,d,D}= \Spec\C[\frac{(xy)^{i-1}z}{f_2}, \frac{f_1}{(xy)^{i-1}},\text{\scriptsize $xy$}], \text{ for $i\leq m+1$}.  \\
U''_i \cong \C^3_{a,d,D}= \Spec\C[\frac{zf_1}{f_2}, \frac{(xy)^i}{f_1},\frac{f_1}{(xy)^{i-1}}], \text{ for $i\leq m$}.  
\end{array}
\]}
For $i=-1$ we have $X\cong\Hilb{G}{\C^3}$.
\item The degrees of the normal bundles $\mathcal{N}_{X\slash E}$ of the exceptional rational curves $E\subset X_{0\ldots i}$ are 
\[
{\small
\begin{array}{|c|l|}
\hline
\text{Open cover of $E$} & \text{Degree of $\mathcal{N}_{X\slash E}$} \\
\hline
U_i\cup U_{i+1} 	& \text{$(-2,0)$ for $i=2,\ldots,m$} \\
				& \text{$(-3,1)$ for $i=m+1$} \\
\hline
U'_i\cup U_{i+1} 	& \text{$(-1,-1)$ for $i=1,\ldots,m$} \\
				& \text{$(-2,0)$ for $i=m+1$} \\
\hline
U''_i\cup U'_{i+1} 	& \text{$(-1,-1)$ for $i=1,\ldots,m-1$} \\				
\hline
U''_i\cup U''_{i+1} 	& \text{$(-2,0)$ for $i=1,\ldots,m$} \\
\hline
\end{array}}
\]
\item Let $\pi_{i}:X_{0\ldots i}\to\C^3/G$ a crepant resolution. Then the dual graph of $\pi_i^{-1}(0)$ is of the form:
\begin{center}
\begin{pspicture}(0,-.025)(5,0.5)
	\psset{arcangle=15,nodesep=2pt}
\rput(0,0){\rnode{0}{$\bullet$}}
\rput(1,0){\rnode{1}{$\bullet$}}
\rput(2,0){\rnode{2}{$\cdots$}}
\rput(3,0){\rnode{3}{$\bullet$}}
\rput(4,0){\rnode{4}{$\bullet$}}
\ncline{-}{0}{1}\ncline{-}{1}{2}\ncline{-}{2}{3}\ncline{-}{3}{4}
\end{pspicture}
\end{center}
\end{enumerate}
\end{thm}

\begin{proof} {\em Step 1}. Let $X:=\Hilb{G}{\C^3}\cong\mathcal{M}_{\theta}$ for the $0$-generated stability condition $\theta$ and dimension vector ${\bf d}={\Large{\begin{smallmatrix}1\\1\end{smallmatrix}}}2\ldots2$. Then we can construct the following open cover of $X$:

\[\Hilb{G}{\C^3}\cong U'_1\cup\bigcup_{i=2}^{m+2}U_i\]

We divide the proof of this fact in 4 steps:
\begin{itemize}
\item[(i)] We can always choose $c=(1,0)$. Otherwise, by the $0$-generated stability we need to have $ab=(1,0)$, so that the relation $ab=cu_1$ implies that $A(c_1)^2+u^1_{21}c_2=1$. But then $c_1$ and $c_2$ cannot be both zero at the same time, which means that we can change basis at the vertex $1$ to obtain $c=(1,0)$. 
\item[(ii)] Similarly, we can change basis to choose $d_i:=\left(\begin{smallmatrix}1&0\\d^i_{21}&d^i_{22}\end{smallmatrix}\right)$ for every $i$. Therefore it is remaining to generate the basis element $(0,1)$ at every 2-dimensional vertex. 
\item[(iii)] The open conditions to generate the 2-dimensional vertices can be done involving only the maps $d_i$ and $D_i$. Indeed, if we suppose that $u_i=\left(\begin{smallmatrix}0&1\\u^i_{21}&u^i_{22}\end{smallmatrix}\right)$ then using the relations involving the vertex $i$ we can conclude that $D^i_{22}\neq0$, thus we can choose $D^i_{21}=0$ and $D^i_{22}=1$ instead. 
\item[(iv)] By the stability condition we need to reach the vertex $0'$ with a nonzero map from $0$, thus we have that 
\[
\begin{array}{rl}
\text{either} & cd_1d_2\cdots d_iD_iD_{i-1}\cdots D_1B\neq0 \\
\text{or} & a\neq0
\end{array}
\]
Consider the first case and suppose initially that $i=1$, which after a change of basis is equivalent to say that $cd_1D_1B=1$. In particular we can choose $B_2=1$ and $D_1:=\left(\begin{smallmatrix}0&1\\D^1_{21}&D^1_{22}\end{smallmatrix}\right)$, which leads to a contradiction when applying the relation $Bb+Cc=d_1D_1$. Similarly, for $i>0$ we obtain contradiction with the relation $D_{i-1}d_{i-1}=d_iD_i$. Therefore we either have $cd_1\cdots d_{m-1}u_mD_{m-1}\cdots D_1B\neq0$ or $cd_1\cdots d_{m-1}vD_{m-1}\cdots D_1B\neq0$. By (ii) we can always choose the second option, which gives the open set 
\[
U'_1:cd_1\cdots d_{m-1}vD_{m-1}\cdots D_1B=1
\] 
By the relation $cB=0$ we get $B_1=0$, so by changing basis we can always take $c=(1,0)$, $d_i:=\left(\begin{smallmatrix}1&0\\d^i_{21}&d^i_{22}\end{smallmatrix}\right)$, $D_i:=\left(\begin{smallmatrix}d^i_{11}&d^i_{12}\\0&1\end{smallmatrix}\right)$ for all $i$, $v=\left(\begin{smallmatrix}0&1\\v_{21}&v_{22}\end{smallmatrix}\right)$ and $B:=\left(\begin{smallmatrix}0\\1\end{smallmatrix}\right)$, and using the relations we obtain the representation space for $U'_1$ shown in Figure \ref{DnoddU1'}. If we suppose that $a\neq0$, by (ii) and the usual change of basis at every 2-dimensional vertex $i$ we reach the standard basis element $(1,0)$ by the path $cd_1\cdots d_{i}$. Then, the rest of possibilities for the open sets are
\[
\begin{array}{rl}
U_2: & cd_1\cdots d_{m-1}vD_{m-1}\cdots D_1=(0,1), a=1 \\
U_3: & cd_1\cdots d_{m-1}vD_{m-1}\cdots D_2=(0,1), ab=1 \\
U_i: & cd_1\cdots d_{m-1}vD_{m-1}\cdots D_i=(0,1), abd_1\cdots d_{i-1}=(0,1) \text{ for $4\leq i<m$} \\
U_{m+1}: & cd_1\cdots d_{m-1}v=(0,1), abd_1\cdots d_{m-2}=(0,1) \\
U_{m+2}: & cd_1\cdots d_{m-1}=(1,0), abd_1\cdots d_{m-1}=(0,1)
\end{array}
\]

\begin{figure}[h]
\begin{pspicture}(0,-1.25)(10,2.25)
	\psset{arcangle=15,nodesep=2pt}
\rput(-2.75,0){
\rput(3,1.85){$U'_1\cong\C^3_{a,b,C}$}
\rput(4,-1){\tiny $A=aC^2-ab^{2m+1}$}
\scalebox{0.8}{
	\rput(-0,1.7){\rnode{0}{$0$}}
	\rput(-0,-1.7){\rnode{1}{$0'$}}
	\rput(2,0){\rnode{2}{$1$}}
	\rput(4,0){\rnode{3}{$2$}}
	\rput(6,0){\rnode{5}{\footnotesize$m\!-\!1$}}
	\rput(8,0){\rnode{6}{$m$}}
	\rput(2,0.3){\rnode{u1}{}}	
	\rput(4,0.3){\rnode{u2}{}}	
	\rput(6,0.3){\rnode{u3}{}}	
	\rput(8,0.3){\rnode{u5}{}}	
	\rput(8,-0.3){\rnode{u6}{}}	
	\ncarc{->}{0}{1}\Aput[0.05]{\footnotesize $a$}		
	\ncarc{->}{1}{0}\Aput[0.05]{\footnotesize $A$}		
	\ncarc{->}{1}{2}\mput*[0.05]{\scriptsize $(\text{-}C,b)$}		
	\ncarc[linecolor=red]{->}{2}{1}\Aput[0.05]{\large $\left(\begin{smallmatrix}0\\1\end{smallmatrix}\right)$}		
	\ncarc[linecolor=red]{->}{0}{2}\Aput[0]{\scriptsize $(1,0)$}		
	\ncarc{->}{2}{0}\mput*[0.05]{\large $\left(\!\begin{smallmatrix}b\\C\end{smallmatrix}\!\right)$}	
	\ncarc[linecolor=red]{->}{2}{3}\Aput[0.05]{$\left(\begin{smallmatrix}1&0\\0&b\end{smallmatrix}\right)$}		
	\ncarc[linecolor=red]{->}{3}{2}\Aput[0.05]{$\left(\begin{smallmatrix}b&0\\0&1\end{smallmatrix}\right)$}		
	\ncline[linestyle=dashed]{-}{3}{5}
	\ncarc[linecolor=red]{->}{5}{6}\Aput[0.05]{$\left(\begin{smallmatrix}1&0\\0&b\end{smallmatrix}\right)$}		
	\ncarc[linecolor=red]{->}{6}{5}\Aput[0.05]{$\left(\begin{smallmatrix}b&0\\0&1\end{smallmatrix}\right)$}		
	 \nccircle[angleA=-25,nodesep=3pt]{->}{u1}{.3cm}\Bput[0.05]{$\left(\!\begin{smallmatrix}\text{-}aC&ab\\\text{-}ab^{2m}&aC\end{smallmatrix}\!\right)$}	
	 \nccircle[angleA=-25,nodesep=3pt]{->}{u2}{.3cm}\Bput[-0.075]{$\left(\!\begin{smallmatrix}\text{-}aC&ab^2\\\text{-}ab^{2m\text{-}1}&aC\end{smallmatrix}\!\right)$}	
	 \nccircle[nodesep=3pt]{->}{u5}{.3cm}\Bput[0.05]{$\left(\!\begin{smallmatrix}\text{-}aC&ab^{m}\\\text{-}ab^{m+1}&aC\end{smallmatrix}\!\right)$}		
	 \nccircle[linecolor=red,angleA=180,nodesep=3pt]{->}{u6}{.3cm}\Bput[0.05]{$\left(\begin{smallmatrix}0&1\\b&0\end{smallmatrix}\right)$}	
	}}

\rput(5.25,0){
\rput(3,1.85){$U_2\cong\C^3_{b,B,u}$}
\rput(4,-1){\tiny $A=b^2-u^2(uB)^{2m-1}$}
\scalebox{0.8}{
	\rput(-0,1.7){\rnode{0}{$0$}}
	\rput(-0,-1.7){\rnode{1}{$0'$}}
	\rput(2,0){\rnode{2}{$1$}}
	\rput(4,0){\rnode{3}{$2$}}
	\rput(6,0){\rnode{5}{\footnotesize$m\!-\!1$}}
	\rput(8,0){\rnode{6}{$m$}}
	\rput(2,0.3){\rnode{u1}{}}	
	\rput(4,0.3){\rnode{u2}{}}	
	\rput(6,0.3){\rnode{u3}{}}	
	\rput(8,0.3){\rnode{u5}{}}	
	\rput(8,-0.3){\rnode{u6}{}}	
	\ncarc[linecolor=red]{->}{0}{1}\Aput[0.05]{\footnotesize $1$}		
	\ncarc{->}{1}{0}\Aput[0.05]{\footnotesize $A$}		
	\ncarc{->}{1}{2}\mput*[0.05]{\scriptsize $(b,u)$}		
	\ncarc{->}{2}{1}\Aput[0.05]{\large $\left(\begin{smallmatrix}0\\B\end{smallmatrix}\right)$}		
	\ncarc[linecolor=red]{->}{0}{2}\Aput[0.05]{\scriptsize $(1,0)$}		
	\ncarc{->}{2}{0}\mput*[0.5]{\large $\left(\!\begin{smallmatrix}uB\\-bB\end{smallmatrix}\!\right)$}	
	\ncarc[linecolor=red]{->}{2}{3}\Aput[0.05]{$\left(\begin{smallmatrix}1&0\\0&uB\end{smallmatrix}\right)$}	
	\ncarc[linecolor=red]{->}{3}{2}\Aput[0.05]{$\left(\begin{smallmatrix}uB&0\\0&1\end{smallmatrix}\right)$}	
	\ncline[linestyle=dashed]{-}{3}{5}
	\ncarc[linecolor=red]{->}{5}{6}\Aput[0.05]{$\left(\begin{smallmatrix}1&0\\0&uB\end{smallmatrix}\right)$}	
	\ncarc[linecolor=red]{->}{6}{5}\Aput[0.05]{$\left(\begin{smallmatrix}uB&0\\0&1\end{smallmatrix}\right)$}	
	 \nccircle[angleA=-25,nodesep=3pt]{->}{u1}{.3cm}\Bput[-0.05]{$\left(\!\begin{smallmatrix}b&u\\\text{-}u(\!uB\!)^{2m\text{-}1}&\text{-}b\end{smallmatrix}\!\right)$}		
	 \nccircle[angleA=-35,nodesep=3pt]{->}{u2}{.3cm}\Bput[0]{$\left(\!\begin{smallmatrix}b&u(\!uB\!)\\\text{-}u(\!uB\!)^{2m\text{-}2}&\text{-}b\end{smallmatrix}\!\right)$}		
	 \nccircle[angleA=-10,nodesep=3pt]{->}{u5}{.3cm}\Bput[-0.1]{$\left(\!\begin{smallmatrix}b&u(\!uB\!)^{m\text{-}1}\\\text{-}u(\!uB\!)^{m}&\text{-}b\end{smallmatrix}\!\right)$}		
	 \nccircle[linecolor=red,angleA=180,nodesep=3pt]{->}{u6}{.3cm}\Bput[0.05]{$\left(\begin{smallmatrix}0&1\\uB&0\end{smallmatrix}\right)$}	
	}}
\end{pspicture}
\caption{Open sets $U_1'$ and $U_2$ in $\Hilb{D_{2n}}{\C^3}$.}
\label{DnoddU1'}
\end{figure}

It follows that $X=\Hilb{G}{\C^3}$ is covered by the union of $m+2$ open sets isomorphic each of them to $\C^3$.
\end{itemize}

{\em Step 2.} For every $\rho\in\Irr G$ let $S_\rho:=(S\otimes\rho)^G$ be the CM $S^G$-modules. Writing down irreducible maps between these modules we obtain the McKay quiver shown in Figure \ref{McKayCMDodd}.

\begin{figure}[h]
\begin{pspicture}(0,-1.5)(10,1.75)
	\psset{arcangle=15,nodesep=2pt}
\rput(0,0){
\scalebox{0.8}{
	\rput(0,1.7){\rnode{0}{$S_{0}$}}
	\rput(0,-1.7){\rnode{1}{$S_{0'}$}}
	\rput(2.75,0){\rnode{2}{$S_1$}}
	\rput(5.5,0){\rnode{3}{$S_2$}}
	\rput(8.25,0){\rnode{5}{$S_{m\!-\!1}$}}
	\rput(11,0){\rnode{6}{$S_m$}}
	\rput(2.75,0.3){\rnode{u1}{}}	
	\rput(5.5,0.3){\rnode{u2}{}}	
	\rput(8.25,0.3){\rnode{u3}{}}	
	\rput(11,0.3){\rnode{u5}{}}	
	\rput(11,-0.3){\rnode{u6}{}}	
	\ncarc{->}{0}{1}\Aput[0.05]{$z$}		
	\ncarc{->}{1}{0}\Aput[0.05]{$z$}		
	\ncarc{->}{1}{2}\mput*[0.05]{\small $(x,\text{-}y)$}		
	\ncarc{->}{2}{1}\Aput[0.05]{\Large $\left(\begin{smallmatrix}y\\\text{-}x\end{smallmatrix}\right)$}		
	\ncarc{->}{0}{2}\aput[0](0.4){\small $(x,y)$}		
	\ncarc{->}{2}{0}\mput*[0.05]{\Large $\left(\begin{smallmatrix}y\\x\end{smallmatrix}\right)$}	
	\ncarc{->}{2}{3}\Aput[0.05]{\Large $\left(\begin{smallmatrix}x&0\\0&y\end{smallmatrix}\right)$}		
	\ncarc{->}{3}{2}\Aput[0.05]{\Large $\left(\begin{smallmatrix}y&0\\0&x\end{smallmatrix}\right)$}		
	\ncline[nodesep=4pt,linestyle=dashed]{-}{3}{5}
	\ncarc{->}{5}{6}\Aput[0.05]{\Large $\left(\begin{smallmatrix}x&0\\0&y\end{smallmatrix}\right)$}		
	\ncarc{->}{6}{5}\Aput[0.05]{\Large $\left(\begin{smallmatrix}y&0\\0&x\end{smallmatrix}\right)$}		
	 \nccircle[angleA=-20,nodesep=3pt]{->}{u1}{.3cm}\Bput[0.05]{\Large $\left(\!\begin{smallmatrix}z&0\\0&-z\end{smallmatrix}\!\right)$}	
	 \nccircle[angleA=-30,nodesep=3pt]{->}{u2}{.3cm}\Bput[0.05]{\Large $\left(\!\begin{smallmatrix}z&0\\0&-z\end{smallmatrix}\!\right)$}	
	 \nccircle[angleA=-25,nodesep=3pt]{->}{u5}{.3cm}\Bput[0.05]{\Large $\left(\!\begin{smallmatrix}z&0\\0&-z\end{smallmatrix}\!\right)$}		
	 \nccircle[angleA=200,nodesep=3pt]{->}{u6}{.3cm}\Bput[0.05]{\Large $\left(\begin{smallmatrix}0&x\\y&0\end{smallmatrix}\right)$}	
	}}
\end{pspicture}
\caption{Quiver of CM-modules $S_\rho$ for $G=D_{2n}$, $n$ odd.}
\label{McKayCMDodd}
\end{figure}
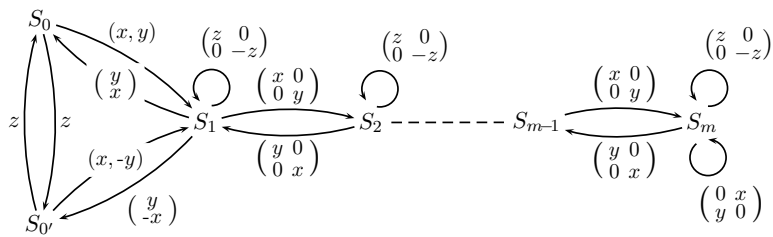

Every point in an open set $U\subset X$ is a representation of $Q$ of dimension vector $d=\begin{smallmatrix}1\\1\end{smallmatrix}\text{\small 2\ldots2}$ generated by a subset of linearly independent {\em distinguished arrows}. We choose basis elements for the vector spaces at every vertex of $Q$ by following the distinguished arrows of $U$ in the quiver between the CM $S^G$-modules shown above. For instance, in $U'_1$ with coordinates $a$, $b$ and $C$, the distinguished arrows are $c$, $d_1,\ldots,d_{m-1}$, $v$, $D_{m-1},\ldots,D_{1}$ and $B$, so we choose the following basis elements:

{\renewcommand{\arraystretch}{1.15}
\[
\begin{array}{|c|c||c|c|}
\hline
V_0 	& 1 			& V_2 & (x^2,y^2)=e_1 \\
V_{0'} & x^7-y^7 	& 	& (y^5,x^5)=e_2 \\
V_1 & (x,y)=e_1 	& V_3 & (x^3,y^3)=e_1 \\
	& (y^6,x^6)=e_2 &  & (y^4,x^4)=e_2 \\
\hline
\end{array}
\]}

For instance, this choice of basis elements implies that taking the paths in the quiver $a$, $cC$ and $cd_1\cdots d_{m-1}vD_{m-1}\cdots D_1C$ we obtain the identities $z=a(x^7-y^7)$, $2xy=b\cdot1$ and $x^7+y^7=C\cdot1$, which by rescaling the coefficients gives us the coordinates of the open set $U'_1$: 
\[
a=z/(x^7-y^7), b=xy, C=x^7+y^7.
\] 
The same method gives $U_2\cong\C^3_{b,B,u}$ with $b=xyz/f_2$, $B=f_2/z$ and $u=zf_1/f_2$. The gluing between $U'_1$ and $U_2$ is given by $(a,b,C)\mapsto(ab,a_1,aC)$ thus the exceptional $\mathbb{P}^1_{(z:x^7-y^7)}$ has a normal bundle of degree $(-1,-1)$. In other words, knowing the representation space of an open set $U\subset\mathcal{M}_{\theta^0}$ we can recover the local coordinates for $U$, and the gluing between open sets determines the degrees of the normal bundles of the exceptional curves. 

It follows that in $X$ we have $\pi^{-1}(0)=\bigcup_{i=0}^mE_i$ where $E_i\cong\mathbb{P}^1$ with coordinates $(xy)^iz:x^{2m+1}-y^{2m+1}$ for $i=0,\ldots,m$, intersecting pairwise according to the dual graph shown below (see also \cite{GNS2} \S3.5). 

\begin{center}
\begin{pspicture}(0,0)(5,0.35)
	\psset{arcangle=15,nodesep=2pt}
\rput(0,0){\rnode{0}{$\bullet$}}\rput(0,0.35){$E_0$}
\rput(1,0){\rnode{1}{$\bullet$}}\rput(1,0.35){$E_1$}
\rput(2,0){\rnode{2}{$\cdots$}}
\rput(3,0){\rnode{3}{$\bullet$}}\rput(3,0.35){$E_{m-1}$}
\rput(4,0){\rnode{4}{$\bullet$}}\rput(4,0.35){$E_m$}
\ncline{-}{0}{1}\ncline{-}{1}{2}\ncline{-}{2}{3}\ncline{-}{3}{4}
\end{pspicture}
\end{center}

Therefore $E_0$ is a $(-1,-1)$-curve, $E_m$ is a $(-3,1)$-curve, and the rest are $(-2,0)$-curves. By Lemma \ref{floppable} only $E_0$ can be floppable, which gives us $X_0$. 

{\em Step 3.} Recall that any $(-1,-1)$-curve $E\subset X$ is floppable. In fact, there exists $F\cong\PP^1\times\PP^1\subset Y$ fitting in the following diagram

\begin{center}
\begin{pspicture}(0,0)(4,2)
	\psset{arcangle=15,nodesep=2pt}
\rput(0,0){\rnode{p0}{$E\subset X$}}
\rput(3,0){\rnode{p1}{$X'\supset E'$}}
\rput(1.5,1.5){\rnode{p2}{$F\subset Y$}}
\ncline[linestyle=dashed]{->}{p0}{p1}
\ncline{->}{p2}{p0}\Bput[0.05]{$\sigma$}
\ncline{->}{p2}{p1}\Aput[0.05]{$\sigma'$}
\end{pspicture}
\end{center}

\noindent where $F$ dominates $E$ and $E'$ in two different ways as the exceptional locus for $\sigma$ and $\sigma'$ (see for instance \cite{Pagoda} \S5.5 and references therein). In our case, the curve $E_0\subset X$ is covered by $U'_1$ and $U_2$, and to calculate the new open sets in $U''_1,U'_2\subset X_0$ covering the flopped curve $E'_0$ we look at the following representation space:

\begin{center}
\begin{pspicture}(0,-1.25)(10,1.5)
	\psset{arcangle=15,nodesep=2pt}
\rput(0,0){
\scalebox{0.8}{
	\rput(0,1.7){\rnode{0}{$0$}}
	\rput(0,-1.7){\rnode{1}{$0'$}}
	\rput(2.75,0){\rnode{2}{$1$}}
	\rput(5.5,0){\rnode{3}{$2$}}
	\rput(8.25,0){\rnode{5}{\footnotesize$m\!-\!1$}}
	\rput(11,0){\rnode{6}{$m$}}
	\rput(2.75,0.3){\rnode{u1}{}}	
	\rput(5.5,0.3){\rnode{u2}{}}	
	\rput(8.25,0.3){\rnode{u3}{}}	
	\rput(11,0.3){\rnode{u5}{}}	
	\rput(11,-0.3){\rnode{u6}{}}	
	\ncarc{->}{0}{1}\Aput[0.05]{\small $a$}		
	\ncarc{->}{1}{0}\Aput[0.05]{\small $ab_1^2-ab_2(b_2B)^{2m-1}$}		
	\ncarc{->}{1}{2}\mput*[0.05]{\footnotesize $(b_1,b_2)$}		
	\ncarc{->}{2}{1}\Aput[0.05]{\Large $\left(\begin{smallmatrix}0\\B\end{smallmatrix}\right)$}		
	\ncarc{->}{0}{2}\aput[0](0.4){\footnotesize $(1,0)$}		
	\ncarc{->}{2}{0}\mput*[0.05]{\Large $\left(\!\begin{smallmatrix}b_2B\\\text{-}b_1B\end{smallmatrix}\!\right)$}	
	\ncarc{->}{2}{3}\Aput[0.05]{\large $\left(\begin{smallmatrix}1&0\\0&b_2B\end{smallmatrix}\right)$}		
	\ncarc{->}{3}{2}\Aput[0.05]{\large $\left(\begin{smallmatrix}b_2B&0\\0&1\end{smallmatrix}\right)$}		
	\ncline[linestyle=dashed]{-}{3}{5}
	\ncarc{->}{5}{6}\Aput[0.05]{\large $\left(\begin{smallmatrix}1&0\\0&b_2B\end{smallmatrix}\right)$}		
	\ncarc{->}{6}{5}\Aput[0.05]{\large $\left(\begin{smallmatrix}b_2B&0\\0&1\end{smallmatrix}\right)$}		
	 \nccircle[angleA=-20,nodesep=3pt]{->}{u1}{.3cm}\Bput[-0.25]{\large $\left(\!\begin{smallmatrix}ab_1&ab_2\\\text{-}ab_2(b_2B)^{2m-1}&\text{-}ab_1\end{smallmatrix}\!\right)$}	
	 \nccircle[angleA=-30,nodesep=3pt]{->}{u2}{.3cm}\Bput[-0.2]{\large $\left(\!\begin{smallmatrix}ab_1&ab_2^2B\\\text{-}ab_2(b_2B)^{2m-2}&\text{-}ab_1\end{smallmatrix}\!\right)$}	
	 \nccircle[angleA=-25,nodesep=3pt]{->}{u5}{.3cm}\Bput[-0.3]{\large $\left(\!\begin{smallmatrix}ab_1&ab_2(b_2B)^{m-1}\\\text{-}ab_2(b_2B)^{m}&\text{-}ab_1\end{smallmatrix}\!\right)$}		
	 \nccircle[angleA=200,nodesep=3pt]{->}{u6}{.3cm}\Bput[0.05]{\large $\left(\begin{smallmatrix}0&1\\b_2B&0\end{smallmatrix}\right)$}	
	}}
\end{pspicture}
\end{center}

It is the representation space obtained by taking the distinguished nonzero arrows which are common in $U'_1$ and $U_2$. This space dominates both sides of the flop, in the sense that if we set $B=1$ we get $U'_1$ and if we set $a=1$ we get $U_2$, covering $E_0$; if $b_1=1$ we get $U''_1$ and if $b_2=1$ we get $U'_2$, covering the flop of $E_0$. The coordinates of $F\cong\PP^1\times\PP^1$ are $(a:B;b_1:b_2)$.

{\em Step 4.} For the rest of crepant resolutions $X_{0\ldots i}$ we argue in the same way, i.e.\ we repeat this process in every $(-1,-1)$-curve to produce every open set $U\subset\mathcal{M}_{\theta}$. We show in Figure \ref{DnoddUi'Ui''} the representation spaces for the open sets $U_i'$ and $U_i''$, the rest are done similarly.  

\begin{figure}[h]
\begin{center}
\begin{pspicture}(-1,0)(15,7)
	\psset{arcangle=15,nodesep=2pt}


\rput(1,4.75){
\rput(-1,2){$U'_i\cong\C^3_{a,d,D}$}
\rput(-1,1.5){\tiny $3\leq i\leq m\!+\!1$}
\rput(5.5,-1){\tiny $A=ad^2-aD^{2\alpha+3}$}
\scalebox{0.8}{
	\rput(-0,1.7){\rnode{0}{$0$}}
	\rput(-0,-1.7){\rnode{0'}{$0'$}}
	\rput(2,0){\rnode{1}{$1$}}
	\rput(4,0){\rnode{2}{$2$}}
	\rput(6,0){\rnode{i-3}{\footnotesize $i\!-\!3$}}
	\rput(8,0){\rnode{i-2}{\footnotesize $i\!-\!2$}}
	\rput(10,0){\rnode{i-1}{\footnotesize $i\!-\!1$}}
	\rput(12,0){\rnode{i}{\footnotesize $~i~$}}
	\rput(14,0){\rnode{m-1}{\footnotesize$m\!-\!1$}}
	\rput(16,0){\rnode{m}{$~m~$}}
	\rput(2,0.3){\rnode{u1}{}}	
	\rput(6,0.3){\rnode{ui-3}{}}	
	\rput(8,0.3){\rnode{ui-2}{}}	
	\rput(10,0.3){\rnode{ui-1}{}}	
	\rput(12,0.3){\rnode{ui}{}}	
	\rput(14,0.3){\rnode{um-1}{}}	
	\rput(16,0.3){\rnode{um}{}}	
	\rput(16,-0.3){\rnode{um'}{}}	
	\ncarc{->}{0}{0'}\Aput[0.05]{\footnotesize $a$}		
	\ncarc{->}{0'}{0}\Aput[0.05]{\footnotesize $A$}		
	\ncarc[linecolor=red]{->}{0'}{1}\mput*[0.05]{\scriptsize $(0,1)$}		
	\ncarc{->}{1}{0'}\Aput[0.05]{\large $\left(\begin{smallmatrix}0\\D\end{smallmatrix}\right)$}		
	\ncarc[linecolor=red]{->}{0}{1}\Aput[0.05]{\scriptsize $(1,0)$}		
	\ncarc{->}{1}{0}\mput*[0.05]{\large $\left(\!\begin{smallmatrix}D\\0\end{smallmatrix}\!\right)$}	
	\ncline[linestyle=dashed]{-}{2}{i-3}
	\ncarc[linecolor=red]{->}{1}{2}\Aput[0.05]{$\left(\begin{smallmatrix}1&0\\0&1\end{smallmatrix}\right)$}		
	\ncarc{->}{2}{1}\Aput[0.05]{$\left(\begin{smallmatrix}D&0\\0&D\end{smallmatrix}\right)$}		
	\ncarc[linecolor=red]{->}{i-3}{i-2}\Aput[0.05]{$\left(\begin{smallmatrix}1&0\\0&1\end{smallmatrix}\right)$}	
	\ncarc{->}{i-2}{i-3}\Aput[0.05]{$\left(\begin{smallmatrix}D&0\\0&D\end{smallmatrix}\right)$}	
	\ncarc[linecolor=red]{->}{i-2}{i-1}\Aput[0.05]{$\left(\begin{smallmatrix}1&0\\d&1\end{smallmatrix}\right)$}	
	\ncarc{->}{i-1}{i-2}\Aput[0.05]{$\left(\begin{smallmatrix}D&0\\\text{-}dD&D\end{smallmatrix}\right)$}	
	\ncarc[linecolor=red]{->}{i-1}{i}\Aput[0.05]{$\left(\begin{smallmatrix}1&0\\0&D\end{smallmatrix}\right)$}		
	\ncarc[linecolor=red]{->}{i}{i-1}\Aput[0.05]{$\left(\begin{smallmatrix}D&0\\0&1\end{smallmatrix}\right)$}		
	\ncline[linestyle=dashed]{-}{i}{m-1}
	\ncarc[linecolor=red]{->}{m-1}{m}\Aput[0.05]{$\left(\begin{smallmatrix}1&0\\0&D\end{smallmatrix}\right)$}		
	\ncarc[linecolor=red]{->}{m}{m-1}\Aput[0.05]{$\left(\begin{smallmatrix}D&0\\0&1\end{smallmatrix}\right)$}		
	\nccircle[angleA=-15,nodesep=3pt]{->}{u1}{.3cm}\Bput[0.05]{$\left(\begin{smallmatrix}0&a\\A&0\end{smallmatrix}\right)$}	
	 \nccircle[nodesep=3pt]{->}{ui-2}{.3cm}\Bput[0.05]{$\left(\begin{smallmatrix}0&a\\A&0\end{smallmatrix}\right)$}	
 	 \nccircle[nodesep=3pt]{->}{ui-1}{.3cm}\Bput[0.05]{$\left(\begin{smallmatrix}ad&a\\\text{-}aD^{2\alpha+3}&\text{-}ad\end{smallmatrix}\right)$}	
	 \nccircle[angleA=-15,nodesep=3pt]{->}{ui}{.3cm}\Bput[-0.1]{$\left(\begin{smallmatrix}ad&aD\\\text{-}aD^{2\alpha+2}&\text{-}ad\end{smallmatrix}\right)$}		
	 \nccircle[angleA=0,nodesep=3pt]{->}{um}{.3cm}\Bput[0.05]{$\left(\begin{smallmatrix}ad&aD^\alpha\\\text{-}aD^{\alpha+2}&\text{-}ad\end{smallmatrix}\right)$}		
	 \nccircle[linecolor=red,angleA=180,nodesep=3pt]{->}{um'}{.3cm}\Bput[0.05]{$\left(\begin{smallmatrix}0&1\\D&0\end{smallmatrix}\right)$}		
	}}


\rput(1,1.25){
\rput(-1,1.75){$U''_i\cong\C^3_{a,d,D}$}
\rput(-1.25,1.25){\tiny $2\leq i\leq m$}
\rput(5.5,-1){\tiny $A=a+ad^2(dD)^{2\alpha+1}$}
\scalebox{0.8}{
	\rput(-0,1.7){\rnode{0}{$0$}}
	\rput(-0,-1.7){\rnode{0'}{$0'$}}
	\rput(2,0){\rnode{1}{$1$}}
	\rput(4,0){\rnode{2}{$2$}}
	\rput(6,0){\rnode{i-2}{\footnotesize $i\!-\!2$}}
	\rput(8,0){\rnode{i-1}{\footnotesize $i\!-\!1$}}
	\rput(10,0){\rnode{i}{\footnotesize $~i~$}}
	\rput(12,0){\rnode{i+1}{\footnotesize $i\!+\!1$}}
	\rput(14,0){\rnode{m-1}{\footnotesize$m\!-\!1$}}
	\rput(16,0){\rnode{m}{$~m~$}}
	\rput(2,0.3){\rnode{u1}{}}	
	\rput(6,0.3){\rnode{ui-2}{}}	
	\rput(8,0.3){\rnode{ui-1}{}}	
	\rput(10,0.3){\rnode{ui}{}}	
	\rput(12,0.3){\rnode{ui+1}{}}	
	\rput(14,0.3){\rnode{um-1}{}}	
	\rput(16,0.3){\rnode{um}{}}	
	\rput(16,-0.3){\rnode{um'}{}}	
	\ncarc{->}{0}{0'}\Aput[0.05]{\footnotesize $a$}		
	\ncarc{->}{0'}{0}\Aput[0.05]{\footnotesize $A$}		
	\ncarc[linecolor=red]{->}{0'}{1}\mput*[0.05]{\scriptsize $(0,1)$}		
	\ncarc{->}{1}{0'}\Aput[0.05]{\large $\left(\begin{smallmatrix}0\\dD\end{smallmatrix}\right)$}		
	\ncarc[linecolor=red]{->}{0}{1}\Aput[0.05]{\scriptsize $(1,0)$}		
	\ncarc{->}{1}{0}\mput*[0.05]{\large $\left(\!\begin{smallmatrix}0\\dD\end{smallmatrix}\!\right)$}	
	\ncline[linestyle=dashed]{-}{2}{i-2}
	\ncarc[linecolor=red]{->}{1}{2}\Aput[0.05]{$\left(\begin{smallmatrix}1&0\\0&1\end{smallmatrix}\right)$}		
	\ncarc{->}{2}{1}\Aput[0.05]{$\left(\begin{smallmatrix}dD&0\\0&dD\end{smallmatrix}\right)$}		
	\ncarc[linecolor=red]{->}{i-2}{i-1}\Aput[0.05]{$\left(\begin{smallmatrix}1&0\\0&1\end{smallmatrix}\right)$}	
	\ncarc{->}{i-1}{i-2}\Aput[0.05]{$\left(\begin{smallmatrix}dD&0\\0&dD\end{smallmatrix}\right)$}	
	\ncarc[linecolor=red]{->}{i-1}{i}\Aput[0.05]{$\left(\begin{smallmatrix}1&0\\1&d\end{smallmatrix}\right)$}	
	\ncarc{->}{i}{i-1}\Aput[0.05]{$\left(\begin{smallmatrix}dD&0\\\text{-}D&D\end{smallmatrix}\right)$}	
	\ncarc[linecolor=red]{->}{i}{i+1}\Aput[0.05]{$\left(\begin{smallmatrix}1&0\\0&dD\end{smallmatrix}\right)$}		
	\ncarc[linecolor=red]{->}{i+1}{i}\Aput[0.05]{$\left(\begin{smallmatrix}dD&0\\0&1\end{smallmatrix}\right)$}		
	\ncline[linestyle=dashed]{-}{i+1}{m-1}
	\ncarc[linecolor=red]{->}{m-1}{m}\Aput[0.05]{$\left(\begin{smallmatrix}1&0\\0&dD\end{smallmatrix}\right)$}		
	\ncarc[linecolor=red]{->}{m}{m-1}\Aput[0.05]{$\left(\begin{smallmatrix}dD&0\\0&1\end{smallmatrix}\right)$}		
	\nccircle[angleA=-15,nodesep=3pt]{->}{u1}{.3cm}\Bput[0.05]{$\left(\begin{smallmatrix}0&a\\A&0\end{smallmatrix}\right)$}	
	 \nccircle[nodesep=3pt]{->}{ui-1}{.3cm}\Bput[0.05]{$\left(\begin{smallmatrix}0&a\\A&0\end{smallmatrix}\right)$}	
 	 \nccircle[angleA=5,nodesep=3pt]{->}{ui}{.3cm}\Bput[-0.05]{$\left(\begin{smallmatrix}a&ad\\\text{-}ad(dD)^{2\alpha+1}&\text{-}a\end{smallmatrix}\right)$}	
	 \nccircle[angleA=-15,nodesep=3pt]{->}{ui+1}{.3cm}\Bput[-0.15]{$\left(\begin{smallmatrix}a&ad(aD)\\\text{-}ad(dD)^{2\alpha}&\text{-}a\end{smallmatrix}\right)$}		
	 \nccircle[angleA=0,nodesep=3pt]{->}{um}{.3cm}\Bput[0.05]{$\left(\begin{smallmatrix}a&ad(dD)^\alpha\\\text{-}ad(dD)^{\alpha+1}&\text{-}a\end{smallmatrix}\right)$}		
	 \nccircle[linecolor=red,angleA=180,nodesep=3pt]{->}{um'}{.3cm}\Bput[0.05]{$\left(\begin{smallmatrix}0&1\\dD&0\end{smallmatrix}\right)$}		
	}}
\end{pspicture}
\caption{Open sets $U'_i$ and $U''_i$ contained in crepant resolutions of $\C^3/{D_{2n}}$ with $n$ odd.}
\label{DnoddUi'Ui''} 
\end{center}
\end{figure}
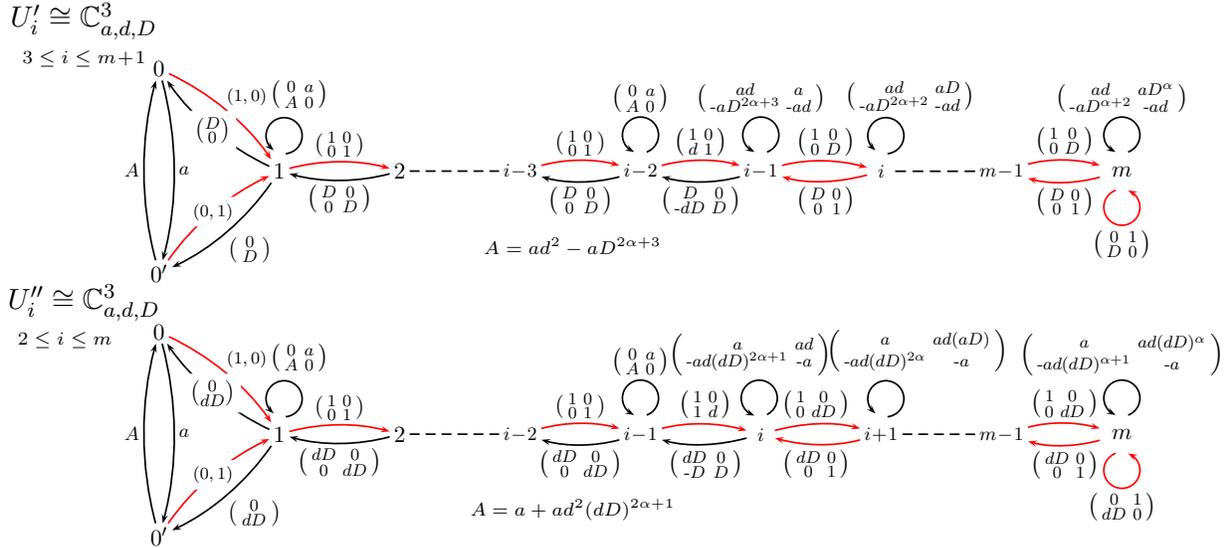

In each of $X_{0\ldots i}$ with $0<i<m$ there are precisely two $(-1,-1)$-curves, whose flops give $X_{0\ldots(i-1)}$ and $X_{0\ldots(i+1)}$ respectively. In the case $i=m$, the 3-fold $X_{0\ldots m}$ has only $E_{m-1}$ as a $(-1,-1)$-curve and the flop produces the previous $X_{0\ldots m-1}$, so we stop. This proves (3).

The gluings between the open sets shown below, from which we can read off the degrees of the normal bundles in (2).
\[
\begin{small}
\begin{array}{rl}
U'_1\to U_2: & (a,b,C)\mapsto(ab,a^{-1},aC) \\
U_i\to U_{i+1}: & (d,D,u)\mapsto(d^2D,d^{-1},u), \text{ for $i=2,\ldots,m+1$} \\
U_{m+1}\to U_{m+2}: & (d,D,u)\mapsto(u^2-4d^3D,d^{-1}u,d^{-1}) \\
U'_i\to U_{i+1}: & (a,d,D)\mapsto(aD,a^{-1},ad), \text{ for $i=2,\ldots,m$} \\
U'_{m+1}\to U_{m+2}: & (a,d,D)\mapsto(a^2d^2-4a^2D,d,a^{-1}) \\
U''_i\to U'_{i+1}: & (a,d,D)\mapsto(ad,d^{-1},dD), \text{ for $i=1,\ldots,m-1$}  \\
U''_i\to U''_{i+1}: & (a,d,D)\mapsto(a,d^2D,d^{-1}), \text{ for $i=1,\ldots,m$}
\end{array}
\end{small}
\]
In every crepant resolution the dual graph of the fibre over the origin $0\in\C^3/G$ is of type $A$, they only differ in the degrees of the normal bundles over $E_i$ given in $(2)$.
\end{proof}


\subsection{The dihedral group of order $2n$ ($n$ even)}

With the potential given in Section \ref{QP-D-even} the list of relations in this case is the following:
\[
\begin{small}
\begin{array}{rl}
\partial a, \partial A, \partial b, \partial B, \partial c, \partial C: & bC=0, cB =0, Ca=u_1B, Ac=bu_1, BA=u_1C, ab=cu_1. \\
\partial a', \partial A', \partial b', \partial B', \partial c', \partial C': & b'C'=0, c'B' =0, C'a'=u_{m-1}B', \\
& A'c'=b'u_{m-1}, B'A'=u_{m-1}C', a'b'=c'u_{m-1}. \\
\partial d_1,\ldots , \partial d_{m-2}: & D_1u_1=u_2D_1, \ldots, D_{m-2}u_{m-2}=u_{m-1}D_{m-2}. \\
\partial D_1, \ldots,\partial D_{m-2}: & u_1d_1=d_1u_2, \ldots, u_{m-2}d_{m-2}=d_{m-2}u_{m-1}. \\ 
\partial u_1: & Bb+Cc=d_1D_1. \\
\partial u_2, \ldots, \partial u_{m-2}: & d_2D_2=D_1d_1, \ldots, d_{m-2}D_{m-2}=D_{m-3}d_{m-3}. \\
\partial u_m : & B'b'+C'c'=D_{m-2}d_{m-2}. 
\end{array}
\end{small}
\]
We consider the same notation as in \ref{sect:Dnodd} for the arrows of $Q$ as linear maps, adding in this case $a':=a'$, $A':=A'$, $b':=(b'_1,b'_2)$, $B':=\left(\begin{smallmatrix}B'_1\\B'_2\end{smallmatrix}\right)$, $c':=(c'_1,c'_2)$ and $C':=\left(\begin{smallmatrix}C'_1\\C'_2\end{smallmatrix}\right)$.

\begin{thm}\label{OpensDnEven} Let $G=D_{2n}\subset\SO(3)$ with $n$ even. Let $f_1:=x^{m}+y^{m}$ and $f_2:=x^{m}-y^{m}$. Then,
\begin{enumerate}
\item There are $(m+1)(m+2)/2$ non-isomorphic crepant resolutions $\pi_{ij}:X_{0\ldots i}^{m\ldots(m-j)} \to\C^3/G$ given by the open covers:  
\[
X_{0\ldots i}^{m\ldots(m-j)} = \bigcup_{k=1}^{i+1}U''_k\cup U'_{i+2}\cup\bigcup_{k=i+3}^{m-j}U_{k}\cup V'_{m-j+1}\cup\bigcup_{k=m-j+2}^{m+3}V''_{k}
\]
for $-1\leq i,j\leq m-1$, where $U_k, U'_k, U''_k,V'_k,V''_k\cong\C^3$ for all $k$, with local coordinates 
{\renewcommand{\arraystretch}{1.5}
\[
\begin{array}{l|l}
\underset{(i\leq m)}{U_i\cong}\C^3_{d,D,u}=\Spec\C[\frac{(xy)^{i-1}z}{f_1f_2}, \frac{f_1f_2}{(xy)^{i-2}z}, \frac{zf_1}{f_2}] & \underset{i\leq m}{V'_i\cong}\C^3_{a',u,C'}=\Spec\C[\frac{(xy)^{i-2}z^2}{f_2^2}, \text{\scriptsize $xy$}, \frac{f_1f_2}{(xy)^{i-2}z}]   \\

U_{m+1}\cong\C^3_{C',c',B'}=\Spec\C[\frac{zf_2}{f_1}, \frac{f_1f_2}{(xy)^{m-1}z}, \frac{zf_1}{f_2}] & V'_{m+1}\cong\C^3_{a',b',C'}=\Spec\C[\frac{(xy)^{m-1}z^2}{f_2^2}, \frac{f_2^2}{(xy)^{m-1}}, \frac{f_1f_2}{(xy)^{m-1}z}] \\

\underset{(i\leq m)}{U'_i\cong}\C^3_{a,d,D}=\Spec\C[\frac{(xy)^{i-1}z}{f_1f_2}, \frac{f_1^2}{(xy)^{i-1}}, \text{\scriptsize $xy$}]  & V'_{m+2}\cong\C^3_{a',c',C'}=\Spec\C[\text{\scriptsize $z^2$}, \frac{f_2^2}{(xy)^{m-1}}, \frac{f_1}{zf_2}] \\

U'_{m+1}\cong\C^3_{a,B',c'}=\Spec\C[\frac{zf_2}{f_1}, \frac{f_1^2}{f_2^2}, \frac{f_2^2}{(xy)^{m-1}}] & \underset{i\leq m+1}{V''_i\cong}\C^3_{d,D,C'}=\Spec\C[\frac{(xy)^{i-2}z^2}{f_2^2}, \frac{f_2^2}{(xy)^{i-3}z^2}, \frac{zf_1}{f_2}] \\

\underset{(i\leq m-1)}{U''_i\cong}\C^3_{a,d,D}=\Spec\C[\frac{zf_1}{f_2}, \frac{x^iy^i}{f_1^2}, \frac{f_1^2}{(xy)^{i-1}}]  & V''_{m+2}\cong\C^3_{A,c',C'}=\Spec\C[\text{\scriptsize $z^2$}, \frac{f_2^2}{(xy)^{m-1}z^2}, \frac{zf_1}{f_2}] \\

U''_{m}\cong\C^3_{a,c',C'}=\Spec\C[\frac{zf_1}{f_2}, \frac{f_1^2}{(xy)^{m-1}}, \frac{f_2^2}{f_1^2}] & V''_{m+3}\cong\C^3_{A',b',B'}=\Spec\C[\text{\scriptsize $z^2$}, \frac{f_1^2}{(xy)^{m-1}}, \frac{f_2}{zf_1}] \\
\end{array}
\]
}
When $i=j=-1$ we have $X\cong\Hilb{G}{\C^3}$. 

\item The degrees of the normal bundles $\mathcal{N}_{X\slash E}$ of the exceptional rational curves $E\subset X_{0\ldots i}^{m\ldots(m-j)}$ are:
{\renewcommand{\arraystretch}{1.15}
\[
{\small
\begin{array}{|c|l||c|l|}
\hline
\text{Open cover of $E$} & \text{Degree of $\mathcal{N}_{X\slash E}$} & \text{Open cover of $E$} & \text{Degree of $\mathcal{N}_{X\slash E}$} \\
\hline
U_i\cup U_{i+1}	& \text{$(-2,0)$ for $2\leq i\leq m$} & U'_i\cup V'_{i+1} 	& \text{$(-2,0)$ for $1\leq i\leq m$} \\
U_i\cup V'_{i+1}	& \text{$(-1,-1)$ for $2\leq i\leq m+1$} & U''_i\cup U'_{i+1} 	& \text{$(-1,-1)$ for $1\leq i\leq m$} \\	
U_{m+1}\cup V''_{m+3}	& \text{$(-1,-1)$} & U''_i\cup U''_{i+1} 	& \text{$(-2,0)$ for $1\leq i\leq m-1$} \\
U'_i\cup U_{i+1} 	& \text{$(-1,-1)$ for $1\leq i\leq m$} & V'_i\cup V''_{i+1} 	& \text{$(-1,-1)$ for $2\leq i\leq m+1$} \\
				& \text{$(-2,0)$ for $i=m+1$} & V''_i\cup V''_{i+1} 	& \text{$(-2,0)$ for $3\leq i\leq m+2$} \\
\hline
\end{array}}
\]}

\item The dual graph of $\pi_{ij}^{-1}(0)$ is:

\begin{center}
\begin{pspicture}(0,-.75)(8,0.75)
	\psset{arcangle=15,nodesep=2pt}
\rput(0,0){\rnode{0}{$\bullet$}}
\rput(1,0){\rnode{1}{$\bullet$}}
\rput(2,0){\rnode{2}{$\cdots$}}
\rput(3,0){\rnode{3}{$\bullet$}}
\rput(4,0.5){\rnode{4}{$\bullet$}}
\rput(4,-0.5){\rnode{4'}{$\bullet$}}
\ncline{-}{0}{1}\ncline{-}{1}{2}\ncline{-}{2}{3}\ncline{-}{3}{4}\ncline{-}{3}{4'}\ncline{-}{4}{4'}
\rput(6.745,0){for $X_{0\ldots i}$ with $i\leq m$,}
\end{pspicture}
\end{center}
\begin{center}
\begin{pspicture}(0,-.5)(8,0.75)
	\psset{arcangle=15,nodesep=2pt}
\rput(0,0){\rnode{0}{$\bullet$}}
\rput(1,0){\rnode{1}{$\bullet$}}
\rput(2,0){\rnode{2}{$\cdots$}}
\rput(3,0){\rnode{3}{$\bullet$}}
\rput(4,0.5){\rnode{4}{$\bullet$}}
\rput(4,-0.5){\rnode{4'}{$\bullet$}}
\ncline{-}{0}{1}\ncline{-}{1}{2}\ncline{-}{2}{3}\ncline{-}{3}{4}\ncline{-}{3}{4'}
\rput(6,0){for $X_{0\ldots m-1}$,}
\end{pspicture}
\end{center}
\begin{center}
\begin{pspicture}(0,0)(7,0.75)
	\psset{arcangle=15,nodesep=2pt}
\rput(0,0){\rnode{0}{$\bullet$}}
\rput(1,0){\rnode{1}{$\bullet$}}
\rput(2,0){\rnode{2}{$\cdots$}}
\rput(3,0){\rnode{3}{$\bullet$}}
\ncline{-}{0}{1}\ncline{-}{1}{2}\ncline{-}{2}{3}
\rput(5.477,0){for the rest.}
\end{pspicture}
\end{center}
\end{enumerate}
\end{thm}

\begin{proof} {\em Step 1.} As in the proof of Theorem \ref{OpensDnOdd} we start by calculating explicitly $X:=\Hilb{G}{\C^3}\cong\mathcal{M}_{\theta^0}$ for the $0$-generated stability condition $\theta^0$. In this case the open cover is given by

\[\Hilb{G}{\C^3}\cong U'_1\cup\bigcup_{k=2}^{m+1}U_k\cup V'_{m+2}\cup V''_{m+3}\]

From Section \ref{Sect:McKayQP-SO(3)} we can see that the McKay quiver in this case only differs from the case when $n$ is odd in the vertices $m-1$, $m$ and $m'$, thus the argument is very similar to the proof of Step 1 in Theorem \ref{OpensDnOdd}. In particular, we can choose $c=(1,0)$, $d_i:=\left(\begin{smallmatrix}1&0\\d^i_{21}&d^i_{22}\end{smallmatrix}\right)$ and we cannot have a path $cd_i\cdots d_iD_i\cdots D_1B\neq0$ for any $i$. Therefore, we have three possibilities to reach the second linearly independent vector, which we may choose to be $(0,1)$, at the vector space at the vertex $m+1$. Namely $cd_i\cdots d_{m-2}B'b'=(0,1)$, $cd_i\cdots d_{m-2}C'c'=(0,1)$ or $abd_i\cdots d_{m-2}=(0,1)$. By symmetry the first two are equivalent, so we can assume that $C'_1=1$ and $c'=(0,1)$. In other words, we have that
\[
\begin{array}{rl}
\text{either} & cd_1d_2\cdots d_{m-2}C'c'=(0,1) \\
\text{or} & abd_i\cdots d_{m-2}=(0,1)
\end{array}
\]
Let us consider the first case. To reach the 1-dimensional vector space at the vertex $m'$ we can always choose $B'_1=1$. Indeed, by the relations $c'B'=0$ and $C'a'=u_{m-1}B'$ we obtain the equality $a'=u_{11}B'_1$. This means that if we choose $a'\neq0$ then $B'_1\neq0$, and we can change basis to consider $B'_1=1$ instead. 

Thus we obtain the following open sets
\[
\begin{array}{rl}
U'_1: & cd_1\cdots d_{m-2}C'=1,B'=1,c'D_{m-2}\cdots D_1B=1 \\
U_2: &  cd_1\cdots d_{m-2}C'=1,B'=1, c'D_{m-2}\cdots D_1=(0,1), a=1  \\
U_3: &  cd_1\cdots d_{m-2}C'=1,B'=1, c'D_{m-2}\cdots D_2=(0,1), ab=(0,1) \\
U_i: &  cd_1\cdots d_{m-2}C'=1,B'=1, c'D_{m-2}\cdots D_{i-2}=(0,1), abd_1\cdots d_{i-3}=(0,1) \text{ for $4\leq i<m+1$} 
\end{array}
\]

If $abd_i\cdots d_{m-2}=(0,1)$ then there are only two possibilities which satisfy the $0$-generated stability condition, namely $cd_1\cdots d_{m-2}B'A'=1$ and $cd_1\cdots d_{m-2}C'a'=1$, giving the open sets $V'_{m+2}$ and $V''_{m+3}$:
\[
\begin{array}{rl}
V'_{m+2}: & abd_i\cdots d_{m-2}=(0,1), cd_1\cdots d_{m-2}B'A'=1\\
V''_{m+3}: & abd_i\cdots d_{m-2}=(0,1), cd_1\cdots d_{m-2}C'a'=1
\end{array}
\]
Again, by using the relations in every case we conclude that $\Hilb{G}{\C^3}$ is covered by the union of $m+3$ open sets isomorphic to $\C^3$.

{\em Step 2.} As in the proof of \ref{OpensDnOdd}, the local coordinates in (1) are obtained using the quiver structure of the CM $S^G$-modules $S_\rho$, which in this case is given in Figure \ref{McKayCMDneven}. 

\begin{figure}[h]
\begin{center}
\begin{pspicture}(-1,-0.75)(10,2)
	\psset{arcangle=15,nodesep=2pt}
\rput(0,0.5){
\scalebox{0.85}{
	\rput(-0,1.7){\rnode{0}{$S_0$}}
	\rput(-0,-1.7){\rnode{0'}{$S_{0'}$}}
	\rput(2,0){\rnode{1}{$S_1$}}
	\rput(4,0){\rnode{2}{$S_2$}}
	\rput(6,0){\rnode{m-2}{\footnotesize$S_{m-2}$}}
	\rput(8,0){\rnode{m-1}{\footnotesize$S_{m-1}$}}
	\rput(10,1.7){\rnode{m}{$S_m$}}
	\rput(10,-1.7){\rnode{m'}{$S_{m'}$}}	
	\rput(2,0.3){\rnode{u1}{}}	
	\rput(4,0.3){\rnode{u2}{}}	
	\rput(8,0.3){\rnode{um-1}{}}	
	\ncarc{->}{0}{0'}\Aput[0.05]{\footnotesize $z$}		
	\ncarc{->}{0'}{0}\Aput[0.05]{\footnotesize $z$}		
	\ncarc{->}{0'}{1}\mput*[0.05]{\scriptsize $(x,\text{-}y)$}		
	\ncarc{->}{1}{0'}\Aput[0.05]{\large $\left(\begin{smallmatrix}y\\\text{-}x\end{smallmatrix}\right)$}	
	\ncarc{->}{0}{1}\Aput[0.05]{\scriptsize $(x,y)$}		
	\ncarc{->}{1}{0}\mput*[0.05]{\large $\left(\!\begin{smallmatrix}y\\x\end{smallmatrix}\!\right)$}	
	\ncline[linestyle=dashed]{-}{2}{m-2}
	\ncarc{->}{1}{2}\Aput[0.05]{$\left(\begin{smallmatrix}x&0\\0&y\end{smallmatrix}\right)$}	
	\ncarc{->}{2}{1}\Aput[0.05]{$\left(\begin{smallmatrix}y&0\\0&x\end{smallmatrix}\right)$}		
	\ncarc{->}{m-2}{m-1}\Aput[0.05]{$\left(\begin{smallmatrix}x&0\\0&y\end{smallmatrix}\right)$}	
	\ncarc{->}{m-1}{m-2}\Aput[0.05]{$\left(\begin{smallmatrix}y&0\\0&x\end{smallmatrix}\right)$}	
	\ncarc{->}{m-1}{m}\aput[0.05](0.65){$\left(\!\begin{smallmatrix}x\\y\end{smallmatrix}\!\right)$}		
	\ncarc{->}{m}{m-1}\mput*[0.05]{\scriptsize $(y,x)$}		
	\ncarc{->}{m-1}{m'}\mput*[0.05]{\large $\left(\begin{smallmatrix}x\\\text{-}y\end{smallmatrix}\right)$}	
	\ncarc{->}{m'}{m-1}\Aput[0.05]{\scriptsize $(y,\text{-}x)$}	
	\ncarc{<-}{m}{m'}\Aput[0.05]{\footnotesize $z$}	
	\ncarc{<-}{m'}{m}\Aput[0.05]{\footnotesize $z$}	
	\nccircle[angleA=-25,nodesep=3pt]{->}{u1}{.3cm}\Bput[0.025]{$\left(\begin{smallmatrix}z&0\\0&\text{-}z\end{smallmatrix}\right)$}		
	 \nccircle[angleA=0,nodesep=3pt]{->}{u2}{.3cm}\Bput[0.075]{$\left(\begin{smallmatrix}z&0\\0&\text{-}z\end{smallmatrix}\right)$}		
	 \nccircle[angleA=17.5,nodesep=3pt]{->}{um-1}{.3cm}\Bput[0.025]{$\left(\begin{smallmatrix}z&0\\0&\text{-}z\end{smallmatrix}\right)$}	
	}}
\end{pspicture}
\caption{Quiver of the CM-modules $S_\rho$ for $G=D_{2n}$, $n$ even.}
\label{McKayCMDneven}.
\end{center}
\end{figure}
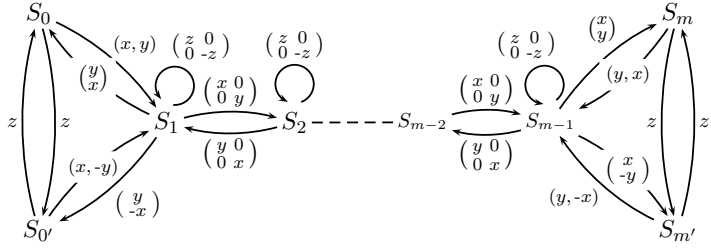

If we call $\pi:\Hilb{G}{\C^3}\to\C^3/G$, it follows that $\pi^{-1}(0)=\bigcup_{i=0}^{m+1}E_i$ where $E_i\cong\mathbb{P}^1$ intersect according to the following dual graph:

\begin{center}
\begin{pspicture}(0,-.5)(5,0.75)
	\psset{arcangle=15,nodesep=2pt}
\rput(0,0){\rnode{0}{$\bullet$}}\rput(0,0.35){$E_0$}
\rput(1,0){\rnode{1}{$\bullet$}}\rput(1,0.35){$E_1$}
\rput(2,0){\rnode{2}{$\cdots$}}
\rput(3,0){\rnode{3}{$\bullet$}}\rput(2.9,0.35){$E_{m-1}$}
\rput(4,0.5){\rnode{4}{$\bullet$}}\rput(4.5,0.5){$E_{m}$}
\rput(4,-0.5){\rnode{4'}{$\bullet$}}\rput(4.65,-0.5){$E_{m+1}$}
\ncline{-}{0}{1}\ncline{-}{1}{2}\ncline{-}{2}{3}\ncline{-}{3}{4}\ncline{-}{3}{4'}\ncline{-}{4}{4'}
\end{pspicture}
\end{center}

The curves $E_i$ have coordinates $(xy)^iz:x^{2m}-y^{2m}$ for $i<m$, $E_m$ has coordinates $x^m+y^m:z(x^m-y^m)$ and $E_{m'}$ has coordinates $x^m-y^m:z(x^m+y^m)$. The rational curves $E_0$, $E_m$ and $E_{m+1}$ are $(-1,-1)$-curves, and the rest of $E_i$'s are $(-2,0)$-curves. By Lemma \ref{floppable} only the flop of $E_0$, $E_m$ and $E_{m+1}$ gives us new crepant resolutions. As in Section \ref{SubSect:MutDnEven}, by the symmetry of the curves $E_m$ and $E_{m+1}$ it is enough to consider flops from $\Hilb{G}{\C^3}$ at $E_0$ and $E_m$. 

{\em Step 3} is analogous to the proof of Theorem \ref{OpensDnOdd}.

{\em Step 4.} Using the same method as in the proof of Theorem \ref{OpensDnOdd} we see that we can flop consecutively the curves $E_0,\ldots,E_{m-1}$ (in this order) to obtain the chain of flops \[
\Hilb{G}{\C^3}=X\dashrightarrow X_0\dashrightarrow\ldots\dashrightarrow X_{0\ldots m-1}
\] 
At every step we perform the flop as in the proof of Theorem \ref{OpensDnOdd}, producing each time two new open sets. The dual graph of the fibre over the origin of these crepant resolutions are the same as the dual graph for $\Hilb{G}{\C^3}$ except for $X_{0\ldots m-1}$ which is

\begin{center}
\begin{pspicture}(0,-.5)(5,0.75)
	\psset{arcangle=15,nodesep=2pt}
\rput(0,0){\rnode{0}{$\bullet$}}\rput(0,0.35){$E'_0$}
\rput(1,0){\rnode{1}{$\bullet$}}\rput(1,0.35){$E'_1$}
\rput(2,0){\rnode{2}{$\cdots$}}
\rput(3,0){\rnode{3}{$\bullet$}}\rput(2.85,0.35){$E'_{m-1}$}
\rput(4,0.5){\rnode{4}{$\bullet$}}\rput(4.5,0.5){$E_{m}$}
\rput(4,-0.5){\rnode{4'}{$\bullet$}}\rput(4.65,-0.5){$E_{m+1}$}
\ncline{-}{0}{1}\ncline{-}{1}{2}\ncline{-}{2}{3}\ncline{-}{3}{4}\ncline{-}{3}{4'}
\end{pspicture}
\end{center}

In any of the crepant resolutions $X_{0\ldots i}$ except for $i=m-1$ we can also flop the rational curve $E_m$ to obtain the crepant resolution $X_{0\ldots i}^m$, where now $E_{m-1}$ and the flopped curve $E'_m$ are $(-1,-1)$-curves. Flopping $E'_m$ again takes us back to $X_{0\ldots i}$ and flopping $E_{m-1}$ leads us to $X_{0\ldots i}^{m(m-1)}$. In the same way we obtain the sequence of flops 
\[
X_{0\ldots i}\dashrightarrow X_{0\ldots i}^m\dashrightarrow X_{0\ldots i}^{m(m-1)}\dashrightarrow\ldots\dashrightarrow X_{0\ldots i}^{m\ldots i+2} 
\]
and continuing the process in the same fashion we construct the $(m+1)(m+2)/2$ crepant resolutions $\pi_{ij}:X_i^j\to\C^3/G$. Except for $X_{0\ldots i}$ which is described above, the dual graph of the fibre $\pi^{-1}(0)$ for the rest of crepant resolutions is

\begin{center}
\begin{pspicture}(0,0)(4,0.25)
	\psset{arcangle=15,nodesep=2pt}
\rput(0,0){\rnode{0}{$\bullet$}}
\rput(1,0){\rnode{1}{$\bullet$}}
\rput(2,0){\rnode{2}{$\cdots$}}
\rput(3,0){\rnode{3}{$\bullet$}}
\ncline{-}{0}{1}\ncline{-}{1}{2}\ncline{-}{2}{3}
\end{pspicture}
\end{center}

which finishes the proof of (3).

For (2), the degrees of the normal bundles in each case are obtained by using the gluings between the open sets shown below, and the result follows.
\[
\small
\begin{array}{l|l}
U_i\to U_{i+1}:  (d,D,u)\mapsto(d^2D,d^{-1},u), \text{\scriptsize $i\leq m-1$} 
		& U''_i\to U'_{i+1}: (a,d,D)\mapsto(ad,d^{-1},dD), \text{\scriptsize $i\leq m$}  \\
U_{m}\to U_{m+1}:  (d,D,u)\mapsto(u-4d^2D,d^{-1},u) 
		&  U''_i\to U''_{i+1}: \\
U_{m+1}\to V'_{m+2}:  (C',c',B')\mapsto(B'C',c'C',(C')^{-1}) 
		& \qquad	 (a,d,D)\mapsto(a,d^2D,d^{-1}), \text{\scriptsize $i\leq m-2$} \\
U_{m+1}\to V''_{m+3}:  (C',c',B')\mapsto(B'C',B'c',(B')^{-1}) 
		& \qquad (a,d,D)\mapsto(a,d^{-1},1-4d^2D), \text{\scriptsize $i=m-1$} \\	
U'_i\to U_{i+1}:  
		& U_i\to V'_{i+1}: 	 (d,D,u)\mapsto(du,dD,d^{-1}), \text{\scriptsize $i\leq m-1$} \\
\qquad (a,d,D)\mapsto(aD,a^{-1},ad), \text{\scriptsize $i\leq m-1$} 
		& \qquad	 (d,D,u)\mapsto(du,d^{-1}u-4dD,d^{-1}), \text{\scriptsize $i=m$} \\
\qquad (a,d,D)\mapsto(ad-4aD,a^{-1},ad), \text{\scriptsize $i=m$} 
		& V'_i\to V''_{i+1}: 	 (a',u,C')\mapsto(a'u,(a')^{-1},a'C'), \text{\scriptsize $i\leq m+1$} \\	
U'_i\to V'_{i+1}: 	 
		& V''_i\to V''_{i+1}: 	 (d,D,C')\mapsto(d^2D,d^{-1},C'), \text{\scriptsize $i\leq m$} \\	
\qquad (a,d,D)\mapsto(a^2d,D,a^{-1}), \text{\scriptsize $i\leq m-1$} 
		& \qquad (d,D,C')\mapsto((C')^2-4d^2D,d^{-1},C'), \text{\scriptsize $i=m+1$} \\
\qquad (a,d,D)\mapsto(a^2d,d-4D,a^{-1}), \text{\scriptsize $i=m$} 	
		& \qquad (A,c',C')\mapsto(A,c'(C')^2,(C')^{-1}), \text{\scriptsize $i=m+2$} \\
\end{array}
\]

\end{proof}

\subsection{The tetrahedral group}

Let $G$ be the tetrahedral group of order 12. In this case the McKay quiver and the relations derived by the potential $W$ are the following:

\begin{center}
\begin{pspicture}(0,-1.5)(10,1.75)
	\psset{arcangle=15,nodesep=2pt}
\rput(2.5,0.25){
\scalebox{0.8}{
	\rput(0,-2){\rnode{0}{$0$}}
	\rput(0,0){\rnode{3}{$3$}}
	\rput(-1.5,1.5){\rnode{1}{$1$}}
	\rput(1.5,1.5){\rnode{2}{$2$}}
	\rput(-0.3,0){\rnode{u}{}}	
	\rput(0.3,0){\rnode{v}{}}	
	\ncarc{->}{0}{3}\Aput[0.05]{\footnotesize $a$}	
	\ncarc{->}{3}{0}\Aput[0.05]{\footnotesize $A$}	
	\ncarc{->}{1}{3}\Aput[0.05]{\footnotesize $b$}	
	\ncarc{->}{3}{1}\Aput[0.05]{\footnotesize $B$}	
	\ncarc{->}{2}{3}\Aput[0.05]{\footnotesize $c$}	
	\ncarc{->}{3}{2}\Aput[0.05]{\footnotesize $C$}	
	\nccircle[angleA=120,nodesep=3pt]{->}{u}{.3cm}\Bput[0.05]{\footnotesize $u$}
	 \nccircle[angleA=240,nodesep=3pt]{->}{v}{.3cm}\Bput[0.05]{\footnotesize $v$}
	}}
\rput(7,1){$uA=vA$, $au=av$}
\rput(7,0.5){$uB=\omega vB$,  $bu=\omega bv$}
\rput(7,0){$uC=\omega^2 vC$, $cu=\omega^2 cv$}
\rput(7,-0.5){$Aa+\omega Bb+\omega^2 Cc=u^2$}
\rput(7,-1){$Aa+\omega^2 Bb+\omega Cc=v^2$}
\end{pspicture}
\end{center}

Considering the arrows as linear maps between vector spaces we denote them by $a:=(a_1,a_2,a_3)$, $A:=\left(\begin{smallmatrix}A_1\\A_2\\A_3\end{smallmatrix}\right)$, $b:=(b_1,b_2,b_3)$, $B:=\left(\begin{smallmatrix}B_1\\B_2\\B_3\end{smallmatrix}\right)$, $c:=(c_1,c_2,c_3)$, $C:=\left(\begin{smallmatrix}C_1\\C_2\\C_3\end{smallmatrix}\right)$, $u:=\left(\begin{smallmatrix}u_{11}&u_{12}&u_{13}\\u_{21}&u_{22}&u_{23}\\u_{31}&u_{32}&u_{33}\end{smallmatrix}\right)$ and $v:=\left(\begin{smallmatrix}v_{11}&v_{12}&v_{13}\\v_{21}&v_{22}&v_{23}\\v_{31}&v_{32}&v_{33}\end{smallmatrix}\right)$. 

The quiver structure of the CM $S^G$-modules $S_\rho$ in this case is the following:

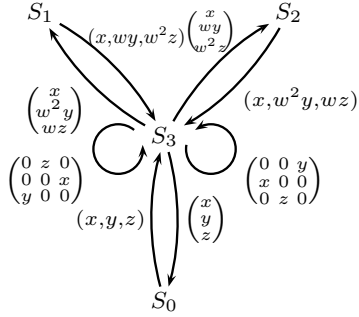
\begin{figure}[htbp]
\begin{center}
\begin{pspicture}(0,-1.1)(2,3)
	\psset{arcangle=15,nodesep=2pt}
\rput(1,1){
\scalebox{1.1}{
	\rput(0,-2){\rnode{0}{\scriptsize $S_{0}$}}
	\rput(0,0){\rnode{3}{\scriptsize $S_{3}$}}
	\rput(-1.5,1.5){\rnode{1}{\scriptsize $S_{1}$}}
	\rput(1.5,1.5){\rnode{2}{\scriptsize $S_{2}$}}
	\rput(-0.3,0){\rnode{u}{}}	
	\rput(0.3,0){\rnode{v}{}}	
	\ncarc{->}{0}{3}\Aput[0.05]{\tiny $(x,\!y,\!z)$}	
	\ncarc{->}{3}{0}\Aput[0.05]{\footnotesize $\left(\!\begin{smallmatrix}x\\y\\z\end{smallmatrix}\!\right)$}	
	\ncarc{->}{1}{3}	
	\ncarc{->}{3}{1}\Aput[0.05]{\footnotesize $\left(\!\!\begin{smallmatrix}x\\w^2y\\wz\end{smallmatrix}\!\!\right)$}	
	\ncarc{->}{2}{3}\Aput[0.05]{\tiny $(x,\!w^2y,\!wz)$}	
	\ncarc{->}{3}{2} 	
	\nccircle[angleA=120,nodesep=3pt]{->}{u}{.3cm}\Bput[0.05]{\footnotesize $\left(\!\begin{smallmatrix}0&z&0\\0&0&x\\y&0&0\end{smallmatrix}\!\right)$} 
	 \nccircle[angleA=240,nodesep=3pt]{->}{v}{.3cm}\Bput[0.05]{\footnotesize $\left(\!\begin{smallmatrix}0&0&y\\x&0&0\\0&z&0\end{smallmatrix}\!\right)$} 
	\scalebox{0.75}{\rput(-0.45,1.6){\footnotesize $(x,\!wy,\!w^2z)$}}
	 \scalebox{0.85}{\rput(0.5,1.5){\footnotesize $\left(\!\!\begin{smallmatrix}x\\wy\\w^2z\end{smallmatrix}\!\!\right)$}}
	}}
\end{pspicture}
\caption{The quiver structure of the CM $S^G$-modules for $G=\mathbb{T}$}
\label{xyQuiverE6}
\end{center}
\end{figure}

Let us define the following polynomials which appear frequently in the rest of this section.
\[
\begin{array}{ll}
f_0:=x^2+y^2+z^2, & R_0:=y^2z^2+x^2z^2+x^2y^2, \\
f_1:=x^2+\omega^2 y^2+\omega z^2, & R_1:=y^2z^2+\omega x^2z^2+\omega^2x^2y^2, \\
f_2:=x^2+\omega y^2+\omega^2 z^2, & R_2:=y^2z^2+\omega^2x^2z^2+\omega x^2y^2. \\
f_3:=xyz, \\
f_4:=(x^2-y^2)(y^2-z^2)(z^2-x^2).  
\end{array}
\]

Notice that we have $3R_0=f_0^2-f_1f_2$, $3R_1=f_1^2-f_0f_2$, $3R_2=f_2^2-f_0f_1$, $R_0^3=R_1R_2+3f_0f^2_3$, $R_1^2=R_0R_2+3f_1f_3^2$, $R_2^2=R_0R_1+3f_2f_3^2$ and $R_2^3-R_1^3=3f_3^2(f_2R_2-f_1R_1)$, as some of the relations among these polynomials. The invariant ring $S^G$ is generated by $f_0,f_3,f_1f_2$ and $f_4$ (See \cite[\S 2]{GNS1}) but $f_1f_2 = f_0^2 - 3R_0$ holds, hence one can take $R_0$ as a generator of $S^G$ instead of $f_1f_2$. There is only one relation between these polynomials:
\[
f_4^2+4R_0^3-f_0^2R_0^2-18f_0R_0f_3^2+4f_0^3f_3^2+27f_3^4.
\]

\begin{thm}\label{OpensE6} Let $G$ be the tetrahedral group of order 12 and let $\pi:Y\to\C^3/G$ be a crepant resolution. Then, 
\begin{enumerate}
\item There exist 5 crepant resolutions of $\pi:X_i\to\C^3/G$ related by flops with the following configuration:

\begin{center}
\begin{pspicture}(0,-1)(8,1)
	\psset{arcangle=15,nodesep=4pt}
	\rput(0,0){\rnode{1}{$X_0$}}
	\rput(2,0.75){\rnode{2}{$X_1$}}
	\rput(2,-0.75){\rnode{3}{$X_2$}}
	\rput(4,0){\rnode{4}{$X_{12}$}}
	\rput(6,0){\rnode{5}{$X_{123}$}}
	\ncline[linestyle=dashed]{<->}{1}{2}
	\ncline[linestyle=dashed]{<->}{1}{3}
	\ncline[linestyle=dashed]{<->}{2}{4}
	\ncline[linestyle=dashed]{<->}{3}{4}
	\ncline[linestyle=dashed]{<->}{4}{5}
\end{pspicture}
\end{center}
Moreover, $X_0\cong\Hilb{G}{\C^3}$ and every $X_i$ is described as the union of 4 open sets isomorphic to $\C^3$. The open covers are 
$X_0=U_0\cup U_1\cup U_2\cup U_3$, $X_1=U'_0\cup U'_1\cup U_2\cup U_3$, $X_2=U_0\cup U_1\cup U'_2\cup U'_3$, $X_3=U'_0\cup U'_1\cup U'_2\cup U'_3$ and $X_4=U'_0\cup U''_1\cup U''_2\cup U'_3$, where the local coordinates in each open set are
{\renewcommand{\arraystretch}{1.5}
\[
\begin{array}{l|l}
U_0\cong\C^3_{c_2,c_3,C_1}=\Spec[\frac{f_1^2f_3}{R_1}, \frac{f_1R_2}{R_1}, \frac{f_2}{f_1^2}] & 
	U'_0\cong\C^3_{c_3,C_1,C_3}=\Spec[\frac{R_2}{f_1f_3}, \frac{f_2f_3}{R_1}, \frac{f_1^2f_3}{R_1}] \\

U_1\cong\C^3_{c_2,c_3,C_3}=\Spec[\frac{f_2f_3}{R_1}, \frac{f_2R_2}{f_1R_1}, \frac{f_1^2}{f_2}] & 
	U'_1\cong\C^3_{c_2,C_1,C_3}=\Spec[\frac{f_1f_3}{R_2}, \frac{f_2R_2}{f_1R_1}, \frac{f_1R_2}{R_1}] \\
U_2\cong\C^3_{b_2,b_3,B_3}=\Spec[\frac{f_1f_3}{R_2}, \frac{f_1R_1}{f_2R_2}, \frac{f_2^2}{f_1}] & 
	U'_2\cong\C^3_{b_2,B_1,B_3}=\Spec[\frac{f_2f_3}{R_1},\frac{f_1R_1}{f_2R_2},\frac{f_2R_1}{R_2}] \\ 
U_3\cong\C^3_{b_2,b_3,B_1}=\Spec[\frac{f_2^2f_3}{R_2}, \frac{f_2R_1}{R_2}, \frac{f_1}{f_2^2}] & 
	U'_3\cong\C^3_{b_3,B_1,B_3}=\Spec[\frac{R_1}{f_2f_3},\frac{f_1f_3}{R_2},\frac{f_2^2f_3}{R_2}] \\
U''_1\cong\C^3_{B_1,c_1,C_1}=\Spec[\frac{f_1f_3}{R_2}, \frac{R_0}{f_3}, \frac{f_2f_3}{R_1}] &
	U''_2\cong\C^3_{B_1,c_2,C_1}=\Spec[\frac{f_1R_0}{R_2}, \frac{f_3}{R_0}, \frac{f_2R_0}{R_1}] 
\end{array}
\]}
\item The dual graphs of $\pi^{-1}(0)$ in each crepant resolution with the corresponding degrees for the normal bundles are:

\begin{center}
\begin{pspicture}(0,-1.5)(14,1.75)
\rput(0,0){\
\scalebox{0.65}{
	\rput(0,0){\rnode{1}{$\bullet$}}
	\rput(1.5,0){\rnode{2}{$\bullet$}}
	\rput(3,0){\rnode{3}{$\bullet$}}
	\ncline{-}{1}{2}
	\ncline{-}{2}{3}
	\rput(0,0.5){\small $(-1,-1)$}
	\rput(1.5,0.5){\small $(-3,1)$}
	\rput(3,0.5){\small $(-1,-1)$}
}}
\rput(4.25,1){
\scalebox{0.65}{
	\rput(0,0){\rnode{1}{$\bullet$}}
	\rput(1.5,0){\rnode{2}{$\bullet$}}
	\rput(3,0){\rnode{3}{$\bullet$}}
	\ncline{-}{1}{2}
	\ncline{-}{2}{3}
	\rput(0,0.5){\small $(-1,-1)$}
	\rput(1.5,0.5){\small $(-2,0)$}
	\rput(3,0.5){\small $(-1,-1)$}
}}
\rput(4.25,-1){
\scalebox{0.65}{
	\rput(0,0){\rnode{1}{$\bullet$}}
	\rput(1.5,0){\rnode{2}{$\bullet$}}
	\rput(3,0){\rnode{3}{$\bullet$}}
	\ncline{-}{1}{2}
	\ncline{-}{2}{3}
	\rput(0,0.5){\small $(-1,-1)$}
	\rput(1.5,0.5){\small $(-2,0)$}
	\rput(3,0.5){\small $(-1,-1)$}
}}
\rput(8.5,0){
\scalebox{0.65}{
	\rput(0,0){\rnode{1}{$\bullet$}}
	\rput(1.5,0){\rnode{2}{$\bullet$}}
	\rput(3,0){\rnode{3}{$\bullet$}}
	\ncline{-}{1}{2}
	\ncline{-}{2}{3}
	\rput(0,0.5){\small $(-1,-1)$}
	\rput(1.5,0.5){\small $(-1,-1)$}
	\rput(3,0.5){\small $(-1,-1)$}
}}
\rput(12.5,0.5){
\scalebox{0.65}{
	\rput(0,0){\rnode{1}{$\bullet$}}
	\rput(2,0){\rnode{2}{$\bullet$}}
	\rput(1,-1){\rnode{3}{$\bullet$}}
	\ncline{-}{1}{2}
	\ncline{-}{2}{3}
	\ncline{-}{3}{1}
	\rput(0,0.5){\small $(-2,0)$}
	\rput(2,0.5){\small $(-2,0)$}
	\rput(1,-1.5){\small $(-1,-1)$}
}}
\psline[linestyle=dashed]{<->}(2.75,0.25)(3.75,0.75)
\psline[linestyle=dashed]{<->}(2.75,-0.25)(3.75,-0.75)
\psline[linestyle=dashed]{<->}(6.75,0.75)(7.75,0.25)
\psline[linestyle=dashed]{<->}(6.75,-0.75)(7.75,-0.25)
\psline[linestyle=dashed]{<->}(11.25,0)(12.5,0)
\end{pspicture}
\end{center}

\end{enumerate}

\end{thm}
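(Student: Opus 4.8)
The plan is to reproduce, for $G=\mathbb T$, the four-step strategy already carried out for the dihedral groups in the proofs of Theorems~\ref{OpensDnOdd} and~\ref{OpensDnEven}: first realize $\Hilb{G}{\C^3}$ explicitly as a union of four affine charts, then read the normal bundles off the gluing maps, identify the floppable $(-1,-1)$-curves via Lemma~\ref{floppable}, and finally generate every remaining crepant resolution by flopping those curves. The genuinely new ingredient is the three-dimensional vertex $V_3$, which governs most of the combinatorics.

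First I would establish the analogue of Claims~\ref{claDNodd} and~\ref{claDNeven}: that $\Hilb{G}{\C^3}\cong\mathcal{M}_\theta$ for the $0$-generated stability $\theta$, with the open cover $U_0\cup U_1\cup U_2\cup U_3$. Since $\theta$ is $0$-generated, the space at every vertex must be spanned by paths issuing from $0$; at the three one-dimensional vertices this is a single normalization, but at $V_3$ one must select three linearly independent paths $0\to 3$, built from $a$ and the loops $u,v$ together with the excursions through vertices $1,2$ via $b,c$. The defining relations $R$, namely $uA=vA$, $au=av$, $uB=\omega vB$, $uC=\omega^2 vC$ (and cyclic variants) together with $Aa+\omega Bb+\omega^2 Cc=u^2$ and $Aa+\omega^2 Bb+\omega Cc=v^2$, cut the choices down and allow changes of basis at vertex $3$; I expect exactly four surviving combinations of generating paths, giving the four charts $\cong\C^3$ recorded in part~(2). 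This is a finite but fiddly piece of linear-algebra bookkeeping.

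Next, to pin down the local coordinates and their invariant-theoretic form in~(2), I would use, exactly as in the dihedral proofs, the quiver of the CM $S^G$-modules $S_\rho=(S\otimes\rho)^G$ (which coincides with the McKay quiver) to choose a basis of each $V_\rho$ by following the distinguished arrows of a given chart. The generators $f_0,f_3,R_0,f_4$ of $S^G$ and the syzygies among $f_0,f_1,f_2,f_3,R_0,R_1,R_2$ listed before the statement then express each coordinate function, and rescaling as in the proof of Claim~\ref{claDNodd} yields the stated $\Spec\C[\cdots]$ descriptions. From the explicit gluings I read off the degrees of $\mathcal N_{X/E}$, confirming that in $X_0\cong\Hilb{G}{\C^3}$ the fibre $\pi^{-1}(0)$ is the chain $E_1$--$E_3$--$E_2$ with $E_1,E_2$ of type $(-1,-1)$ and the central $E_3$ of type $(-3,1)$, in agreement with Corollary~\ref{cor:DualGraph} (two loops at vertex $3$).

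Finally I would generate the remaining resolutions by flops. By Lemma~\ref{floppable} only $(-1,-1)$-curves are floppable, so from $X_0$ I flop $E_1$ and $E_2$ to obtain $X_1$ and $X_2$, using the dominating $\PP^1\times\PP^1$ representation-space device of Claim~\ref{claDNodd} to compute the new charts; in each the central curve drops from $(-3,1)$ to $(-2,0)$. Flopping the second end curve produces $X_{12}$ (the all-primed cover $U'_0\cup U'_1\cup U'_2\cup U'_3$), now a chain of three $(-1,-1)$-curves. \textbf{The main obstacle} is the last flop $X_{12}\dashrightarrow X_{123}$, which flops the central curve $E_3$ meeting both $E_1$ and $E_2$: here the dual graph changes from a chain to a triangle, and the two outer curves pass from type $(-1,-1)$ to $(-2,0)$. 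Verifying this requires the dominating representation space for the $(-1,-1)$-flop of $E_3$ and a careful recomputation of the gluings among $U'_0,U''_1,U''_2,U'_3$ to confirm the new intersection pattern and the normal bundles; geometrically this is precisely the mutation $\mu_3$ at the three-dimensional vertex in Figure~\ref{E_6}, and matching it with the geometry is the delicate point. Since in $X_{123}$ the only $(-1,-1)$-curve left lies on the triangle and flopping it returns to $X_{12}$, the process terminates after five resolutions, giving parts~(1) and~(3).
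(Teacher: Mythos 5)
Your proposal follows essentially the same route as the paper's own proof: establish $\Hilb{G}{\C^3}\cong\mathcal{M}_\theta$ for the $0$-generated stability with four affine charts (the paper's Claim, proved by the same path-generation analysis at the $3$-dimensional vertex using the relations), compute coordinates via the quiver of CM $S^G$-modules, and then generate $X_1,X_2,X_{12},X_{123}$ by successive $(-1,-1)$-flops justified by Lemma~\ref{floppable}, ending when the only remaining flop returns to $X_{12}$. All your intermediate claims (the chain $E_1$--$E_3$--$E_2$ with degrees $(-1,-1),(-3,1),(-1,-1)$, the central curve dropping to $(-2,0)$ after one end flop, the chain of three $(-1,-1)$-curves in $X_{12}$, and the triangle dual graph in $X_{123}$) agree with the paper's computations, so the plan is correct and matches the published argument.
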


\begin{proof} We start by calculating explicitly $\Hilb{G}{\C^3}$ as a moduli space of representations of the McKay QP. Let $\theta^0$ be a $0$-generated stability condition. Then $X_0:=\Hilb{G}{\C^3}$ is covered by $U_0$, $U_1$, $U_2$ and $U_3$, where
\[
\left.\begin{array}{cccccc}
U_0: & aB=1 & aBbC=1 & a=(1,0,0) & au=(0,1,0) & b=(0,0,1)\\
U_1: & aB=1 & aC=1 & a=(1,0,0) & au=(0,1,0) & b=(0,0,1)\\
U_2: & aB=1 & aC=1 & a=(1,0,0) & au=(0,1,0) & c=(0,0,1)\\
U_3: & aCcB=1 & aC=1 & a=(1,0,0) & au=(0,1,0) & c=(0,0,1)
\end{array}\right.
\]

First notice that by using the relations we have that $auB = avB = \omega^2 auB$, which implies $auB=0$. Similarly we obtain that following paths vanish:   
\[
auB=avB=auC=avC=buA=bvA=buC=bvC=cuA=cvA=cuB=cvB=0 \ \ (\#)
\]
We split the calculation this time in 5 steps:
\begin{enumerate}
\item[(i)] By changing basis we can assume that $a= (1,0,0)$.
\item[(ii)] If $aB=aC=0$, then it follows $au^iv^jB=au^iv^jC=0$ by the relations of the middle vertex, which contradicts the 0-generated stability condition $\theta$. Therefore either $aB\neq0$ or $aC\neq0$. Moreover we may assume that $aB=1$ or $aC=1$ by change of basis.
\item[(iii)] We consider the case $aB=1$ and $aC=0$. If $aBbC=0$, then it turns out that any path through $C$ is zero by the relations. This contradicts the 0-generated condition. So it must be $aBbC\not=0$ and $b$ not a linear multiple of $a$. We may assume $aBbC=1$ and $b=(0,0,1)$. Next assume that $au=(\lambda,0,\eta)$ for some $\lambda, \eta \in \mathbb C$. Since $auC=0$ by (\#), and $C_1=0, C_3=1$ by $aC=0,ABbC=1$, it follows that $\eta=0$. Moreover since $auB=0$ by (\#), and $B_1=1$ by $aB=1$, it follows that $\lambda=0$ hence $au=0$, which leads to $aBbC = aAaC + \omega aBbC + \omega^2 aCcC = au^2 = 0$. This contradicts $aBbC=1$, hence $au$ is linear independent of $(1,0,0)$ and $(0,0,1)$. Therefore we can take $au=(0,1,0)$ by change of basis. These are the conditions for $U_0$.
\item[(iv)] The case $aB=0$ and $aC=1$ is similar to (iii). This case leads to $U_3$.
\item[(v)] Consider the case $aB=aC=1$. If $au=(\lambda,0,0)$ for some $\lambda \in \mathbb C$, because $auB=0$ and $B_1=1$, we must have $\lambda=0$, hence $au=0$. The relations $aAa+\omega aBb + \omega^2 aCc = au^2$ and $aAa+\omega^2 aBb + \omega aCc = av^2$ means
\begin{align*}
\left\{\begin{array}{r}
A_1 + \omega b_1 + \omega^2 c_1 = 0 \\
\omega b_2 + \omega^2 c_2 = 0 \\
\omega b_3 + \omega^2 c_3 = 0
\end{array}\right.
\qquad
\left\{\begin{array}{r}
A_1 + \omega^2 b_1 + \omega c_1 = 0 \\
\omega^2 b_2 + \omega c_2 = 0 \\
\omega^2 b_3 + \omega c_3 = 0
\end{array}\right.
\end{align*}
hence it follows $b_1 = c_1$ and $b_2=c_2=b_3=c_3=0$, that is, $b = c = (b_1,0,0)$. This means we cannot generate the middle vertex, which contradicts the 0-generated condition. Therefore $au$ is not a linear multiple of $a$, hence we can assume $au =(0,1,0)$. 

We claim that if both of $b$ and $c$ are linear multiple of $a$ and $au$, then it contradicts the 0-generated condition. Indeed, if we assume $b=(b_1,b_2,0)$ and $c=(c_1,c_2,0)$, the relations $aAa+\omega aBb + \omega^2 aCc = au^2$ and $aAa+\omega^2 aBb + \omega aCc = av^2$ are equivalent to 
\begin{align*}
\left\{\begin{array}{r}
A_1 + \omega b_1 + \omega^2 c_1 = u_{21} \\
\omega b_2 + \omega^2 c_2 = u_{22} \\
0 = u_{23}
\end{array}\right.
\qquad
\left\{\begin{array}{r}
A_1 + \omega^2 b_1 + \omega c_1 = v_{21} \\
\omega^2 b_2 + \omega c_2 = v_{22} \\
0 = v_{23}
\end{array}\right.
\end{align*}
Therefore 
\begin{align*}
au^2&=(0,1,0)u=(u_{21},u_{22},u_{23})=(u_{21},u_{22},0), \\
av^2&=(0,1,0)u=(v_{21},v_{22},v_{23})=(v_{21},v_{22},0),
\end{align*} 
which are linear combinations of $a$ and $au$. Therefore we can not generate the middle vertex. Consequently it must be $b=(0,0,1)$ or $c=(0,0,1)$. These conditions give $U_1$ and $U_2$ respectively.

We show in Figure \ref{RepSpT} the representation spaces for each open set in $\Hilb{G}{\C^3}$.

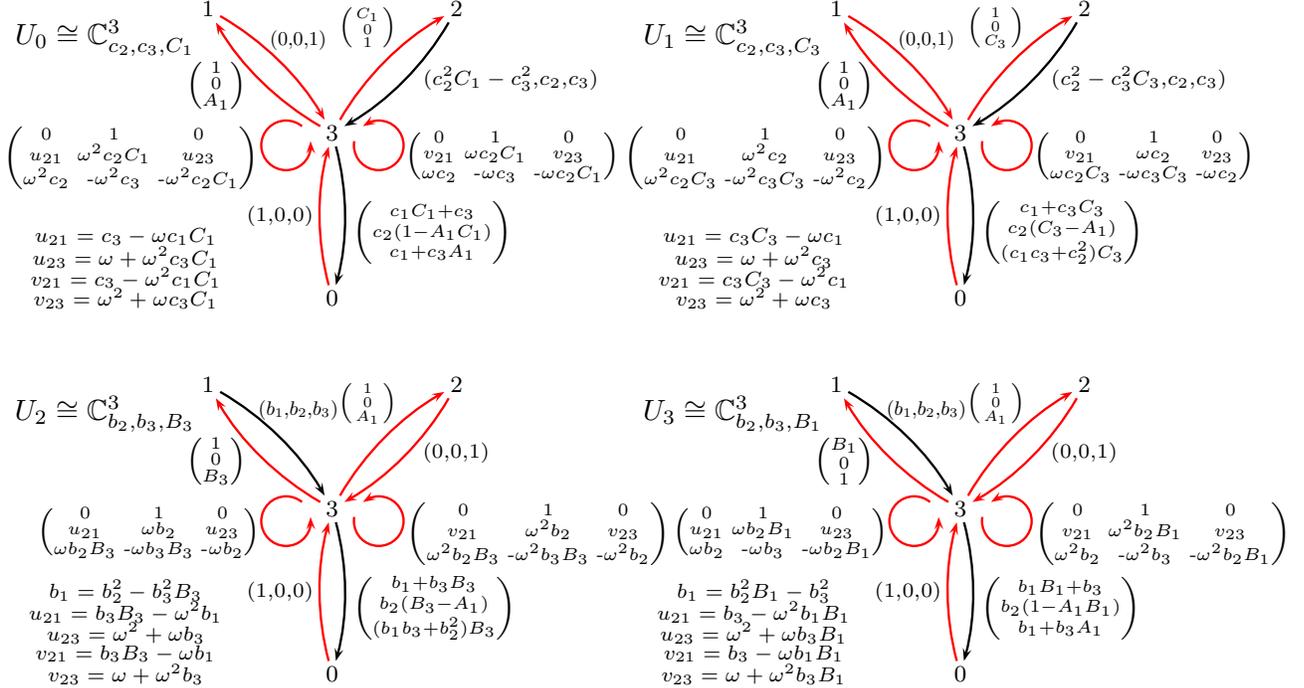
\begin{figure}[h]
\begin{center}
\begin{pspicture}(0,-1.5)(10,7.5)
	\psset{arcangle=15,nodesep=2pt}
	

\rput(1.15,5.5){
\scalebox{1.1}{
	\rput(0,-2){\rnode{0}{\scriptsize $0$}}
	\rput(0,0){\rnode{3}{\scriptsize $3$}}
	\rput(-1.5,1.5){\rnode{1}{\scriptsize $1$}}
	\rput(1.5,1.5){\rnode{2}{\scriptsize $2$}}
	\rput(-0.3,0){\rnode{u}{}}	
	\rput(0.3,0){\rnode{v}{}}	
	\ncarc[linecolor=red]{->}{0}{3}\Aput[0.05]{\tiny $(1,\!0,\!0)$}	
	\ncarc{->}{3}{0}\aput[0.05](0.6){\footnotesize $\left(\!\begin{smallmatrix}c_1C_1 + c_3 \\c_2(1-A_1C_1) \\c_1+c_3A_1 \end{smallmatrix}\!\right)$}	
	\ncarc[linecolor=red]{->}{1}{3}	
	\scalebox{0.75}{\rput(-0.55,1.5){\footnotesize $(0,\!0,\!1)$}} 	
	\ncarc[linecolor=red]{->}{3}{1}\aput[-0.05](0.65){\footnotesize $\left(\!\begin{smallmatrix}1 \\0 \\ A_1\end{smallmatrix}\!\right)$}	
	\ncarc{->}{2}{3}\aput[0.025](0.35){\tiny $(c_2^2C_1-c_3^2,\!c_2,\!c_3)$}	
	\ncarc[linecolor=red]{->}{3}{2}\scalebox{0.85}{\rput(0.5,1.5){\footnotesize $\left(\begin{smallmatrix}C_1 \\0 \\ 1\end{smallmatrix}\right)$}}	
	\nccircle[linecolor=red,angleA=120,nodesep=3pt]{->}{u}{.3cm}\bput[-0.05](0.45){\footnotesize $\left(\!\begin{smallmatrix}0&1&0 \\u_{21}&\omega^2 c_2C_1&u_{23} \\ \omega^2c_2&\text{-}\omega^2c_3&\text{-}\omega^2c_2C_1\end{smallmatrix}\!\right)$} 
	 \nccircle[linecolor=red,angleA=240,nodesep=3pt]{->}{v}{.3cm}\bput[-0.05](0.55){\footnotesize $\left(\!\begin{smallmatrix}0&1&0 \\v_{21}&\omega c_2C_1&v_{23} \\ \omega c_2&\text{-}\omega c_3&\text{-}\omega c_2C_1\end{smallmatrix}\!\right)$} 
\rput(-2.5,-1.25){\tiny $u_{21}=c_3-\omega c_1C_1$}
\rput(-2.5,-1.5){\tiny $u_{23}=\omega+\omega^2c_3C_1$}
\rput(-2.5,-1.75){\tiny $v_{21}=c_3-\omega^2 c_1C_1$}
\rput(-2.5,-2){\tiny $v_{23}=\omega^2 +\omega c_3C_1$}
\rput(-2.75,1.15){\small $U_0\cong\C^3_{c_2,c_3,C_1}$}

	}}
\rput(9.5,5.5){
\scalebox{1.1}{
	\rput(0,-2){\rnode{0}{\scriptsize $0$}}
	\rput(0,0){\rnode{3}{\scriptsize $3$}}
	\rput(-1.5,1.5){\rnode{1}{\scriptsize $1$}}
	\rput(1.5,1.5){\rnode{2}{\scriptsize $2$}}
	\rput(-0.3,0){\rnode{u}{}}	
	\rput(0.3,0){\rnode{v}{}}	
	\ncarc[linecolor=red]{->}{0}{3}\Aput[0.05]{\tiny $(1,\!0,\!0)$}	
	\ncarc{->}{3}{0}\aput[0.05](0.6){\footnotesize $\left(\!\begin{smallmatrix}c_1 + c_3C_3 \\c_2(C_3-A_1) \\ (c_1c_3+c_2^2)C_3 \end{smallmatrix}\!\right)$}	
	\ncarc[linecolor=red]{->}{1}{3}	
	\scalebox{0.75}{\rput(-0.55,1.5){\footnotesize $(0,\!0,\!1)$}}  
	\ncarc[linecolor=red]{->}{3}{1}\aput[-0.05](0.65){\footnotesize $\left(\!\begin{smallmatrix}1 \\0 \\ A_1\end{smallmatrix}\!\right)$}	
	\ncarc{->}{2}{3}\aput[0.025](0.35){\tiny $(c_2^2-c_3^2C_3,\!c_2,\!c_3)$}	
	\ncarc[linecolor=red]{->}{3}{2} \scalebox{0.85}{\rput(0.5,1.5){\footnotesize $\left(\begin{smallmatrix}1 \\0 \\ C_3\end{smallmatrix}\right)$}}	
	\nccircle[linecolor=red,angleA=120,nodesep=3pt]{->}{u}{.3cm}\bput[-0.05](0.45){\footnotesize $\left(\!\begin{smallmatrix}0&1&0 \\u_{21}&\omega^2 c_2&u_{23} \\ \omega^2 c_2C_3&\text{-}\omega^2 c_3C_3 & \text{-}\omega^2 c_2 \end{smallmatrix}\!\right)$} 
	 \nccircle[linecolor=red,angleA=240,nodesep=3pt]{->}{v}{.3cm}\bput[-0.05](0.55){\footnotesize $\left(\!\begin{smallmatrix}0&1&0 \\v_{21}&\omega c_2&v_{23} \\ \omega c_2C_3&\text{-}\omega c_3C_3 & \text{-}\omega c_2 \end{smallmatrix}\!\right)$} 

\rput(-2.5,-1.25){\tiny $u_{21}=c_3C_3-\omega c_1$}
\rput(-2.5,-1.5){\tiny $u_{23}=\omega +\omega^2 c_3$}
\rput(-2.5,-1.75){\tiny $v_{21}=c_3C_3-\omega^2 c_1$}
\rput(-2.5,-2){\tiny $v_{23}=\omega^2 +\omega c_3$}
\rput(-2.75,1.15){\small $U_1\cong\C^3_{c_2,c_3,C_3}$}
	}}


\rput(1.15,0.5){
\scalebox{1.1}{
	\rput(0,-2){\rnode{0}{\scriptsize $0$}}
	\rput(0,0){\rnode{3}{\scriptsize $3$}}
	\rput(-1.5,1.5){\rnode{1}{\scriptsize $1$}}
	\rput(1.5,1.5){\rnode{2}{\scriptsize $2$}}
	\rput(-0.3,0){\rnode{u}{}}	
	\rput(0.3,0){\rnode{v}{}}	
	\ncarc[linecolor=red]{->}{0}{3}\Aput[0.05]{\tiny $(1,\!0,\!0)$}	
	\ncarc{->}{3}{0}\aput[0.05](0.6){\footnotesize $\left(\!\begin{smallmatrix}b_1 + b_3B_3 \\b_2(B_3-A_1) \\ (b_1b_3+b_2^2)B_3 \end{smallmatrix}\!\right)$}	
	\ncarc{->}{1}{3}	
	\scalebox{0.75}{\rput(-0.55,1.6){\footnotesize $(b_1,\!b_2,\!b_3)$}} 	
	\ncarc[linecolor=red]{->}{3}{1}\aput[-0.05](0.65){\footnotesize $\left(\!\begin{smallmatrix}1 \\0 \\ B_3\end{smallmatrix}\!\right)$}	
	\ncarc[linecolor=red]{->}{2}{3}\aput[0.025](0.35){\tiny $(0,\!0,\!1)$}	
	\ncarc[linecolor=red]{->}{3}{2}\scalebox{0.85}{\rput(0.5,1.5){\footnotesize $\left(\begin{smallmatrix}1 \\0 \\ A_1\end{smallmatrix}\right)$}}	
	\nccircle[linecolor=red,angleA=120,nodesep=3pt]{->}{u}{.3cm}\bput[-0.05](0.45){\footnotesize $\left(\!\begin{smallmatrix}0&1&0 \\u_{21}&\omega b_2&u_{23} \\ \omega b_2B_3&\text{-}\omega b_3B_3 & \text{-}\omega b_2 \end{smallmatrix}\!\right)$} 
	 \nccircle[linecolor=red,angleA=240,nodesep=3pt]{->}{v}{.3cm}\bput[-0.05](0.55){\footnotesize $\left(\!\begin{smallmatrix}0&1&0 \\v_{21}&\omega^2 b_2&v_{23} \\ \omega^2 b_2B_3 & \text{-}\omega^2 b_3B_3 & \text{-}\omega^2 b_2 \end{smallmatrix}\!\right)$} 

\rput(-2.5,-1){\tiny $b_{1}=b_2^2-b_3^2B_3$}
\rput(-2.5,-1.25){\tiny $u_{21}=b_3B_3-\omega^2 b_1$}
\rput(-2.5,-1.5){\tiny $u_{23}=\omega^2 +\omega b_3$}
\rput(-2.5,-1.75){\tiny $v_{21}=b_3B_3-\omega b_1$}
\rput(-2.5,-2){\tiny $v_{23}=\omega +\omega^2 b_3$}
\rput(-2.75,1.15){\small $U_2\cong\C^3_{b_2,b_3,B_3}$}

	}}
\rput(9.5,0.5){
\scalebox{1.1}{
	\rput(0,-2){\rnode{0}{\scriptsize $0$}}
	\rput(0,0){\rnode{3}{\scriptsize $3$}}
	\rput(-1.5,1.5){\rnode{1}{\scriptsize $1$}}
	\rput(1.5,1.5){\rnode{2}{\scriptsize $2$}}
	\rput(-0.3,0){\rnode{u}{}}	
	\rput(0.3,0){\rnode{v}{}}	
	\ncarc[linecolor=red]{->}{0}{3}\Aput[0.05]{\tiny $(1,\!0,\!0)$}	
	\ncarc{->}{3}{0}\aput[0.05](0.6){\footnotesize $\left(\!\begin{smallmatrix}b_1B_1 + b_3 \\b_2(1-A_1B_1) \\ b_1+b_3A_1 \end{smallmatrix}\!\right)$}	
	\ncarc{->}{1}{3}	
	\scalebox{0.75}{\rput(-0.55,1.6){\footnotesize $(b_1,\!b_2,\!b_3)$}}  
	\ncarc[linecolor=red]{->}{3}{1}\aput[-0.05](0.65){\footnotesize $\left(\!\begin{smallmatrix}B_1 \\0 \\ 1\end{smallmatrix}\!\right)$}	
	\ncarc[linecolor=red]{->}{2}{3}\aput[0.025](0.35){\tiny $(0,\!0,\!1)$}	
	\ncarc[linecolor=red]{->}{3}{2} \scalebox{0.85}{\rput(0.5,1.5){\footnotesize $\left(\begin{smallmatrix}1 \\0 \\ A_1\end{smallmatrix}\right)$}}	
	\nccircle[linecolor=red,angleA=120,nodesep=3pt]{->}{u}{.3cm}\bput[-0.05](0.45){\footnotesize $\left(\!\begin{smallmatrix}0&1&0 \\u_{21}&\omega b_2B_1&u_{23} \\ \omega b_2& \text{-}\omega b_3 & \text{-}\omega b_2B_1 \end{smallmatrix}\!\right)$} 
	 \nccircle[linecolor=red,angleA=240,nodesep=3pt]{->}{v}{.3cm}\bput[-0.05](0.55){\footnotesize $\left(\!\begin{smallmatrix}0&1&0 \\v_{21}&\omega^2 b_2B_1&v_{23} \\ \omega^2 b_2 & \text{-}\omega^2 b_3 & \text{-}\omega^2 b_2B_1 \end{smallmatrix}\!\right)$} 

\rput(-2.5,-1){\tiny $b_{1}=b_2^2B_1-b_3^2$}
\rput(-2.5,-1.25){\tiny $u_{21}=b_3-\omega^2 b_1B_1$}
\rput(-2.5,-1.5){\tiny $u_{23}=\omega^2 +\omega b_3B_1$}
\rput(-2.5,-1.75){\tiny $v_{21}=b_3-\omega b_1B_1$}
\rput(-2.5,-2){\tiny $v_{23}=\omega +\omega^2 b_3B_1$}
\rput(-2.75,1.15){\small $U_3\cong\C^3_{b_2,b_3,B_1}$}
	}}
\end{pspicture}
\end{center}
\caption{Representation spaces for the open covering of \Hilb{$\mathbb{T}$}{$\C^3$}.}
\label{RepSpT}
\end{figure}
\end{enumerate}

We calculate now the local coordinates along the exceptional curves using the quiver shown in Figure \ref{xyQuiverE6}. For example in the open set $U_0\cong\mathbb C^3_{c_2,c_3,C_1}$, we have that
\[
\text{$aC = C_1\cdot$(the basis of $\rho_2$)}
\]
which implies that $f_2 = C_1f_1^2$, thus $C_1 = f_2/f_1^2$. Similarly,
\begin{align*}
aA&=(c_1C_1+c_3)\cdot1\Longrightarrow c_1= (f_0-c_3)f_1^2/f_2 \\
aBbA&=(c_1+c_3A_1)\cdot1\Longrightarrow f_1f_2=c_1+c_3f_0
\end{align*}
which gives $c_3=-f_1R_2/R_1$. Finally $auA=3\sqrt{3}f_3 = c_2(1-A_1C_1)\cdot1$, so that $c_2=\sqrt{3}f_3f_1^2/R_1$. Therefore (after rescaling) the coordinate ring of $U_0$ is given by $\C[c_2,c_3,C_1] = \C\left[\frac{f_3f_1^2}{R_1},\frac{f_1R_2}{R_1},\frac{f_2}{f_1^2} \right]$. The rest of the cases are done similarly.

It follows that the fibre over the origin $\pi^{-1}(0)\subset X_0$ consists of 3 rational curves $E_1$, $E_2$ and $E_3$ intersecting pairwise as 

\begin{center}
\begin{pspicture}(0,-0.1)(4,0.5)
	\psset{arcangle=15,nodesep=2pt}
\rput(0,0){\rnode{0}{$\bullet$}}\rput(0,0.35){$E_1$}
\rput(1,0){\rnode{1}{$\bullet$}}\rput(1,0.35){$E_3$}
\rput(2,0){\rnode{2}{$\bullet$}}\rput(2,0.35){$E_2$}
\ncline{-}{0}{1}\ncline{-}{1}{2}
\end{pspicture}
\end{center}

The explicit open cover shows that the curves $E_1$ and $E_2$ have degree $(-1,-1)$ while $E_3$ has degree $(-3,1)$. By Lemma \ref{floppable} we can flop $E_1$ and $E_2$, giving rise to $X_{1}$ and $X_{2}$ respectively. By symmetry we only explain the flop of $E_2$.

{\em First Flop $X_2$.} In the flop of the rational curve $E_2$ we only need to change the open sets $U_2$ and $U_3$. By the same method as in the dihedral case we produce the rational curve $E'_2$ covered by open sets $U_{2}'$ and $U_{3}'$, both of them isomorphic to $\C^3$, and given by
\[
\left.\begin{array}{cccccc}
U_2' & b_2=1 & aC=1 & a=(1,0,0) & au=(0,1,0) & c=(0,0,1)\\
U_3' & b_3=1 & aC=1 & a=(1,0,0) & au=(0,1,0) & c=(0,0,1)
\end{array}\right.
\]

\begin{figure}[h]
\begin{center}
\begin{pspicture}(0,-1.1)(10,2.5)
	\psset{arcangle=15,nodesep=2pt}

\rput(1.15,1){
\scalebox{1.1}{
	\rput(0,-2){\rnode{0}{\scriptsize $0$}}
	\rput(0,0){\rnode{3}{\scriptsize $3$}}
	\rput(-1.5,1.5){\rnode{1}{\scriptsize $1$}}
	\rput(1.5,1.5){\rnode{2}{\scriptsize $2$}}
	\rput(-0.3,0){\rnode{u}{}}	
	\rput(0.3,0){\rnode{v}{}}	
	\ncarc[linecolor=red]{->}{0}{3}\Aput[0.05]{\tiny $(1,\!0,\!0)$}	
	\ncarc{->}{3}{0}\aput[0.05](0.6){\footnotesize $\left(\!\begin{smallmatrix}b_1B_1 + B_3 \\b_2(B_3-B_1A_1) \\ (b_1+b_2^2)B_1B_3 \end{smallmatrix}\!\right)$}	
	\ncarc[linecolor=red]{->}{1}{3}	
	\scalebox{0.75}{\rput(-0.55,1.5){\footnotesize $(b_1,\!b_2,\!1)$}} 	
	\ncarc{->}{3}{1}\aput[-0.05](0.65){\footnotesize $\left(\!\begin{smallmatrix}B_1 \\0 \\ B_3\end{smallmatrix}\!\right)$}	
	\ncarc[linecolor=red]{->}{2}{3}\aput[0.025](0.35){\tiny $(0,\!0,\!1)$}	
	\ncarc[linecolor=red]{->}{3}{2}\scalebox{0.85}{\rput(0.5,1.5){\footnotesize $\left(\begin{smallmatrix}1 \\0 \\ A_1\end{smallmatrix}\right)$}}	
	\nccircle[linecolor=red,angleA=120,nodesep=3pt]{->}{u}{.3cm}\bput[-0.05](0.45){\footnotesize $\left(\!\begin{smallmatrix}0&1&0 \\u_{21}& \omega b_2B_1&u_{23} \\ \omega b_2B_3&\text{-}\omega B_3 & \text{-}\omega b_2B_1 \end{smallmatrix}\!\right)$} 
	 \nccircle[linecolor=red,angleA=240,nodesep=3pt]{->}{v}{.3cm}\bput[-0.075](0.55){\footnotesize $\left(\!\begin{smallmatrix}0&1&0 \\ v_{21}&\omega^2 b_2B_1&v_{23} \\ \omega^2 b_2B_3 & \text{-}\omega^2 B_3 & \text{-}\omega^2 b_2B_1 \end{smallmatrix}\!\right)$} 

\rput(-2.5,-1){\tiny $b_{1}=b_2^2B_1-B_3$}
\rput(-2.5,-1.25){\tiny $u_{21}=-\omega^2 b_1B_1+B_3$}
\rput(-2.5,-1.5){\tiny $u_{23}=\omega^2 +\omega B_1$}
\rput(-2.5,-1.75){\tiny $v_{21}=-\omega b_1B_1+B_3$}
\rput(-2.5,-2){\tiny $v_{23}=\omega +\omega^2 B_1$}
\rput(-2.75,1.15){\small $U'_2\cong\C^3_{b_2,B_1,B_3}$}

	}}
\rput(9.5,1){
\scalebox{1.1}{
	\rput(0,-2){\rnode{0}{\scriptsize $0$}}
	\rput(0,0){\rnode{3}{\scriptsize $3$}}
	\rput(-1.5,1.5){\rnode{1}{\scriptsize $1$}}
	\rput(1.5,1.5){\rnode{2}{\scriptsize $2$}}
	\rput(-0.3,0){\rnode{u}{}}	
	\rput(0.3,0){\rnode{v}{}}	
	\ncarc[linecolor=red]{->}{0}{3}\Aput[0.05]{\tiny $(1,\!0,\!0)$}	
	\ncarc{->}{3}{0}\aput[0.05](0.6){\footnotesize $\left(\begin{smallmatrix}b_1B_1 + b_3B_3 \\B_3-B_1A_1 \\ (b_1b_3+1)B_1B_3 \end{smallmatrix}\right)$}	
	\ncarc[linecolor=red]{->}{1}{3}	
	\scalebox{0.75}{\rput(-0.55,1.5){\footnotesize $(b_1,\!1,\!b_3)$}}  
	\ncarc{->}{3}{1}\aput[-0.05](0.65){\footnotesize $\left(\begin{smallmatrix}B_1 \\0 \\ B_3\end{smallmatrix}\right)$}	
	\ncarc[linecolor=red]{->}{2}{3}\aput[0.025](0.35){\tiny $(0,\!0,\!1)$}	
	\ncarc[linecolor=red]{->}{3}{2} \scalebox{0.85}{\rput(0.5,1.5){\footnotesize $\left(\begin{smallmatrix}1 \\0 \\ A_1\end{smallmatrix}\right)$}}	
	\nccircle[linecolor=red,angleA=120,nodesep=3pt]{->}{u}{.3cm}\bput[-0.075](0.45){\footnotesize $\left(\!\begin{smallmatrix}0&1&0 \\ u_{21}& \omega B_1 & u_{23} \\ \omega B_3 & \text{-}\omega b_3B_3 & \text{-}\omega B_1 \end{smallmatrix}\!\right)$} 
	 \nccircle[linecolor=red,angleA=240,nodesep=3pt]{->}{v}{.3cm}\bput[-0.05](0.55){\footnotesize $\left(\!\begin{smallmatrix}0&1&0 \\ v_{21}& \omega^2 B_1 & v_{23} \\ \omega^2 B_3 & \text{-}\omega^2 b_3B_3 & \text{-}\omega^2 B_1 \end{smallmatrix}\!\right)$} 

\rput(-2.5,-1){\tiny $b_{1}=B_1-b_3^2B_3$}
\rput(-2.5,-1.25){\tiny $u_{21}=-\omega^2 b_1B_1+b_3B_3$}
\rput(-2.5,-1.5){\tiny $u_{23}=\omega^2 +\omega b_3B_1$}
\rput(-2.5,-1.75){\tiny $v_{21}=-\omega b_1B_1+b_3B_3$}
\rput(-2.5,-2){\tiny $v_{23}=\omega +\omega^2 b_3B_1$}
\rput(-2.75,1.15){\small $U'_3\cong\C^3_{b_3,B_1,B_3}$}
	}}
	
\end{pspicture}
\end{center}
\end{figure}

{\em Second flop $X_{12}$.} In $X_1$ we can flop $E'_2$ obtaining $X_0$ back, or $E_1$. In the latter case we get the new curve $E'_1$ covered by $U'_{0}$ and $U'_{1}$, both of the isomorphic to $\C^3$. The conditions for the new open sets are:
\[
\left.\begin{array}{cccccc}
U_0' & c_2=1 & aB=1 & a=(1,0,0) & au=(0,1,0) & b=(0,0,1)\\
U_1' & c_3=1 & aB=1 & a=(1,0,0) & au=(0,1,0) & b=(0,0,1)
\end{array}\right.
\]

\begin{figure}[h]
\begin{center}
\begin{pspicture}(0,-1.1)(10,2.5)
	\psset{arcangle=15,nodesep=2pt}

\rput(1.15,0.75){
\scalebox{1.1}{
	\rput(0,-2){\rnode{0}{\scriptsize $0$}}
	\rput(0,0){\rnode{3}{\scriptsize $3$}}
	\rput(-1.5,1.5){\rnode{1}{\scriptsize $1$}}
	\rput(1.5,1.5){\rnode{2}{\scriptsize $2$}}
	\rput(-0.3,0){\rnode{u}{}}	
	\rput(0.3,0){\rnode{v}{}}	
	\ncarc[linecolor=red]{->}{0}{3}\Aput[0.05]{\tiny $(1,\!0,\!0)$}	
	\ncarc{->}{3}{0}\aput[0.05](0.6){\footnotesize $\left(\!\begin{smallmatrix}c_1C_1 + C_3 \\C_3-C_1A_1 \\ (c_1c_3+1)C_1C_3 \end{smallmatrix}\!\right)$}	
	\ncarc[linecolor=red]{->}{1}{3}	
	\scalebox{0.75}{\rput(-0.55,1.5){\footnotesize $(0,\!0,\!1)$}} 	
	\ncarc[linecolor=red]{->}{3}{1}\aput[-0.05](0.65){\footnotesize $\left(\!\begin{smallmatrix}1 \\0 \\ A_1\end{smallmatrix}\!\right)$}	
	\ncarc[linecolor=red]{->}{2}{3}\aput[0.025](0.35){\tiny $(C_1-c_3^2C_3,\!1,\!c_3)$}	
	\ncarc{->}{3}{2}\scalebox{0.85}{\rput(0.5,1.5){\footnotesize $\left(\begin{smallmatrix}C_1 \\0 \\ C_3\end{smallmatrix}\right)$}}	
	\nccircle[linecolor=red,angleA=120,nodesep=3pt]{->}{u}{.3cm}\bput[-0.05](0.45){\footnotesize $\left(\!\begin{smallmatrix}0&1&0 \\u_{21}& \omega^2 C_1 & u_{23} \\ \omega^2 C_3 & \text{-}\omega^2 c_3C_3 & \text{-}\omega^2 C_1 \end{smallmatrix}\!\right)$} 
	 \nccircle[linecolor=red,angleA=240,nodesep=3pt]{->}{v}{.3cm}\bput[-0.05](0.55){\footnotesize $\left(\!\begin{smallmatrix}0&1&0 \\ v_{21} & \omega C_1 & v_{23} \\ \omega C_3 & \text{-}\omega c_3C_3 & \text{-}\omega C_1 \end{smallmatrix}\!\right)$} 
	 
\rput(-2.5,-1.25){\tiny $u_{21}=-\omega c_1C_1+c_3C_3 $}
\rput(-2.5,-1.5){\tiny $u_{23}=\omega +\omega^2 c_3C_1$}
\rput(-2.5,-1.75){\tiny $v_{21}=-\omega^2 c_1C_1+c_3C_3$}
\rput(-2.5,-2){\tiny $v_{23}=\omega^2 +\omega c_3C_1$}
\rput(-2.75,1.15){\small $U'_0\cong\C^3_{c_3,C_1,C_3}$}

	}}
\rput(9.5,0.75){
\scalebox{1.1}{
	\rput(0,-2){\rnode{0}{\scriptsize $0$}}
	\rput(0,0){\rnode{3}{\scriptsize $3$}}
	\rput(-1.5,1.5){\rnode{1}{\scriptsize $1$}}
	\rput(1.5,1.5){\rnode{2}{\scriptsize $2$}}
	\rput(-0.3,0){\rnode{u}{}}	
	\rput(0.3,0){\rnode{v}{}}	
	\ncarc[linecolor=red]{->}{0}{3}\Aput[0.05]{\tiny $(1,\!0,\!0)$}	
	\ncarc{->}{3}{0}\aput[0.05](0.6){\footnotesize $\left(\!\begin{smallmatrix}c_1C_1 + C_3 \\c_2(C_3-C_1A_1) \\ (c_1+c_2^2)C_1C_3 \end{smallmatrix}\!\right)$}	
	\ncarc[linecolor=red]{->}{1}{3}	
	\scalebox{0.75}{\rput(-0.55,1.5){\footnotesize $(0,\!0,\!1)$}}  
	\ncarc[linecolor=red]{->}{3}{1}\aput[-0.05](0.65){\footnotesize $\left(\!\begin{smallmatrix}1 \\0 \\ A_1\end{smallmatrix}\!\right)$}	
	\ncarc[linecolor=red]{->}{2}{3}\aput[0.025](0.35){\tiny $(c_2^2C_1-C_3,\!c_2,\!1)$}	
	\ncarc{->}{3}{2} \scalebox{0.85}{\rput(0.5,1.5){\footnotesize $\left(\begin{smallmatrix}C_1 \\0 \\ C_3\end{smallmatrix}\right)$}}	
	\nccircle[linecolor=red,angleA=120,nodesep=3pt]{->}{u}{.3cm}\bput[-0.05](0.45){\footnotesize $\left(\!\begin{smallmatrix}0&1&0 \\u_{21} & \omega^2 c_2C_1 & u_{23} \\ \omega^2 c_2C_3 & \text{-}\omega^2 C_3 & \text{-}\omega^2 c_2C_1 \end{smallmatrix}\!\right)$} 
	 \nccircle[linecolor=red,angleA=240,nodesep=3pt]{->}{v}{.3cm}\bput[-0.05](0.55){\footnotesize $\left(\!\begin{smallmatrix}0&1&0 \\ v_{21} & \omega c_2C_1 & v_{23} \\ \omega c_2C_3 & \text{-}\omega C_3 & \text{-}\omega c_2C_1 \end{smallmatrix}\!\right)$} 

\rput(-2.5,-1.25){\tiny $u_{21}=-\omega c_1C_1+C_3$}
\rput(-2.5,-1.5){\tiny $u_{23}=\omega +\omega^2 C_1$}
\rput(-2.5,-1.75){\tiny $v_{21}=-\omega^2 c_1C_1+C_3$}
\rput(-2.5,-2){\tiny $v_{23}=\omega^2 +\omega C_1$}
\rput(-2.75,1.15){\small $U'_1\cong\C^3_{c_2,C_1,C_3}$}
	}}
\end{pspicture}
\end{center}
\end{figure}

{\em Third flop $X_{123}$.} The degree of the normal bundle of the curve $E_3$ in $X_{12}$ is now $(-1,-1)$ so we can perform the last flop. We obtain the open sets $U''_{1}$ and $U''_{2}$ given by:
\[
\left.\begin{array}{cccccc}
U''_1 & c_2=c_3=1 & a=(1,0,0) & au=(0,1,0) & b=(0,0,1)\\
U''_2 & c_1=c_3=1 & a=(1,0,0) & au=(0,1,0) & b=(0,0,1)
\end{array}\right.
\]

\begin{figure}[h]
\begin{center}
\begin{pspicture}(0,-1.25)(10,3)
	\psset{arcangle=15,nodesep=2pt}

\rput(1.15,1.25){
\scalebox{1.1}{
	\rput(0,-2){\rnode{0}{\scriptsize $0$}}
	\rput(0,0){\rnode{3}{\scriptsize $3$}}
	\rput(-1.5,1.5){\rnode{1}{\scriptsize $1$}}
	\rput(1.5,1.5){\rnode{2}{\scriptsize $2$}}
	\rput(-0.3,0){\rnode{u}{}}	
	\rput(0.3,0){\rnode{v}{}}	
	\ncarc[linecolor=red]{->}{0}{3}\Aput[0.05]{\tiny $(1,\!0,\!0)$}	
	\ncarc{->}{3}{0}\aput[0.05](0.6){\footnotesize $\left(\begin{smallmatrix}c_1C_1 + C_3 \\-C_1A_1+B_1C_3 \\ C_1(A_1-c_1^2) \end{smallmatrix}\right)$}	
	\ncarc[linecolor=red]{->}{1}{3}	
	\scalebox{0.75}{\rput(-0.55,1.5){\footnotesize $(0,\!0,\!1)$}} 	
	\ncarc{->}{3}{1}\aput[-0.05](0.65){\footnotesize $\left(\begin{smallmatrix}B_1 \\0 \\ A_1\end{smallmatrix}\right)$}	
	\ncarc[linecolor=red]{->}{2}{3}\aput[0.025](0.35){\tiny $(c_1,\!1,\!1)$}	
	\ncarc{->}{3}{2}\scalebox{0.85}{\rput(0.5,1.5){\footnotesize $\left(\begin{smallmatrix}C_1 \\0 \\ C_3\end{smallmatrix}\right)$}}	
	\nccircle[linecolor=red,angleA=120,nodesep=3pt]{->}{u}{.3cm}\bput[-0.05](0.45){\footnotesize $\left(\!\begin{smallmatrix}0&1&0 \\ C_3\text{-}\omega C_1c_1 & \omega^2 C_1 & u_{23} \\ \omega^2(A_1\text{-}C_1c_1) & u_{32} & \text{-}\omega^2 C_1 \end{smallmatrix}\!\right)$} 
	 \nccircle[linecolor=red,angleA=240,nodesep=3pt]{->}{v}{.3cm}\bput[-0.05](0.55){\footnotesize $\left(\!\begin{smallmatrix}0&1&0 \\ v_{21} & \omega C_1 & v_{23} \\ v_{31} & v_{32} & \text{-}\omega C_1 \end{smallmatrix}\!\right)$} 
	 
\rput(-2.5,-1){\tiny $C_{3}=B_1(C_1-c_1)$}	 
\rput(-2.5,-1.25){\tiny $u_{23}=\omega^2C_1+\omega B_1$}
\rput(-2.5,-1.5){\tiny $u_{32}=-\omega^2(C_1-c_1)$}
\rput(-2.5,-1.75){\tiny $v_{21}=C_3-\omega^2C_1c_1$}
\rput(-2.5,-2){\tiny $v_{23}=\omega C_1+\omega^2B_1$}
\rput(-2.5,-2.25){\tiny $v_{31}=\omega(A_1-C_1c_1)$}
\rput(-2.5,-2.5){\tiny $v_{32}=-\omega(C_1-c_1)$}

\rput(-2.75,1.15){\small $U''_1\cong\C^3_{B_1,c_1,C_1}$}
	}}
	
\rput(9.5,1.25){
\scalebox{1.1}{
	\rput(0,-2){\rnode{0}{\scriptsize $0$}}
	\rput(0,0){\rnode{3}{\scriptsize $3$}}
	\rput(-1.5,1.5){\rnode{1}{\scriptsize $1$}}
	\rput(1.5,1.5){\rnode{2}{\scriptsize $2$}}
	\rput(-0.3,0){\rnode{u}{}}	
	\rput(0.3,0){\rnode{v}{}}	
	\ncarc[linecolor=red]{->}{0}{3}\Aput[0.05]{\tiny $(1,\!0,\!0)$}	
	\ncarc{->}{3}{0}\aput[0.05](0.6){\footnotesize $\left(\!\begin{smallmatrix}C_1+C_3 \\-c_2C_1A_1+c_2B_1C_3 \\ C_1(c_2^2A_1-1) \end{smallmatrix}\!\right)$}	
	\ncarc[linecolor=red]{->}{1}{3}	
	\scalebox{0.75}{\rput(-0.55,1.5){\footnotesize $(0,\!0,\!1)$}}  
	\ncarc{->}{3}{1}\aput[-0.05](0.65){\footnotesize $\left(\!\begin{smallmatrix}B_1 \\0 \\ A_1\end{smallmatrix}\!\right)$}	
	\ncarc[linecolor=red]{->}{2}{3}\aput[0.025](0.35){\tiny $(1,\!c_2,\!1)$}	
	\ncarc{->}{3}{2} \scalebox{0.85}{\rput(0.5,1.5){\footnotesize $\left(\begin{smallmatrix}C_1 \\0 \\ C_3\end{smallmatrix}\right)$}}	
	\nccircle[linecolor=red,angleA=120,nodesep=3pt]{->}{u}{.3cm}\bput[-0.05](0.45){\footnotesize $\left(\!\begin{smallmatrix}0&1&0 \\ u_{21} & \omega^2 c_2C_1 & u_{23} \\ \omega^2c_2C_3 & u_{32} & \text{-}\omega^2 c_2C_1 \end{smallmatrix}\!\right)$} 
	 \nccircle[linecolor=red,angleA=240,nodesep=3pt]{->}{v}{.3cm}\bput[-0.05](0.55){\footnotesize $\left(\!\begin{smallmatrix}0&1&0 \\ v_{21} & \omega c_2C_1 & v_{23} \\ \omega c_2C_3 & v_{32} & \text{-}\omega c_2C_1 \end{smallmatrix}\!\right)$} 

\rput(-2.5,-1){\tiny $C_{3}=B_1(c_2^2C_1-1)$}	  
\rput(-2.5,-1.25){\tiny $u_{21}=C_3-\omega C_1$}
\rput(-2.5,-1.5){\tiny $u_{23}=\omega^2C_1+\omega B_1$}
\rput(-2.5,-1.75){\tiny $u_{32}=-\omega^2(c_2^2C_1-1)$}
\rput(-2.5,-2){\tiny $v_{21}=-\omega^2C_1+C_3$}
\rput(-2.5,-2.25){\tiny $v_{23}=\omega C_1+\omega^2B_1$}
\rput(-2.5,-2.5){\tiny $v_{32}=-\omega(c_2^2C_1-1)$}

\rput(-2.75,1.15){\small $U''_2\cong\C^3_{B_1,c_2,C_1}$}

	}}

\end{pspicture}
\end{center}
\end{figure}

The normal bundles of the rational curves in the fibre over the origin are obtained by the explicit gluings among the open sets covering the curves. These gluings are given below and the result follows.
\[
\begin{small}
\begin{array}{l}
U_0\to U_{1}: (c_2,c_3,C_1) \mapsto (c_2C_1, c_3C_1, C_1^{-1})  \\
U_1\to U_2: (c_2,c_3,C_3) \mapsto (c_2c_3^{-1}, c_3^{-1}, c_3^3C_3-3c_2^2(c_3-1)) \\
U_2\to U_3: (b_2,b_3,B_3) \mapsto (b_2B_3, b_3B_3, B_3^{-1}) \\
U_0\to U_1: (c_2,c_3,C_1) \mapsto (c_2C_1, c_3C_1, C_1^{-1}) \\
U_1\to U'_2: (c_2,c_3,C_3) \mapsto (c_2, c_3^{-1}, c_3^2C_3-3c_2^2(1-c_3^{-1})) \\
U'_2\to U'_3: (b_2,B_1,B_3) \mapsto (b_2^{-1}, b_2B_1, b_2B_3)  \\
U'_0\to U'_1: (c_3,C_1,C_3) \mapsto (c_3^{-1},c_3C_1,c_3C_3) \\
U'_1\to U'_2: (c_2,C_1,C_3) \mapsto (c_2C_1,C_1^{-1},C_1C_3-3c_2^2C_1(C_1-1))) \\
U'_2\to U'_3: (b_2,B_1,B_3) \mapsto (b_2^{-1},b_2B_1,b_2B_3) \\
U'_0\to U''_1: (c_3,C_1,C_3) \mapsto (c_3^{-1},c_3^2C_3-3C_1,C_1) \\
U''_2\to U''_1: (B_1,c_2,C_1) \mapsto (B_1c_2,c_2^{-1},c_2C_1) \\
U'_3\to U''_1: (b_3,B_1,B_3) \mapsto (B_1,b_3^2B_3-3B_1,b_3^{-1}) \\
\end{array}
\end{small}
\]
\end{proof}

\subsection{Proof of Theorem \ref{thm:main}}\label{Proof:Main} The proof is explicit and it follows from the direct comparison between every mutation of $(Q,W)$ at non-trivial vertices with no loops and the description of every crepant resolution of $\C^3/G$ given in Sections \ref{Sect:Mut-SO(3)} and \ref{sect:opens} respectively. 

The case $G\cong\Z/n\Z$ is immediate, there are no flops of $X$ and no mutations of $(Q,W)$ since every vertex has a loop. 

For the rest of the cases, note that in every projective crepant resolution $\pi:X\to\C^3/G$ the dual graph of the exceptional fibre $\pi^{-1}(0)=\bigcup E_i$ (described in the part (3) of Theorems \ref{OpensDnOdd}, \ref{OpensDnEven} and part (2) in Theorem \ref{OpensE6}) coincide with the graph associated to the corresponding mutated quiver in Section \ref{Sect:Mut-SO(3)} removing the trivial vertex. Recall that the {\em graph} of a quiver $Q$ is obtained by forgetting the direction of the arrows. More precisely, 
\[ 
\begin{array}{ll}
\text{For $G\cong D_{2n}$, $n$ odd:} & \text{Dual graph of $\pi_i^{-1}(0)$ = Graph of $Q_i\backslash0$} \\
\text{For $G\cong D_{2n}$, $n$ even:} & \text{Dual graph of $\pi_{ij}^{-1}(0)$ = Graph of $Q_{0\ldots i}^{m\ldots(m-j)}\backslash0$} \\
\text{For $G\cong\mathbb{T}$:} & \text{Dual graph of $\pi^{-1}(0)$ = Graph of $Q_i\backslash0$}
\end{array}
\]
and it follows that flopping the curve $E_i$ corresponds to mutate with respect to the vertex $i$. This also proves Corollary \ref{cor:DualGraph}. 

\begin{rmk}\label{rem:LoopDeg}
Part (ii) of Corollary \ref{cor:DualGraph} also follows by direct comparison, although it is an expected fact since the dimension of the fibres is one. Indeed, since $D^b(X)\cong D^b(\Lambda)$ with $\Lambda:=\End_R(M)$ we have that
\begin{align*}
\text{loops at a vertex $k$} &= \dim_\C\Ext^1_A(S_k,S_k) \\
	&= \dim_\C\Hom_{D^b(\Lambda)}(S_i,S_i[1]) \\
	&= \dim_\C\Hom_{D^b(X)}(\mathcal{O}_{E_k}(-1),\mathcal{O}_{E_k}(-1)[1]) \\
	&= \dim_\C\Ext^1_X(\OO_{E_k},\OO_{E_k}) \\
	&= \dim_\C H^0(\mathcal{N}_{E_k}|_X)
\end{align*}
and the three possible cases give $H^0(\OO(-1)\oplus\OO(-1))=0$, $H^0(\OO(-2)\oplus\OO)=\C$ and $H^0(\OO(-3)\oplus\OO(1))=\C^2$. We want to thank M. Wemyss for explaining this fact to us. 
\end{rmk}

\begin{ex}
In the case $D_{2n}$ for $n=7$ there are 4 non-equivalent QPs, as shown in Figure \ref{D-14}.

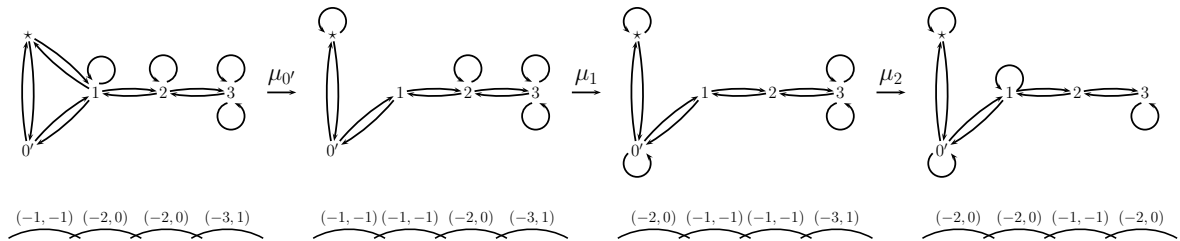
\begin{figure}[htbp]
\begin{center}
\scalebox{0.45}{
\begin{pspicture}(-1,-4.5)(34,2)
	\psset{arcangle=10,nodesep=2pt,linewidth=1.5pt}

\rput(0,0){
	\rput(-0,1.7){\rnode{0}{\Large$\star$}}
	\rput(-0,-1.7){\rnode{0'}{\Large$0'$}}
	\rput(2,0){\rnode{1}{\Large$1$}}
	\rput(4,0){\rnode{2}{\Large$2$}}
	\rput(6,0){\rnode{3}{\Large$3$}}
	\rput(2,0.3){\rnode{u1}{}}
	\rput(4,0.3){\rnode{u2}{}}
	\rput(6,0.3){\rnode{u3}{}}	
	\rput(6,-0.3){\rnode{u3'}{}}		
	\ncarc{->}{0}{0'}\ncarc{->}{0'}{0}\ncarc{->}{0'}{1}\ncarc{->}{1}{0'}
	\ncarc{->}{0}{1}\ncarc{->}{1}{0}\ncarc{->}{1}{2}\ncarc{->}{2}{1}
	\ncarc{->}{2}{3}\ncarc{->}{3}{2}
	\nccircle[angleA=-25,nodesep=3pt]{->}{u1}{.4cm}	\nccircle[angleA=0,nodesep=3pt]{->}{u2}{.4cm}
	\nccircle[angleA=0,nodesep=3pt]{->}{u3}{.4cm}
	\nccircle[angleA=180,nodesep=3pt]{->}{u3'}{.4cm}
}

\rput(9,0){
	\rput(-0,1.7){\rnode{0}{\Large$\star$}}
	\rput(-0,-1.7){\rnode{0'}{\Large$0'$}}
	\rput(2,0){\rnode{1}{\Large$1$}}
	\rput(4,0){\rnode{2}{\Large$2$}}
	\rput(6,0){\rnode{3}{\Large$3$}}
	\rput(2,0.3){\rnode{u1}{}}
	\rput(4,0.3){\rnode{u2}{}}
	\rput(6,0.3){\rnode{u3}{}}	
	\rput(6,-0.3){\rnode{u3'}{}}		
	\ncarc{->}{0}{0'}\ncarc{->}{0'}{0}\ncarc{->}{0'}{1}\ncarc{->}{1}{0'}
	\ncarc{->}{1}{2}\ncarc{->}{2}{1}\ncarc{->}{2}{3}\ncarc{->}{3}{2}
	\nccircle[angleA=0,nodesep=3pt]{->}{0}{.4cm}
	 \nccircle[angleA=0,nodesep=3pt]{->}{u2}{.4cm}
	\nccircle[angleA=0,nodesep=3pt]{->}{u3}{.4cm}
	\nccircle[angleA=180,nodesep=3pt]{->}{u3'}{.4cm}
}

\rput(18,0){
	\rput(-0,1.7){\rnode{0}{\Large$\star$}}
	\rput(-0,-1.7){\rnode{0'}{\Large$0'$}}
	\rput(2,0){\rnode{1}{\Large$1$}}
	\rput(4,0){\rnode{2}{\Large$2$}}
	\rput(6,0){\rnode{3}{\Large$3$}}
	\rput(2,0.3){\rnode{u1}{}}
	\rput(4,0.3){\rnode{u2}{}}
	\rput(6,0.3){\rnode{u3}{}}	
	\rput(6,-0.3){\rnode{u3'}{}}		
	\ncarc{->}{0}{0'}\ncarc{->}{0'}{0}\ncarc{->}{0'}{1}\ncarc{->}{1}{0'}
	\ncarc{->}{1}{2}\ncarc{->}{2}{1}\ncarc{->}{2}{3}\ncarc{->}{3}{2}
	\nccircle[angleA=0,nodesep=3pt]{->}{0}{.4cm}
	\nccircle[angleA=180,nodesep=3pt]{->}{0'}{.4cm}
	\nccircle[angleA=0,nodesep=3pt]{->}{u3}{.4cm}
	\nccircle[angleA=180,nodesep=3pt]{->}{u3'}{.4cm}
}

\rput(27,0){
	\rput(-0,1.7){\rnode{0}{\Large$\star$}}
	\rput(-0,-1.7){\rnode{0'}{\Large$0'$}}
	\rput(2,0){\rnode{1}{\Large$1$}}
	\rput(4,0){\rnode{2}{\Large$2$}}
	\rput(6,0){\rnode{3}{\Large$3$}}
	\rput(2,0.3){\rnode{u1}{}}
	\rput(4,0.3){\rnode{u2}{}}
	\rput(6,0.3){\rnode{u3}{}}	
	\rput(6,-0.3){\rnode{u3'}{}}		
	\ncarc{->}{0}{0'}\ncarc{->}{0'}{0}\ncarc{->}{0'}{1}\ncarc{->}{1}{0'}
	\ncarc{->}{1}{2}\ncarc{->}{2}{1}\ncarc{->}{2}{3}\ncarc{->}{3}{2}
	\nccircle[angleA=0,nodesep=3pt]{->}{0}{.4cm}
	\nccircle[angleA=180,nodesep=3pt]{->}{0'}{.4cm}
	\nccircle[angleA=180,nodesep=3pt]{->}{u3'}{.4cm}
	\nccircle[angleA=0,nodesep=3pt]{->}{1}{.4cm}
}
\rput(2,-3){
	\scalebox{0.9}{
	\begin{pspicture}(-1,-0.5)(7,1)
	\psset{arcangle=30,nodesep=2pt}
	\rput(0,-1.25){
		\rput(0,0){\rnode{0}{}}
		\rput(2.5,0){\rnode{1}{}}
		\rput(2,0){\rnode{2}{}}
		\rput(4.5,0){\rnode{3}{}}
		\rput(4,0){\rnode{4}{}}
		\rput(6.5,0){\rnode{5}{}}	
		\rput(6,0){\rnode{6}{}}
		\rput(8.5,0){\rnode{7}{}}	

		\ncarc{-}{0}{1}\Aput[0.05,npos=0.5]{\Large $(-1,-1)$}
		\ncarc{-}{2}{3}\Aput[0.05,npos=0.5]{\Large $(-2,0)$}
		\ncarc{-}{4}{5}\Aput[0.05,npos=0.5]{\Large $(-2,0)$}
		\ncarc{-}{6}{7}\Aput[0.05,npos=0.5]{\Large $(-3,1)$}
	}
	\end{pspicture}
	}
}
\rput(11,-3){
	\scalebox{0.9}{
	\begin{pspicture}(-1,-0.5)(7,1)
	\psset{arcangle=30,nodesep=2pt}
	\rput(0,-1.25){
		\rput(0,0){\rnode{0}{}}
		\rput(2.5,0){\rnode{1}{}}
		\rput(2,0){\rnode{2}{}}
		\rput(4.5,0){\rnode{3}{}}
		\rput(4,0){\rnode{4}{}}
		\rput(6.5,0){\rnode{5}{}}	
		\rput(6,0){\rnode{6}{}}
		\rput(8.5,0){\rnode{7}{}}	

		\ncarc{-}{0}{1}\Aput[0.05,npos=0.5]{\Large $(-1,-1)$}
		\ncarc{-}{2}{3}\Aput[0.05,npos=0.5]{\Large $(-1,-1)$}
		\ncarc{-}{4}{5}\Aput[0.05,npos=0.5]{\Large $(-2,0)$}
		\ncarc{-}{6}{7}\Aput[0.05,npos=0.5]{\Large $(-3,1)$}
	}
	\end{pspicture}
	}
}
\rput(20,-3){
	\scalebox{0.9}{
	\begin{pspicture}(-1,-0.5)(7,1)
	\psset{arcangle=30,nodesep=2pt}
	\rput(0,-1.25){
		\rput(0,0){\rnode{0}{}}
		\rput(2.5,0){\rnode{1}{}}
		\rput(2,0){\rnode{2}{}}
		\rput(4.5,0){\rnode{3}{}}
		\rput(4,0){\rnode{4}{}}
		\rput(6.5,0){\rnode{5}{}}	
		\rput(6,0){\rnode{6}{}}
		\rput(8.5,0){\rnode{7}{}}	

		\ncarc{-}{0}{1}\Aput[0.05,npos=0.5]{\Large $(-2,0)$}
		\ncarc{-}{2}{3}\Aput[0.05,npos=0.5]{\Large $(-1,-1)$}
		\ncarc{-}{4}{5}\Aput[0.05,npos=0.5]{\Large $(-1,-1)$}
		\ncarc{-}{6}{7}\Aput[0.05,npos=0.5]{\Large $(-3,1)$}
	}
	\end{pspicture}
	}
}
\rput(29,-3){
	\scalebox{0.9}{
	\begin{pspicture}(-1,-0.5)(7,1)
	\psset{arcangle=30,nodesep=2pt}
	\rput(0,-1.25){
		\rput(0,0){\rnode{0}{}}
		\rput(2.5,0){\rnode{1}{}}
		\rput(2,0){\rnode{2}{}}
		\rput(4.5,0){\rnode{3}{}}
		\rput(4,0){\rnode{4}{}}
		\rput(6.5,0){\rnode{5}{}}	
		\rput(6,0){\rnode{6}{}}
		\rput(8.5,0){\rnode{7}{}}	

		\ncarc{-}{0}{1}\Aput[0.05,npos=0.5]{\Large $(-2,0)$}
		\ncarc{-}{2}{3}\Aput[0.05,npos=0.5]{\Large $(-2,0)$}
		\ncarc{-}{4}{5}\Aput[0.05,npos=0.5]{\Large $(-1,-1)$}
		\ncarc{-}{6}{7}\Aput[0.05,npos=0.5]{\Large $(-2,0)$}

	}
	\end{pspicture}
	}
}

\rput(7,0){\rput(0,0){\rnode{0}{}}\rput(1,0){\rnode{1}{}}\ncline{->}{0}{1}\Aput{\huge $\mu_{0'}$}}
\rput(16,0){\rput(0,0){\rnode{0}{}}\rput(1,0){\rnode{1}{}}\ncline{->}{0}{1}\Aput{\huge $\mu_1$}}
\rput(25,0){\rput(0,0){\rnode{0}{}}\rput(1,0){\rnode{1}{}}\ncline{->}{0}{1}\Aput{\huge $\mu_2$}}
\end{pspicture}
}
\caption{Mutations $Q_C$ and the corresponding fibre over the origin in $\M_C$ for the dihedral group of type $D_{14}\subset\SO(3)$.}
\label{D-14}
\end{center}
\end{figure}

\end{ex}

\section{The space of stability conditions}
\label{Sect:Stability}

Let $G\subset\SO(3)$ of type $\Z/n\Z$, $D_{2n}$ or $\mathbb{T}$, let $Q$ be the McKay quiver, take $X\cong\mathcal{M}_C$ to be a projective crepant resolution of $\C^3/G$ for some chamber $C\subset\Theta$, and fix the open cover of $X$ given in Section \ref{sect:opens}. Then every open set $U\subset X$ is isomorphic to $\C^3_{a,b,c}$ where $a$, $b$ and $c$ are the local coordinates of $\C^3$ given in Theorems \ref{OpensDnOdd}, \ref{OpensDnEven} and \ref{OpensE6}. For a given point $(a,b,c)\in U$ we denote the corresponding representation by $M_{a,b,c}\in\mathcal{M}_C$. 

Fix $\theta\in C$. For any $\theta$-stable representation $M_{a,b,c}\in U$, the explicit knowledge of the representation space of $U$ gives every possible subrepresentation of $M_{a,b,c}$. In other words, the analysis of the matrices in the representation space of every open set in the open cover of $X$ give the inequalities defining the chamber $C\subset\Theta$. In order to do this analysis we encode the structure of the representation space of an open set by using its {\em skeleton}.

\begin{df} Under the above conditions, we call {\em skeleton $sk(U)$ of $U$} the representation of $Q$ corresponding to the origin ${\bf{0}}\in U$.
\end{df}

Once we choose basis for the vector spaces at every vertex of $Q$, the skeleton is obtained by setting $a=b=c=0$, i.e.\ $sk(U)=M_{0,0,0}$.

\begin{ex} Let $G=\frac{1}{3}(1,2,0)$ and consider $X=\Hilb{G}{\C^3}\cong\mathcal{M}_{C_0}$, where $C_0$ contains the $0$-generated stability condition. Then $X$ is covered by 3 open sets $U_i\cong\C^3$ for $i=1,2,3$ with skeletons
\begin{center}
\begin{scriptsize}
\begin{pspicture}(0,-0.15)(8,1.35)
	\psset{arcangle=15,nodesep=2pt}
\scalebox{1.2}{
\rput(0,0){
	\rput(0,0){\rnode{0}{0}}
	\rput(0.5,0.87){\rnode{1}{1}}
	\rput(1,0){\rnode{2}{2}}
	\ncline{->}{0}{1}\ncline{->}{1}{2}
	}
\rput(3,0){
	\rput(0,0){\rnode{0}{0}}
	\rput(0.5,0.87){\rnode{1}{1}}
	\rput(1,0){\rnode{2}{2}}
	\ncline{->}{0}{1}\ncline{->}{0}{2}
	}
\rput(6,0){
	\rput(0,0){\rnode{0}{0}}
	\rput(0.5,0.87){\rnode{1}{1}}
	\rput(1,0){\rnode{2}{2}}
	\ncline{->}{0}{2}\ncline{->}{2}{1}
	}}
\end{pspicture}
\end{scriptsize}
\end{center}
where only the non-zero arrows in $sk(U_i)$, for $i=1,2,3$, are represented in the figure. The chamber $C_0$ is therefore defined by $\theta_1,\theta_2>0$. 
\end{ex}

As a consequence of the next lemma, if there exists a finite open cover of $\mathcal{M}_C=\bigcup_{i=1}^NU_i$ and we define $C_{sk}:=\{ \theta\in\Theta | \theta(N)>0 \text{ for every $0\subsetneq N\subsetneq sk(U_i)$ and every $i$}\}$, then $C=C_{sk}$.

\begin{lem}
Let $\theta\in\Theta$ be a generic parameter. If $M_{0,0,0}$ is $\theta$-stable then $M_{a,b,c}$ is $\theta$-stable. 
\end{lem}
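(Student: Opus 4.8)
The plan is to deduce stability of $M_{a,b,c}$ from that of the skeleton by a degeneration argument: I will produce a one-parameter family inside the chart that contracts $M_{a,b,c}$ to $M_{0,0,0}$ while preserving the dimension vectors of all subrepresentations, and then invoke openness of the stable locus. First, since $\theta$ is generic we have $\theta(M)=0$ and $\theta$-stability of a representation $M$ is equivalent to $\theta(\udim N)>0$ for every $0\subsetneq N\subsetneq M$; thus stability depends only on the set $\Sigma(M):=\{\udim N\mid 0\subsetneq N\subsetneq M\}$ of dimension vectors of proper nonzero subrepresentations. It therefore suffices to prove $\Sigma(M_{a,b,c})\subseteq\Sigma(M_{0,0,0})$, for then the inequalities $\theta(\mathbf e)>0$ valid on $\Sigma(M_{0,0,0})$ hold a fortiori on $\Sigma(M_{a,b,c})$.

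To obtain this inclusion I would exploit the gradability of $(Q,W)$. Fix a grading $\deg\colon Q_1\to\Z_{\geq0}$ making $W$ homogeneous; scaling each arrow $\alpha$ by $t^{\deg\alpha}$ then defines a $\C^*$-action on the scheme cut out by the Jacobian relations, homogeneity of $W$ being exactly what preserves those relations. After fixing the chosen bases, the distinguished entries of the matrices — the ones not vanishing at the origin — realise every basis vector of $M_{a,b,c}$ as the image of the generator at $\star$ along a tree of such entries; assigning to each basis vector $v$ the total degree $c_v$ accumulated along its tree-path (with $c_\star=0$), conjugation by $\diag(t^{-c_v})$ exactly cancels the degree-weight on every distinguished entry. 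Let $\rho(t)$ be the composite of this conjugation with the degree-scaling. Conjugation is an isomorphism, and rescaling each arrow by a nonzero scalar leaves the stability of any subspace collection $(N_i)$ under the arrows unchanged, so both constituents preserve $\Sigma(-)$; hence $\Sigma(\rho(t)\cdot M_{a,b,c})=\Sigma(M_{a,b,c})$ for every $t\neq0$.

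By construction $\rho(t)$ fixes the distinguished entries and multiplies every remaining entry by a non-negative power of $t$; when the exponents on the three chart coordinates are strictly positive the action reads $\rho(t)\cdot M_{a,b,c}=M_{t^{w_a}a,\,t^{w_b}b,\,t^{w_c}c}$ with $w_a,w_b,w_c>0$, whence $\lim_{t\to0}\rho(t)\cdot M_{a,b,c}=M_{0,0,0}$. Now the unstable locus in the family over $U$ is the finite union, over dimension vectors $\mathbf e$ with $\theta(\mathbf e)\leq0$, of the closed loci where $M_{a,b,c}$ admits a subrepresentation of dimension vector $\mathbf e$ (each closed, being the image in $\prod_i\mathrm{Gr}(e_i,d_i)$ of the relative Quot scheme); hence the $\theta$-stable locus $S\subseteq U$ is open and, by hypothesis, contains the origin. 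For any $p\in U$ the path $t\mapsto\rho(t)\cdot M_p$ tends to $M_{0,0,0}\in S$ as $t\to0$, so $\rho(t_1)\cdot M_p\in S$ for some $t_1\neq0$; since $\rho(t_1)$ preserves subrepresentation dimension vectors, $M_p$ is itself $\theta$-stable, giving $S=U$.

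The step I expect to be the crux is the positivity $w_a,w_b,w_c>0$ of the weights of the three chart coordinates, equivalently the assertion that, once the distinguished entries carry weight zero, every coordinate entry closes a cycle of strictly positive degree. This is precisely where the explicit charts of Section \ref{sect:opens} and the verified gradability of each mutated McKay QP are needed, and it must be confirmed for each family ($\Z/n\Z$, $D_{2n}$ with $n$ odd and even, and $\mathbb T$) from the representation spaces of the Appendix; the thin cyclic case, where every vertex space is one-dimensional, already exhibits the mechanism and can be checked directly.
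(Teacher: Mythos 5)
You have the right idea, and in substance it is the same specialization argument the paper gestures at. The paper's entire proof is the assertion that every proper subrepresentation $N_{a,b,c}\subset M_{a,b,c}$ has a counterpart $N_{0,0,0}\subset M_{0,0,0}$ with the same dimension vector, whence $\theta(N_{a,b,c})=\theta(N_{0,0,0})>0$; no mechanism for producing $N_{0,0,0}$ is given, and the counterpart cannot simply be the same collection of subspaces (in the chart $U_2$ for $D_{2n}$, $n$ odd, a line invariant under the loop $\left(\begin{smallmatrix}0&1\\ uB&0\end{smallmatrix}\right)$ is spanned by an eigenvector and is not invariant under the skeleton loop $\left(\begin{smallmatrix}0&1\\ 0&0\end{smallmatrix}\right)$), so a genuine degeneration is needed, and it only yields an inclusion of sets of subrepresentation dimension vectors, which is all the lemma requires. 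Your construction --- closedness in $U$ of the locus $Z_{\mathbf e}$ of points admitting a subrepresentation of dimension vector $\mathbf e$ (properness of the product of Grassmannians), invariance of $Z_{\mathbf e}$ under conjugation and nonzero arrow rescaling, and a $\C^*$-flow contracting the chart to its skeleton --- is exactly the justification the paper's one-liner is missing. So your route buys rigor where the paper buys brevity.

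The step you defer is, however, where all the work sits, and it deserves to be called out as the remaining gap: the positivity of the coordinate weights is not automatic and depends on a chart-adapted choice of grading. With the natural grading in which every arrow of the McKay quiver has degree $1$ (so $W$ is homogeneous of degree $3$), the coordinate $a$ of the chart $U_1'$ for $D_{2n}$, $n$ odd, acquires weight $1-(2m+1)=-2m<0$, so the flow runs away from the origin rather than towards it. The fix is to regrade chart by chart: for $U_1'$, taking $\deg v=1$, $\deg c=\deg C=\deg d_i=\deg D_i=1$ for all $i$, and $\deg a=\deg A=\deg u_i=2m+2$ makes $W$ homogeneous of degree $2m+4$ and gives weights $w_a=1$, $w_b=2$, $w_C=2m+1$, all positive; homogeneity of the relations then forces every occurrence of a given coordinate (for instance $b$, which appears in $d_0$, in the arrow $C$, in every $d_i$ and $D_i$, and in the loop $v$) to scale with one and the same weight, so the flow really does preserve the chart and contract it to $M_{0,0,0}$. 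Until such a grading is exhibited for every chart of every group --- a finite and routine check against the Appendix, but one you have not carried out --- your proof is conditional precisely at its crux; doing that verification would complete it, and would at the same time supply the justification that the paper's own three-line proof leaves implicit.
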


\begin{proof}
Let $(a,b,c)\in U$ and let $M_{a,b,c}$ be the corresponding representation. For every proper subrepresentation $N_{a,b,c}\subset M_{a,b,c}$ , the dimension vectors for $N_{a,b,c}$ and $N_{0,0,0}$ coincide. Therefore since $N_{0,0,0}\subset M_{0,0,0}$ and $M_{0,0,0}$ is $\theta$-stable we have that $\theta(N_{a,b,c})=\theta(N_{0,0,0})>0$.
\end{proof}

\begin{thm}\label{stability}
(i) Let $G=D_{2n}$ with $n$ odd and let $X_i$ be a crepant resolution of $\C^3/G$. The chamber $C_i\subset\Theta$ for which $X_i\cong\mathcal{M}_{C_i}$ is given by the inequalities:
\[
\begin{array}{l}
\theta_k>0 \text{ for $k\neq0,1$,} \\
\theta_1<0, \\
\theta_1+\theta_2<0, \\
~~~\vdots \\
\theta_1+\theta_2+\ldots+\theta_i<0, \\
\theta_1+\theta_2+\ldots+\theta_i+\theta_{i+1}>0.
\end{array}
\]
The wall between $C_i$ and $C_{i+1}$ is defined by $\theta_1+\theta_2+\ldots+\theta_i+\theta_{i+1}=0$.

(ii) Let $G=D_{2n}$ with $n$ even and let $X_{0\ldots i}^{m\ldots(m-j)}$ be a crepant resolution of $\C^3/G$. The chamber $C_{ij}\subset\Theta$ for which $X^{m..(m-j)}_{0..i}\cong\mathcal{M}_{C_{ij}}$ is given by the inequalities:

{\renewcommand{\arraystretch}{1.25}
\[
\begin{array}{c|c}
\theta_k>0 \text{ for $k\neq0,1,m+1$,}  & \theta_1<0, \theta_1+\theta_2<0, \ldots, \sum_{k=1}^i\theta_k<0, \\
\sum_{k=1}^{m+1}\theta_k>0,  & \theta_{m+1}<0, \theta_{m}+\theta_{m+1}<0, \ldots, \sum_{k=m-j+1}^{m+1}\theta_k<0, \\
\sum_{k=1}^{m}\theta_k+\theta_{m+2}>0,  & \sum_{k=1}^{i+1}\theta_k>0, \\

\theta_{m+1}+\theta_{m+2}>0,  & \sum_{k=m-j}^{m+1}\theta_k>0. 
\end{array}
\]}
The wall between $C_{i,j}$ and $C_{i+1,j}$ is defined by $\sum_{k=1}^{i+2}\theta_k=0$, and the wall between $C_{i,j}$ and $C_{i,j+1}$ is given by $\sum_{k=m-j+1}^{m+1}\theta_k=0$.

(iii) Let $G$ be the tetrahedral group of order 12 and let $X_i$ be a crepant resolution of $\C^3/G$. The chamber $C_{i}\subset\Theta$ for which $X_i\cong\mathcal{M}_{C_{i}}$ is given by the inequalities:
\[
\begin{array}{rl}
C_0: & \theta_i>0, i\neq0 \\
C_1: & \theta_1<0, \theta_2>0, \theta_1+\theta_3>0 \\
C_2: & \theta_1>0, \theta_2<0, \theta_2+\theta_3>0 \\
C_{12}: & \theta_1<0, \theta_2<0, \theta_1+\theta_2+\theta_3>0 \\
C_{123}: & \theta_1+\theta_3>0, \theta_2+\theta_3>0, \theta_1+\theta_2+\theta_3<0
\end{array}
\]
\end{thm}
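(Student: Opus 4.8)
The plan is to invoke the corollary to the preceding Lemma: for any crepant resolution $X \cong \mathcal{M}_C$ equipped with a finite affine open cover $\{U_i\}$, one has $C = C_{sk}$, where $C_{sk}$ is cut out by the inequalities $\theta(N) > 0$ ranging over all proper nonzero subrepresentations $N \subsetneq sk(U_i)$ and all $i$. Thus the proof reduces to a bookkeeping computation: read off each skeleton, list its subrepresentations, translate each into a linear inequality on $\theta$, and simplify the resulting system to the stated minimal list. Since the explicit open covers and representation spaces are already recorded (Theorems \ref{OpensDnOdd}, \ref{OpensDnEven}, \ref{OpensE6} and the Appendix), every skeleton $sk(U_i)$ is obtained mechanically by setting the three local coordinates of $U_i$ to zero.

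First I would describe the skeletons intrinsically. After setting the coordinates to zero, the only surviving arrows of $sk(U_i)$ are the \emph{distinguished} arrows of $U_i$ (those equal to constant maps in the representation space), and these are components of identity maps between one-dimensional pieces of the vertex spaces. Consequently a subrepresentation $N \subseteq sk(U_i)$ amounts to a choice, at each vertex, of a coordinate subspace closed under the surviving arrows; equivalently, $N$ is generated by a ``successor-closed'' collection of basis vectors flowing along the distinguished arrows. For each such $N$ the quantity $\theta(N) = \sum_k \theta_k \dim N_k$ is a partial sum of the $\theta_k$, so running over all subrepresentations of all skeletons produces a finite family of partial-sum inequalities.

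Next I would carry out this enumeration group-type by group-type. For $G = D_{2n}$ with $n$ odd, the skeletons of the open sets covering $X_i$ are strings of distinguished arrows threading the vertices $0,0',1,\dots,m$; the subrepresentations generated from the source $0$ (and, after the flops producing $X_i$, also from $0'$) give precisely the partial sums $\theta_1 + \cdots + \theta_\ell$, with the sign change at $\ell = i{+}1$ encoding which curves have been flopped. The even case and the tetrahedral case (where one vertex carries dimension $3$) are handled the same way, the extra combinatorics of the branch at the vertices $m, m'$ (resp.\ the three-dimensional vertex $3$) giving the two families of partial sums and the triangle graph of $X_{123}$. The main obstacle is the reduction step: showing that the large system of partial-sum inequalities produced by all subrepresentations of all skeletons collapses to exactly the stated minimal, non-redundant list, that the resulting region is open and full-dimensional (hence a genuine chamber containing a generic $\theta$), and that the walls separating adjacent chambers are the stated hyperplanes matching the flop correspondence of Section \ref{sect:opens}. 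This is purely combinatorial but must be verified uniformly across the parametrized families, and it is where the bulk of the case analysis lies.
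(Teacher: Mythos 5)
Your proposal follows essentially the same route as the paper's proof: the authors likewise invoke the skeleton lemma (so that $C=C_{sk}$), read the skeletons of the open sets off the representation spaces in the Appendix, and enumerate the dimension vectors of their subrepresentations (their $s_i$, $r_i$, $n_i$, $c_i$, $d_i$, $e_i$, $j$ in case (ii), and the analogous submodules in cases (i) and (iii)) to obtain precisely these partial-sum inequalities, asserting that all remaining inequalities are implied by the listed ones. The only difference is one of completeness rather than method: the paper actually tabulates the relevant dimension vectors open set by open set, whereas you defer that bookkeeping, correctly identifying it as where the bulk of the case analysis lies.
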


\begin{proof} (i) Consider the open cover of $X_i$ given in Theorem \ref{OpensDnOdd} and let $M\in X_i$ be a representation of $Q$. We calculate for which parameters $\theta\in\Theta$ the representation $M$ is $\theta$-stable. By the representation spaces of every open set we can see that the skeletons for the open sets $U_i$, $U'_i$ and $U''_i$ are:

\begin{center}
\begin{pspicture}(0,-1)(10,2.5)

\rput(0,2){
\rput(1.25,-0.65){$U'_{1}$}
\scalebox{0.5}{
	\psset{arcangle=15,nodesep=1pt}
	\rput(0,1){\rnode{0}{$\bullet$}}		
	\rput(0,-1){\rnode{0'}{$\bullet$}}
	\rput(1,0.25){\rnode{1u}{$\bullet$}}
	\rput(1,-0.25){\rnode{1d}{$\bullet$}}
	\rput(2,0.25){\rnode{2u}{$\bullet$}}
	\rput(2,-0.25){\rnode{2d}{$\bullet$}}
	\rput(2.5,0.25){$\cdots$}
	\rput(2.5,-0.25){$\cdots$}
	\rput(3,0.25){\rnode{m-1u}{$\bullet$}}	
	\rput(3,-0.25){\rnode{m-1d}{$\bullet$}}
	\rput(4,0.25){\rnode{mu}{$\bullet$}}
	\rput(4,-0.25){\rnode{md}{$\bullet$}}
	
	\ncarc{->}{0}{0'} 	
	\ncarc{->}{0}{1u}
	\ncarc{->}{0'}{1d}
	\ncarc{->}{1u}{2u}
	\ncarc{->}{1d}{2d}
	\ncline[nodesep=0]{->}{1u}{1d}
	\ncline[nodesep=0]{->}{2u}{2d}
	
	\ncarc{->}{m-1u}{mu}	
	\ncline[nodesep=0]{->}{mu}{md}
	\ncarc{->}{md}{m-1d}	
	}}

\rput(3,2){
\rput(2,-0.65){$U_i$~\text{\tiny ($2\leq i\leq m+1$)}}
\scalebox{0.5}{
	\psset{arcangle=15,nodesep=1pt}
	\rput(0,1){\rnode{0}{$\bullet$}}	
	\rput(0,-1){\rnode{0'}{$\bullet$}}
	\rput(1,0.25){\rnode{1u}{$\bullet$}}
	\rput(1,-0.25){\rnode{1d}{$\bullet$}}
	\rput(2,0.25){\rnode{2u}{$\bullet$}}
	\rput(2,-0.25){\rnode{2d}{$\bullet$}}
	\rput(2.5,0.25){$\cdots$}
	\rput(2.5,-0.25){$\cdots$}
	\rput(3,0.25){\rnode{3u}{$\bullet$}}	
	\rput(3,-0.25){\rnode{3d}{$\bullet$}}
	\rput(4,0.25){\rnode{4u}{$\bullet$}}
	\rput(4,-0.25){\rnode{4d}{$\bullet$}}
	\rput(5,0.25){\rnode{5u}{$\bullet$}}
	\rput(5,-0.25){\rnode{5d}{$\bullet$}}
	\rput(6,0.25){\rnode{6u}{$\bullet$}}
	\rput(6,-0.25){\rnode{6d}{$\bullet$}}
	\rput(6.5,0.25){$\cdots$}
	\rput(6.5,-0.25){$\cdots$}
	\rput(7,0.25){\rnode{m-1u}{$\bullet$}}	
	\rput(7,-0.25){\rnode{m-1d}{$\bullet$}}
	\rput(8,0.25){\rnode{mu}{$\bullet$}}
	\rput(8,-0.25){\rnode{md}{$\bullet$}}
	
	\ncarc{->}{0}{0'}	
	\ncarc{->}{0}{1u}
	\ncarc{->}{0'}{1d}
	\ncarc{->}{1u}{2u}
	\ncarc{->}{1d}{2d}
	\ncline[nodesep=0]{->}{1u}{1d}
	\ncline[nodesep=0]{->}{2u}{2d}
	
	\ncarc{->}{3u}{4u}	
	\ncarc{->}{3d}{4d}
	\ncline[nodesep=0]{->}{3u}{3d}
	\ncarc{->}{4u}{5u}\Aput[0.05]{$i\!-\!2$}
	\ncline[nodesep=0]{->}{4u}{4d}
	\ncarc{->}{5u}{6u}
	\ncarc{->}{6d}{5d}
	
	\ncarc{->}{m-1u}{mu}	
	\ncline[nodesep=0]{->}{mu}{md}
	\ncarc{->}{md}{m-1d}	

	}}

\rput(8,2){
\rput(1.25,-0.65){$U_{m+2}$}
\scalebox{0.5}{
	\psset{arcangle=15,nodesep=1pt}
	\rput(0,1){\rnode{0}{$\bullet$}}		
	\rput(0,-1){\rnode{0'}{$\bullet$}}
	\rput(1,0.25){\rnode{1u}{$\bullet$}}
	\rput(1,-0.25){\rnode{1d}{$\bullet$}}
	\rput(2,0.25){\rnode{2u}{$\bullet$}}
	\rput(2,-0.25){\rnode{2d}{$\bullet$}}
	\rput(2.5,0.25){$\cdots$}
	\rput(2.5,-0.25){$\cdots$}
	\rput(3,0.25){\rnode{m-1u}{$\bullet$}}	
	\rput(3,-0.25){\rnode{m-1d}{$\bullet$}}
	\rput(4,0.25){\rnode{mu}{$\bullet$}}
	\rput(4,-0.25){\rnode{md}{$\bullet$}}
	
	\ncarc{->}{0}{0'} 	
	\ncarc{->}{0}{1u}
	\ncarc{->}{0'}{1d}
	\ncarc{->}{1u}{2u}
	\ncarc{->}{1d}{2d}
	\ncline[nodesep=0]{->}{1u}{1d}
	\ncline[nodesep=0]{->}{2u}{2d}
	
	\ncarc{->}{m-1u}{mu}	
	\ncarc{->}{m-1d}{md}	
	}}

\rput(0,0){
\rput(2,-0.65){$U'_i$~\text{\tiny ($1\leq i\leq m+1$)}}
\scalebox{0.5}{
	\psset{arcangle=15,nodesep=1pt}
	\rput(0,1){\rnode{0}{$\bullet$}}	
	\rput(0,-1){\rnode{0'}{$\bullet$}}
	\rput(1,0.25){\rnode{1u}{$\bullet$}}
	\rput(1,-0.25){\rnode{1d}{$\bullet$}}
	\rput(2,0.25){\rnode{2u}{$\bullet$}}
	\rput(2,-0.25){\rnode{2d}{$\bullet$}}
	\rput(2.5,0.25){$\cdots$}
	\rput(2.5,-0.25){$\cdots$}
	\rput(3,0.25){\rnode{3u}{$\bullet$}}	
	\rput(3,-0.25){\rnode{3d}{$\bullet$}}
	\rput(4,0.25){\rnode{4u}{$\bullet$}}
	\rput(4,-0.25){\rnode{4d}{$\bullet$}}
	\rput(5,0.25){\rnode{5u}{$\bullet$}}
	\rput(5,-0.25){\rnode{5d}{$\bullet$}}
	\rput(6,0.25){\rnode{6u}{$\bullet$}}
	\rput(6,-0.25){\rnode{6d}{$\bullet$}}
	\rput(6.5,0.25){$\cdots$}
	\rput(6.5,-0.25){$\cdots$}
	\rput(7,0.25){\rnode{m-1u}{$\bullet$}}	
	\rput(7,-0.25){\rnode{m-1d}{$\bullet$}}
	\rput(8,0.25){\rnode{mu}{$\bullet$}}
	\rput(8,-0.25){\rnode{md}{$\bullet$}}
	
	\ncarc{->}{0}{1u}
	\ncarc{->}{0'}{1d}
	\ncarc{->}{1u}{2u}
	\ncarc{->}{1d}{2d}
	
	\ncarc{->}{3u}{4u}	
	\ncarc{->}{3d}{4d}
	\ncarc{->}{4u}{5u}\Aput[0.05]{$i\!-\!2$}
	\ncarc{->}{4d}{5d}
	\ncarc{->}{5u}{6u}
	\ncarc{->}{6d}{5d}
	
	\ncarc{->}{m-1u}{mu}	
	\ncline[nodesep=0]{->}{mu}{md}
	\ncarc{->}{md}{m-1d}	
	}}

\rput(5,0){
\rput(2,-0.65){$U''_i$~\text{\tiny ($1\leq i\leq m$)}}
\scalebox{0.5}{
	\psset{arcangle=15,nodesep=1pt}
	\rput(0,1){\rnode{0}{$\bullet$}}	
	\rput(0,-1){\rnode{0'}{$\bullet$}}
	\rput(1,0.25){\rnode{1u}{$\bullet$}}
	\rput(1,-0.25){\rnode{1d}{$\bullet$}}
	\rput(2,0.25){\rnode{2u}{$\bullet$}}
	\rput(2,-0.25){\rnode{2d}{$\bullet$}}
	\rput(2.5,0.25){$\cdots$}
	\rput(2.5,-0.25){$\cdots$}
	\rput(3,0.25){\rnode{3u}{$\bullet$}}	
	\rput(3,-0.25){\rnode{3d}{$\bullet$}}
	\rput(4,0.25){\rnode{4u}{$\bullet$}}
	\rput(4,-0.25){\rnode{4d}{$\bullet$}}
	\rput(5,0.25){\rnode{5u}{$\bullet$}}
	\rput(5,-0.25){\rnode{5d}{$\bullet$}}
	\rput(6,0.25){\rnode{6u}{$\bullet$}}
	\rput(6,-0.25){\rnode{6d}{$\bullet$}}
	\rput(6.5,0.25){$\cdots$}
	\rput(6.5,-0.25){$\cdots$}
	\rput(7,0.25){\rnode{m-1u}{$\bullet$}}	
	\rput(7,-0.25){\rnode{m-1d}{$\bullet$}}
	\rput(8,0.25){\rnode{mu}{$\bullet$}}
	\rput(8,-0.25){\rnode{md}{$\bullet$}}
	
	\ncarc{->}{0}{1u}
	\ncarc{->}{0'}{1d}
	\ncarc{->}{1u}{2u}
	\ncarc{->}{1d}{2d}
	
	\ncarc{->}{3u}{4u}	
	\ncarc{->}{3d}{4d}
	\ncarc{->}{4u}{5u}\Aput[0.05]{$i\!-\!2$}
	\ncarc{->}{4d}{5u}
	\ncarc{->}{5u}{6u}
	\ncarc{->}{6d}{5d}
	
	\ncarc{->}{m-1u}{mu}	
	\ncline[nodesep=0]{->}{mu}{md}
	\ncarc{->}{md}{m-1d}	
	}}
\end{pspicture}
\end{center}

Every dot in the above picture corresponds to a basis element in the corresponding vector space in a representation of $Q$. Notice that the dimension vector is $\begin{smallmatrix}1\\1\end{smallmatrix}\text{\small{2\ldots2}}$ so that there is one dot for each 1-dimensional vertex and two dots for each 2-dimensional vertex.

We order the subindices of the stability condition $\theta:=(\theta_i)_{0\leq i\leq m+1}\in\Q^{|Q_0|}$ by the sequence $\begin{smallmatrix}0\\1\end{smallmatrix}\text{\small{2\ldots m+1}}$ along the vertices of $Q$. Let $s_i:=\begin{smallmatrix}0\\0\end{smallmatrix}\text{\small{0\ldots010\ldots0}}$ be the dimension vector with entry 1 at the position $i$. With this notation, we see that every $\theta$-stable submodule in the open sets $U''_1,\ldots,U''_i$ contains a submodule with dimension vector $s_2,\ldots,s_{i+2}$ respectively. Similarly, there exist a submodule of dimension vector $s_{i+3},\ldots,s_{m+2}$ in any $\theta$-stable module contained in $U_{i+2},\ldots,U_{m+2}$ respectively. Therefore we have that $\theta_i>0$ for $i\geq2$.

The rest of the condition follows by examining the remaining submodules. If $M_i\in U''_i$ then there exist a submodule $W_i\subset M_i$ with $\underline{\dim}(W_i)=\begin{smallmatrix}1\\0\end{smallmatrix}\text{\small{1\ldots12\ldots2}}$ where the first 2 is located in the position $i+1$. This imply that $\theta_1+\ldots+\theta_i<0$. Finally if $N_{i+1}\in U'_{i+1}$ then there exist a submodule $V_{i+1}\subset N_{i+1}$ with $\underline{\dim}(V_{i+1})=\begin{smallmatrix}0\\1\end{smallmatrix}\text{\small{1\ldots10\ldots0}}$ where the last 1 is located in the position $i+1$, which means that $\sum_{i=1}^{i+1}\theta_i>0$.    

Any other inequalities coming from the submodules of $M\in\mathcal{M}_C$ are implied by the ones we have just described, so the chamber $C$ is defined by the inequalities of the statement. By comparing the chamber conditions of $X_{0\ldots i}$ and $X_{0\ldots(i+1)}$ we obtain the equation of the wall.

(ii) This time we order the subindices of the stability condition $\theta:=(\theta_i)_{0\leq i\leq m+2}\in\Q^{|Q_0|}$ by the sequence ${\Large{\begin{smallmatrix}0\\1\end{smallmatrix}}}2\ldots m{\Large{\begin{smallmatrix}m\!+\!1\\m\!+\!2\end{smallmatrix}}}$ along the vertices of $Q$. The skeletons of the open sets in this case are as follows:

\begin{center}
\begin{pspicture}(0,-1)(14,6.5)

\rput(0,6){
\rput(2.5,-0.65){$U_i$~\text{\tiny ($2\leq i\leq m$)}}
\scalebox{0.5}{
	\psset{arcangle=15,nodesep=1pt}
	\rput(0,1){\rnode{0}{$\bullet$}}	
	\rput(0,-1){\rnode{0'}{$\bullet$}}
	\rput(1,0.25){\rnode{1u}{$\bullet$}}
	\rput(1,-0.25){\rnode{1d}{$\bullet$}}
	\rput(2,0.25){\rnode{2u}{$\bullet$}}
	\rput(2,-0.25){\rnode{2d}{$\bullet$}}
	\rput(2.5,0.25){$\cdots$}
	\rput(2.5,-0.25){$\cdots$}
	\rput(3,0.25){\rnode{3u}{$\bullet$}}	
	\rput(3,-0.25){\rnode{3d}{$\bullet$}}
	\rput(4,0.25){\rnode{4u}{$\bullet$}}
	\rput(4,-0.25){\rnode{4d}{$\bullet$}}
	\rput(5,0.25){\rnode{5u}{$\bullet$}}
	\rput(5,-0.25){\rnode{5d}{$\bullet$}}
	\rput(6,0.25){\rnode{6u}{$\bullet$}}
	\rput(6,-0.25){\rnode{6d}{$\bullet$}}
	\rput(6.5,0.25){$\cdots$}
	\rput(6.5,-0.25){$\cdots$}
	\rput(7,0.25){\rnode{m-2u}{$\bullet$}}	
	\rput(7,-0.25){\rnode{m-2d}{$\bullet$}}
	\rput(8,0.25){\rnode{m-1u}{$\bullet$}}
	\rput(8,-0.25){\rnode{m-1d}{$\bullet$}}
	\rput(9,1){\rnode{m}{$\bullet$}}
	\rput(9,-1){\rnode{m'}{$\bullet$}}
	
	\ncarc{->}{0}{0'}	
	\ncarc{->}{0}{1u}
	\ncarc{->}{0'}{1d}
	\ncarc{->}{1u}{2u}
	\ncarc{->}{1d}{2d}
	\ncline[nodesep=0]{->}{1u}{1d}
	\ncline[nodesep=0]{->}{2u}{2d}
	
	\ncarc{->}{3u}{4u}	
	\ncarc{->}{3d}{4d}
	\ncline[nodesep=0]{->}{3u}{3d}
	\ncarc{->}{4u}{5u}\Aput[0.05]{$i\!-\!2$}
	\ncline[nodesep=0]{->}{4u}{4d}
	\ncarc{->}{5u}{6u}
	\ncarc{->}{6d}{5d}
	
	\ncarc{->}{m-2u}{m-1u}	
	\ncarc{->}{m-1u}{m}
	\ncarc{->}{m-1u}{m'}
	\ncarc{->}{m}{m-1d}
	\ncarc{->}{m'}{m-1d}
	\ncarc{->}{m-1d}{m-2d}
	}}

\rput(5,6){
\rput(1.35,-0.65){$U_{m+1}$}
\scalebox{0.5}{
	\psset{arcangle=15,nodesep=1pt}
	\rput(0,1){\rnode{0}{$\bullet$}}		
	\rput(0,-1){\rnode{0'}{$\bullet$}}
	\rput(1,0.25){\rnode{1u}{$\bullet$}}
	\rput(1,-0.25){\rnode{1d}{$\bullet$}}
	\rput(2,0.25){\rnode{2u}{$\bullet$}}
	\rput(2,-0.25){\rnode{2d}{$\bullet$}}
	\rput(2.5,0.25){$\cdots$}
	\rput(2.5,-0.25){$\cdots$}
	\rput(3,0.25){\rnode{m-2u}{$\bullet$}}	
	\rput(3,-0.25){\rnode{m-2d}{$\bullet$}}
	\rput(4,0.25){\rnode{m-1u}{$\bullet$}}
	\rput(4,-0.25){\rnode{m-1d}{$\bullet$}}
	\rput(5,1){\rnode{m}{$\bullet$}}
	\rput(5,-1){\rnode{m'}{$\bullet$}}
	
	\ncarc{->}{0}{0'} 	
	\ncarc{->}{0}{1u}
	\ncarc{->}{0'}{1d}
	\ncarc{->}{1u}{2u}
	\ncarc{->}{1d}{2d}
	\ncline[nodesep=0]{->}{1u}{1d}
	\ncline[nodesep=0]{->}{2u}{2d}
	
	\ncarc{->}{m-2u}{m-1u}	
	\ncarc{->}{m-2d}{m-1d}
	\ncline[nodesep=0]{->}{m-2u}{m-2d}
	\ncarc{->}{m-1u}{m}
	\ncarc{->}{m-1u}{m'}
	\ncline[nodesep=0]{->}{m-1u}{m-1d}
	}}

\rput(8,6){
\rput(1.35,-0.65){$V'_{m+2}$}
\scalebox{0.5}{
	\psset{arcangle=15,nodesep=1pt}
	\rput(0,1){\rnode{0}{$\bullet$}}		
	\rput(0,-1){\rnode{0'}{$\bullet$}}
	\rput(1,0.25){\rnode{1u}{$\bullet$}}
	\rput(1,-0.25){\rnode{1d}{$\bullet$}}
	\rput(2,0.25){\rnode{2u}{$\bullet$}}
	\rput(2,-0.25){\rnode{2d}{$\bullet$}}
	\rput(2.5,0.25){$\cdots$}
	\rput(2.5,-0.25){$\cdots$}
	\rput(3,0.25){\rnode{m-2u}{$\bullet$}}	
	\rput(3,-0.25){\rnode{m-2d}{$\bullet$}}
	\rput(4,0.25){\rnode{m-1u}{$\bullet$}}
	\rput(4,-0.25){\rnode{m-1d}{$\bullet$}}
	\rput(5,1){\rnode{m}{$\bullet$}}
	\rput(5,-1){\rnode{m'}{$\bullet$}}
	
	\ncarc{->}{0}{0'} 	
	\ncarc{->}{0}{1u}
	\ncarc{->}{0'}{1d}
	\ncarc{->}{1u}{2u}
	\ncarc{->}{1d}{2d}
	\ncline{->}{1u}{1d}
	\ncline[nodesep=0]{->}{2u}{2d}
	
	\ncarc{->}{m-2u}{m-1u}	
	\ncarc{->}{m-2d}{m-1d}
	\ncline[nodesep=0]{->}{m-2u}{m-2d}
	\ncarc{->}{m-1u}{m'}
	\ncarc{->}{m-1d}{m}
	\ncline[nodesep=0]{->}{m-1u}{m-1d}
	\ncarc[arcangle=-15]{->}{m'}{m}
	}}

\rput(11,6){
\rput(1.35,-0.65){$V''_{m+3}$}
\scalebox{0.5}{
	\psset{arcangle=15,nodesep=1pt}
	\rput(0,1){\rnode{0}{$\bullet$}}		
	\rput(0,-1){\rnode{0'}{$\bullet$}}
	\rput(1,0.25){\rnode{1u}{$\bullet$}}
	\rput(1,-0.25){\rnode{1d}{$\bullet$}}
	\rput(2,0.25){\rnode{2u}{$\bullet$}}
	\rput(2,-0.25){\rnode{2d}{$\bullet$}}
	\rput(2.5,0.25){$\cdots$}
	\rput(2.5,-0.25){$\cdots$}
	\rput(3,0.25){\rnode{m-2u}{$\bullet$}}	
	\rput(3,-0.25){\rnode{m-2d}{$\bullet$}}
	\rput(4,0.25){\rnode{m-1u}{$\bullet$}}
	\rput(4,-0.25){\rnode{m-1d}{$\bullet$}}
	\rput(5,1){\rnode{m}{$\bullet$}}
	\rput(5,-1){\rnode{m'}{$\bullet$}}
	
	\ncarc{->}{0}{0'} 	
	\ncarc{->}{0}{1u}
	\ncarc{->}{0'}{1d}
	\ncarc{->}{1u}{2u}
	\ncarc{->}{1d}{2d}
	\ncline[nodesep=0]{->}{1u}{1d}
	\ncline[nodesep=0]{->}{2u}{2d}
	
	\ncarc{->}{m-2u}{m-1u}	
	\ncarc{->}{m-2d}{m-1d}
	\ncline[nodesep=0]{->}{m-2u}{m-2d}
	\ncarc{->}{m-1u}{m}
	\ncarc{->}{m-1d}{m'}
	\ncline[nodesep=0]{->}{m-1u}{m-1d}
	\ncarc[arcangle=-15]{->}{m}{m'}
	}}

\rput(1.5,4){
\rput(2.5,-0.65){$U'_i$~\text{\tiny $(2\leq i\leq m)$}}
\scalebox{0.5}{
	\psset{arcangle=15,nodesep=1pt}
	\rput(0,1){\rnode{0}{$\bullet$}}	
	\rput(0,-1){\rnode{0'}{$\bullet$}}
	\rput(1,0.25){\rnode{1u}{$\bullet$}}
	\rput(1,-0.25){\rnode{1d}{$\bullet$}}
	\rput(2,0.25){\rnode{2u}{$\bullet$}}
	\rput(2,-0.25){\rnode{2d}{$\bullet$}}
	\rput(2.75,0.25){$\cdots$}
	\rput(2.75,-0.25){$\cdots$}
	\rput(3.5,0.25){\rnode{3u}{$\bullet$}}	
	\rput(3.5,-0.25){\rnode{3d}{$\bullet$}}
	\rput(4.5,0.25){\rnode{4u}{$\bullet$}}
	\rput(4.5,-0.25){\rnode{4d}{$\bullet$}}
	\rput(5.5,0.25){\rnode{5u}{$\bullet$}}
	\rput(5.5,-0.25){\rnode{5d}{$\bullet$}}
	\rput(6.5,0.25){\rnode{6u}{$\bullet$}}
	\rput(6.5,-0.25){\rnode{6d}{$\bullet$}}
	\rput(7.25,0.25){$\cdots$}
	\rput(7.25,-0.25){$\cdots$}
	\rput(8,0.25){\rnode{m-2u}{$\bullet$}}	
	\rput(8,-0.25){\rnode{m-2d}{$\bullet$}}
	\rput(9,0.25){\rnode{m-1u}{$\bullet$}}
	\rput(9,-0.25){\rnode{m-1d}{$\bullet$}}
	\rput(10,1){\rnode{m}{$\bullet$}}
	\rput(10,-1){\rnode{m'}{$\bullet$}}
	
	\ncarc{->}{0}{1u}
	\ncarc{->}{0'}{1d}
	\ncarc{->}{1u}{2u}
	\ncarc{->}{1d}{2d}
	
	\ncarc{->}{3u}{4u}	
	\ncarc{->}{3d}{4d}
	\ncarc{->}{4u}{5u}\Aput[0.05]{$i\!-\!2$}
	\ncarc{->}{4d}{5d}
	\ncarc{->}{5u}{6u}
	\ncarc{->}{6d}{5d}
	
	\ncarc{->}{m-2u}{m-1u}	
	\ncarc{->}{m-1u}{m}
	\ncarc{->}{m-1u}{m'}
	\ncarc{->}{m}{m-1d}
	\ncarc{->}{m'}{m-1d}
	\ncarc{->}{m-1d}{m-2d}
	}}

\rput(7.5,4){
\rput(1.35,-0.65){$U'_{m+1}$}
\scalebox{0.5}{
	\psset{arcangle=15,nodesep=1pt}
	\rput(0,1){\rnode{0}{$\bullet$}}		
	\rput(0,-1){\rnode{0'}{$\bullet$}}
	\rput(1,0.25){\rnode{1u}{$\bullet$}}
	\rput(1,-0.25){\rnode{1d}{$\bullet$}}
	\rput(2,0.25){\rnode{2u}{$\bullet$}}
	\rput(2,-0.25){\rnode{2d}{$\bullet$}}
	\rput(2.75,0.25){$\cdots$}
	\rput(2.75,-0.25){$\cdots$}
	\rput(3.5,0.25){\rnode{m-2u}{$\bullet$}}	
	\rput(3.5,-0.25){\rnode{m-2d}{$\bullet$}}
	\rput(4.5,0.25){\rnode{m-1u}{$\bullet$}}
	\rput(4.5,-0.25){\rnode{m-1d}{$\bullet$}}
	\rput(5.5,1){\rnode{m}{$\bullet$}}
	\rput(5.5,-1){\rnode{m'}{$\bullet$}}
	
	\ncarc{->}{0}{1u}
	\ncarc{->}{0'}{1d}
	\ncarc{->}{1u}{2u}
	\ncarc{->}{1d}{2d}
	
	\ncarc{->}{m-2u}{m-1u}	
	\ncarc{->}{m-2d}{m-1d}
	\ncarc{->}{m-1u}{m}
	\ncarc{->}{m-1u}{m'}
	\ncarc{->}{m-1d}{m}
	}}

\rput(1.5,2){
\rput(2.5,-0.65){$U''_i$~\text{\tiny $(1\leq i\leq m\!-\!1)$}}
\scalebox{0.5}{
	\psset{arcangle=15,nodesep=1pt}
	\rput(0,1){\rnode{0}{$\bullet$}}	
	\rput(0,-1){\rnode{0'}{$\bullet$}}
	\rput(1,0.25){\rnode{1u}{$\bullet$}}
	\rput(1,-0.25){\rnode{1d}{$\bullet$}}
	\rput(2,0.25){\rnode{2u}{$\bullet$}}
	\rput(2,-0.25){\rnode{2d}{$\bullet$}}
	\rput(2.75,0.25){$\cdots$}
	\rput(2.75,-0.25){$\cdots$}
	\rput(3.5,0.25){\rnode{3u}{$\bullet$}}	
	\rput(3.5,-0.25){\rnode{3d}{$\bullet$}}
	\rput(4.5,0.25){\rnode{4u}{$\bullet$}}
	\rput(4.5,-0.25){\rnode{4d}{$\bullet$}}
	\rput(5.5,0.25){\rnode{5u}{$\bullet$}}
	\rput(5.5,-0.25){\rnode{5d}{$\bullet$}}
	\rput(6.5,0.25){\rnode{6u}{$\bullet$}}
	\rput(6.5,-0.25){\rnode{6d}{$\bullet$}}
	\rput(7.25,0.25){$\cdots$}
	\rput(7.25,-0.25){$\cdots$}
	\rput(8,0.25){\rnode{m-2u}{$\bullet$}}	
	\rput(8,-0.25){\rnode{m-2d}{$\bullet$}}
	\rput(9,0.25){\rnode{m-1u}{$\bullet$}}
	\rput(9,-0.25){\rnode{m-1d}{$\bullet$}}
	\rput(10,1){\rnode{m}{$\bullet$}}
	\rput(10,-1){\rnode{m'}{$\bullet$}}
	
	\ncarc{->}{0}{1u}
	\ncarc{->}{0'}{1d}
	\ncarc{->}{1u}{2u}
	\ncarc{->}{1d}{2d}
	
	\ncarc{->}{3u}{4u}	
	\ncarc{->}{3d}{4d}
	\ncarc{->}{4u}{5u}\Aput[0.05]{$i\!-\!1$}
	\ncarc{->}{4d}{5u}
	\ncarc{->}{5u}{6u}
	\ncarc{->}{6d}{5d}
	
	\ncarc{->}{m-2u}{m-1u}	
	\ncarc{->}{m-1d}{m-2d}
	\ncarc{->}{m-1u}{m}
	\ncarc{->}{m-1u}{m'}
	\ncarc{->}{m}{m-1d}
	\ncarc{->}{m'}{m-1d}
	}}

\rput(7.5,2){
\rput(1.35,-0.65){$U''_{m}$}
\scalebox{0.5}{
	\psset{arcangle=15,nodesep=1pt}
	\rput(0,1){\rnode{0}{$\bullet$}}		
	\rput(0,-1){\rnode{0'}{$\bullet$}}
	\rput(1,0.25){\rnode{1u}{$\bullet$}}
	\rput(1,-0.25){\rnode{1d}{$\bullet$}}
	\rput(2,0.25){\rnode{2u}{$\bullet$}}
	\rput(2,-0.25){\rnode{2d}{$\bullet$}}
	\rput(2.75,0.25){$\cdots$}
	\rput(2.75,-0.25){$\cdots$}
	\rput(3.5,0.25){\rnode{m-2u}{$\bullet$}}	
	\rput(3.5,-0.25){\rnode{m-2d}{$\bullet$}}
	\rput(4.5,0.25){\rnode{m-1u}{$\bullet$}}
	\rput(4.5,-0.25){\rnode{m-1d}{$\bullet$}}
	\rput(5.5,1){\rnode{m}{$\bullet$}}
	\rput(5.5,-1){\rnode{m'}{$\bullet$}}
	
	\ncarc{->}{0}{1u}
	\ncarc{->}{0'}{1d}
	\ncarc{->}{1u}{2u}
	\ncarc{->}{1d}{2d}
	
	\ncarc{->}{m-2u}{m-1u}	
	\ncarc{->}{m-2d}{m-1d}
	\ncarc{->}{m-1u}{m}
	\ncarc{->}{m-1u}{m'}
	\ncarc{->}{m-1d}{m'}
	}}

\rput(1.5,0){
\rput(2.5,-0.65){$V'_i$~\text{\tiny $(2\leq i\leq m\!+\!1)$}}
\scalebox{0.5}{
	\psset{arcangle=15,nodesep=1pt}
	\rput(0,1){\rnode{0}{$\bullet$}}	
	\rput(0,-1){\rnode{0'}{$\bullet$}}
	\rput(1,0.25){\rnode{1u}{$\bullet$}}
	\rput(1,-0.25){\rnode{1d}{$\bullet$}}
	\rput(2,0.25){\rnode{2u}{$\bullet$}}
	\rput(2,-0.25){\rnode{2d}{$\bullet$}}
	\rput(2.75,0.25){$\cdots$}
	\rput(2.75,-0.25){$\cdots$}
	\rput(3.5,0.25){\rnode{3u}{$\bullet$}}	
	\rput(3.5,-0.25){\rnode{3d}{$\bullet$}}
	\rput(4.5,0.25){\rnode{4u}{$\bullet$}}
	\rput(4.5,-0.25){\rnode{4d}{$\bullet$}}
	\rput(5.5,0.25){\rnode{5u}{$\bullet$}}
	\rput(5.5,-0.25){\rnode{5d}{$\bullet$}}
	\rput(6.5,0.25){\rnode{6u}{$\bullet$}}
	\rput(6.5,-0.25){\rnode{6d}{$\bullet$}}
	\rput(7.25,0.25){$\cdots$}
	\rput(7.25,-0.25){$\cdots$}
	\rput(8,0.25){\rnode{m-2u}{$\bullet$}}	
	\rput(8,-0.25){\rnode{m-2d}{$\bullet$}}
	\rput(9,0.25){\rnode{m-1u}{$\bullet$}}
	\rput(9,-0.25){\rnode{m-1d}{$\bullet$}}
	\rput(10,1){\rnode{m}{$\bullet$}}
	\rput(10,-1){\rnode{m'}{$\bullet$}}
	
	\ncarc{->}{0}{0'}
	\ncarc{->}{0}{1u}
	\ncarc{->}{0'}{1d}
	\ncarc{->}{1u}{2u}
	\ncarc{->}{1d}{2d}
	\ncline[nodesep=0]{->}{1u}{1d}
	\ncline[nodesep=0]{->}{2u}{2d}
	
	\ncarc{->}{3u}{4u}	
	\ncarc{->}{3d}{4d}
	\ncline[nodesep=0]{->}{3u}{3d}
	\ncarc{->}{4u}{5u}\Aput[0.05]{$i\!-\!2$}
	\ncarc{->}{5d}{4d}
	\ncline[nodesep=0]{->}{4u}{4d}
	\ncarc{->}{5u}{6u}
	\ncarc{->}{6d}{5d}
	
	\ncarc{->}{m-2u}{m-1u}	
	\ncarc{->}{m-1d}{m-2d}
	\ncarc{->}{m-1u}{m'}
	\ncarc{->}{m}{m-1d}
	}}

\rput(7.5,0){
\rput(2.5,-0.65){$V''_i$~\text{\tiny $(3\leq i\leq m\!+\!2)$}}
\scalebox{0.5}{
	\psset{arcangle=15,nodesep=1pt}
	\rput(0,1){\rnode{0}{$\bullet$}}	
	\rput(0,-1){\rnode{0'}{$\bullet$}}
	\rput(1,0.25){\rnode{1u}{$\bullet$}}
	\rput(1,-0.25){\rnode{1d}{$\bullet$}}
	\rput(2,0.25){\rnode{2u}{$\bullet$}}
	\rput(2,-0.25){\rnode{2d}{$\bullet$}}
	\rput(2.75,0.25){$\cdots$}
	\rput(2.75,-0.25){$\cdots$}
	\rput(3.5,0.25){\rnode{3u}{$\bullet$}}	
	\rput(3.5,-0.25){\rnode{3d}{$\bullet$}}
	\rput(4.5,0.25){\rnode{4u}{$\bullet$}}
	\rput(4.5,-0.25){\rnode{4d}{$\bullet$}}
	\rput(5.5,0.25){\rnode{5u}{$\bullet$}}
	\rput(5.5,-0.25){\rnode{5d}{$\bullet$}}
	\rput(6.5,0.25){\rnode{6u}{$\bullet$}}
	\rput(6.5,-0.25){\rnode{6d}{$\bullet$}}
	\rput(7.25,0.25){$\cdots$}
	\rput(7.25,-0.25){$\cdots$}
	\rput(8,0.25){\rnode{m-2u}{$\bullet$}}	
	\rput(8,-0.25){\rnode{m-2d}{$\bullet$}}
	\rput(9,0.25){\rnode{m-1u}{$\bullet$}}
	\rput(9,-0.25){\rnode{m-1d}{$\bullet$}}
	\rput(10,1){\rnode{m}{$\bullet$}}
	\rput(10,-1){\rnode{m'}{$\bullet$}}
	
	\ncarc{->}{0}{0'}
	\ncarc{->}{0}{1u}
	\ncarc{->}{0'}{1d}
	\ncarc{->}{1u}{2u}
	\ncarc{->}{1d}{2d}
	\ncline[nodesep=0]{->}{1u}{1d}
	\ncline[nodesep=0]{->}{2u}{2d}
	
	\ncarc{->}{3u}{4u}	
	\ncarc{->}{3d}{4d}
	\ncline[nodesep=0]{->}{3u}{3d}
	\ncarc{->}{4u}{5u}\Aput[0.05]{$i\!-\!3$}
	\ncline[nodesep=0]{->}{4u}{4d}
	\ncarc{->}{5u}{6u}
	\ncarc{->}{6d}{5d}
	
	\ncarc{->}{m-2u}{m-1u}	
	\ncarc{->}{m-1d}{m-2d}
	\ncarc{->}{m-1u}{m'}
	\ncarc{->}{m}{m-1d}
	\ncarc[arcangle=-15]{->}{m}{m'}
	}}
\end{pspicture}
\end{center}

Consider the open cover of $X_{0\ldots i}^{m\ldots(m-j)}$ given in \ref{OpensDnEven} and let $M\in X_{0\ldots i}^{m\ldots(m-j)}$ be a representation of $Q$. We now define the dimension vectors which are relevant in the proof, together with the corresponding inequality that any submodule $N\subset M$ with one of these dimension vectors produce:
{\renewcommand{\arraystretch}{1.5}
\[
\begin{array}{cc}
\text{Dimension vector} & \text{Inequality in $\Theta$} \\
\begin{pspicture}(0,0)(0.5,0.5)
\rput(-1.5,0){$s_i:=$}
\scalebox{0.3}{
\rput(-2.25,0){
{\Huge
	\psset{arcangle=15,nodesep=1pt}
	\rput(0,0.5){\rnode{0}{$0$}}	
	\rput(0,-0.5){\rnode{0'}{$0$}}
	\rput(1,0){\rnode{1u}{$0$}}
	\rput(2,0){$\cdots$}
	\rput(3,0){\rnode{3u}{$0$}}	
	\rput(4,0){\rnode{4u}{$1$}}	
	\rput(5,0){\rnode{5u}{$0$}}
	\rput(6,0){$\cdots$}
	\rput(7,0){\rnode{m-1u}{$0$}}	
	\rput(8,0.5){\rnode{m}{$0$}}
	\rput(8,-0.5){\rnode{m'}{$0$}}
	\rput(4,1){$i$}	
	}}}
\end{pspicture}
&  \theta_i>0 \\

\begin{pspicture}(0,0)(0.5,0.5)
\rput(-1.5,0){$r_i:=$}
\scalebox{0.3}{
\rput(-2.25,0){
{\Huge
	\psset{arcangle=15,nodesep=1pt}
	\rput(0,0.5){\rnode{0}{$0$}}	
	\rput(0,-0.5){\rnode{0'}{$1$}}
	\rput(1,0){\rnode{1u}{$1$}}
	\rput(2,0){$\cdots$}
	\rput(3,0){\rnode{3u}{$1$}}	
	\rput(4,0){\rnode{4u}{$1$}}
	\rput(5,0){\rnode{5u}{$0$}}
	\rput(6,0){$\cdots$}
	\rput(7,0){\rnode{m-1u}{$0$}}	
	\rput(8,0.5){\rnode{m}{$0$}}
	\rput(8,-0.5){\rnode{m'}{$0$}}
	\rput(4,1){$i$}	
	}}}
\end{pspicture}
&  \sum_{k=1}^i\theta_k>0 \\

\begin{pspicture}(0,0)(0.5,0.5)
\rput(-1.5,0){$n_i:=$}
\scalebox{0.3}{
\rput(-2.25,0){
{\Huge
	\psset{arcangle=15,nodesep=1pt}
	\rput(0,0.5){\rnode{0}{$0$}}	
	\rput(0,-0.5){\rnode{0'}{$0$}}
	\rput(1,0){\rnode{1u}{$0$}}
	\rput(2,0){$\cdots$}
	\rput(3,0){\rnode{3u}{$0$}}	
	\rput(4,0){\rnode{4u}{$1$}}
	\rput(5,0){\rnode{5u}{$1$}}
	\rput(6,0){$\cdots$}
	\rput(7,0){\rnode{m-1u}{$1$}}	
	\rput(8,0.5){\rnode{m}{$1$}}
	\rput(8,-0.5){\rnode{m'}{$0$}}
	\rput(4,1){$i$}	
	}}}
\end{pspicture}
&  \sum_{k=i}^{m+1}\theta_k>0 \\

\begin{pspicture}(0,0)(0.5,0.5)
\rput(-1.5,0){$e_i:=$}
\scalebox{0.3}{
\rput(-2.25,0){
{\Huge
	\psset{arcangle=15,nodesep=1pt}
	\rput(0,0.5){\rnode{0}{$0$}}	
	\rput(0,-0.5){\rnode{0'}{$0$}}
	\rput(1,0){\rnode{1u}{$0$}}
	\rput(2,0){$\cdots$}
	\rput(3,0){\rnode{3u}{$0$}}	
	\rput(4,0){\rnode{4u}{$1$}}
	\rput(5,0){\rnode{5u}{$1$}}
	\rput(6,0){$\cdots$}
	\rput(7,0){\rnode{m-1u}{$1$}}	
	\rput(8,0.5){\rnode{m}{$1$}}
	\rput(8,-0.5){\rnode{m'}{$1$}}
	\rput(4,1){$i$}	
	}}}
\end{pspicture}
&  \sum_{k=i}^{m+2}\theta_k>0 \\

\begin{pspicture}(0,0)(0.5,0.5)
\rput(-1.5,0){$c_i:=$}
\scalebox{0.3}{
\rput(-2.25,0){
{\Huge
	\psset{arcangle=15,nodesep=1pt}
	\rput(0,0.5){\rnode{0}{$1$}}	
	\rput(0,-0.5){\rnode{0'}{$0$}}
	\rput(1,0){\rnode{1u}{$1$}}
	\rput(2,0){$\cdots$}
	\rput(3,0){\rnode{3u}{$1$}}	
	\rput(4,0){\rnode{4u}{$1$}}
	\rput(5,0){\rnode{5u}{$2$}}
	\rput(6,0){$\cdots$}
	\rput(7,0){\rnode{m-1u}{$2$}}	
	\rput(8,0.5){\rnode{m}{$1$}}
	\rput(8,-0.5){\rnode{m'}{$1$}}
	\rput(4,1){$i$}	
	}}}
\end{pspicture}
&  \sum_{k=1}^i\theta_k<0 \\

\begin{pspicture}(0,0)(0.5,0.5)
\rput(-1.5,0){$d_i:=$}
\scalebox{0.3}{
\rput(-2.25,0){
{\Huge
	\psset{arcangle=15,nodesep=1pt}
	\rput(0,0.5){\rnode{0}{$1$}}	
	\rput(0,-0.5){\rnode{0'}{$1$}}
	\rput(1,0){\rnode{1u}{$2$}}
	\rput(2,0){$\cdots$}
	\rput(3,0){\rnode{3u}{$2$}}	
	\rput(4,0){\rnode{4u}{$1$}}
	\rput(5,0){\rnode{5u}{$1$}}
	\rput(6,0){$\cdots$}
	\rput(7,0){\rnode{m-1u}{$1$}}	
	\rput(8,0.5){\rnode{m}{$0$}}
	\rput(8,-0.5){\rnode{m'}{$1$}}
	\rput(4,1){$i$}	
	}}}
\end{pspicture}
&  \sum_{k=i}^{m+1}\theta_k<0 \\

\begin{pspicture}(0,0)(0.5,0.5)
\rput(-1.5,0){$j:=$}
\scalebox{0.3}{
\rput(-2.25,0){
{\Huge
	\psset{arcangle=15,nodesep=1pt}
	\rput(0,0.5){\rnode{0}{$0$}}	
	\rput(0,-0.5){\rnode{0'}{$1$}}
	\rput(1,0){\rnode{1u}{$1$}}
	\rput(2,0){$\cdots$}
	\rput(3,0){\rnode{m-1u}{$1$}}	
	\rput(4,0.5){\rnode{m}{$0$}}
	\rput(4,-0.5){\rnode{m'}{$1$}}
	}}}
\end{pspicture}
&  \sum_{k=1}^m\theta_k+\theta_{m+2}>0
\end{array}
\]}

Note that $r_1=s_1$, $n_{m+1}=s_{m+1}$ and $r_{m+1}=n_1$. \\

The following Lemma shows the presence of the first two inequalities of the Theorem, namely $\theta_k>0$ for $k\neq0,1,m+1$, and $\sum_{k=1}^{m+1}\theta_k>0$.

\begin{lem}\label{lem-Dn-even} (i) $\theta_i>0$ for all $i\neq0,1,m+1$. \\
\noindent (ii) There always exists a submodule $N_1\subset M$ with $\underline{\dim}(N_1)=n_1$.
\end{lem}

\begin{proof} (i) By the open covers given in Theorem \ref{OpensDnEven} (1) and the corresponding skeletons, any crepant resolution of $\C^3/G$ has at least one open set containing a submodule $S_i$ with $\underline{\dim}(S_i)=s_i$ for $i=2,\ldots,m,m+2$. In the cases $i=0,1,m+1$ note that $S_0$ do not belong to any open set so that there's no condition of the form $\theta_0>0$. The submodule $S_1$ is only contained in $U'_1$ and $U_2$, which implies that only $X^{m\ldots(m-j)}$ for any $j$ have the condition $\theta_1>0$. Finally, only $U_{m+1}$ and $V'_{m+2}$ contain the submodule $S_{m+1}$, so that the condition $\theta_{m+1}>0$ only is valid in $X_{0\ldots k}$ for any $k$.

\noindent (ii) Notice that for every $k$ the submodule $N_1\in U'_k,V'_k$, and every $X_{0\ldots i}^{m\ldots(m-j)}$ contains at least one of these affine sets. This finishes the proof of the lemma.
\end{proof}

The dimension vectors that we have to consider in every open set are the following:
{\renewcommand{\arraystretch}{1}
\[
\begin{array}{rl|rl}
\text{Open set} & \text{Dimension vectors} & \text{Open set} & \text{Dimension vectors} \\
	\text{\tiny ($2i\leq m$)}, U_i & s_{i-1}, s_{i}, r_{i-1}, n_{i} 	& \text{\tiny ($i\leq m-1$)}, U''_i & s_{i+1}, n_{i+1}, c_1,\ldots, c_i \\
	U_{m+1} & s_m, s_{m+1}, s_{m+2}, r_m & U''_m & s_{m+1},s_{m+2}, j , c_m \\
V'_{m+2} & s_{m+1}, n_1 & V'_i & s_{i-1}, s_{m+2}, r_{i-1}, n_{i-1}, n_1, j, d_{i}, \ldots,d_{m+1} \\
V''_{m+3} & s_{m+2}, j, e_{m+1} & \text{\tiny ($i\leq m+1$)}, V''_i & s_{i-2}, s_{i-1}, s_{m+2}, r_{i-2}, e_{i-1}, d_{i-1}, \ldots,d_{m+1} \\
\text{\tiny ($1i\leq m$)}, U'_i & s_i, r_i, n_i, n_1, c_1,\ldots,c_{i-1} & V''_{m+2} & s_{m},s_{m+2}, r_m, e_m, d_{m+1} \\
U'_{m+1} & s_{m+1}, s_{m+2}, n_m, n_1, c_m & 
\end{array}
\]}
The result follows by going through the open cover of $X^{m..(m-j)}_{0..i}\cong\mathcal{M}_{C_{ij}}$ given in Theorem \ref{OpensDnEven}, and writing down the corresponding inequalities.

(iii) The skeletons in this case are:

\begin{center}
\begin{pspicture}(0,-1.5)(12,4)
	\psset{arcangle=15,nodesep=0.75pt}
	
\rput(0,2.75){
\scalebox{0.6}{
	\rput(0,-2){\rnode{0}{$\bullet$}}
	\rput(0,0.5){\rnode{33}{$\bullet$}}	
	\rput(0,0.0){\rnode{32}{$\bullet$}} 	
	\rput(0,-0.5){\rnode{31}{$\bullet$}}
	\rput(-1.5,1.5){\rnode{1}{$\bullet$}}
	\rput(1.5,1.5){\rnode{2}{$\bullet$}}
	\ncarc{->}{0}{31}
	\ncline{->}{31}{32}
	\ncarc[arcangle=30]{->}{31}{33}
	\ncarc{->}{31}{1}
	\ncarc{->}{1}{33}
	\ncarc{->}{33}{2}
	}}
	
\rput(3,2.75){
\scalebox{0.6}{
	\rput(0,-2){\rnode{0}{$\bullet$}}
	\rput(0,0.5){\rnode{33}{$\bullet$}}	
	\rput(0,0.0){\rnode{32}{$\bullet$}} 	
	\rput(0,-0.5){\rnode{31}{$\bullet$}}
	\rput(-1.5,1.5){\rnode{1}{$\bullet$}}
	\rput(1.5,1.5){\rnode{2}{$\bullet$}}
	\ncarc{->}{0}{31}
	\ncline{->}{31}{32}
	\ncarc[arcangle=30]{->}{31}{33}
	\ncarc{->}{31}{1}
	\ncarc{->}{1}{33}
	\ncarc[arcangle=-30]{->}{31}{2}
	}}
	
\rput(6,2.75){
\scalebox{0.6}{
	\rput(0,-2){\rnode{0}{$\bullet$}}
	\rput(0,0.5){\rnode{33}{$\bullet$}}	
	\rput(0,0.0){\rnode{32}{$\bullet$}} 	
	\rput(0,-0.5){\rnode{31}{$\bullet$}}
	\rput(-1.5,1.5){\rnode{1}{$\bullet$}}
	\rput(1.5,1.5){\rnode{2}{$\bullet$}}
	\ncarc{->}{0}{31}
	\ncline{->}{31}{32}
	\ncarc[arcangle=-30]{->}{31}{33}
	\ncarc{->}{33}{1}
	\ncarc{->}{2}{33}
	\ncarc[arcangle=-30]{->}{31}{2}
	}}

\rput(9,2.75){
\scalebox{0.6}{
	\rput(0,-2){\rnode{0}{$\bullet$}}
	\rput(0,0.5){\rnode{33}{$\bullet$}}	
	\rput(0,0.0){\rnode{32}{$\bullet$}} 	
	\rput(0,-0.5){\rnode{31}{$\bullet$}}
	\rput(-1.5,1.5){\rnode{1}{$\bullet$}}
	\rput(1.5,1.5){\rnode{2}{$\bullet$}}
	\ncarc{->}{0}{31}
	\ncline{->}{31}{32}
	\ncarc[arcangle=-30]{->}{31}{33}
	\ncarc{->}{33}{1}
	\ncarc{->}{2}{33}
	\ncarc[arcangle=-30]{->}{31}{2}
	}}

\rput(12,2.75){
\scalebox{0.6}{
	\rput(0,-2){\rnode{0}{$\bullet$}}
	\rput(0,0.5){\rnode{33}{$\bullet$}}	
	\rput(0,0.0){\rnode{32}{$\bullet$}} 	
	\rput(0,-0.5){\rnode{31}{$\bullet$}}
	\rput(-1.5,1.5){\rnode{1}{$\bullet$}}
	\rput(1.5,1.5){\rnode{2}{$\bullet$}}
	\ncarc{->}{0}{31}
	\ncline{->}{31}{32}
	\ncarc{->}{1}{33}
	\ncarc{->}{2}{32}
	\ncarc{->}{2}{33}
	}}

\rput(0,0){
\scalebox{0.6}{
	\rput(0,-2){\rnode{0}{$\bullet$}}
	\rput(0,0.5){\rnode{33}{$\bullet$}}	
	\rput(0,0.0){\rnode{32}{$\bullet$}} 	
	\rput(0,-0.5){\rnode{31}{$\bullet$}}
	\rput(-1.5,1.5){\rnode{1}{$\bullet$}}
	\rput(1.5,1.5){\rnode{2}{$\bullet$}}
	\ncarc{->}{0}{31}
	\ncline{->}{31}{32}
	\ncarc[arcangle=30]{->}{31}{33}
	\ncarc{->}{31}{1}
	\ncarc{->}{1}{33}
	\ncarc{->}{2}{32}
	}}
	
\rput(3,0){
\scalebox{0.6}{
	\rput(0,-2){\rnode{0}{$\bullet$}}
	\rput(0,0.5){\rnode{33}{$\bullet$}}	
	\rput(0,0.0){\rnode{32}{$\bullet$}} 	
	\rput(0,-0.5){\rnode{31}{$\bullet$}}
	\rput(-1.5,1.5){\rnode{1}{$\bullet$}}
	\rput(1.5,1.5){\rnode{2}{$\bullet$}}
	\ncarc{->}{0}{31}
	\ncline{->}{31}{32}
	\ncarc[arcangle=30]{->}{31}{33}
	\ncarc{->}{31}{1}
	\ncarc{->}{1}{33}
	\ncarc{->}{2}{33}
	}}
	
\rput(6,0){
\scalebox{0.6}{
	\rput(0,-2){\rnode{0}{$\bullet$}}
	\rput(0,0.5){\rnode{33}{$\bullet$}}	
	\rput(0,0.0){\rnode{32}{$\bullet$}} 	
	\rput(0,-0.5){\rnode{31}{$\bullet$}}
	\rput(-1.5,1.5){\rnode{1}{$\bullet$}}
	\rput(1.5,1.5){\rnode{2}{$\bullet$}}
	\ncarc{->}{0}{31}
	\ncline{->}{31}{32}
	\ncarc[arcangle=-30]{->}{31}{33}
	\ncarc[arcangle=-30]{->}{31}{2}
	\ncarc{->}{1}{33}
	\ncarc{->}{2}{33}
	}}

\rput(9,0){
\scalebox{0.6}{
	\rput(0,-2){\rnode{0}{$\bullet$}}
	\rput(0,0.5){\rnode{33}{$\bullet$}}	
	\rput(0,0.0){\rnode{32}{$\bullet$}} 	
	\rput(0,-0.5){\rnode{31}{$\bullet$}}
	\rput(-1.5,1.5){\rnode{1}{$\bullet$}}
	\rput(1.5,1.5){\rnode{2}{$\bullet$}}
	\ncarc{->}{0}{31}
	\ncline{->}{31}{32}
	\ncarc[arcangle=-30]{->}{31}{33}
	\ncarc[arcangle=-30]{->}{31}{2}
	\ncarc{->}{1}{32}
	\ncarc{->}{2}{33}
	}}
	
\rput(12,0){
\scalebox{0.6}{
	\rput(0,-2){\rnode{0}{$\bullet$}}
	\rput(0,0.5){\rnode{33}{$\bullet$}}	
	\rput(0,0.0){\rnode{32}{$\bullet$}} 	
	\rput(0,-0.5){\rnode{31}{$\bullet$}}
	\rput(-1.5,1.5){\rnode{1}{$\bullet$}}
	\rput(1.5,1.5){\rnode{2}{$\bullet$}}
	\ncarc{->}{0}{31}
	\ncline{->}{31}{32}
	\ncarc{->}{1}{33}
	\ncarc{->}{2}{31}
	\ncarc{->}{2}{33}
	}}

\rput(-1,2.75){$U_0$}
\rput(2,2.75){$U_1$}
\rput(5,2.75){$U_2$}
\rput(8,2.75){$U_3$}
\rput(11,2.75){$U''_1$}

\rput(-1,0){$U'_0$}
\rput(2,0){$U'_1$}
\rput(5,0){$U'_2$}
\rput(8,0){$U'_3$}
\rput(11,0){$U''_2$}
\end{pspicture}
\end{center}

As expected, only the skeletons for $U_i, i=0,\ldots,3$ are generated from the vertex 0. Indeed, this is equivalent to the $0$-generated stability condition which only $\Hilb{G}{\C^3}$ satisfies.

Now take the open covers of $X_i$ given in Theorem \ref{OpensE6}. Then the inequalities defining the chambers $C_i$ for which $X_i\cong\mathcal{M}_{C_i}$ are given by the submodules of the above skeletons, and the result follows.
\end{proof}

\begin{rmk} The set of inequalities in Theorem \ref{stability} does not give the reduce description of the chamber $C\subset\Theta$. Nevertheless, for any crepant resolution the minimum number of walls or inequalities defining $C$ is precisely $|Q_0|-1$, which coincides with the number of components of the fibre over the origin (or the number of non-trivial irreducible representations of $G$).
\end{rmk}

If for ${\bf d}:=(\dim\rho_i)_{i\in Q_0}$ we consider the dual graph $\mathcal{T}$ of $\Theta_{\bf{d}}$ (that is, one vertex for each chamber and an edge between two vertices if the corresponding chambers are separated by a wall), as a consequence of the previous theorem we have the following corollary.

\begin{cor}\label{FRegion} There exists a path in $\mathcal{T}$ where every crepant resolution of $\C^3/G$ can be found and such that every wall crossing in $\mathcal{T}$ corresponds to a flop.
\end{cor}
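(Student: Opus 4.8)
The plan is to assemble the explicit chamber descriptions of Theorem~\ref{stability} into one region and to read off both required properties directly from them. For $G\cong\Z/n\Z$ there is nothing to prove: $\C^3/G$ has a unique crepant resolution $\Hilb{G}{\C^3}$, and one takes $F$ to be its chamber. In the remaining cases I would set
\[
F := \mathrm{int}\Big(\bigcup_{C}\overline{C}\Big)\subset\Theta_{\bf{d}},
\]
where $C$ runs over the finitely many chambers $C_i$, $C_{ij}$ of Theorem~\ref{stability} for which $\mathcal{M}_C$ is a crepant resolution, $\overline{C}$ denotes closure and $\mathrm{int}$ interior in $\Theta_{\bf{d}}$. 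The union is finite because the number of crepant resolutions is finite (equivalently, the number of mutations of the McKay QP is finite). By construction every crepant resolution occurs as $\mathcal{M}_C$ with $C\subset F$, and passing to the interior of the union keeps exactly the walls separating two such chambers while discarding the outer boundary walls.

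The connectedness of $F$ I would deduce from the adjacency data in Theorem~\ref{stability}. For $G=D_{2n}$ with $n$ odd the chambers $C_i$ form a chain, consecutive members sharing the wall $\theta_1+\cdots+\theta_{i+1}=0$; for $n$ even the $C_{ij}$ form a triangular array whose neighbours are glued along the walls $\sum_{k=1}^{i+2}\theta_k=0$ and $\sum_{k=m-j+1}^{m+1}\theta_k=0$; and for $\mathbb{T}$ the five chambers are joined as in the flop diagram of Theorem~\ref{OpensE6}. In every case the adjacency graph on the chambers is connected, so any two points of $F$ are joined by a path meeting only chamber interiors and their shared walls; hence $F$ is connected.

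It then remains to check that each wall lying in the interior of $F$ is a flopping wall. Such a wall separates two crepant-resolution chambers $C$ and $C'$, and by the flop--mutation correspondence of Theorem~\ref{thm:main}(ii), together with the identification in Section~\ref{sect:opens} of these walls with the contraction loci of the relevant $(-1,-1)$-curves, the birational map $\mathcal{M}_C\dashrightarrow\mathcal{M}_{C'}$ across the wall is precisely the flop of that curve.

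The hard part is verifying that $F$ is wall-convex, i.e.\ that no wall of non-flopping type lies in its interior. By Lemma~\ref{floppable} the $(-2,0)$- and $(-3,1)$-curves are rigid, so a priori the union $\bigcup_C\overline{C}$ might acquire an interior wall along which such a curve, or a divisor, would be contracted; one must confirm instead that all such contraction walls sit on $\partial F$. This is settled by comparing the defining inequalities of adjacent chambers in Theorem~\ref{stability}: each internal wall is cut out by a single linear equation whose crossing alters exactly one $(-1,-1)$-curve and matches a mutation at a loop-free non-trivial vertex, whereas the inequalities common to all the chambers---for instance $\theta_k>0$ for every $k\neq 0,1$ when $n$ is odd---are never crossed and carve out the boundary of $F$.
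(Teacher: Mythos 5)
Your proposal is correct and follows essentially the same route as the paper, which states the corollary as a direct consequence of Theorem~\ref{stability}: you take the explicitly described chambers $C_i$, $C_{ij}$, glue them along the shared walls listed there, and use the flop sequences of Section~\ref{sect:opens} to identify each interior wall crossing with a flop. Your extra bookkeeping (defining $F$ as the interior of the union of chamber closures, checking connectedness via facet-adjacency, and confirming that the common inequalities such as $\theta_k>0$ for $k\neq 0,1$ form the boundary rather than interior walls) just makes explicit what the paper leaves implicit.
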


This nice distribution contrast for example with the general case for Abelian groups in $\SL(3,\C)$, where it can happen that finitely many wall crossings (of {\em Types} $0$ or $III$) are needed to connect two crepant resolutions related by a single flop. See \cite{CI} for more details.

\subsection{Stability conditions and mutations}

In this section we compare the classical approach of changing the stability condition on the representations of the McKay QP to obtain all crepant resolutions of $\C^3/G$ with the mutation approach, which change the QP but not the stability.

Let $G\subset\SO(3)$ of type $\Z/n\Z$, $D_{2n}$ or $\mathbb{T}$, and let $(Q,W)$ be the Mckay QP.
Let $\mathbb Z^{Q_0}$ be the space of dimension vectors, with canonical basis $\{ \e_0, \e_1,\ldots,\e_n \}$. Let $\Hom_{\mathbb Z}(\mathbb Z^{Q_0},\mathbb Z)$ be the dual space with the dual basis $\e_0^*,\e_1^*,\ldots,\e_n^*$ and define $\Theta := {\Hom_{\mathbb Z}(\mathbb Z^{Q_0}},\mathbb Z) \otimes \mathbb Q$ the whole parameter space. Let ${\bf{d}}=\sum_{i \in Q_0} (\dim\rho_i)\e_i$ with $\rho_i\in\Irr G$.

Let $\mu(Q,W)$ a QP obtained by a sequence of mutations $\mu = \mu_{i_1}\cdots\mu_{i_m}$ from the McKay QP. We denote by $\Lambda = \mathcal P(\mu(Q,W))$ the Jacobian algebra. We fix a vertex $i \in Q_0$ with no loops and let $P_i$ be the projective $\Lambda$-module and $S_i$ the simple module associated to the vertex $i$.
Then, as in \cite{BIRS} Proposition 4.2, there is an exact sequence of the form
\begin{equation}\label{ex-Lambda}
0 \to P_i \to  X_2 = \!\!\!\bigoplus_{a \in Q_1, ha=i}\!\!\!P_{ta} \to X_1 = \!\!\!\bigoplus_{a \in Q_1, ta=i}\!\!\!P_{ha} \stackrel{f}{\to} P_i \to S_i \to 0.
\end{equation}

Let $(-,-)$ be a symmetric bilinear form on $\mathbb Z^{Q_0}$ defined by
\[
(\e_i,\e_j) = \left\{\begin{array}{cl} 2 & i=j \\ -\#(i \to j) & i\neq j \end{array}\right.
\]
In our case, if $i$ and $j$ are adjacent, then we can see that there is only one arrow from $i \to j$, so $(\e_i,\e_j)$ is $-1$.

We define $(M,N) := (\udim M,\udim N) := (\udim M,\udim N)$ for any finite dimensional $\Lambda$-modules $M,N$. We denote by $s_i$ the reflection with respect to a vertex $i$, which is defined by 
\[
s_i\alpha := \alpha - (\alpha,\e_i)\e_i
\] 
for any dimension vector $\alpha \in \mathbb Z^{Q_0}$ and dually 
\[
s_i\theta := \theta - \theta_i \sum_{j=0}^n (\e_i,\e_j)\e_j^*.\] 
Trivially, for any dimension vector $\alpha$, $\theta(\alpha)=0$ if and only if $(s_{i}\theta)(s_{i}\alpha)=0$. 
For a sequence $\mu=\mu_{i_1}\cdots \mu_{i_m}$ of mutations, we consider the corresponding sequence of reflections $\omega = s_{i_1}\cdots s_{i_m}$.
Then dimension vectors $\omega \mathbf d$ determine parameter spaces $\Theta_{\omega\mathbf d}$.
Let $\theta^0 \in \Theta_{\omega\mathbf d}$ be the $0$-generated stability condition and $C_0$ the chamber in $\Theta_{\omega \mathbf d}$ defined by the inequalities of $\theta^0_i>0$ for $i \neq 0$. 

\begin{lem}\label{0-gen}
The chamber of $\mathcal{M}_{\theta^0,\omega d}(\Lambda)$ is $C_0$.
\end{lem}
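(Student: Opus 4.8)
The plan is to prove the two inclusions $C_0 \subseteq \mathcal C$ and $\mathcal C \subseteq C_0$, where $\mathcal C$ denotes the chamber of $\Theta_{\omega\mathbf d}$ containing the generic parameter $\theta^0$. Since $\theta^0_i > 0$ for $i \neq 0$ by the definition of the $0$-generated condition, we already have $\theta^0 \in C_0$. The single structural fact driving everything is that, by rule (1) of the game, none of the reflections $s_{i_k}$ composing $\omega$ acts on the trivial vertex; as each $s_i$ alters only the $i$-th coordinate of a dimension vector, the $0$-th entry is never touched and $(\omega\mathbf d)_0 = \mathbf d_0 = \dim\rho_0 = 1$ is preserved. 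Thus any module $M$ with $\udim M = \omega\mathbf d$ has $\dim_\C M_0 = 1$, and $\theta(M) = \theta\cdot\omega\mathbf d = 0$ for every $\theta \in \Theta_{\omega\mathbf d}$.

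First I would show that for every $\theta \in C_0$ the $\theta$-stable locus coincides with the $0$-generated modules, so that $\mathcal{M}_\theta$ is constant and $\theta$ is generic on all of $C_0$. If $M$ is $0$-generated, i.e.\ $M = \Lambda M_0$, then any proper subrepresentation $N \subsetneq M$ must have $N_0 = 0$ (otherwise $N \supseteq \Lambda M_0 = M$, using $\dim_\C M_0 = 1$), whence $\theta(N) = \sum_{i \neq 0}\theta_i \dim_\C N_i > 0$ because $\theta_i > 0$ and $N \neq 0$ is supported away from $0$; thus $M$ is $\theta$-stable. Conversely, if $M$ is merely $\theta$-semistable of dimension $\omega\mathbf d$, then $\dim_\C M_0 = 1$ forces $\Lambda M_0 \neq 0$, and were it proper the quotient would be a nonzero module supported away from $0$ of strictly positive $\theta$-weight, contradicting semistability; hence $M = \Lambda M_0$ is $0$-generated and therefore stable. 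This simultaneously shows that there are no strictly semistable modules (so every $\theta \in C_0$ is generic) and that $\mathcal{M}_\theta$ is independent of $\theta \in C_0$. As $C_0$ is an open convex cone it is connected, so $C_0 \subseteq \mathcal C$.

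For the reverse inclusion it remains to check that $C_0$ is not properly contained in a larger chamber, equivalently that each bounding hyperplane $\{\theta_i = 0\}$, $i \neq 0$, is a genuine wall. Chosen generic within such a hyperplane (so $\theta_j > 0$ for $j \neq 0,i$), a destabilizing subrepresentation $N$ of a $0$-generated module has $N_0 = 0$ and $\theta(N) = \sum_{j \neq 0,i}\theta_j\dim_\C N_j = 0$, which forces $\udim N$ to be a multiple of $\e_i$; conversely one exhibits a $0$-generated $M$ of dimension $\omega\mathbf d$ containing a nonzero subrepresentation supported at the vertex $i$ (a copy of the simple $S_i$ in the $\mathbb{P}^1$-fibre direction), which is then strictly $\theta$-semistable. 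Hence $\{\theta_i = 0\}$ meets the non-generic locus and is a wall, giving $\mathcal C = C_0$.

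I expect the genuine obstacle to be exactly this last step: producing the strictly semistable modules that certify each coordinate hyperplane as a wall, since the forward inclusion is the soft sub/quotient argument. A clean way to bypass an \emph{ad hoc} construction is to invoke the wall count: by the Remark following Theorem \ref{stability} the reduced chamber of any crepant resolution has precisely $|Q_0| - 1$ facets, matching the $n$ hyperplanes $\theta_i = 0$ that cut out the simplicial cone $C_0$; together with $C_0 \subseteq \mathcal C$ this forces equality. I would present the wall-count route as the main argument and retain the explicit semistable module as a cross-check.
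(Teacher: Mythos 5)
Your first half is fine: the argument that for every $\theta \in C_0$ the $\theta$-semistable modules of dimension $\omega\mathbf d$ are exactly the $0$-generated ones (hence $\theta$-stable) is correct — it uses only $(\omega\mathbf d)_0 = 1$, which holds because rule (1) keeps the trivial vertex untouched — and it gives both genericity of every point of $C_0$ and the inclusion $C_0 \subseteq \mathcal C$, in more detail than the paper records. The gap is in the reverse inclusion, and it sits exactly where you chose to place your ``main argument''. Containment of open convex cones with equal facet numbers does not force equality: in two variables the cone $\{\theta_2 > 0,\ \theta_1 + \theta_2 > 0\}$ strictly contains the positive quadrant $\{\theta_1 > 0,\ \theta_2 > 0\}$, and both have two facets. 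So even granting that $\mathcal C$ has exactly $|Q_0|-1$ facets, this together with $C_0 \subseteq \mathcal C$ does not yield $\mathcal C = C_0$. In addition, the fact you invoke — the Remark following Theorem \ref{stability} — is itself stated without proof (``it can be shown that\dots'') and concerns chambers in $\Theta_{\mathbf d}$ for the McKay algebra $\mathcal P(Q,W)$, not chambers in $\Theta_{\omega\mathbf d}$ for the mutated algebras; transporting it would require the reflection machinery of Corollary \ref{chamber}, i.e.\ material developed after this lemma.

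The step you relegate to a ``cross-check'' is the actual proof, and it is the entire content of the paper's argument: one verifies by direct calculation (using the explicit representation spaces and skeletons) that for each vertex $i \neq 0$ the simple module $S_i$ occurs as a subrepresentation of some $\theta^0$-stable module $M$ of dimension $\omega\mathbf d$. Once this is known, no wall-counting or wall-certification is needed at all: for any $\theta$ in the chamber $\mathcal C$ the same $M$ is $\theta$-stable, so $\theta_i = \theta(S_i) > 0$; letting $i$ range over the nontrivial vertices gives $\mathcal C \subseteq C_0$ directly, and equality follows from your first half. In your write-up this verification is only asserted (``one exhibits a $0$-generated $M$ containing a copy of $S_i$''), so the proposal as it stands does not close the reverse inclusion. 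To repair it, carry out that construction — this is what the paper means by ``direct calculations'' — and discard the facet count, which cannot do the job.
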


\begin{proof}
It follows from direct calculations that all simple modules associated to vertices can be a subrepresentation of some point in $\mathcal{M}_{\theta^0,\omega d}(\Lambda)$
\end{proof}

Recall that by the one-to-one correspondence between flops of \Hilb{$G$}{$\C^3$} and mutations of the McKay QP, for any projective crepant resolution $X\cong\mathcal{M}_C$ for some $C\subset\Theta$ there exists a corresponding iterated QP $(Q_C,W_C)$ obtained by a sequence of mutations from the McKay QP. The goal of this subsection is the next result. 

\begin{thm}\label{horizontal}
Let $X\to \mathbb C^3/G$ be an arbitrary projective crepant resolution, that is $X\cong\mathcal{M}_C$ for some $C\subset\Theta$.
Then $X \cong \mathcal{M}_{\theta^0,\omega \mathbf d}(\Gamma)$ for the Jacobian algebra $\Gamma := \mathcal P(Q_C,W_C)$ and the 0-generated stability condition $\theta^0$. Moreover, there exists a corresponding sequence of wall crossings from $\Hilb{G}{\C^3}$ which leads to $X \cong \mathcal M_{\theta,\mathbf d}(\Lambda)$ where $\Lambda = \mathcal{P}(Q,W)$, ${\bf d}=(\dim\rho_i)_{\rho_i\in\Irr G}$ and the chamber $C \subset \Theta_{\mathbf d}$ containing $\theta$ is given by the inequalities $\theta(\omega^{-1}\e_i)>0$ for any $i \neq 0$.
\end{thm}

The rest of the section is dedicated to prove the above theorem. 

\begin{df}
For any parameter $\theta \in \Theta$, we define the full subcategory $\Stab{\theta}{\Lambda}{}$ of $\Mod \Lambda$ consisting of $\theta$-semistable finite dimensional $\Lambda$-modules. Moreover we denote by $\Stab{\theta}{\Lambda}{,\alpha}$ the full subcategory of $\Stab{\theta}{\Lambda}{}$ consisting of $\theta$-semistable $\Lambda$-modules of dimension vector $\alpha$ if $\Stab{\theta}{\Lambda}{,\alpha}$ is not empty. 
\end{df}

In the exact sequence \eqref{ex-Lambda}, let $K_i$ be the kernel of $f$ fitting in the exact sequence
\begin{equation}\label{ex-K}
0 \to K_i \stackrel{g}{\to} X_1 \stackrel{f}{\to} P_i \to S_i \to 0.
\end{equation}

Then it can be checked that $T_i :=\Lambda/P_i\oplus K_i$ is a tilting $\Lambda$-module of projective dimension one.
We put $\Gamma = \End_{\Lambda}(T_i)$. By a similar strategy as in \cite{BIRS}, it follows that $\Gamma \simeq \mathcal P(\mu_i\omega(Q,W))$. 

\begin{lem}\label{dim-ref}
Let $M$ be a finite dimensional $\Lambda$-module of dimension vector $\alpha=(\alpha_k)$. Then the alternating sum of the dimension vector of $\RHom_{\Lambda}(T_i,M)$ is given by the following formula:
\[
\udim_{\Gamma} \Hom_{\Lambda}(T_i,M) - \udim_{\Gamma} \Ext^1_{\Lambda}(T_i,M) = s_{i}\alpha.
\]
\end{lem}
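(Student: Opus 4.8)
The plan is to reduce the identity to a componentwise count over the indecomposable summands of the tilting module $T_i = \Lambda/P_i \oplus K_i$, exploiting that $T_i$ has projective dimension one, so that $\RHom_\Lambda(T_i, M)$ is concentrated in homological degrees $0$ and $1$. Under the standard tilting dictionary the primitive idempotents of $\Gamma = \End_\Lambda(T_i) \simeq \mathcal P(\mu_i\omega(Q,W))$ correspond to the summands of $T_i$: the vertices $k \neq i$ to the projectives $P_k$, and the relabelled vertex (call it $i^*$) to $K_i$. Hence the $k$-th component of $\udim_\Gamma \Hom_\Lambda(T_i, M) - \udim_\Gamma \Ext^1_\Lambda(T_i, M)$ equals $\dim_\C \Hom_\Lambda(T^{(k)}, M) - \dim_\C \Ext^1_\Lambda(T^{(k)}, M)$ for the corresponding summand $T^{(k)}$. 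First I would dispose of the vertices $k \neq i$: there $T^{(k)} = P_k$ is projective, so $\Ext^1_\Lambda(P_k, M) = 0$ and the component is $\dim_\C \Hom_\Lambda(P_k, M) = \dim_\C e_k M = \alpha_k$, which is exactly $(s_i\alpha)_k$ since $s_i$ fixes every coordinate but the $i$-th.

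The core of the computation is the component at $i^*$. Here I would extract from \eqref{ex-Lambda} the short exact sequence
\[
0 \to P_i \to X_2 \to K_i \to 0,
\]
which is a projective resolution of $K_i$: the map $X_2 \to X_1$ has image $K_i = \ker f$ and kernel the image of $P_i \hookrightarrow X_2$, so $X_2/P_i \cong K_i$. Applying $\Hom_\Lambda(-, M)$ and using $\Ext^1_\Lambda(X_2, M) = 0$ gives
\[
\dim_\C \Hom_\Lambda(K_i, M) - \dim_\C \Ext^1_\Lambda(K_i, M) = \dim_\C \Hom_\Lambda(X_2, M) - \dim_\C \Hom_\Lambda(P_i, M).
\]
Since $X_2 = \bigoplus_{a : ha = i} P_{ta}$, the summand $P_j$ occurs with multiplicity $\#(j \to i)$, so the right-hand side equals $\sum_j \#(j \to i)\alpha_j - \alpha_i$.

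Finally I would match this against the reflection. Using the symmetric form with $(\e_i, \e_i) = 2$ and $(\e_j, \e_i) = -\#(j \to i)$ for $j \neq i$, one finds $(\alpha, \e_i) = 2\alpha_i - \sum_{j \neq i} \#(j \to i)\alpha_j$, whence
\[
(s_i\alpha)_i = \alpha_i - (\alpha, \e_i) = \sum_{j \neq i} \#(j \to i)\alpha_j - \alpha_i.
\]
Because $i$ carries no loops, $\#(i \to i) = 0$ and the sum over $j \neq i$ coincides with the sum over all $j$, so the $i^*$-component agrees with $(s_i\alpha)_i$ and the claimed equality of dimension vectors follows. I expect the only genuine subtlety to be bookkeeping rather than substance: one must correctly identify the mutated vertex $i^*$ with the summand $K_i$, so that the $\Gamma$-dimension-vector entry at $i^*$ really is the alternating Euler contribution of $K_i$, and one must use the no-loop hypothesis at $i$ both to make the mutation (and hence $T_i$) defined and to guarantee $\#(i \to i) = 0$ in the last comparison.
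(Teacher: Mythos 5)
Your proof is correct and follows essentially the same route as the paper: decompose $\Hom_{\Lambda}(T_i,M)$ and $\Ext^1_{\Lambda}(T_i,M)$ componentwise via the idempotents of $\Gamma$ (projectivity killing $\Ext^1$ at vertices $k\neq i$), then compute the alternating contribution at the mutated vertex from the short exact sequence $0 \to P_i \to X_2 \to K_i \to 0$ and match it with $\alpha - (\alpha,\e_i)\e_i$. Your explicit remark that the no-loop hypothesis ensures $\#(i\to i)=0$ in the final comparison is a small point of precision the paper leaves implicit, but it is the same argument.
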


\begin{proof}
For each $j \in Q_0$, $e_j$ denotes the corresponding idempotent of $\Lambda$. The following hold:
\[
\Hom_{\Lambda}(T_i,M)e_j \simeq \Hom_{\Lambda}(e_jT_i,M) = 
\begin{cases}
\Hom_{\Lambda}(K_i,M) & \text{ if } j=i \\
\Hom_{\Lambda}(P_j,M) & \text{ if } j\neq i 
\end{cases}
\]
and 
\[
\Ext^1_{\Lambda}(T_i,M)e_j \simeq \Ext^1_{\Lambda}(e_jT_i,M) =
\begin{cases}
\Ext^1_{\Lambda}(K_i,M) & \text{ if } j=i \\
0 & \text{ if } j\neq i. 
\end{cases}
\]
By applying $\Hom_{\Lambda}(-,M)$ to the exact sequence $0 \to P_i \to X_2 \to K_i \to 0$, we have 
\[
0 \to \Hom_{\Lambda}(K_i,M) \to \Hom_{\Lambda}(X_2,M) \to \Hom_{\Lambda}(P_i,M) \to \Ext^1_{\Lambda}(K_i,M) \to 0.
\]
Hence we have
\[
\dim_{\C} \Hom_{\Lambda}(K_i,M) - \dim_{\C} \Ext^1_{\Lambda}(K_i,M) = \dim_{\C}  \Hom_{\Lambda}(X_2,M) - \dim_{\C} \Hom_{\Lambda}(P_i,M).
\]
so that $\udim_{\Gamma} \Hom_{\Lambda}(T_i,M) - \udim_{\Gamma} \Ext^1_{\Lambda}(T_i,M)$ is equal to
\begin{align*}
& \sum_{j\neq i} \alpha_j\e_j + (\dim_{\C} \Hom_{\Lambda}(K_i,M) - \dim_{\C} \Ext^1_{\Lambda}(K_i,M))\e_i \\
= &\sum_{j\neq i} \alpha_j\e_j + (\!\!\!\!\sum_{a\in Q_1, ha=i}\!\!\!\!\alpha_{ta} -\alpha_i)\e_i 
= \alpha - (2\alpha_i - \!\!\!\!\sum_{a \in Q_1, ha=i}\!\!\!\!\alpha_{ta})\e_i
= \alpha - (\alpha,\e_i)\e_i. 
\end{align*}
\end{proof}

We have a similar result as in the two dimensional case treated in \cite{SY}.

\begin{thm}\label{ref}
If $\theta_i>0$, then there is an equivalence
\[\xymatrix{
\Stab{\theta}{\Lambda}{} \ar@<0.5ex>[rr]^{\Hom_{\Lambda}(T_i,-)}  && \Stab{s_{i}\theta}{\Gamma}{} \ar@<0.5ex>[ll]^{-\otimes_{\Gamma}T_i}
}\]
of categories which preserves $S$-equivalence classes. Moreover this equivalence induces an isomorphism 
\[
\mathcal M_{\theta,\alpha}(\Lambda) \cong \mathcal M_{s_i\theta,s_i\alpha}(\Gamma)
\]
 of varieties for any $\alpha \in \mathbb Z^{Q_0}$.
\end{thm}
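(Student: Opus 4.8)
The plan is to realise the two functors as the Brenner--Butler equivalence attached to the tilting module $T_i$ and then to match the two stability functions. First I would record the two torsion pairs. Since $T_i=\Lambda/P_i\oplus K_i$ is tilting of projective dimension one, $\Mod\Lambda$ carries the torsion pair $(\mathcal T,\mathcal F)$ with $\mathcal T=\operatorname{Gen}(T_i)=\{M:\Ext^1_\Lambda(T_i,M)=0\}$ and $\mathcal F=\{M:\Hom_\Lambda(T_i,M)=0\}$, while $\Mod\Gamma$ carries $(\mathcal X,\mathcal Y)$ with $\mathcal X=\{N:N\otimes_\Gamma T_i=0\}$ and $\mathcal Y=\{N:\Tor^\Gamma_1(N,T_i)=0\}$; the functors $\Hom_\Lambda(T_i,-)$ and $-\otimes_\Gamma T_i$ are quasi-inverse equivalences $\mathcal T\simeq\mathcal Y$, and $\Ext^1_\Lambda(T_i,-)$, $\Tor^\Gamma_1(-,T_i)$ identify $\mathcal F\simeq\mathcal X$. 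A direct computation with the defining sequences shows $\mathcal F=\add S_i$ and, dually, $\mathcal X=\add S_i'$, where $S_i'$ is the simple $\Gamma$-module at the mutated vertex; I will use that $\mathcal Y$ is a torsion-free class, hence closed under submodules, and that $\mathcal T$ is a torsion class, hence closed under quotients.

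Next I would locate the semistable modules inside these classes. Because $\theta(S_i)=\theta_i>0$, no nonzero $\theta$-semistable module (which has $\theta$-value $0$) can surject onto $S_i$; hence every $\theta$-semistable $M$ satisfies $\Hom_\Lambda(M,S_i)=0$, i.e. $M\in\mathcal T$, so $\Ext^1_\Lambda(T_i,M)=0$ and Lemma~\ref{dim-ref} gives $\udim\Hom_\Lambda(T_i,M)=s_i\alpha$. Dually, since $(s_i\theta)(S_i')=-\theta_i<0$, every $s_i\theta$-semistable module lies in $\mathcal Y$. The bridge between the two stability functions is the identity $(s_i\theta)(s_i\beta)=\theta(\beta)$ for all $\beta\in\mathbb Z^{Q_0}$, an immediate consequence of the symmetry of $(-,-)$, proved by the one-line computation $(s_i\theta)(s_i\beta)=\theta(\beta)-(\beta,\e_i)\theta_i+\theta_i(\e_i,\beta)=\theta(\beta)$.

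Then I would transport semistability across the equivalence. Set $N:=\Hom_\Lambda(T_i,M)$ for $M\in\mathcal T$ $\theta$-semistable. For any submodule $N'\subseteq N$ we have $N'\in\mathcal Y$, so $N'=\Hom_\Lambda(T_i,M')$ with $M'=N'\otimes_\Gamma T_i\subseteq M$; since $\udim N'=s_i\udim M'$, the pairing identity yields $(s_i\theta)(N')=\theta(M')\ge 0$, so $N$ is $s_i\theta$-semistable. Conversely, for an arbitrary submodule $M'\subseteq M$ I would exploit $M/M'\in\mathcal T$ and split off the $\add S_i$-torsion part of $M'$: writing $0\to tM'\to M'\to fM'\to 0$ with $tM'\in\mathcal T$ and $fM'\in\add S_i$, one has $\theta(fM')\ge 0$ because $\theta_i>0$, while $\theta(tM')=(s_i\theta)(\Hom_\Lambda(T_i,tM'))\ge 0$ since $\Hom_\Lambda(T_i,tM')\subseteq N$ and $N$ is $s_i\theta$-semistable; hence $\theta(M')\ge 0$. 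This shows the restricted functors are mutually inverse equivalences $\Stab{\theta}{\Lambda}{}\simeq\Stab{s_i\theta}{\Gamma}{}$, and as they are exact on $\mathcal T$ and $\mathcal Y$ respectively, they send Jordan--H\"older filtrations in the abelian category of semistable modules to Jordan--H\"older filtrations, carrying stable modules to stable modules and therefore preserving $S$-equivalence classes.

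Finally, to promote this bijection on $S$-equivalence classes to an isomorphism $\mathcal M_{\theta,\alpha}(\Lambda)\cong\mathcal M_{s_i\theta,s_i\alpha}(\Gamma)$ of varieties, I would check that the construction is functorial in families: for a flat family of $\theta$-semistable $\Lambda$-modules the uniform vanishing $\Ext^1_\Lambda(T_i,-)=0$ on semistable fibres makes $\Hom_\Lambda(T_i,-)$ commute with base change and produces a flat family of $s_i\theta$-semistable $\Gamma$-modules of dimension vector $s_i\alpha$, and symmetrically for $-\otimes_\Gamma T_i$; the universal property of the GIT moduli then yields mutually inverse morphisms. I expect this last families/base-change step to be the main obstacle, since everything preceding it is pointwise representation theory. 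The argument runs parallel to the two-dimensional case of \cite{SY}, and I would follow that template, the extra care being only to ensure the $\Ext^1$-vanishing is uniform over each semistable locus so that flatness and base-change compatibility persist.
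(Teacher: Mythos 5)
Your overall strategy is the paper's own: use the tilting module $T_i$, the reflection identity $(s_i\theta)(s_i\beta)=\theta(\beta)$, and Lemma \ref{dim-ref} to transport (semi)stability across the equivalence, then defer the family/GIT step to the argument of \cite{SY}. Two of your variations are actually nice: identifying the torsion-free class as $\mathcal F=\add S_i$ lets you deduce $\Ext^1_\Lambda(T_i,M)=0$ for $\theta$-semistable $M$ from pure torsion theory, whereas the paper invokes the 3-Calabi--Yau property to get $\Ext^1_\Lambda(T_i,M)\simeq D\Hom_\Lambda(M,S_i)$; and your converse direction via the canonical torsion sequence $0\to tM'\to M'\to fM'\to 0$ with $fM'\in\add S_i$ is correct and more explicit than the paper's ``the converse is proved similarly.''

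However, the forward direction contains a genuine error. You claim that for a submodule $N'\subseteq N=\Hom_\Lambda(T_i,M)$ one has $M':=N'\otimes_\Gamma T_i\subseteq M$. This fails in general: $-\otimes_\Gamma T_i$ is only right exact, and applying it to $0\to N'\to N\to C\to 0$ yields, using $\Tor_1^\Gamma(N,T_i)=0$, the exact sequence
\[
0 \to \Tor_1^\Gamma(C,T_i) \to N'\otimes_\Gamma T_i \xrightarrow{\ f\ } M,
\]
so $f$ has kernel $\Tor_1^\Gamma(C,T_i)$, which is nonzero exactly when $C$ has a torsion part in $\mathcal X$ (for instance $\Tor_1^\Gamma(S_i',T_i)\simeq S_i\neq 0$); the equivalence $\mathcal T\simeq\mathcal Y$ does not preserve submodule lattices in this direction. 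Your conclusion $(s_i\theta)(N')\geq 0$ is still true, but it needs the extra ingredient the paper supplies at precisely this point: $\Tor_1^\Gamma(C,T_i)e_j=\Tor_1^\Gamma(C,T_ie_j)=0$ for $j\neq i$, because $T_ie_j\simeq\Hom_\Lambda(P_j,T_i)$ is a projective $\Gamma^{\op}$-module, so $\udim \Tor_1^\Gamma(C,T_i)=m\e_i$ for some $m\geq 0$, whence
\[
(s_i\theta)(N') = \theta\bigl(\udim (N'\otimes_\Gamma T_i)\bigr) = m\theta_i + \theta(\Im f) \geq 0,
\]
using $\theta_i>0$ and semistability of $M$ applied to $\Im f\subseteq M$. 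Without locating the $\Tor_1$ at the vertex $i$, this step fails, and with it your stable-to-stable claim and the preservation of $S$-equivalence in the forward direction.
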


\begin{proof}
Since $T_i$ is a tilting module, there is a derived equivalence
\[\xymatrix{
\mathcal D(\Mod \Lambda) \ar@<0.5ex>[rr]^{\RHom_{\Lambda}(T_i,-)}  && \mathcal D(\Mod \Gamma) \ar@<0.5ex>[ll]^{-\Ltensor_{\Gamma}T_i}.
}\]
The functor $\RHom_{\Lambda}(T_i,-)$ induces a functor $\Hom_{\Lambda}(T_i,-)$ from $\Stab{\theta}{\Lambda}{}$ to $\mod \Gamma$.  We show that $\Hom_{\Lambda}(T_i,-)$ is well-defined. Let $M$ be a $\theta$-semistable $\Lambda$-module of dimension $\alpha$. 
By applying $\Hom_{\Lambda}(-,M)$ to the exact sequence \eqref{ex-K} and using the fact that $\Lambda$ is 3-CY, we have 
\[
\Ext^1_{\Lambda}(T_i,M) \simeq \Ext^3(S_i,M) \simeq D\Hom_{\Lambda}(M,S_i).
\]
Since $\theta_i>0$, $M$ doesn't have $S_i$ as a factor. So we have $\Hom_{\Lambda}(M,S_i)=0$, hence $\Ext^1_{\Lambda}(T_i,M)=0$. 
Next we show that $M' = \Hom_{\Lambda}(T_i,M)$ is $s_{i}\theta$-semistable.
By Lemma \ref{dim-ref} we have
\[
(s_{i}\theta)(M')=(s_{i}\theta)(\udim_{\Gamma} \Hom_{\Lambda}(T_i,M)) = (s_{i}\theta)(s_{i}\alpha) = \theta(\alpha)=0.
\]
We take any proper submodule $N'$ of $M'$ and consider the following exact sequence
\[
0 \to N' \to M' \to C \to 0.
\]
By applying $-\otimes_{\Gamma}T_i$ to the above, since $\Tor_1^{\Gamma}(M',T_i)=0$ we have an exact sequence
\[
0 \to \Tor_1^{\Gamma}(C,T_i) \to N'\otimes_{\Gamma}T_i \stackrel{f}{\to} M'\otimes_{\Gamma}T_i \simeq M.
\]
We have $\Tor_1^{\Gamma}(N,T_i)e_j = \Tor_1^{\Gamma}(N,T_ie_j) = 0$
since $T_ie_j \simeq \Hom_{\Lambda}(\Lambda,T_i)e_j \simeq \Hom_{\Lambda}(P_j,T_i)$ is a projective $\Gamma^{\op}$-module. So $\udim_{\Gamma} \Tor_1^{\Gamma}(C,T_i) = \e_i^m$ for some non-negatve integer $m$. Since $\Tor_1^{\Gamma}(N',T_i)=0$ we have $\udim_{\Lambda} N' \otimes_{\Gamma}T_i = s_{i} \udim_{\Gamma}N'$. Thus since $\Im f$ is a submodule of $M$, we have
\begin{align*}
(s_{i}\theta)(N')&=(s_{i}\theta)(s_{i}\udim_{\Lambda} N' \otimes_{\Gamma}T_i) \\
&= \theta(\udim_{\Lambda} N' \otimes_{\Gamma}T_i) \\
&= \theta(\e_i^m) + \theta(\Im f) \geq 0.
\end{align*}
Note that if $M$ is $\theta$-stable, then $M'$ is also $s_{i}\theta$-stable since $\theta(\Im f)>0$. The converse is proved similarly. Moreover, one can easily check that $S$-equivalent classes are preserved. 

As in \cite[Theorem 5.6]{SY}, for any dimension vector $\alpha$ the functors $\Hom_{\Lambda}(T_i,-)$ and $-\otimes_{\Gamma}T_i$ induce inverse morphisms $f : \mathcal M_{\theta,\alpha}(\Lambda) \to \mathcal M_{s_i\theta,s_i\alpha}(\Gamma)$ and  $g : \mathcal M_{s_i\theta,s_i\alpha}(\Gamma) \to \mathcal M_{\theta,\alpha}(\Lambda)$ so that there is an isomorphism between the moduli spaces.
\end{proof}

\begin{cor}\label{chamber}
A chamber $C \subset \Theta_{\omega \mathbf d}$ is mapped to a chamber $s_iC \subset \Theta_{s_i\omega \mathbf d}$. More precisely, if $C$ is defined by inequalities $\theta(\alpha) > 0$ for a set of vectors $\{ \alpha \}$, $s_iC$ is defined by inequalities $\theta(s_i\alpha) > 0$.
\end{cor}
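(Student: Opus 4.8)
The plan is to deduce the corollary from two ingredients: the elementary reflection identity
\[
(s_i\theta)(s_i\alpha) = \theta(\alpha) \qquad \text{for all } \theta \in \Theta,\ \alpha \in \Z^{Q_0},
\]
together with the categorical equivalence of Theorem \ref{ref}, which guarantees that $s_i$ carries the wall-and-chamber decomposition on the $\Lambda$-side to that on the $\Gamma$-side. Once the identity is available the ``more precisely'' assertion is immediate, and the only substantive point left is that the image of a chamber is again a chamber, i.e.\ that genericity and the relevant subdimension vectors are respected.

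First I would verify the reflection identity by a direct computation from the definitions $s_i\alpha = \alpha - (\alpha,\e_i)\e_i$ and $s_i\theta = \theta - \theta_i\sum_j(\e_i,\e_j)\e_j^*$. Expanding $(s_i\theta)(s_i\alpha)$ against the natural pairing $\langle\e_j^*,\e_k\rangle = \delta_{jk}$, and using $\sum_j(\e_i,\e_j)\alpha_j = (\e_i,\alpha)$ together with $(\e_i,\e_i)=2$, one finds that the correction terms cancel in pairs (the three terms contributing $-\theta_i(\alpha,\e_i) - \theta_i(\alpha,\e_i) + 2\theta_i(\alpha,\e_i) = 0$), leaving exactly $\theta(\alpha)$. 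Taking $\alpha = \omega\mathbf d$ shows $(s_i\theta)(s_i\omega\mathbf d) = \theta(\omega\mathbf d) = 0$, so $s_i$ restricts to a linear isomorphism $\Theta_{\omega\mathbf d} \to \Theta_{s_i\omega\mathbf d}$; since $s_i$ is an involution it is its own inverse. Writing $\eta = s_i\theta$, the identity rewrites each defining inequality $\theta(\alpha)>0$ of $C$ as $\eta(s_i\alpha)>0$, which is precisely the asserted description of $s_iC$.

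It then remains to see that $s_iC$ is genuinely a chamber. Recall that a chamber is a connected component of the generic locus of $\Theta_{\omega\mathbf d}$, where generic means every $\theta$-semistable module of dimension $\omega\mathbf d$ is $\theta$-stable, and its walls are the hyperplanes $\theta(\alpha)=0$ cut out by dimension vectors $\alpha$ of proper subrepresentations appearing on the boundary. Here I would invoke Theorem \ref{ref}. Since $\e_i=\udim S_i$ is itself such a subdimension vector, $\theta_i = \theta(\e_i)$ has constant sign on $C$; as $(s_i\theta)(\e_i) = \theta(s_i\e_i) = -\theta_i$, replacing the roles of $\Lambda$ and $\Gamma$ when this sign is negative, we may assume $\theta_i>0$ throughout $C$. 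The equivalence $\Stab{\theta}{\Lambda}{} \simeq \Stab{s_i\theta}{\Gamma}{}$ preserves $S$-equivalence classes, so it sends stable to stable and strictly semistable to strictly semistable; hence $\theta$ is generic if and only if $s_i\theta$ is generic. Moreover, by Lemma \ref{dim-ref} a subrepresentation of dimension $\alpha$ of a module in $\Stab{\theta}{\Lambda}{}$ is matched with one of dimension $s_i\alpha$ on the $\Gamma$-side, so the walls of $\Theta_{\omega\mathbf d}$ are carried bijectively onto those of $\Theta_{s_i\omega\mathbf d}$ by $s_i$. Consequently $s_i$ maps the component $C$ onto a component $s_iC$ of the generic locus, cut out by the reflected inequalities.

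The main obstacle I anticipate is this last step: matching the \emph{sets} of wall-defining dimension vectors on the two sides and confirming that genericity is transported. This is exactly what Theorem \ref{ref} and Lemma \ref{dim-ref} are designed to supply, so the argument reduces to bookkeeping once one observes that the constant sign of $\theta_i$ on $C$ permits Theorem \ref{ref} to be applied in the appropriate direction. The purely formal part—the reflection identity and the resulting inequality description of $s_iC$—presents no difficulty.
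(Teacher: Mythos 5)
Your proof is correct and takes essentially the same route as the paper: the reflection identity $(s_i\theta)(s_i\alpha)=\theta(\alpha)$ yields the description of $s_iC$ by the reflected inequalities, while Theorem \ref{ref} transports (semi)stability—and hence genericity and the wall structure—between $\Theta_{\omega\mathbf d}$ and $\Theta_{s_i\omega\mathbf d}$. Your handling of the sign of $\theta_i$ on $C$ and the matching of wall-defining dimension vectors via Lemma \ref{dim-ref} is more explicit than the paper's two-line argument, but the substance is identical.
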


\begin{proof}
For any $\theta \in \Theta_{\omega\mathbf d}$ by Theorem \ref{ref}, a $\Lambda$-module $M$ of dimension vector $\omega\mathbf d$ is $\theta$-(semi)stable if and only if $\Hom_{\Lambda}(T_i,M)$ is $s_i\theta$-(semi)stable, so the first assertion follows. The second assertion follows from the fact that $s_i\theta(s_i\alpha)>0$ is equivalent to $\theta(\alpha)>0$ for any $\theta \in C$.
\end{proof}

\begin{proof}[Proof of Theorem \ref{horizontal}]
Let $X$ be any crepant resolution and $\mu(Q,W)$ the corresponding QP given in the previous sections. By Theorem \ref{ref} it follows that $\mathcal M_{\theta^0,\omega\mathbf d}(\mu(Q,W)) \cong \mathcal M_{\omega^{-1}\theta^0,\mathbf d}(Q,W)$ and it can be checked that $X \simeq \mathcal M_{\omega^{-1}\theta^0,\mathbf d}(Q,W)$. Also by combining Lemma \ref{0-gen} and Corollary \ref{chamber}, the chamber containing $\omega^{-1}\theta^0$ is given by the equalities $\theta(\omega^{-1}\e_i)>0$.
\end{proof}

\begin{ex} Let $G=D_{14}$. The rows in the following diagram correspond to the different chambers in the three mutated algebras for which the crepant resolution of $\C^3/G$ shown in the left column can be realized. Note that in any mutated algebra we can find the corresponding crepant resolution in the chamber containing the 0-generated parameter. \\

\begin{center}
\begin{tabular}{|c|c|c|c|}
	\hline
		& 
		\scalebox{0.45}{
			\begin{pspicture}(-1,-2.2)(6,2)
			\psset{arcangle=10,nodesep=2pt,linewidth=1.5pt}

			\rput(-0.5,0){\Huge $Q:$}
			\rput(1,0){
				\rput(-0,1.25){\rnode{0}{\Large$\star$}}
				\rput(-0,-1.25){\rnode{0'}{\Large$0$}}
				\rput(2,0){\rnode{1}{\Large$1$}}
				\rput(4,0){\rnode{3}{\Large$3$}}
				\rput(2,0.3){\rnode{u1}{}}
				\rput(4,0.3){\rnode{u3}{}}	
				\rput(4,-0.3){\rnode{u3'}{}}		
				\ncarc{->}{0}{0'}\ncarc{->}{0'}{0}\ncarc{->}{0'}{1}\ncarc{->}{1}{0'}
				\ncarc{->}{0}{1}\ncarc{->}{1}{0}\ncarc{->}{1}{3}\ncarc{->}{3}{1}
				\nccircle[angleA=-25,nodesep=3pt]{->}{u1}{.4cm}	
				\nccircle[angleA=0,nodesep=3pt]{->}{u3}{.4cm}
				\nccircle[angleA=180,nodesep=3pt]{->}{u3'}{.4cm}
				}
		 \rput(3,-1.75){\Huge ${\bf{d}}=\begin{smallmatrix}1\\1\end{smallmatrix}\!\text{\huge 22}$}

			\end{pspicture}}
		& 
		\scalebox{0.45}{
			\begin{pspicture}(-1,-2.2)(6,2)
			\psset{arcangle=10,nodesep=2pt,linewidth=1.5pt}

			\rput(-0.5,0){\Huge $Q_{0}:$}
			\rput(1,0){
				\rput(-0,1.25){\rnode{0}{\Large$\star$}}
				\rput(-0,-1.25){\rnode{0'}{\Large$0$}}
				\rput(2,0){\rnode{1}{\Large$1$}}
				\rput(4,0){\rnode{3}{\Large$3$}}
				\rput(-0.15,1.25){\rnode{u1}{}}
				\rput(4,0.3){\rnode{u3}{}}	
				\rput(4,-0.3){\rnode{u3'}{}}		
				\ncarc{->}{0}{0'}\ncarc{->}{0'}{0}\ncarc{->}{0'}{1}\ncarc{->}{1}{0'}
				\ncarc{->}{1}{3}\ncarc{->}{3}{1}
				\nccircle[angleA=75,nodesep=3pt]{->}{u1}{.4cm}	
				\nccircle[angleA=0,nodesep=3pt]{->}{u3}{.4cm}
				\nccircle[angleA=180,nodesep=3pt]{->}{u3'}{.4cm}
				}
		 \rput(3,-1.75){\Huge ${\bf{d}}=\begin{smallmatrix}1\\2\end{smallmatrix}\!\text{\huge 22}$}

			\end{pspicture}}

		& 
		\scalebox{0.45}{
			\begin{pspicture}(-1,-2.2)(6,2)
			\psset{arcangle=10,nodesep=2pt,linewidth=1.5pt}

			\rput(-0.4,0){\Huge $Q_{01}\!:$}
			\rput(1.25,0){
				\rput(-0,1.25){\rnode{0}{\Large$\star$}}
				\rput(-0,-1.25){\rnode{0'}{\Large$0$}}
				\rput(2,0){\rnode{1}{\Large$1$}}
				\rput(4,0){\rnode{3}{\Large$3$}}
				\rput(-0.2,1.25){\rnode{u1}{}}
				\rput(-0.2,-1.25){\rnode{u3}{}}	
				\rput(4,-0.3){\rnode{u3'}{}}		
				\ncarc{->}{0}{0'}\ncarc{->}{0'}{0}\ncarc{->}{0'}{1}\ncarc{->}{1}{0'}
				\ncarc{->}{1}{3}\ncarc{->}{3}{1}
				\nccircle[angleA=75,nodesep=3pt]{->}{u1}{.4cm}	
				\nccircle[angleA=75,nodesep=3pt]{->}{u3}{.4cm}
				\nccircle[angleA=180,nodesep=3pt]{->}{u3'}{.4cm}
				}

		 \rput(3,-1.75){\Huge ${\bf{d}}=\begin{smallmatrix}1\\2\end{smallmatrix}\!\text{\huge 22}$}
			\end{pspicture}}
		 \\
	\hline
		\scalebox{0.5}{
		\begin{pspicture}(-1,-0.5)(7,1)
			\psset{arcangle=30,nodesep=2pt}
		\rput(0,-1.25){
			\rput(0,0){\rnode{0}{}}
			\rput(2.5,0){\rnode{1}{}}
			\rput(2,0){\rnode{2}{}}
			\rput(4.5,0){\rnode{3}{}}
			\rput(4,0){\rnode{4}{}}
			\rput(6.5,0){\rnode{5}{}}	
			\ncarc{-}{0}{1}\Aput[0.05,npos=0.4]{\large $(-1,-1)$}
			\ncarc{-}{2}{3}\Aput[0.05,npos=0.5]{\large $(-2,0)$}\Aput[0.75]{\huge $\Hilb{G}{\C^3}$}
			\ncarc{-}{4}{5}\Aput[0.05,npos=0.6]{\large $(-3,1)$}
		}
		\end{pspicture}
		}

		&	
		\begin{tabular}{c}
		$\theta_0>0$ \\
		$\theta_1>0$ \\
		$\theta_2>0$
		\end{tabular}
		& 
		\begin{tabular}{c}
		$\theta_0<0$ \\
		$\theta_0+\theta_1>0$ \\
		$\theta_2>0$
		\end{tabular} 
		& 
		\begin{tabular}{c}
		$\theta_0+\theta_1<0$ \\
		$\theta_0>0$ \\
		$\theta_1+\theta_2>0$
		\end{tabular} \\
	\hline
		\scalebox{0.5}{
		\begin{pspicture}(-1,-0.5)(7,1)
			\psset{arcangle=30,nodesep=2pt}
		\rput(0,-1.25){
			\rput(0,0){\rnode{0}{}}
			\rput(2.5,0){\rnode{1}{}}
			\rput(2,0){\rnode{2}{}}
			\rput(4.5,0){\rnode{3}{}}
			\rput(4,0){\rnode{4}{}}
			\rput(6.5,0){\rnode{5}{}}	
			\ncarc{-}{0}{1}\Aput[0.05,npos=0.4]{\large $(-1,-1)$}
			\ncarc{-}{2}{3}\Aput[0.05,npos=0.5]{\large $(-1,-1)$}\Aput[0.75]{\huge $X_0$}
			\ncarc{-}{4}{5}\Aput[0.05,npos=0.6]{\large $(-3,1)$}
		}
		\end{pspicture}
		}

		&	
		\begin{tabular}{c}
		$\theta_0<0$ \\
		$\theta_0+\theta_1>0$ \\
		$\theta_2>0$
		\end{tabular}
		& 
		\begin{tabular}{c}
		$\theta_0>0$ \\
		$\theta_1>0$ \\
		$\theta_2>0$
		\end{tabular} 
		& 
		\begin{tabular}{c}
		$\theta_0+\theta_1>0$ \\
		$\theta_1<0$ \\
		$\theta_1+\theta_2>0$
		\end{tabular}  \\
	\hline
		\scalebox{0.5}{
		\begin{pspicture}(-1,-0.5)(7,1)
			\psset{arcangle=30,nodesep=2pt}
		\rput(0,-1.25){
			\rput(0,0){\rnode{0}{}}
			\rput(2.5,0){\rnode{1}{}}
			\rput(2,0){\rnode{2}{}}
			\rput(4.5,0){\rnode{3}{}}
			\rput(4,0){\rnode{4}{}}
			\rput(6.5,0){\rnode{5}{}}	
			\ncarc{-}{0}{1}\Aput[0.05,npos=0.4]{\large $(-2,0)$}
			\ncarc{-}{2}{3}\Aput[0.05,npos=0.5]{\large $(-1,-1)$}\Aput[0.75]{\huge $X_{01}$}
			\ncarc{-}{4}{5}\Aput[0.05,npos=0.6]{\large $(-2,0)$}
		}
		\end{pspicture}
		}

		&	
		\begin{tabular}{c}
		$\theta_1>0$ \\
		$\theta_0+\theta_1<0$ \\
		$\theta_0+\theta_1+\theta_2>0$
		\end{tabular}
		& 
		\begin{tabular}{c}
		$\theta_0+\theta_1>0$ \\
		$\theta_1<0$ \\
		$\theta_1+\theta_2>0$
		\end{tabular} 
		& 
		\begin{tabular}{c}
		$\theta_0>0$ \\
		$\theta_1>0$ \\
		$\theta_2>0$
		\end{tabular}  \\
	\hline		
\end{tabular}
\end{center}

\end{ex}

\section{Floppable curves in $\mathcal{M}_C$}
\label{Sect:Floppable}

Let $G\subset\SO(3)$ of type $\Z/n\Z$, $D_{2n}$ or $\mathbb{T}$. Let $\pi:X\to\C^3/G$ be a crepant resolution and $E\subset X$ be a rational curve. In this section we prove that the only rational curves in a crepant resolution of $\C^3/G$ that can be flopped are the $(-1,-1)$-curves. 

There are three possible degrees for the normal bundle $\mathcal{N}_{X|E}$ over a curve $E\cong\mathbb{P}^1$ in $X$, namely $(-1,-1)$, $(-2,0)$ and $(-3,1)$, and all three types appear in the families treated in this paper. For every $(-1,-1)$-curve there always exists a flop $X\dashrightarrow X'$ of $E$ where $X$ and $X'$ are isomorphic in codimension one. If $E$ is a $(-2,0)$-curve then we use the {\em width of $E$} defined in \cite{Pagoda} to conclude that $E$ is always contained on a scroll, which implies that it does not exist a small contraction of $X$ which contracts $E$. 

There are only two $(-3,1)$-curves: $E_{m}\subset\Hilb{D_{2n}}{\C^3}$ when $n$ is odd and $E_2\subset\Hilb{\mathbb{T}}{\C^3}$. In both cases we use the fact that $X\cong\mathcal{M}_C$ for some chamber $C\in\Theta$ and we consider the contraction of $E$ as the map $\mathcal{M}_C\to\mathcal{M}_{\overline{\theta}}$ where $\overline{\theta}\in\overline{C}$ lies on a wall of the chamber $C$ (cf.\ \cite{CI} \S3.2). By the study of $S$-equivalence classes we are able to describe explicitly the contracted locus and conclude that such a contraction is divisorial, i.e.\ the curve is not floppable.

We finish the section giving an alternative proof of the fact that $E_2\subset\Hilb{\mathbb{T}}{\C^3}$ is not floppable using {\em contraction algebras}.

\begin{lem}\label{floppable} Let $G\subset\SO(3)$ of type $\Z/n\Z$, $D_{2n}$ or $\mathbb{T}$, and let $\pi:X\to\C^3/G$ be a crepant resolution. Then only the rational curves $E\subset X$ with degree of normal bundle $(-1,-1)$ are floppable.
\end{lem}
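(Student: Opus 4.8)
The statement to prove is Lemma \ref{floppable}: among the three possible normal bundle degrees $(-1,-1)$, $(-2,0)$, $(-3,1)$ for a rational curve $E \cong \mathbb{P}^1$ in a crepant resolution of $\mathbb{C}^3/G$, only the $(-1,-1)$-curves are floppable. The proof will split according to the three cases, and the author has already signaled the three tools: for $(-1,-1)$ the existence of the Atiyah/Pagoda flop, for $(-2,0)$ the \emph{width} of Reid's Pagoda paper, and for $(-3,1)$ an $S$-equivalence analysis on the moduli space, with a secondary argument via contraction algebras.

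\textbf{Plan of proof.} The plan is to treat the three normal bundle types separately. For a $(-1,-1)$-curve, the claim that it is floppable is classical and requires no work beyond invoking the standard flop: given $E$ with $\mathcal{N}_{X/E} \cong \mathcal{O}(-1) \oplus \mathcal{O}(-1)$, there is a small contraction $X \to X_0$ collapsing only $E$, and the flop $X \dashrightarrow X'$ exists with $X, X'$ isomorphic in codimension one. Indeed, this is exactly the construction already used repeatedly in Sections \ref{sect:Dnodd} and following (the $F \cong \mathbb{P}^1 \times \mathbb{P}^1$ diagram), so I would simply cite \cite{Pagoda} and the explicit open-set computations of Section \ref{sect:opens}. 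The first substantive case is the $(-2,0)$-curve: here I would invoke the notion of \emph{width} of a contractible curve from \cite{Pagoda}. The key point is that a $(-2,0)$-curve deforms inside a surface (it sweeps out a scroll/ruled surface in a formal neighbourhood), so any birational contraction that collapses $E$ must collapse a divisor containing the family of deformations of $E$; hence the contraction is divisorial rather than small, and no flop exists. I would phrase this as: the width of $E$ is $\geq 1$, so $E$ moves in a one-parameter family, obstructing the existence of a small contraction with exceptional locus exactly $E$.

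\textbf{The $(-3,1)$-case, which I expect to be the main obstacle.} There are exactly two such curves: $E_m \subset \Hilb{D_{2n}}{\mathbb{C}^3}$ for $n$ odd (from Theorem \ref{OpensDnOdd}) and $E_2 \subset \Hilb{\mathbb{T}}{\mathbb{C}^3}$ (from Theorem \ref{OpensE6}). For these I would use the moduli description $X \cong \mathcal{M}_C$ and consider the wall-crossing map $\mathcal{M}_C \to \mathcal{M}_{\overline{\theta}}$ where $\overline{\theta} \in \overline{C}$ sits on the wall corresponding to contracting $E$. The strategy is to identify the points of $\mathcal{M}_{\overline{\theta}}$ with $S$-equivalence classes of $\overline{\theta}$-semistable representations of dimension vector $\mathbf{d}$, and to compute explicitly, using the representation spaces from the Appendix and the skeleton data from Theorem \ref{stability}, which stable representations become $S$-equivalent after crossing the wall. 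The hard part will be showing that the locus of $X$ that gets contracted to a single point (or lower-dimensional stratum) of $\mathcal{M}_{\overline{\theta}}$ is \emph{two-dimensional}, i.e.\ a divisor rather than just the curve $E$. Concretely, I would determine the wall inequality (for $D_{2n}$, $n$ odd, this is $\theta_1 + \cdots + \theta_m = 0$ type condition from Theorem \ref{stability}(i); for $\mathbb{T}$, the wall $\theta_i = 0$ from \ref{stability}(iii)) and analyze the Jordan--H\"older factors of the semistable modules on the wall. If a whole surface's worth of stable representations share the same $S$-equivalence class, the contraction is divisorial and $E$ is not floppable.

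\textbf{Secondary argument and closing.} Finally, for $E_2 \subset \Hilb{\mathbb{T}}{\mathbb{C}^3}$ I would give the alternative proof promised in the text using \emph{contraction algebras}: the curve $E$ is floppable if and only if the associated contraction algebra $A_{\mathrm{con}}$ is finite-dimensional. I would compute $A_{\mathrm{con}}$ from the relevant mutated Jacobian algebra $\mathcal{P}(\mu(Q,W))$ localized at the vertex corresponding to $E_2$, and exhibit an infinite-dimensional cyclic quotient (coming from the loop at that vertex, whose infinite-dimensionality is precisely the content of Theorem \ref{MutTriv} and the Lemmas asserting $\dim_{\mathbb{C}} \Lambda_i = \infty$ at looped vertices). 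Since a looped vertex with $\dim_{\mathbb{C}} \Lambda_i = \infty$ forces the contraction algebra to be infinite-dimensional, $E_2$ cannot be flopped, confirming the moduli-theoretic computation. Collecting the three cases completes the proof that only $(-1,-1)$-curves are floppable.
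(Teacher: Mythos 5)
Your overall three-case structure matches the paper's proof, and your treatment of the $(-3,1)$-curves is essentially the paper's argument: one passes to a degenerate parameter $\overline{\theta}$ on the boundary of the chamber and shows, by computing $S$-equivalence classes explicitly from the representation spaces, that an entire hypersurface (the paper exhibits $y^2=xz^2$ in $U_{m+2}$ for $D_{2n}$ with $n$ odd, and $y^2=x(z+\omega)(z+\omega^2)$ in $U_1$ for $\mathbb{T}$) collapses to a curve, so the contraction is divisorial and no flop exists. One small correction there: the relevant wall is $\theta_m=0$ (resp.\ $\theta_3=0$ for $\mathbb{T}$), not $\theta_1+\cdots+\theta_m=0$ as you wrote; the latter type of wall corresponds to flops between chambers in Theorem \ref{stability}, not to the contraction of the $(-3,1)$-curve. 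Your contraction-algebra remark also parallels the paper's supplementary subsection, though the paper's version proceeds by removing the summand $M_3$, computing $\Gamma=\End_{S^G}(M/M_3)$ and its moduli space explicitly, and exhibiting the singular line whose preimage is a divisor.

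However, your $(-2,0)$ case has a genuine gap. You assert that a $(-2,0)$-curve "deforms inside a surface (it sweeps out a scroll)" and conclude non-floppability from "width $\geq 1$". Both statements fail: width $\geq 1$ holds trivially for any curve, and, more importantly, a $(-2,0)$-curve of \emph{finite} width $n$ does not move in a scroll and \emph{is} floppable --- this is exactly Reid's pagoda construction in \cite{Pagoda}. Non-floppability of a $(-2,0)$-curve is equivalent to the width being infinite, and this must be checked; it is not a consequence of the normal bundle degree alone. The paper does precisely this work: for each $(-2,0)$-curve it first changes coordinates (using the explicit gluings from Theorems \ref{OpensDnOdd}, \ref{OpensDnEven} and \ref{OpensE6}) so that the transition between the two charts covering $E$ takes the normal form $(a,b,c)\mapsto(a^{-1},a^2b,c)$, and then verifies that the ideals $J_k=(b,c^k)$ satisfy the criterion of Proposition 5.10 of \cite{Pagoda} for every $k>0$, whence the width is infinite, $E$ moves in a scroll, and no small contraction collapses $E$ alone. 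Without this verification your argument would equally "prove" that pagoda curves are not floppable, which is false; so the missing step is the explicit infinite-width computation, not merely the invocation of the width.
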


\begin{proof} Let $(Q,R)$ the McKay quiver with relations, $\Lambda=\C Q/R$, ${\bf{d}}:=(\dim\rho)_{\rho\in\Irr G}$ and $\theta\in\Theta$ be the $0$-generated parameter. Denote by $\mathcal{M}_\theta:=\mathcal{M}_{\theta,{\bf{d}}}(\Lambda)$. By the part $(3)$ in Theorems \ref{OpensDnOdd}, \ref{OpensDnEven} and \ref{OpensE6} only $E_{m}\subset\Hilb{D_{2n}}{\C^3}$ when $n$ is odd and $E_2\subset\Hilb{\mathbb{T}}{\C^3}$ are $(-3,1)$-curves. Since the open sets covering these curves do not change under the flop of any other curve, it is enough to prove that they are not floppable in $\Hilb{G}{\C^3}$. Thus, it is enough to show the following three claims:
\begin{itemize}
\item[(i)] If $E$ is a $(-2,0)$-curve then $E$ is contained on a scroll.
\item[(ii)] Let $G=D_{2n}\subset SO(3)$ with $n$ odd. Then the $(-3,1)$-curve on $\mathcal M_{\theta}$ is not floppable.
\item[(iii)] Let $G \subset SO(3)$ be the tetrahedral group. Then the $(-3,1)$-curve on $\mathcal M_{\theta}$ is not floppable.
\end{itemize}

{\em Proof of (i).} By the covering of $X$ given in Section \ref{sect:opens} we know that $E\subset X$ is covered by two open sets $U$ and $U'$ where $U,U'\cong\C^3$. First notice that for every curve $E$ of type $(-2,0)$ we can make a suitable change of basis on $U$ or $U'$ to obtain the gluing to be of the form $U\backslash\{a=0\}\ni(a,b,c)\mapsto(a^{-1},a^2b,c)\in U'\backslash\{a'=0\}$. It is straightforward in most cases, although we give here some of them:

In $D_{2n}$ with $n$ odd have $U'_{m+1}\backslash\{a=0\}\ni(a,b,B)\mapsto(a^2(d^4-D^2/4),d,a^{-1})\in U_{m+2}\backslash\{u=0\}$, so we can change of coordinates in $U'_{m+1}$ by $(\bar{a},\bar{d},\bar{D})=(a,d,d^2-D)$.

In $D_{2n}$ with $n$ even have $V''_{m+1}\backslash\{d=0\}\ni(d,D,C')\mapsto(C'^2+d^2D,d^{-1},C')\in V''_{m+2}\backslash\{c'=0\}$, so we can change of coordinates in $V''_{m+2}$ by $(\bar{A},\bar{c'},\bar{C'})=(A-C'^2,c',C')$.

In $\mathbb{T}$ have $U_{1}\backslash\{c_3=0\}\ni(c_2,c_3,C_3)\mapsto(-c_2,c_3^{-1},c_2^2(1+c_3^{-1})-c_3^2C_3)\in U'_{2}\backslash\{B_1\}$, so we can change of coordinates in $U'_{2}$ by $(\bar{b_1},\bar{B_1},\bar{B_3})=(b_1,B_1,b_2^2(1+B_1)-B_3)$.

The {\em width} of a $(-2,0)$-curve $E\in X$ is defined in \cite{Pagoda} as
\[n:=\sup\{n | \text{$\exists$ scheme $E_n$ with $E\subset E_n\subset X$ s.t.\ $E_n\cong E\times\Spec\C[\varepsilon]/\varepsilon^n$} \}
\] 
Once we have the gluing in the form $(a,b,c)\mapsto(a^{-1},a^2b,c)$, the curve $E\subset U$ is defined by the ideal $I=(b,c)$ and for any $k>0$ the ideal $J_k=(b,c^k)$ satisfy the conditions of the criteria in Proposition 5.10 in \cite{Pagoda}, so the curve $E$ has infinity width. Thus $E$ moves in a scroll $S\subset X$ so there is no small contraction of $X$ which contracts only $E$.

{\em Proof of (ii).} First we note that, if $\bar\theta$ is a parameter with $\theta_i >0$ for $i \neq 0,m$ and $\theta_m =0$, then there is a morphism $f : \mathcal M_{\theta} \to \mathcal M_{\bar\theta}$ which cannot be further factored into birational morphisms between normal varieties. If $M$ is a point on $\mathcal M_{\theta}$, then the image $[M] := f(M)$ is an $S$-equivalence class of $M$ with respect to $\bar\theta$.

In the open cover $U_{m+2}$, put $x=u,y=v,z=V$. We consider the hypersurface $X \subset U_{m+2}$ defined by $y^2-xz^2=0$. We prove that the surface $X$ is contracted to a curve by calculating $S$-equivalence classes.
Fix $x = \alpha$ and denote by $X_{\alpha}$ the curve on $X$ determined by $x=\alpha$. Note that $X_{0}$ is contained in the $(-3,1)$-curve.
Take any representation $M$ on $X_{\alpha}$. 
\begin{center}
\begin{pspicture}(-2,-1.5)(8,1.5)
	\psset{arcangle=15,nodesep=2pt}
\rput(-2.5,0){$U_{m+2} \simeq \C^3_{\alpha,y,z} \ni M = $}
\scalebox{0.8}{
	\rput(-0,1.7){\rnode{0}{$\C$}}
	\rput(-0,-1.7){\rnode{1}{$\C$}}
	\rput(2,0){\rnode{2}{$\C^2$}}
	\rput(4,0){\rnode{3}{$\C^2$}}
	\rput(6,0){\rnode{5}{$\C^2$}}
	\rput(8,0){\rnode{6}{$\C^2$}}
	\rput(2,0.3){\rnode{u1}{}}	
	\rput(4,0.3){\rnode{u2}{}}	
	\rput(6,0.3){\rnode{u3}{}}	
	\rput(8,0.3){\rnode{u5}{}}	
	\rput(8,-0.3){\rnode{u6}{}}	
	\ncarc{->}{0}{1}\Aput[0.05]{\footnotesize $1$}	
	\ncarc{->}{1}{0}\Aput[0.05]{\footnotesize $\alpha$}	
	\ncarc{->}{1}{2}\mput*[0.05]{\scriptsize $(0,1)$}	
	\ncarc{->}{2}{1}\Aput[0.05]{\large $\left(\begin{smallmatrix}0\\0\end{smallmatrix}\right)$}		
	\ncarc{->}{0}{2}\Aput[0.05]{\scriptsize $(1,0)$}		
	\ncarc{->}{2}{0}\mput*[0.05]{\large $\left(\!\begin{smallmatrix}0\\0\end{smallmatrix}\!\right)$}	
	\ncarc{->}{2}{3}\Aput[0.05]{$\left(\begin{smallmatrix}1&0\\0&1\end{smallmatrix}\right)$}		
	\ncarc{->}{3}{2}\Aput[0.05]{$\left(\begin{smallmatrix}0&0\\0&0\end{smallmatrix}\right)$}		
	\ncline[linestyle=dashed]{-}{3}{5}
	\ncarc{->}{5}{6}\Aput[0.05]{$\left(\begin{smallmatrix}1&0\\0&1\end{smallmatrix}\right)$}		
	\ncarc{->}{6}{5}\Aput[0.05]{$\left(\begin{smallmatrix}0&0\\0&0\end{smallmatrix}\right)$}		
	 \nccircle[angleA=-25,nodesep=3pt]{->}{u1}{.3cm}\Bput[0.05]{$\left(\!\begin{smallmatrix}0&1\\ \alpha&0\end{smallmatrix}\!\right)$}	
	 \nccircle[nodesep=3pt]{->}{u5}{.3cm}\Bput[0.05]{$u_m = \left(\!\begin{smallmatrix}0&1\\ \alpha&0\end{smallmatrix}\!\right)$}		
	 \nccircle[angleA=180,nodesep=3pt]{->}{u6}{.3cm}\Bput[0.05]{$v = \left(\begin{smallmatrix}y&z\\\text{-} \alpha z&\text{-}y\end{smallmatrix}\right)$}	
}
\end{pspicture}
\end{center}
Then there is a submodule $M'$ of $M$ whose dimension vector is $2\e_m$. 
One can check that the eigenvalue of $u_m$ is $0$ and eigenvectors are $(\sqrt{\alpha},1)$ and $(-\sqrt{\alpha},1)$.
Since $y^2 - \alpha z^2 = (y + \sqrt{\alpha}z)(y - \sqrt{\alpha}z)$, we put $X_{\alpha}^+ = X \cap (y + \sqrt{\alpha}z=0)$ and $X_{\alpha}^- = X \cap (y - \sqrt{\alpha}z=0)$. Then $X_{\alpha} = X_{\alpha}^+ \cup X_{\alpha}^-$. If $M$ is a point on $X_{\alpha}^+$, we consider the subspace $M''$ of $M'$ spanned by $(-\sqrt{\alpha},1)$. The actions of $u_m,v$ are zero on $M''$, so it becomes subrepresentation of $M'$, and we have a filtration of $\bar\theta$-semistable representations
\[
0 \subsetneq M'' \subsetneq M' \subsetneq M.
\]
One can check that 
\[
[M'/M''] \simeq
[
\begin{pspicture}(-2.3,-0.2)(2,0.3)
	\psset{arcangle=15,nodesep=2pt}
\rput(0,0){
\scalebox{1}{
	\rput(0,0){\rnode{1}{$\C$}}
	\rput(-0.3,0){\rnode{u1}{}}	
	\rput(0.3,0){\rnode{u2}{}}	
	\nccircle[angleA=90,nodesep=3pt]{->}{u1}{.3cm}\Bput[0.05]{\footnotesize $ \sqrt{\alpha}=u_m$}
	\nccircle[angleA=270,nodesep=3pt]{->}{u2}{.3cm}\Bput[0.05]{\footnotesize $v = 0$}
	}}
\end{pspicture}
]
,
[M''] \simeq
[
\begin{pspicture}(-2.1,-0.2)(2,0.3)
	\psset{arcangle=15,nodesep=2pt}
\rput(0,0){
\scalebox{1}{
	\rput(0,0){\rnode{1}{$\C$}}
	\rput(-0.3,0){\rnode{u1}{}}	
	\rput(0.3,0){\rnode{u2}{}}	
	\nccircle[angleA=90,nodesep=3pt]{->}{u1}{.3cm}\Bput[0.05]{\footnotesize $ 0=u_m$}
	\nccircle[angleA=270,nodesep=3pt]{->}{u2}{.3cm}\Bput[0.05]{\footnotesize $v = 0$}
	}}
\end{pspicture}
]
\]
where the vector spaces lie only on the vertex $m$.
Thus the factor modules $M/M'$, $M'/M''$, $M''$ do not depend on $y,z$.
Hence any representation on $X_{\alpha}^+$ is $S$-equivalent to the representation $[M/M'] \oplus [M'/M''] \oplus [M'']$.
Similarly any representation on $X_{\alpha}^-$ is also $S$-equivalent to it. Thus the surface $X$ is contracted to a curve. Therefore, there is no small contraction and the $(-3,1)$-curve is not floppable.

{\em Proof of (iii).} It is proved by the same strategy, however the computation is not so obvious, so we show it. 
We take the open set $U_1$ and put $x = C_3, y = c_2, c_3 = z$.
We consider the hypersurface $X$ defined by $y^2 = xz^2-xz+x = x(z+\omega)(z+\omega^2)$ where $\omega$ is a primitive 3rd root of unity.
We fix $x=\alpha$ and put $X_{\alpha} = X \cap (x=\alpha)$. 
Take any representation $M$ on $X_{\alpha}$.
Then we have $A_1=\alpha$, so the matrices in $M$ become as follows:\\
$\begin{array}{cl}
& a=(1,0,0), b=(0,0,1), c=(y^2-\alpha z^2,y,z), A=\alpha\left(\begin{smallmatrix} 1 \\ 0 \\ \alpha \end{smallmatrix}\right), B=\left(\begin{smallmatrix}1 \\0 \\ \alpha \end{smallmatrix}\right), C=\left(\begin{smallmatrix}1 \\0 \\ \alpha\end{smallmatrix}\right), \\
& u=\left(\begin{smallmatrix}0&1&0 \\\alpha z-\omega (y^2-\alpha z^2)&\omega^2 y&\omega +\omega^2 z \\ \omega^2 \alpha y & -\omega^2 \alpha z & -\omega^2 y \end{smallmatrix}\right), v=\left(\begin{smallmatrix}0&1&0 \\ \alpha z-\omega^2 (y^2-\alpha z^2)&\omega y&\omega^2 +\omega z \\ \omega \alpha y & -\omega \alpha z & -\omega y \end{smallmatrix}\right).
\end{array}$

Thus we see that there is a subrepresentation $M'$ of $M$ generated by $(\alpha,0,-1)$ and $(0,1,0)$:
\begin{pspicture}(-5,-0.2)(4.3,0.5)
	\psset{arcangle=15,nodesep=2pt}
\rput(0,0){
\scalebox{1}{
	\rput(0,0){\rnode{1}{$\C^2$}}
	\rput(-0.3,0){\rnode{u1}{}}	
	\rput(0.3,0){\rnode{u2}{}}	
	\nccircle[angleA=90,nodesep=3pt]{->}{u1}{.3cm}\Bput[0.05]{$\omega^2\left(\!\begin{smallmatrix} -y & \alpha (z+\omega)\\ -(z+\omega^2) & y \end{smallmatrix} \!\right) =u$}
	\nccircle[angleA=270,nodesep=3pt]{->}{u2}{.3cm}\Bput[0.05]{\footnotesize $v = \omega\left(\!\begin{smallmatrix} -y & \alpha (z+\omega^2)\\ -(z+\omega) & y \end{smallmatrix} \!\right)$}
}}
\end{pspicture}.
The eigenvalues of $u,v$ are $0$ and the eigenvectors are $(y,z+\omega^2),(\alpha(z+\omega^2),y)$ and $(y,z+\omega),(\alpha(z+\omega),y)$ respectively.
Here we put $X_{\alpha}^{+}=X_{\alpha} \cap (y=\sqrt{\alpha}(z+w))$ and $X_{\alpha}^{-}=X_{\alpha} \cap (y=-\sqrt{\alpha}(z+w))$.
Then if $M$ is on $X_{\alpha}^+$, all eigenvectors are multiples of the vector $(\sqrt{\alpha},1)$. So by taking the subrepresentation $M''^+$ of $M'$ generated by $(\sqrt{\alpha},1)$, we have a filtration of $\bar\theta$-semistable representations:
\[
0 \subsetneq M'' \subsetneq M' \subsetneq M
\]
and one can check that factor modules $M/M'$,$M'/M''^+$,$M''^+$ do not depend on $y,z$. So any point on $X_{\alpha}^+$ is S-equivalent to $M/M' \oplus M'/M''^+ \oplus M''^+$. Similarly one can check that any point on $X_{\alpha}^-$ is S-equivalent to $M/M' \oplus M'/M''^- \oplus M''^-$ where $M''^-$ is the subrepresentation of $M'$ generated by $(-\sqrt{\alpha},1)$.
Therefore $X$ is contracted to a curve, so the $(-3,1)$-curve is not floppable.
\end{proof}

\subsection{A contraction algebra}
\label{sect:contralg}

In this section we give an alternative proof of Lemma \ref{floppable} by using {\em contraction algebras}. Let $G$ be the tetrahedral group $\mathbb{T}$ of order 12, $S:=\C[x,y,z]$ as usual. Let $M_i:=(S\otimes\rho)^G$ for $i=0,\ldots,3$ be the non-isomorphic CM $S^G$-modules with $M_0\cong R$, and let $M:=\bigoplus_{i=0}^3M_i$. Then the algebra $\Lambda:=\End_{S^G}(M)$ is isomorphic to the Jacobian algebra $\mathcal{P}(Q,W)$ for the McKay QP $(Q,W)$ given in the previous sections. 

Let $X:=\Hilb{\mathbb{T}}{\C^3}\cong\mathcal{M}_{\theta^0,{\bf{d}}}(\Lambda)$ with $\theta^0=(-5,1,1,1)$ and ${\bf{d}}=(1,1,1,3)$. Let $E_3\subset X$ the rational curve of type $(-3,1)$, which corresponds with the vertex $3\in Q_0$. If $E_3$ is floppable there exists a small contraction $\tau:X\to Y$, where we can realize $Y$ as the moduli space $\mathcal{M}_{\overline{\theta^0},\overline{d}}(\Gamma)$ with $\overline{\theta^0}=(-3,1,1)$, $\overline{\bf{d}}=(1,1,1)$ and $\Gamma:=\End_{S^G}(M/M_3)$ is obtained by removing the module $M_3$ from $M$. We call $\Gamma$ a {\em contraction algebra} by its analogy with what is happening geometrically. 

It turns out that $\Gamma$ is isomorphic to the path algebra $\C\overline{Q}/\overline{R}$ for the following quiver $(\overline{Q},\overline{R})$ with relations:

\begin{center}
\begin{pspicture}(0,-2)(10,2.25)
	\psset{arcangle=15,nodesep=2pt}
\rput(1,0){
\scalebox{0.8}{
	\rput(0,-2){\rnode{0}{$V_0$}}
	\rput(-1.75,1.5){\rnode{1}{$V_1$}}
	\rput(1.75,1.5){\rnode{2}{$V_2$}}
	\rput(-0.3,-2){\rnode{d}{}}	
	\rput(0.3,-2){\rnode{D}{}}	
	\rput(-2.05,1.5){\rnode{e}{}}	
	\rput(-1.75,1.8){\rnode{E}{}}	
	\rput(1.75,1.8){\rnode{g}{}}	
	\rput(2.05,1.5){\rnode{G}{}}	

	\ncarc{->}{0}{1}\Aput[0.05]{\footnotesize $a$}	
	\ncarc{->}{1}{0}\Aput[0.05]{\footnotesize $A$}	
	\ncarc{->}{1}{2}\Aput[0.05]{\footnotesize $b$}	
	\ncarc{->}{2}{1}\Aput[0.05]{\footnotesize $B$}	
	\ncarc{->}{2}{0}\Aput[0.05]{\footnotesize $c$}	
	\ncarc{->}{0}{2}\Aput[0.05]{\footnotesize $C$}	
	\nccircle[angleA=100,nodesep=3pt]{->}{d}{.3cm}\Bput[0.05]{\footnotesize $d$}
	 \nccircle[angleA=260,nodesep=3pt]{->}{D}{.3cm}\Bput[0.05]{\footnotesize $D$}
 	\nccircle[angleA=120,nodesep=3pt]{->}{e}{.3cm}\Bput[0.05]{\footnotesize $e$}
	 \nccircle[angleA=0,nodesep=3pt]{->}{E}{.3cm}\Bput[0.05]{\footnotesize $E$}
	\nccircle[angleA=0,nodesep=3pt]{->}{g}{.3cm}\Bput[0.05]{\footnotesize $g$}
	 \nccircle[angleA=-120,nodesep=3pt]{->}{G}{.3cm}\Bput[0.05]{\footnotesize $G$}
	}}
\scalebox{0.8}{
	\rput(9,1.5){$aA=Cc$, $bB=Aa$, $cC=Bb$}
	\rput(9,1){$da=ae$, $eb=bg$, $gc=cd$}
	\rput(9,0.5){$dC=Cg$, $eA=Ad$, $gB=Be$}
	\rput(9,0){$Da=\omega aE$, $Ed=\omega bG$, $Gc=\omega cD$}
	\rput(9,-0.5){$DC=\omega^2CG$, $EA=\omega^2AD$, $GB=\omega^2BE$}
	\rput(9,-1){$D^2=d^3+abc+CBA-3Ccd$}
	\rput(9,-1.5){$\omega^2E^2=e^3+bca+ACB-3Aae$}
	\rput(9,-2){$\omega G^2=g^3+cab+BAC-3Bbg$}
	}
\end{pspicture}
\end{center}
Thus we obtain $Y\cong\mathcal{M}_{\overline{\theta^0},\overline{\bf{d}}}(\overline{Q},\overline{R})=U_1\cup U_2\cup U_3$ where $U_i$ are hypersurfaces given by equations:
\begin{align*}
U_1: &  ~(wG^2=d^3+c+c^2C^3-3cCd)\subset\C^4_{c,C,d,G} \\
U_2: &  ~(wG^2=d^3+b^2B+bB^2-3bBd)\subset\C^4_{c,B,d,G} \\
U_3: &  ~(wG^2=d^3+A+A^2a^3-3aAd)\subset\C^4_{c,C,d,G} 
\end{align*} 
Therefore $Y$ has a singular line $L$ which in $U_2$ is given by the points $(d,d,d,0)$. As in Lemma \ref{floppable} that the preimage of $L$ is precisely the equation $y^2=xz^2-xz+x$ by setting $C_3=x$, $c_2=y$ and $c_1=y^2-xz^2$. Therefore $\tau$ is not a small contraction, therefore $E_3$ is not floppable. Moreover, this construction coincides with the contraction map $\mathcal{M}_{\theta^0}\to\mathcal{M}_{\overline{\theta^0}}$ described in the previous lemma where $\overline{\theta^0}$ is a stability condition at the wall $\theta_3=0$.







\begin{thebibliography}{ABCD}


\bibitem[BIRS]{BIRS}
A. B. Buan, O. Iyama, I. Reiten and D. Smith.
\newblock Mutation of cluster-tilting objects and potentials.
\newblock {\em Amer. J. Math.} 133(4):835--887, 2011.

\bibitem[BSW]{BSW}
R. Bocklandt, T. Schedler, and M. Wemyss.
\newblock Superpotentials and higher order derivations.
\newblock {\em J. Pure Appl. Algebra}, 214(9):1501--1522, 2010.


\bibitem[BS]{BS}
S. Boissiere and A. Sarti.
\newblock Contraction of excess fibres between the {M}c{K}ay correspondences in
  dimensions two and three.
\newblock {\em Annales de l'Institut Fourier}, 57(6):1839--1861, 2007.

\bibitem[BKR]{BKR}
T. Bridgeland, A. King, and M. Reid.
\newblock The {M}c{K}ay correspondence as an equivalence of derived categories.
\newblock {\em J. Amer. Math. Soc.}, 14(3):535--554 (electronic), 2001.

\bibitem[CI]{CI}
A. Craw and A. Ishii.
\newblock Flops of {$G$}-{H}ilb and equivalences of derived categories by
  variation of {GIT} quotient.
\newblock {\em Duke Math. J.}, 124(2):259--307, 2004.

\bibitem[CR]{CR02}
A. Craw and M. Reid.
\newblock How to calculate {$A$}-{H}ilb {$\mathbb C\sp 3$}.
\newblock In {\em Geometry of toric varieties}, volume~6 of {\em S\'emin.
  Congr.}, pages 129--154. Soc. Math. France, Paris, 2002.

\bibitem[DWZ]{DWZ}
H. Derksen, J. Weyman and A. Zelevinsky.
\newblock Quivers with potentials and their representations. I. Mutations.
\newblock {\em Selecta Math.} (N.S.) 14, no. 1, 59--119, 2008. 


\bibitem[GNS00]{GNS1}
Y. Gomi, I. Nakamura, and K. Shinoda.
\newblock Hilbert schemes of {$G$}-orbits in dimension three.
\newblock {\em Asian J. Math.}, 4(1):51--70, 2000.
\newblock Kodaira's issue.

\bibitem[GNS04]{GNS2}
Y. Gomi, I. Nakamura, and K. Shinoda.
\newblock Coinvariant algebras of finite subgroups of {${\rm SL}(3,{\bf C})$}.
\newblock {\em Canad. J. Math.}, 56(3):495--528, 2004.




\bibitem[IN]{IN00}
Y. Ito and H. Nakajima.
\newblock Mc{K}ay correspondence and {H}ilbert schemes in dimension three.
\newblock {\em Topology}, 39(6):1155--1191, 2000.

  
\bibitem[IW10]{IW10}
O. Iyama and M. Wemyss.
\newblock {Maximal Modifications and Auslander-Reiten Duality for Non-isolated Singularities}.
\newblock {\em Invent. Math.}, 197(3): 521--586, 2014.

\bibitem[IW13]{IW13}
O. Iyama and M. Wemyss.
\newblock {Reduction of triangulated categories and Maximal Modification Algebras for $cA_n$ singularities}.
\newblock {\em To appear Crelle}.

\bibitem[King]{King}
A. D. King.
\newblock Moduli of representations of finite-dimensional algebras.
\newblock {\em Quart. J. Math. Oxford Ser. (2)}, 45(180):515--530, 1994.

\bibitem[Mar]{Mar97}
D. Markushevich.
\newblock Resolution of {${\bf C}^3/H_{168}$}.
\newblock {\em Math. Ann.}, 308(2):279--289, 1997.

\bibitem[Miz]{Mizuno}
Y. Mizuno
\newblock APR tilting modules and graded quivers with potential.
\newblock {\em Int. Math. Res. Not.}, 3, 817--841, 2014.

\bibitem[Nak]{Nak01}
I. Nakamura.
\newblock Hilbert schemes of abelian group orbits.
\newblock {\em J. Algebraic Geom.}, 10(4):757--779, 2001.

\bibitem[Reid]{Pagoda}
M. Reid.
\newblock Minimal models of canonical {$3$}-folds.
\newblock In {\em Algebraic varieties and analytic varieties ({T}okyo, 1981)},
  volume~1 of {\em Adv. Stud. Pure Math.}, pages 131--180. North-Holland,
  Amsterdam, 1983.
  




\bibitem[SY]{SY}
Y. Sekiya and K. Yamaura.
\newblock Tilting theoretical approach to moduli spaces over preprojective algebras.
\newblock {\em Algebr. Represent. Theory}, 16(6):1733-1786, 2013.

\bibitem[VdB]{VdB}
M. Van den Bergh.
\newblock Non-commutative crepant resolutions.
\newblock {\em The legacy of Niels Henrik Abel}, 749--770, 
Springer, Berlin, 2004. 

\bibitem[Wem11]{Wem08}
M. Wemyss.
\newblock The {GL}(2) {M}c{K}ay {C}orrespondence.
\newblock {\em Math. Ann.}, 350(3):631--659, 2011.

\bibitem[Wem14]{Wem14}
M. Wemyss.
\newblock {Aspects of the Homological Minimal Model Program}.
\newblock {\em arXiv e-print 1411.7189}.

\bibitem[YY]{YY}
S. S-T. Yau and Y. Yu.
\newblock Gorenstein quotient singularities in dimension three.
\newblock {\em Mem. Amer. Math. Soc.}, 105(505):viii+88, 1993.

\end{thebibliography}
\end{document}